\documentclass[a4paper,11pt]{article}

\usepackage[utf8x]{inputenc}
\usepackage[T1]{fontenc}
\usepackage{lmodern}

\usepackage[colorlinks]{hyperref}
\hypersetup{linkcolor=blue, citecolor=red}

\newcommand{\Aut}{{\rm Aut}}

\usepackage{bbm}
\usepackage[bbgreekl]{mathbbol}
\usepackage{nccrules}
\usepackage[nottoc]{tocbibind}
\usepackage[intlimits,leqno]{amsmath}
\usepackage{amsthm}
\usepackage{amssymb}
\usepackage{amsmath}
\usepackage{mathrsfs}
\usepackage{stmaryrd}
\usepackage{latexsym,bm}
\usepackage[capitalise]{cleveref}
\usepackage{xspace}
\usepackage[all]{xy}

\usepackage{paralist} \setdefaultitem{$\star$}{}{}{}

\usepackage{geometry}
\geometry{
  paper=a4paper,
  top=3cm,
  inner=2.54cm,
  outer=2.54cm,
  bottom=3cm,
  headheight=5ex,
  headsep=5ex,
}

\newcommand{\GL}{{\rm GL}}
\newcommand{\SL}{{\rm SL}}
\newcommand{\Sh}{{\rm Sh}}

\newcommand{\trdeg}{\rm tr.deg}

\newcommand{\Gal}{{\rm Gal}}

\newcommand{\MT}{{\rm MT}}
\newcommand{\der}{{\rm der}}

\newcommand{\ad}{{\rm ad}}

\newcommand{\CC}{{\mathbb C}}
\newcommand{\RR}{{\mathbb R}}

\newcommand{\ZZ}{{\mathbb Z}}
\newcommand{\QQ}{{\mathbb Q}}
\newcommand{\NN}{{\mathbb N}}

\newcommand{\HH}{{\mathbb H}}

\newcommand{\SSS}{{\mathbb S}}
\newcommand{\AAA}{{\mathbb A}}

\newcommand{\Opt}{{\rm Opt}}


\theoremstyle{plain}
\newtheorem{proposition}{Proposition}[section]
\newtheorem{conjecture}[proposition]{Conjecture}
\newtheorem{lemma}[proposition]{Lemma}
\newtheorem{theorem}[proposition]{Theorem}
\newtheorem{corollary}[proposition]{Corollary}

\theoremstyle{definition}
\newtheorem{definition}[proposition]{Definition}
\newtheorem{definition-theorem}[proposition]{Definition-Theorem}
\newtheorem{definition-proposition}[proposition]{Definition-Proposition}

\theoremstyle{remark}
\newtheorem{remark}[proposition]{Remark}

\numberwithin{equation}{proposition}

\renewcommand{\Im}{\ensuremath{\mathrm{Im}\xspace}}


\title{Applications of the hyperbolic Ax-Schanuel conjecture}
\author{Christopher Daw \and Jinbo Ren}
\date{}

\begin{document}

\maketitle

\begin{abstract}

In 2014, Pila and Tsimerman gave a proof of the Ax-Schanuel conjecture for the $j$-function and, with Mok, have recently announced a proof of its generalization to any (pure) Shimura variety. We refer to this generalization as the hyperbolic Ax-Schanuel conjecture. In this article, we show that the hyperbolic Ax-Schanuel conjecture can be used to reduce the Zilber-Pink conjecture for Shimura varieties to a problem of point counting. We further show that this point counting problem can be tackled in a number of cases using the Pila-Wilkie counting theorem and several arithmetic conjectures. Our methods are inspired by previous applications of the Pila-Zannier method and, in particular, the recent proof by Habegger and Pila of the Zilber-Pink conjecture for curves in abelian varieties.

\end{abstract}

\begin{flushleft}
  \small 2010 Mathematics Subject Classification: \textbf{11G18, 14G35}\\
  \small Keywords: hyperbolic Ax-Schanuel conjecture, Zilber-Pink conjecture, Shimura varieties
\end{flushleft}

\tableofcontents

\section{Introduction}\label{sec:intro}

The Ax-Schanuel theorem \cite{Ax71} is a result regarding the transcendence degrees of fields generated over the complex numbers by power series and their exponentials. Formulated geometrically for the uniformization maps of algebraic tori, it has inspired analogous statements for the uniformization maps of abelian varieties and Shimura varieties. The former, following from another theorem of Ax \cite{Ax72}, has recently been used by Habegger and Pila in their proof of the Zilber-Pink conjecture for curves in abelian varieties \cite{hp:o-min}. 

Habegger and Pila also extended the Pila-Zannier strategy to the Zilber-Pink conjecture for products of modular curves. Their method relies on an Ax-Schanuel conjecture for the $j$-function and is conditional on their so-called large Galois orbits conjecture. The purpose of this paper is to show that the Pila-Zannier strategy can be extended to the Zilber-Pink conjecture for general Shimura varieties.

This conjecture can just as easily be stated in the generality of {\bf mixed} Shimura varieties but, in this article, we will restrict our attention to {\bf pure} Shimura varieties, though we have no explicit reason to believe that the methods presented here will not extend to the mixed setting. We begin by stating a conjecture of Pink. We note that, throughout this article, unless preceded by the word {\bf Shimura}, varieties (and, indeed, subvarieties) will be assumed {\bf geometrically irreducible}.

\begin{conjecture}[cf. \cite{pink:generalisation}, Conjecture 1.3]\label{zp}
Let $\Sh_K(G,\mathfrak{X})$ be a Shimura variety and, for any integer $d$, let $\Sh_K(G,\mathfrak{X})^{[d]}$ denote the union of the special subvarieties of $\Sh_K(G,\mathfrak{X})$ having codimension at least $d$. Let $V$ be a {\bf Hodge generic} subvariety of $\Sh_K(G,\mathfrak{X})$. Then 
\begin{align*}
V\cap \Sh_K(G,\mathfrak{X})^{[1+\dim V]}
\end{align*}
is not Zariski dense in $V$.
\end{conjecture}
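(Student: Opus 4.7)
The plan is to adapt the Pila-Zannier o-minimal strategy, already employed by Habegger-Pila for products of modular curves, to general pure Shimura varieties, using the hyperbolic Ax-Schanuel conjecture (HAS) as the key functional-transcendence input in place of Ax's theorem. Let $\pi\colon \mathfrak{X}^+\to \Sh_K(G,\mathfrak{X})$ be the uniformization from a connected component of the associated Hermitian symmetric domain, with $\Gamma\subset G(\QQ)$ the relevant arithmetic group, and fix a semi-algebraic fundamental set $\cF\subset\mathfrak{X}^+$ for $\Gamma$. A key input is that the restriction of $\pi$ to $\cF$ is definable in the o-minimal structure $\RR_{\mathrm{an,exp}}$, so that the analytic locus $Z:=\pi^{-1}(V)\cap\cF$ is definable.

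Suppose, for contradiction, that $V\cap\Sh_K(G,\mathfrak{X})^{[1+\dim V]}$ is Zariski dense in $V$. Each special subvariety $T\subset\Sh_K(G,\mathfrak{X})$ of codimension $\ge 1+\dim V$ meeting $V$ comes from a reductive $\QQ$-subgroup $H\subset G$, and its pre-image $\pi^{-1}(T)\cap\cF$ is a union of $\Gamma$-translates of a sub-Hermitian-domain attached to $H$. These translates are parameterized by elements of $\Gamma$ whose height is polynomially controlled by the complexity of $T$. The plan is then to attach to each atypical intersection $x\in V\cap T$ an algebraic/rational datum in $Z$ of controlled height, and to use an arithmetic lower bound (a \emph{large Galois orbits} conjecture, generalizing the Habegger-Pila hypothesis) to produce many such data: namely, a polynomial-in-complexity lower bound on the size of the Galois orbit of $x$, and hence on the number of $\Gamma$-translates of the pre-image datum that must appear in $Z$.

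The Pila-Wilkie counting theorem then forces these rational (or algebraic) parameter points in $Z$ to concentrate on a bounded number of positive-dimensional semi-algebraic blocks contained in $Z$. Applying HAS to such a block yields that the analytic Zariski closure of the image of the block is contained in a proper weakly special subvariety $W\subset \Sh_K(G,\mathfrak{X})$ and that $V\cap W$ is itself an atypical intersection in $W$. Organizing this inductively via the notion of optimal (or geodesic-optimal) subvarieties of $V$ in the sense of Habegger-Pila, Zariski density of atypical intersections on $V$ forces either infinitely many optimal weakly special components, contradicting the Pila-Wilkie bound, or a drop in dimension to which the induction hypothesis applies.

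The main obstacle, and the reason the theorem will only be conditional, is the arithmetic input: one needs lower bounds on Galois orbits (equivalently, on the heights of $\Gamma$-elements conjugating a fixed sub-domain to the pre-images in question) for special subvarieties of arbitrary codimension in an arbitrary Shimura variety. Outside of André-Oort type cases and the abelian setting treated by Habegger-Pila, such bounds are deep open problems. A secondary, more technical difficulty is the combinatorial bookkeeping of weakly special subvarieties and intersections with them: handling this requires the optimal-subvariety formalism, together with a careful parametrization of $\Gamma$-translates by rational points in a suitable flag variety so that both the block structure produced by Pila-Wilkie and the geometric conclusions extracted from HAS are compatible with the induction on $\dim V$.
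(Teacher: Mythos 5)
The labelled statement is a \emph{conjecture} (Pink's formulation of Zilber--Pink for Shimura varieties); the paper does not prove it, unconditionally or otherwise. What you have sketched is essentially the paper's conditional \emph{program}, and as a program it is faithful in spirit, but two points should be kept in focus. First, the paper's route to Conjecture~\ref{zp} is indirect: it deduces Conjecture~\ref{zp} from the stronger Zilber--Pink statement (Conjecture~\ref{zp2}) by a short defect computation in the optimal-subvariety formalism (the unnamed lemma just after Lemma~\ref{equivalence}: any $P\in V\cap S^{[1+\dim V]}$ has $\dim\langle P\rangle\le\dim S-\dim V-1<\delta(V)$, so it lies on a proper optimal subvariety of $V$, and by Conjecture~\ref{zp2} those are finite). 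The heavy machinery you describe --- definable fundamental domain, complexities, large Galois orbits, Pila--Wilkie, Ax--Schanuel --- is then aimed at Conjecture~\ref{zp2}, not at Conjecture~\ref{zp} directly. Second, even granting all the arithmetic hypotheses, the paper does not close the loop in full generality: Theorem~\ref{main theorem} reduces Zilber--Pink (under weak HAS) to finiteness of optimal \emph{points} at every dimension, and Theorem~\ref{Opt0} only establishes the latter for curves; the alternative path via $V^{\rm oa}$ (Theorems~\ref{oa0} and the genericity theorem after Conjecture~\ref{oafinite}) covers higher-dimensional $V$ but only sufficiently generic ones. Your sketch elides this dimension/genericity bottleneck, which is exactly the gap preventing the strategy from being a proof.

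There is also a technical imprecision in your use of the functional-transcendence input. You write that ``applying HAS to such a block'' produces a proper weakly special subvariety. In the paper, HAS enters through the optimal-subvariety formalism: its weak form (Conjecture~\ref{whasc2}, that Zariski-optimal intersection components are pre-weakly-special) is used to prove the structural finiteness result for weakly optimal subvarieties (Proposition~\ref{fwo}), and in Theorem~\ref{oa0} it is invoked at the block to produce a Zariski-optimal intersection component $B$ whose projection is an anomalous subvariety containing $P$, contradicting $P\in V^{\rm oa}$. In Theorem~\ref{Opt0}, the block is handled instead via the inverse Ax--Lindemann theorem, and the contradiction is a dimension count against $X^{\vee}$, not a direct HAS application. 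So the HAS input is mediated by the optimality/defect bookkeeping rather than being applied as a black box to the semi-algebraic curve produced by Pila--Wilkie.
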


The heuristics of this conjecture are as follows. For two subvarieties $V$ and $W$ of $\Sh_K(G,\mathfrak{X})$, such that the codimension of $W$ is at least $1+\dim V$, we expect $V\cap W=\emptyset$. Even if we fix $V$ and take the union of $V\cap W$ for countably many $W$ of codimension at least $1+\dim V$, the resulting set should still be rather small in $V$ unless, of course, $V$ was not sufficiently generic in $\Sh_K(G,\mathfrak{X})$. Pink's conjecture turns this expectation into an explicit statement about the intersection of Hodge generic subvarieties with the special subvarieties of small dimension.

Conjecture \ref{zp} can also be formulated for algebraic tori, abelian varieties, or even semi-abelian varieties, though Conjecture \ref{zp} for mixed Shimura varieties implies all of these formulations (see \cite{pink:generalisation}). When $V$ is a curve, defined over a number field, and contained in an algebraic torus, we obtain a theorem of Maurin \cite{maurin}. We also note that Capuano, Masser, Pila, and Zannier have recently applied the Pila-Zannier method in this setting \cite{CMPZ:torus}. When $V$ is a curve, defined over a number field, and contained in an abelian variety, we obtain the recent theorem of Habegger and Pila \cite{hp:o-min}, and it is the ideas presented there that form the basis for this article. Habegger and Pila had given some partial results when $V$ is a curve, defined over a number field, and contained in the Shimura variety $\CC^n$ \cite{hp:beyond}, and Orr has recently generalized their results to a curve contained in $\mathcal{A}^2_g$ (see \cite{orr:unlikely} for more details).

We should point out that Conjecture \ref{zp} implies the Andr\'{e}-Oort conjecture.

\begin{conjecture}[Andr\'{e}-Oort]
Let $\Sh_K(G,\mathfrak{X})$ be a Shimura variety and let $V$ be a subvariety of $\Sh_K(G,\mathfrak{X})$ such that the special points of $\Sh_K(G,\mathfrak{X})$ in $V$ are Zariski dense in $V$. Then $V$ is a special subvariety of $\Sh_K(G,\mathfrak{X})$.
\end{conjecture}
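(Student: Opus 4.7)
The plan is to apply Conjecture \ref{zp} to $V$ itself inside the smallest special subvariety of $\Sh_K(G,\mathfrak{X})$ containing $V$, exploiting the fact that special points are zero-dimensional special subvarieties and therefore have codimension equal to the dimension of the chosen ambient Shimura variety.

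First I would invoke the standard fact that, for any geometrically irreducible subvariety $V \subseteq \Sh_K(G,\mathfrak{X})$, there is a unique smallest special subvariety $S$ containing $V$, and that $V$ is Hodge generic in $S$. Moreover, the special subvarieties of $\Sh_K(G,\mathfrak{X})$ contained in $S$ are precisely the special subvarieties of $S$ regarded as a Shimura variety in its own right, so Conjecture \ref{zp} applies verbatim to the pair $V \subseteq S$.

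Then I would argue by contradiction. Suppose $V$ is not special; then $V \subsetneq S$, so $\dim V < \dim S$. Every special point lying in $V$ is, in particular, a zero-dimensional special subvariety of $S$, and hence has codimension $\dim S \geq 1 + \dim V$ in $S$. Consequently all such points are contained in $V \cap S^{[1+\dim V]}$ (with the superscript now interpreted inside $S$). By the hypothesis of the Andr\'{e}-Oort conjecture, these special points are Zariski dense in $V$, so $V \cap S^{[1+\dim V]}$ is Zariski dense in $V$. This contradicts Conjecture \ref{zp} applied to the Hodge generic subvariety $V$ of $S$. Hence $V = S$, and $V$ is special.

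The deduction is essentially a one-line reduction once the Hodge generic envelope is in hand, so there is no genuine obstacle here; the only non-formal input is the existence and basic properties of the smallest special subvariety containing $V$, which is a standard consequence of the group-theoretic definition of special subvarieties and which I would simply quote rather than reprove.
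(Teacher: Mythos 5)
Your proposal is correct and follows essentially the same route as the paper: reduce to the Hodge generic case by passing to the smallest special subvariety containing $V$, observe that special points have codimension equal to the dimension of the ambient Shimura variety, and then apply Conjecture \ref{zp} with $d = 1+\dim V$. The paper phrases it as a two-case dichotomy ($\dim V = \dim S$ or density fails) rather than a contradiction, but the mathematical content is identical.
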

To see this, we may assume that $V$ is Hodge generic in $\Sh_K(G,\mathfrak{X})$. Then, since special points have codimension $\dim \Sh_K(G,\mathfrak{X})$, Conjecture \ref{zp} implies that, either $\dim V=\dim \Sh_K(G,\mathfrak{X})$, in which case $V$ is a connected component of $\Sh_K(G,\mathfrak{X})$ and, in particular, a special subvariety of $\Sh_K(G,\mathfrak{X})$, or the set of special points of $\Sh_K(G,\mathfrak{X})$ in $V$ are not Zariski dense in $V$. 

In precisely the same fashion, the Zilber-Pink conjecture for abelian varieties implies the Manin-Mumford conjecture.

The Andr\'e-Oort conjecture has a rich history of its own. Here, we simply recall that it was recently settled for $\mathcal{A}_g$ by Pila and Tsimerman \cite{pt:axlindemann,tsimerman:AO}, thanks to recent progress on the Colmez conjecture due to Andreatta, Goren, Howard, Madapusi Pera \cite{AGHM:colmez} and Yuan and Zhang \cite{YZ:colmez}, and it is known to hold for all Shimura varieties under conjectural lower bounds for Galois orbits of special points due to the work of Orr, Klingler, Ulmo, Yafaev, and the first author \cite{Daw2016,kuy:ax-lindemann,uy:galois}. Furthermore, Gao has generalized these proofs to all mixed Shimura varieties \cite{gao:AO,gao:reduction}.

\medskip

In his work on Schanuel's conjecture, Zilber made his own conjecture on unlikely intersections \cite{zilber:exponential}, which was closely related to the independent work of Bombieri, Masser, and Zannier \cite{bombieri2007anomalous}. To describe Zilber's formulation, we require the following definition.

\begin{definition}
Let $\Sh_K(G,\mathfrak{X})$ be a Shimura variety and let $V$ be a subvariety of $\Sh_K(G,\mathfrak{X})$. A subvariety $W$ of $V$ is called {\bf atypical} with respect to $V$ if there is a special subvariety $T$ of $\Sh_K(G,\mathfrak{X})$ such that $W$ is an irreducible component of $V\cap T$ and
\begin{align*}
\dim W>\dim V+\dim T-\dim \Sh_K(G,\mathfrak{X}).
\end{align*}
We denote by ${\rm Atyp}(V)$ the union of the subvarieties of $V$ that are atypical with respect to $V$.
\end{definition}

Zilber's conjecture, formulated for Shimura varieties, is then as follows. 

\begin{conjecture}[cf. \cite{hp:o-min}, Conjecture 2.2]\label{zp'}
Let $\Sh_K(G,\mathfrak{X})$ be a Shimura variety and let $V$ be a subvariety of $\Sh_K(G,\mathfrak{X})$. Then ${\rm Atyp}(V)$ is equal to a finite union of atypical subvarieties of $V$.
\end{conjecture}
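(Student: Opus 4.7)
The plan is to adapt the Pila-Zannier strategy, using the hyperbolic Ax-Schanuel conjecture in the role played by Ax's theorem of the logarithmic derivative in the work of Habegger and Pila on curves in abelian varieties. A standard induction on $\dim V$ reduces the task to showing that the set of \emph{maximal} atypical subvarieties of $V$ that remain Hodge generic in $\Sh_K(G,\mathfrak{X})$ is finite: atypical subvarieties contained in a proper weakly special subvariety $M$ of $\Sh_K(G,\mathfrak{X})$ are handled by applying the statement recursively to the components of $V \cap M$ inside $M$, itself a union of Shimura subvarieties of strictly smaller dimension.

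For the main step, I would lift $V$ to the universal cover $\mathfrak{X}^+$ via the uniformization $\pi : \mathfrak{X}^+ \to \Sh_K(G,\mathfrak{X})^\circ$. An atypical component $W$ of an intersection $V \cap T$ corresponds, at the level of preimages, to an intersection $\wt V \cap \pi^{-1}(T)$ whose dimension exceeds the generic expectation. The hyperbolic Ax-Schanuel conjecture, applied to this configuration, forces a component of $\pi^{-1}(W)$ to lie inside a proper weakly special subvariety of $\mathfrak{X}^+$; equivalently, $W$ itself is contained in a proper weakly special subvariety of $\Sh_K(G,\mathfrak{X})$, at which point the induction from the first step kicks in. In this way, the whole conjecture reduces to the following residual finiteness statement: for each proper weakly special $M$, only finitely many irreducible components of intersections $V \cap T$ contained in $M$ are atypical in $V$.

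To prove that residual finiteness, I would use point counting. The weakly special subvarieties of $\Sh_K(G,\mathfrak{X})$ are parametrised by cosets of reductive $\QQ$-subgroups of $G$, and their preimages in $\mathfrak{X}^+$ are semi-algebraic. Using definability of $\pi$ on a fundamental domain (after Peterzil-Starchenko, and Klingler-Ulmo-Yafaev), the set of those parameters producing an atypical component of $V$ becomes definable in $\RR_{\mathrm{an},\exp}$, and the Pila-Wilkie theorem yields a polynomial upper bound, in terms of complexity, for the number of atypical components of bounded height. This must be pitted against a lower bound coming from $\Gal(\overline{\QQ}/F)$-orbits of the special subvarieties $T$ whose components are atypical in $V$. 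The main obstacle, exactly as in the Habegger-Pila curve-in-abelian-varieties setting, is this lower bound---a ``large Galois orbits'' hypothesis for special subvarieties---which is currently available only in restricted cases (notably Tsimerman's theorem for $\mathcal{A}_g$); modulo this arithmetic input, the remaining ingredients are now in place thanks to the announced Pila-Mok-Tsimerman proof of hyperbolic Ax-Schanuel.
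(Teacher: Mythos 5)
Note first that the statement you were asked to prove is Conjecture~\ref{zp'}, the Zilber--Pink conjecture itself; the paper does not prove it, but rather reduces it, under the hyperbolic Ax--Schanuel conjecture, to a point-counting problem (Theorem~\ref{main theorem} reduces the equivalent Conjecture~\ref{zp2} to Conjecture~\ref{zerodim}, the finiteness of $\Opt_0(V)$), and that point-counting problem is itself only resolved for curves (Theorem~\ref{Opt0}), conditionally on further arithmetic hypotheses. So any ``proof'' is necessarily a conditional reduction, and the right question is whether your sketch gets the reduction right. It does not, in two places.

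First, your induction step does not work as stated. You propose to dispose of atypical subvarieties $W$ that lie inside a proper weakly special $M\subsetneq\Sh_K(G,\mathfrak{X})$ by ``applying the statement recursively to the components of $V\cap M$ inside $M$, itself a union of Shimura subvarieties of strictly smaller dimension.'' But a proper weakly special subvariety is a Shimura subvariety only if it contains a special point; in general it is not, so there is no ambient Shimura variety over which to run the recursion. Worse, atypicality is not intrinsic: that $W$ is atypical with respect to $V$ in $S$ does not imply $W$ is atypical with respect to a component of $V\cap M$ viewed inside $M$, because the dimension count changes. The paper avoids both difficulties by working with the defect $\delta(W)=\dim\langle W\rangle-\dim W$ and the notion of optimality (Definition~\ref{defopt}), establishing the defect condition (Proposition~\ref{dc}) so that optimal implies weakly optimal, then using Ax--Schanuel to prove a finiteness theorem for weakly optimal subvarieties (Proposition~\ref{fwo}) that pins $\langle W\rangle_{\rm ws}$ to a \emph{finite} list of fibers of pre-special subvarieties; the induction in Theorem~\ref{main theorem} then proceeds by projecting along such a fiber to produce an optimal \emph{point} in a strictly lower-dimensional variety $V_1$. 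This is a different, and sharper, decomposition than the one you describe, and replacing it with a naive ``restrict to $V\cap M$'' recursion leaves a real gap.

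Second, your arithmetic input is incomplete. A large-Galois-orbits hypothesis for optimal points (Conjecture~\ref{LGO}) is necessary, but it is not enough to make Pila--Wilkie bite: you must also parametrize the relevant pre-special subvarieties by points of \emph{polynomially controlled height} in a definable set, so that the Galois lower bound can beat the Pila--Wilkie upper bound. The paper formalizes this as Conjecture~\ref{conj} (height bounds for pre-special subvarieties), or alternatively as the trio Conjectures~\ref{count complexity}, \ref{fieldofdef}, and~\ref{orr2}; these are genuine extra hypotheses, not consequences of Ax--Schanuel or of definability of the uniformization map. Your closing claim that ``the remaining ingredients are now in place'' once Ax--Schanuel and large Galois orbits are granted therefore overstates what is known.

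In short: you have the broad architecture right (reduce via Ax--Schanuel to a counting problem; count with o-minimality and Pila--Wilkie; defeat the count with a Galois lower bound), but the specific reduction you sketch is flawed, and the list of arithmetic hypotheses is materially shorter than what the argument actually requires.
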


Since there are only countably many special subvarieties of $\Sh_K(G,\mathfrak{X})$, the conjecture is equivalent to the statement that $V$ contains only finitely many subvarieties that are atypical with respect to $V$ and maximal with respect to this property.

We will see that Conjecture \ref{zp'} strengthens Conjecture \ref{zp} and, therefore, it is Conjecture \ref{zp'} that we refer to as the {\bf Zilber-Pink} conjecture. Habegger and Pila obtained a proof of the Zilber-Pink conjecture for products of modular curves assuming the weak complex Ax conjecture and the large Galois orbits conjecture. Subsequently, Pila and Tsimerman obtained the weak complex Ax conjecture as a corollary to their proof of the Ax-Schanuel conjecture for the $j$-function \cite{pila2014ax}. Habegger and Pila had previously verified the large Galois orbits conjecture for so-called asymmetric curves \cite{hp:beyond}.

This article seeks to generalize the ideas of \cite{hp:o-min} to general Shimura varieties. Hence, we will have to make generalizations of the previously mentioned hypotheses. The foremost of which will be the statement from functional transcendence, namely, the hyperbolic Ax-Schanuel conjecture that generalizes the Ax-Schanuel conjecture for the $j$-function to general Shimura varieties. Our main result (Theorem \ref{main theorem}) is that, under the hyperbolic Ax-Schanuel conjecture, the Zilber-Pink conjecture can be reduced to a problem of point counting. However, given that Mok, Pila, and Tsimerman have recently announced a proof of the hyperbolic Ax-Schanuel conjecture \cite{MPT:AS}, this result is now very likely unconditional. Besides the hyperbolic Ax-Schanuel conjecture, our main input will be the theory of o-minimality and, in particular, the fact that the uniformization map of a Shimura variety is definable in $\RR_{\rm an,exp}$ when it is restricted to an appropriate fundamental domain.

After establishing the main result, we attempt to tackle the point counting problem using the now famous Pila-Wilkie counting theorem. To do so, we formulate several arithmetic conjectures that are inspired by previous applications of the Pila-Zannier strategy. In this vein, our paper is very much in the spirit of \cite{ullmo:applications}, which, at the time, reduced the Andr\'e-Oort conjecture to a point counting problem and then explained how various conjectural ingredients, namely, the hyperbolic Ax-Lindemann conjecture, lower bounds for Galois orbits of special points, upper bounds for the heights of pre-special points, and the definability of the uniformization map, could be combined to deliver a proof of the Andr\'e-Oort conjecture.

Our arithmetic hypotheses are (1) lower bounds for Galois orbits of so-called optimal points (see Definition \ref{defopt}), which we also refer to as the large Galois orbits conjecture, and (2) upper bounds for the heights of pre-special subvarieties. Hypothesis (1) generalizes the (in some cases still conjectural) lower bounds for Galois orbits of special points (when such special points are also maximal special subvarieties), and also generalizes the large Galois orbits conjecture of Habegger and Pila. Hypothesis (2) generalizes the upper bounds for heights of pre-special points, which were proved by Orr and the first author \cite{Daw2016}. However, we also show that it is possible to replace hypothesis (2) with two other arithmetic hypotheses, namely, (3) upper bounds for the degrees of fields associated with special subvarieties, and (4) upper bounds for the heights of lattice elements. Hypothesis (3) is a replacement for the fact that, for an abelian variety, its abelian subvarieties can be defined over a fixed finite extension of the base field. Hypothesis (4) is an analogue of a known result for abelian varieties. We verify hypotheses (2), (3), and (4) for a product of modular curves.

\paragraph{Acknowledgements}
The first author would like to thank the EPSRC, as well as Jonathan Pila, for the opportunity to be part of the project Model Theory, Functional Transcendence, and Diophantine Geometry as a postdoctoral research assistant. He would like to thank Linacre College, Oxford, the Mathematical Insitute at the University of Oxford, and the Department of Mathematics and Statistics at the University of Reading, all for providing excellent working conditions. Finally, he would like to thank Martin Orr, Jonathan Pila, Harry Schmidt, Emmanuel Ullmo, and Andrei Yafaev for several valuable discussions. The second author is grateful to the Institut des Hautes Études Scientifiques and the Université Paris Saclay for providing great environments in which to work. He would like to thank his supervisor Emmanuel Ullmo for regular discussions and constant support during the preparation of this article and he would like to thank Mikhail Borovoi, Philipp Habegger, Ziyang Gao, Martin Orr, and Jonathan Pila for several useful discussions. His work was supported by grants from Région l'Île de France. Both authors would like to thank Martin Orr, for sharing drafts of his preprint \cite{orr:unlikely}. They would also like to thank Bruno Klingler, as well as the anonymous referee, for their many detailed comments.

\paragraph{Conventions}
\begin{itemize}
  \item Throughout this paper, {\bf definable} means definable in the o-minimal structure $\RR_{\textup{an,exp}}$.
  \item Unless preceded by the word {\bf Shimura}, varieties (and, indeed, subvarieties) will be assumed {\bf geometrically irreducible}.
  \item By a subvariety, we will always mean a {\bf closed} subvariety.
\end{itemize}
\paragraph{Index of notations}
We collect here the main symbols appearing in this article.
\begin{itemize}
  \item $\langle  W\rangle$ is the smallest special subvariety containing $W$.
  \item $\langle W \rangle_{\rm ws}$ is the smallest weakly special subvariety containing $W$.
  \item $\langle A \rangle_{\rm Zar}$ is the smallest algebraic subvariety containing $A$.
  \item $\langle A \rangle_{\rm geo}$ is the smallest totally geodesic subvariety containing $A$.
  \item $\delta(W):=\dim \langle W \rangle-\dim W$
  \item $\delta_{\rm ws}(W):=\dim \langle W \rangle_{\rm ws}-\dim W$
  \item $\delta_{\rm Zar}(A):=\dim \langle A \rangle_{\rm Zar}-\dim A$
  \item $\delta_{\rm geo}(A):=\dim \langle A \rangle_{\rm geo}-\dim A$
  \item $\Opt(V)$ is the set of subvarieties of $V$ that are optimal in $V$.
  \item $\Opt_0(V)$ is the set of points of $V$ that are optimal in $V$.
  \item $G^{\ad}$ is the adjoint group of $G$ i.e. the quotient of $G$ by its centre.
  \item $G^{\rm der}$ is derived group of $G$.
  \item $G^{\circ}$ is the (Zariski) connected component of $G$ containing the identity.
  \item $G_H:=H Z_G(H)^{\circ}$ whenever $H$ is a subgroup of $G$.
  \item $G(\RR)^+$ is the (archimedean) connected component of $G(\RR)$ containing the identity.
\end{itemize}

\section{Special and weakly special subvarieties}\label{preli}
Let $( G,\mathfrak{X})$ be a Shimura datum and let $K$ be a compact open subgroup of ${ G}(\AAA_f)$, where $\AAA_f$ will henceforth denote the finite rational ad\`eles. Let $\Sh_K({ G},\mathfrak{ X})$ denote the corresponding {\bf Shimura variety}. By this, we mean the complex quasi-projective algebraic variety such that $\Sh_K({ G},\mathfrak{ X})(\CC)$ is equal to the image of
\begin{align}\label{double}
{ G}(\QQ)\backslash[\mathfrak{ X}\times({ G}(\AAA_f)/K)]
\end{align}
under the canonical embedding into complex projective space given by Baily and Borel \cite{bb:compactification}. We will identify (\ref{double}) with $\Sh_K({ G},\mathfrak{ X})(\CC)$. We recall that, on $\mathfrak{ X}\times({ G}(\AAA_f)/K)$, the action of ${ G}(\QQ)$ is the diagonal one. 

Let $X$ be a connected component of $\mathfrak{X}$ and let ${ G}(\QQ)_+$ be the subgroup of ${ G}(\QQ)$ acting on it. For any $g\in G(\AAA_f)$, we obtain a {\bf congruence} subgroup $\Gamma_g$ of ${ G}(\QQ)_+$ by intersecting it with $gKg^{-1}$. Furthermore, the locally symmetric variety $\Gamma_g\backslash X$ is contained in (\ref{double}) via the map that sends the class of $x$ to the class of $(x,g)$.
If we take the disjoint union of the $\Gamma_g\backslash X$ over a (finite) set of representatives for
\begin{align*}
{ G}(\QQ)_+\backslash{ G}(\AAA_f)/K,
\end{align*}
the corresponding inclusion map is a bijection. 

\begin{definition}
For any compact open subgroup $K'$ of $G(\AAA_f)$ contained in $K$, we obtain a finite morphism
\begin{align*}
\Sh_{K'}(G,\mathfrak{X})\rightarrow\Sh_K(G,\mathfrak{X}),
\end{align*}
given by the natural projection. Furthermore, for any $a\in G(\AAA_f)$, we obtain an isomorphism
\begin{align*}
\Sh_K(G,\mathfrak{X})\rightarrow\Sh_{a^{-1}Ka}(G,\mathfrak{X})
\end{align*}
sending the class of $(x,g)$ to the class of $(x,ga)$. We let $T_{K,a}$ denote the map on algebraic cycles of $\Sh_K(G,\mathfrak{X})$ given by the algebraic correspondence 
\begin{align*}
\Sh_K(G,\mathfrak{X})\leftarrow\Sh_{K\cap aKa^{-1}}(G,\mathfrak{X})\rightarrow\Sh_{a^{-1}Ka\cap K}(G,\mathfrak{X})\rightarrow\Sh_K(G,\mathfrak{X}),
\end{align*}
where the outer arrows are the natural projections and the middle arrow is the isomorphism given by $a$. We refer to a map of this sort as a {\bf Hecke correspondence}.
\end{definition}

\begin{definition}
Let $({ H},{ \mathfrak{X}_H})$ be a Shimura subdatum of $({ G},\mathfrak{ X})$ and let $K_{ H}$ denote a compact open subgroup of ${ H}(\AAA_f)$ contained in $K$. The natural map
\begin{align*}
{ H}(\QQ)\backslash[\mathfrak{ X}_H\times({ H}(\AAA_f)/K_H)]\rightarrow{ G}(\QQ)\backslash[\mathfrak{ X}\times({ G}(\AAA_f)/K)]
\end{align*}
 yields a finite morphism of Shimura varieties
\begin{align*}
\Sh_{K_{ H}}({ H}, \mathfrak{X}_H)\rightarrow\Sh_K({ G},\mathfrak{ X})
\end{align*}
(see, for example, \cite{pink:published}, Facts 2.6), and we refer to the image of any such morphism as a {\bf Shimura subvariety} of $\Sh_K({ G},\mathfrak{ X})$. 

For any Shimura subvariety $Z$ of $\Sh_K({ G},\mathfrak{ X})$ and any $a\in G(\AAA_f)$, we refer to any irreducible component of $T_{K,a}(Z)$ as a {\bf special subvariety} of $\Sh_K({ G},\mathfrak{ X})$.
\end{definition}

Recall that, by definition, $\mathfrak{ X}$ is a ${ G}(\RR)$ conjugacy class of morphisms from $\mathbb{S}$ to ${ G}_{\RR}$ and the {\bf Mumford-Tate group} $\MT(x)$ of $x\in\mathfrak{ X}$ is defined as the smallest $\QQ$-subgroup $ H$ of $ G$ such that $x$ factors through ${ H}_{\RR}$. If we let $\mathfrak{ X}_M$ denote the ${M}(\RR)$ conjugacy class of $x\in X$, where $M:=\MT(x)$, then $({ M},\mathfrak{ X}_{ M})$ is a Shimura subdatum of $({ G},{ \mathfrak{X}})$. In particular, if we let $X_{ M}$ denote a connected component of $\mathfrak{X}_M$ contained in $X$, then the image of $X_M$ in $\Gamma_g\backslash X$, for any $g\in G(\AAA_f)$, is a special subvariety of $\Sh_K({ G},\mathfrak{ X})$, and it is easy to see that every special subvariety of $\Sh_K({ G},\mathfrak{ X})$ arises this way.

Of course, if $x\in X_M$, then $X_M$ is equal to the $M(\RR)^+$ conjugacy class of $x$. Furthermore, the action of $M(\RR)$ on $\mathfrak{X}_M$ factors through $M^{\ad}(\RR)$ and the group $M^{\ad}$ is equal to the direct product of its $\QQ$-simple factors. Therefore, we can write $M^{\ad}$ as a product
\begin{align*}
M^{\ad}=M_1\times M_2
\end{align*} 
of two normal $\QQ$-subgroups, either of which may (by choice or necessity) be trivial, and we thus obtain a corresponding splitting
\begin{align*}
X_M=X_1\times X_2.
\end{align*}
For any such splitting, and any $x_1\in X_1$ or $x_2\in X_2$, we refer to the image of $\{x_1\}\times X_2$ or $X_1\times\{x_2\}$ in $\Gamma_g\backslash X$, for any $g\in G(\AAA_f)$, as a {\bf weakly special subvariety} of $\Sh_K({ G},\mathfrak{ X})$. In particular, every special subvariety of $\Sh_K({ G},\mathfrak{ X})$ is a weakly special subvariety of $\Sh_K({ G},\mathfrak{ X})$. By \cite{Moonen:linear1}, Section 4, the weakly special subvarieties of $\Sh_K({ G},\mathfrak{ X})$ are precisely those subvarieties of $\Sh_K({ G},\mathfrak{ X})$ that are totally geodesic in $\Sh_K({ G},\mathfrak{ X})$. Furthermore, a weakly special subvariety of $\Sh_K({ G},\mathfrak{ X})$ is a special subvariety of $\Sh_K({ G},\mathfrak{ X})$ if and only if it contains a special subvariety of dimension zero, henceforth known as a {\bf special point}.

\begin{remark}\label{introreductions}
The following observations will facilitate various reductions.
\begin{itemize}
\item Let $K'$ be a compact open subgroup of $G(\AAA_f)$ contained in $K$. By definition, a subvariety $Z$ of $\Sh_K({ G},\mathfrak{ X})$ is a (weakly) special subvariety of $\Sh_K({ G},\mathfrak{ X})$ if and only if any irreducible component of the inverse image of $Z$ in $\Sh_{K'}({ G},\mathfrak{ X})$ is a (weakly) special subvariety of $\Sh_{K'}({ G},\mathfrak{ X})$.

\item For any $a\in G(\AAA_f)$, a subvariety $Z$ of $\Sh_K({ G},\mathfrak{ X})$ is a (weakly) special subvariety of $\Sh_K({ G},\mathfrak{ X})$ if and only if any irreducible component of $T_{K,a}(Z)$ is a (weakly) special subvariety of $\Sh_{K'}({ G},\mathfrak{ X})$.

\item If we let $G^{\ad}$ denote the adjoint group of $G$ i.e. the quotient of $G$ by its centre, we obtain another Shimura datum $(G^{\ad},\mathfrak{X}^{\ad})$, known as the {\bf adjoint Shimura datum} associated with $(G,\mathfrak{X})$. For any compact open subgroup $K^{\ad}$ of $G^{\ad}(\AAA_f)$ containing the image of $K$, we obtain a finite morphism
\begin{align*}
\Sh_K(G,\mathfrak{X})\rightarrow\Sh_{K^{\ad}}(G^{\ad},\mathfrak{X}^{\ad}).
\end{align*}
As in \cite{EY:subvar}, Proposition 2.2, a subvariety $Z$ of $\Sh_{K^{\ad}}(G^{\ad},\mathfrak{X}^{\ad})$ is a special subvariety of $\Sh_{K^{\ad}}(G^{\ad},\mathfrak{X}^{\ad})$ if and only if any irreducible component of its inverse image in $\Sh_K(G,\mathfrak{X})$ is a special subvariety of $\Sh_K(G,\mathfrak{X})$.
\end{itemize}
\end{remark}

By \cite{pink:published}, Remark 4.9, for any subvariety $W$ of $\Sh_K({ G},\mathfrak{ X})$, there exists a {\bf smallest} weakly special subvariety $\langle W\rangle_{\rm ws}$ of $\Sh_K({ G},\mathfrak{ X})$ containing $W$ and a {\bf smallest} special subvariety $\langle W\rangle$ of $\Sh_K({ G},\mathfrak{ X})$ containing $W$. We note that here, and throughout, our notations and terminology regarding subvarieties often differ from those found in \cite{hp:o-min}.

\section{The Zilber-Pink conjecture}\label{zilber-pink}

For the remainder of this article, we fix a Shimura datum $(G,\mathfrak{X})$ and we let $X$ be a connected component of $\mathfrak{X}$. We fix a compact open subgroup $K$ of $G(\AAA_f)$ and we let
\begin{align*}
\Gamma:=G(\QQ)_+\cap K,
\end{align*}		
where $G(\QQ)_+$ is the subgroup of $G(\QQ)$ acting on $X$. We denote by $S$ the connected component $\Gamma\backslash X$ of $\Sh_K(G,\mathfrak{X})$.

As in \cite{hp:o-min}, we will consider an equivalent formulation of Conjecture \ref{zp'} using the language of optimal subvarieties. 

\begin{definition}
Let $W$ be a subvariety of $S$. We define the {\bf defect} of $W$ to be
\begin{align*}
\delta(W):=\dim\left<W\right>-\dim W.
\end{align*}
\end{definition}

\begin{definition}\label{defopt}
Let $V$ be a subvariety of $S$ and let $W$ be a subvariety of $V$. Then $W$ is called {\bf optimal} in $V$ if, for any subvariety $Y$ of $S$, 
\begin{align*}
W\subsetneq Y\subseteq V\implies\delta(Y)>\delta(W).
\end{align*}		
We denote by $\textup{Opt}(V)$ the set of all subvarieties of $V$ that are optimal in $V$.
\end{definition}

\begin{remark}
Let $V$ be a subvariety of $S$. First note that $V\in{\rm Opt}(V)$. Secondly, if $W\in{\rm Opt}(V)$, then $W$ is an irreducible component of
\begin{align*}
\langle W\rangle\cap V.
\end{align*}
\end{remark}

\begin{conjecture}[cf. \cite{hp:o-min}, Conjecture 2.6]\label{zp2}
Let $V$ be a subvariety of $S$. Then $\textup{Opt}(V)$ is finite.
\end{conjecture}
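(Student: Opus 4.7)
The plan is to adapt the Pila--Zannier strategy to this setting, following the blueprint of Habegger and Pila for products of modular curves. Let $\pi\colon X\to S=\Gamma\backslash X$ denote the uniformization map and fix a semi-algebraic fundamental domain $\mathcal{F}\subset X$ for the action of $\Gamma$, chosen so that $\pi|_\mathcal{F}$ is definable in $\RR_{\rm an,exp}$. Writing $\tilde V$ for the definable set $\pi^{-1}(V)\cap\mathcal{F}$, I would aim to prove finiteness of $\Opt(V)$ by deriving a contradiction from the hypothetical existence of infinitely many distinct optimal subvarieties $W\in\Opt(V)$.

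To each such $W$ one associates the smallest special subvariety $\langle W\rangle$, which is the image in $S$ of an orbit $X_M$ of the Mumford--Tate group $M=\MT(x)$ for a Hodge-generic point $x$ of a suitable lift of $W$. After applying Hecke correspondences (cf.\ Remark \ref{introreductions}), one can arrange that some $\Gamma$-translate of the pre-special subvariety $X_M$ meets $\mathcal{F}$, and the goal is to encode each $W$ as a point, or a bounded-complexity datum, in a definable parameter set built from $X$, $G(\RR)^+$ and the group-theoretic data of $M$.

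The decisive functional-transcendence input is the hyperbolic Ax--Schanuel conjecture, which governs atypical components of intersections $\tilde V\cap A$ for algebraic $A\subset X$: any such component of excess dimension must be contained in a proper pre-weakly-special subvariety. The optimality of $W$ should translate into exactly such an atypical intersection on the $X$-side, so Ax--Schanuel converts each optimal $W$ into a rational point (essentially an element of $\Gamma$, or a coset thereof) in the definable parameter set. The Pila--Wilkie counting theorem then bounds the number of such rational points of bounded height polynomially, while the two arithmetic hypotheses --- large Galois orbits for optimal points and upper bounds for the heights of pre-special subvarieties --- should yield a super-polynomial lower bound, providing the contradiction.

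The main obstacle, I expect, lies in setting up the parameter space correctly and verifying that distinct optimal $W$'s produce genuinely distinct and sufficiently \emph{arithmetic} data. Weakly special subvarieties correspond to cosets of reductive subgroups of $G$, and the passage from $\langle W\rangle_{\rm ws}$ up to $\langle W\rangle$ involves special-point data coming from a complementary factor; threading all of this through a single definable family is the most delicate bookkeeping. A secondary obstacle is conditional in nature: the arithmetic hypotheses generalise both the Galois-orbit lower bounds underpinning Andr\'e--Oort and the height bounds of Daw--Orr, so any unconditional special case of \Cref{zp2} will depend on establishing these inputs independently.
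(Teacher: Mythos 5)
There is a genuine gap in the proposed approach. You plan to apply the Pila--Wilkie counting theorem, together with a large--Galois--orbit lower bound, directly to the (hypothetically infinite) collection of positive-dimensional optimal subvarieties $W\in\Opt(V)$, encoding each $W$ as a rational point in a single definable parameter set. But the large Galois orbits conjecture (Conjecture \ref{LGO}) is formulated only for optimal \emph{points} $P\in\Opt_0(V)$; there is no analogue supplying super-polynomial lower bounds keyed to positive-dimensional $W$, and indeed such a thing is not needed. The paper handles all positive-dimensional $W\in\Opt(V)$ without any arithmetic input at all: Proposition~\ref{dc} and Corollary~\ref{owo} give that optimal implies weakly optimal, and Proposition~\ref{fwo} then shows --- using the weak hyperbolic Ax--Schanuel conjecture together with a much more elementary o-minimality fact, namely that a countable definable set is finite (Lemma~\ref{finiteset}), not Pila--Wilkie --- that $\langle W\rangle_{\rm ws}$ is constrained to a \emph{finite} family of pre-special subvarieties. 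Theorem~\ref{main theorem} then closes the argument by a fibration-and-induction step: projecting along the weakly special fiber sends a positive-dimensional $W$ to an optimal \emph{point} $z$ of a strictly lower-dimensional variety $V_1$, and induction on $\dim V$ reduces everything to Conjecture~\ref{zerodim}, the finiteness of $\Opt_0(V)$.

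Only at that final zero-dimensional step does the Pila--Zannier machinery you describe --- definable parameter sets, height bounds for pre-special data (Conjecture~\ref{conj} or its substitutes), Pila--Wilkie, and the Galois-orbit lower bound --- actually come into play, and the paper establishes it (Theorem~\ref{Opt0}) only for curves. So the proposal is not merely delicate bookkeeping away from working: it collapses a three-layer structure (defect condition $\Rightarrow$ weakly optimal finiteness via o-minimal countability, then a dimension-lowering induction, then point counting) into one counting argument, and in doing so it requires a Galois-orbit hypothesis for positive-dimensional optimal subvarieties that the paper does not assume and does not need. You should first prove that optimal subvarieties are weakly optimal, then exploit Ax--Schanuel plus definability to bound the \emph{types} of weakly special envelopes, and reserve the arithmetic heavy lifting for the residual point-counting problem.
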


Observe that a maximal special subvariety of $V$ is an optimal subvariety of $V$. Therefore, Conjecture \ref{zp2} immediately implies that $V$ contains only finitely many maximal special subvarieties, which is another formulation of the Andr\'{e}-Oort conjecture for $V$.

\begin{lemma}\label{equivalence}
The Zilber-Pink conjecture (Conjecture \ref{zp'}) is equivalent to Conjecture \ref{zp2}.
\end{lemma}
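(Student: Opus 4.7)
The plan is to prove the two implications separately, leveraging the following equivalent reformulation of atypicality: a subvariety $W$ of $V$ is atypical in $V$ if and only if $W$ is an irreducible component of $V\cap\langle W\rangle$ and
\[
\delta(W) < \dim\Sh_K(G,\mathfrak{X}) - \dim V.
\]
Indeed, the ``only if'' direction follows from the minimality of $\langle W\rangle$ inside any witnessing special subvariety $T$ containing $W$, while the ``if'' direction follows by taking $T=\langle W\rangle$.

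For the implication (\ref{zp2})$\Rightarrow$(\ref{zp'}), I would show that every maximal atypical subvariety of $V$ belongs to $\textup{Opt}(V)$. Suppose $W$ were a maximal atypical subvariety of $V$ that fails to be optimal; pick $Y$ with $W\subsetneq Y\subseteq V$ and $\delta(Y)\le\delta(W)$. Then $\delta(Y)<\dim\Sh_K(G,\mathfrak{X})-\dim V$, and letting $Y'$ be the irreducible component of $V\cap\langle Y\rangle$ containing $Y$, the chain of inequalities
\[
\dim Y' \ge \dim Y > \dim V + \dim\langle Y\rangle - \dim\Sh_K(G,\mathfrak{X})
\]
shows that $Y'$ is atypical in $V$ and strictly contains $W$, contradicting maximality. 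Since every atypical subvariety lies inside a maximal one, finiteness of $\textup{Opt}(V)$ then yields Conjecture \ref{zp'}.

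For the implication (\ref{zp'})$\Rightarrow$(\ref{zp2}), I would induct on $\dim V$; the base case $\dim V=0$ is immediate since $\textup{Opt}(V)=\{V\}$. In the inductive step, I first reduce to the case where $V$ is Hodge generic in $\Sh_K(G,\mathfrak{X})$: if $\langle V\rangle\subsetneq\Sh_K(G,\mathfrak{X})$, one replaces the ambient Shimura variety by one whose relevant component is $\langle V\rangle$ (using the constructions of \cref{preli}), noting that for any $W\subseteq V$ the inclusion $\langle W\rangle\subseteq\langle V\rangle$ is forced by minimality, so $\delta(W)$ is unaffected by the change of ambient. Assuming now $V$ Hodge generic, so $\delta(V)=\dim\Sh_K(G,\mathfrak{X})-\dim V$, every $W\in\textup{Opt}(V)\setminus\{V\}$ satisfies $\delta(W)<\delta(V)$ (apply the optimality inequality with $Y=V$) and is an irreducible component of $V\cap\langle W\rangle$ (if the component $W'$ containing $W$ were strictly larger, then $\delta(W')\le\dim\langle W\rangle-\dim W'<\delta(W)$ would violate optimality), hence $W$ is atypical in $V$. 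By (\ref{zp'}) there are finitely many maximal atypical subvarieties $W_1,\dots,W_k$ of $V$, each of strictly smaller dimension than $V$ (since in the Hodge generic case $V$ is not atypical in itself). Moreover, optimality is inherited downwards: if $W\in\textup{Opt}(V)$ and $W\subseteq W_i$, then $W\in\textup{Opt}(W_i)$, because the defining inequality quantifies over a smaller family of intermediaries. Applying the inductive hypothesis to each $W_i$ gives
\[
\textup{Opt}(V) \subseteq \{V\} \cup \bigcup_{i=1}^k \textup{Opt}(W_i),
\]
which is finite.

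The main obstacle is the reduction to the Hodge generic case: one must verify that the notions of special subvariety, special hull, and defect are compatible with passing from $\Sh_K(G,\mathfrak{X})$ to the Shimura variety whose relevant component is $\langle V\rangle$, so that (\ref{zp'}) applied in the smaller setting transfers back. This compatibility rests on the standard description of Shimura subdata recalled in \cref{preli} and on the fact that special subvarieties of $\langle V\rangle$ (with its inherited Shimura structure) coincide with the special subvarieties of $\Sh_K(G,\mathfrak{X})$ that happen to lie in $\langle V\rangle$.
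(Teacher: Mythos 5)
Your argument is correct, and you have in effect reconstructed from scratch the content of \cite{hp:o-min}, Lemma 2.7, which the paper simply invokes after reducing to the connected component $S$ via \cref{introreductions}. The reformulation of atypicality via the defect, the observation that maximal atypical subvarieties are optimal (so Conjecture~\ref{zp2} gives Conjecture~\ref{zp'}), and the induction on dimension through the maximal atypical subvarieties together with downward inheritance of optimality are precisely the ingredients of a direct proof. You also correctly single out the one point that is not entirely formal, namely the reduction to the Hodge generic case: this is genuinely needed, because if $\langle V\rangle$ has dimension strictly less than $\dim S$ then $V$ is itself atypical (it is a component of $V\cap\langle V\rangle$ with $\delta(V)<\dim S-\dim V$), so $V$ is the unique maximal atypical subvariety and the induction collapses. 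Carrying out the reduction requires lifting $V$ to a component $\widetilde{V}$ of its preimage under the finite morphism $\Sh_{K_H}(H,\mathfrak{X}_H)\to\Sh_K(G,\mathfrak{X})$ (or a Hecke translate thereof) whose image is $\langle V\rangle$, and then checking that special hulls and defects of subvarieties of $\widetilde{V}$ agree with those of their images in $V$; since the morphism is finite rather than an isomorphism, this comparison is routine but not entirely trivial, and your proposal correctly flags it as the main gap to be filled. Where the paper buys brevity by deferring to Habegger--Pila's abstract framework, your version makes the mechanism visible at the cost of having to verify this compatibility by hand.
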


\begin{proof}
Consider the situation described in the statement of Conjecture \ref{zp'}. By Remark \ref{introreductions}, we suffer no loss in generality if we assume that $V$ is contained in $S$. Then the result follows from \cite{hp:o-min}, Lemma 2.7.
\end{proof}

\begin{lemma}
The Zilber-Pink conjecture implies Conjecture \ref{zp}.
\end{lemma}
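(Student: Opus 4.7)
My plan is to show the inclusion
\[
V\cap \Sh_K(G,\mathfrak{X})^{[1+\dim V]}\subseteq {\rm Atyp}(V),
\]
and then argue, using Hodge genericity, that $V$ itself is not atypical with respect to $V$. Then Conjecture \ref{zp'} forces ${\rm Atyp}(V)$ to be a finite union of \emph{proper} closed subvarieties of $V$, whence it cannot be Zariski dense in $V$, and the inclusion gives the result.

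For the inclusion, consider any point $x\in V\cap T$, where $T$ is a special subvariety of $\Sh_K(G,\mathfrak{X})$ of codimension at least $1+\dim V$, and let $W$ be an irreducible component of $V\cap T$ containing $x$. Since $W$ is non-empty, $\dim W\geq 0$, whereas
\[
\dim V+\dim T-\dim \Sh_K(G,\mathfrak{X})\leq -1,
\]
by the codimension hypothesis on $T$. Thus $\dim W>\dim V+\dim T-\dim \Sh_K(G,\mathfrak{X})$, so $W$ is atypical with respect to $V$, as witnessed by $T$.

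Next I verify $V\notin {\rm Atyp}(V)$. Suppose for contradiction that there is a special $T$ such that $V$ is an irreducible component of $V\cap T$ (forcing $V\subseteq T$) and $\dim V>\dim V+\dim T-\dim \Sh_K(G,\mathfrak{X})$, i.e.\ $\dim T<\dim \Sh_K(G,\mathfrak{X})$. Then $\langle V\rangle\subseteq T$, so $\dim\langle V\rangle<\dim \Sh_K(G,\mathfrak{X})$. But since $V$ is Hodge generic, $\langle V\rangle$ equals the connected component of $\Sh_K(G,\mathfrak{X})$ meeting $V$, and hence $\dim\langle V\rangle=\dim \Sh_K(G,\mathfrak{X})$, a contradiction.

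By Conjecture \ref{zp'} (via Lemma \ref{equivalence}, if one prefers to argue in terms of ${\rm Opt}(V)$), ${\rm Atyp}(V)$ is a finite union of atypical subvarieties of $V$, each of which, by the preceding paragraph, is a proper closed subvariety of $V$. Such a finite union is not Zariski dense in $V$, and combined with the first step this yields Conjecture \ref{zp}. No genuinely hard step is involved; the only point requiring any care is the verification that Hodge genericity rules out $V$ being atypical in itself.
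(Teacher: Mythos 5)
Your proof is correct, and it differs from the paper's in which equivalent formulation of Zilber--Pink it invokes. The paper passes through Lemma~\ref{equivalence} and argues in terms of optimal subvarieties: it picks an optimal $W\supseteq P$ with $\delta(W)\leq\delta(P)$, uses Hodge genericity to show $\delta(W)<\delta(V)$, and concludes $W\neq V$, so that $P$ lies in the (finite, by Conjecture~\ref{zp2}) union $\bigcup\bigl(\Opt(V)\setminus\{V\}\bigr)$. You instead work directly with the atypical-subvarieties formulation of Conjecture~\ref{zp'}: you observe that any irreducible component of $V\cap T$ with $T$ of codimension at least $1+\dim V$ is automatically atypical (the typical dimension is negative), so $V\cap \Sh_K(G,\mathfrak{X})^{[1+\dim V]}\subseteq{\rm Atyp}(V)$, and you rule out $V$ itself being atypical via Hodge genericity. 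Both arguments isolate the same key point --- that Hodge genericity prevents the ambient variety $V$ from being one of the finitely many exceptional loci --- but your version avoids the detour through $\Opt(V)$ and is, if anything, slightly more self-contained. One small technical note: the paper first reduces to $V\subseteq S$ via Remark~\ref{introreductions} so that $\dim S$ can be used, whereas you work with $\dim\Sh_K(G,\mathfrak{X})$ throughout; this is harmless because the Shimura variety is equidimensional, but it is worth being conscious that the equality $\dim\langle V\rangle=\dim\Sh_K(G,\mathfrak{X})$ you use is precisely the statement that the connected component containing $V$ has full dimension.
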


\begin{proof}
By Lemma \ref{equivalence}, it suffices to show that Conjecture \ref{zp2} implies Conjecture \ref{zp}. 

Consider the situation described in Conjecture \ref{zp}. By Remark \ref{introreductions}, we suffer no loss in generality if we assume that $V$ is contained in $S$. Let $P$ be a point belonging to 
\begin{align*}
V\cap\Sh_K(G,\mathfrak{X})^{[1+\dim V]}.
\end{align*}
Let $W$ be a subvariety of $V$ that is optimal in $V$ and contains $P$ such that 
\begin{align*}
\delta(W)\leq\delta(P)=\dim \langle P\rangle.
\end{align*}
Since $P$ belongs to a special subvariety of codimension at least $\dim V+1$ and $V$ is Hodge generic in $\Sh_K(G,\mathfrak{X})$, we have 
\begin{align*}
\dim \langle P\rangle \leq \dim S-\dim V-1=\dim \langle V\rangle -\dim V-1<\delta(V).
\end{align*}
Therefore, $\delta(W)<\delta(V)$ and we conclude that $W$ is not $V$. According to Conjecture \ref{zp2}, the union of the subvarieties belonging to ${\rm Opt}(V)\setminus V$ is not Zariski dense in $V$.
\end{proof}

\section{The defect condition}\label{defect}

In this section, we prove Habegger and Pila's defect condition (Proposition \ref{dc}) for Shimura varieties, and thus show that a subvariety that is optimal is weakly optimal.

\begin{definition}
Let $W$ be a subvariety of $S$. We define the {\bf weakly special defect} of $W$ to be
\begin{align*}
\delta_{\rm ws}(W):=\dim\langle W\rangle_{\rm ws}-\dim W.
\end{align*}
We note that, in \cite{hp:o-min}, this notion was referred to as geodesic defect.
\end{definition}

\begin{definition}
If $V$ is a subvariety of $S$ and $W$ a subvariety of $V$, then $W$ is called {\bf weakly optimal} in $V$ if, for any subvariety $Y$ of $S$, 
\begin{align*}
W\subsetneq Y\subseteq V\implies\delta_{\rm ws}(Y)>\delta_{\rm ws}(W).
\end{align*}		
\end{definition}

\begin{remark}
Let $V$ be a subvariety of $S$ and $W$ a subvariety of $V$. If $W$ is weakly optimal in $V$, then $W$ is an irreducible component of
\begin{align*}
\langle W\rangle_{\rm ws}\cap V.
\end{align*}
\end{remark}

\begin{proposition}[cf. \cite{hp:o-min}, Proposition 4.3]\label{dc}
The following {\bf defect condition} holds.

Let $W\subseteq Y$ be two subvarieties of $S$. Then
\begin{align*}
\delta(Y)-\delta_{\rm ws}(Y)\leq \delta(W)-\delta_{\rm ws}(W).
\end{align*}
\end{proposition}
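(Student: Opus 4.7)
The plan is to realize the quantity $\delta(Z) - \delta_{\rm ws}(Z)$ as the dimension of a target Shimura variety $T_Z$ of a canonical Shimura morphism $\pi_Z \colon \langle Z \rangle \to T_Z$, and to deduce the defect condition from the existence of a surjection $\bar\pi \colon T_W \twoheadrightarrow T_Y$ fitting into a commutative triangle $\pi_Y|_{\langle W \rangle} = \bar\pi \circ \pi_W$.

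For any subvariety $Z$ of $S$, writing $(M_Z, \mathfrak{X}_{M_Z})$ for the Shimura subdatum associated with the smallest special subvariety $\langle Z \rangle$ (so $M_Z$ is the generic Mumford--Tate group of a lift $\wt Z \subseteq X$), the adjoint group $M_Z^{\ad}$ decomposes canonically as $M_{Z,c} \times M_{Z,v}$, where $M_{Z,c}$ is the product of those $\QQ$-simple factors along which $\wt Z$ projects to a single point. Correspondingly $X_{M_Z} = X_{Z,c} \times X_{Z,v}$, the component of the preimage of $\langle Z \rangle_{\rm ws}$ containing $\wt Z$ is $\{z_c\} \times X_{Z,v}$, and projection onto the first factor descends to a Shimura morphism $\pi_Z \colon \langle Z \rangle \to T_Z$ whose fibers are weakly special subvarieties of dimension $\dim \langle Z \rangle_{\rm ws}$; in particular $\dim T_Z = \delta(Z) - \delta_{\rm ws}(Z)$, and $Z$ maps to a single point of $T_Z$.

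For $W \subseteq Y$, I would first establish a sub-lemma: the induced adjoint map $\phi \colon M_W^{\ad} \to M_Y^{\ad} \twoheadrightarrow M_{Y,c}$ vanishes on $M_{W,v}$. If a $\QQ$-simple factor $(M_W)_i$ of $M_W^{\ad}$ were mapped non-trivially to some $(M_Y)_j \subseteq M_{Y,c}$, the induced covering map $X_{(M_W)_i} \to X_{(M_Y)_j}$ of symmetric domains would have finite fibers; since $\wt W \subseteq \wt Y$ projects to a single point in $X_{(M_Y)_j}$, the irreducibility of $\wt W$ would force its projection to $X_{(M_W)_i}$ to be a single point as well, i.e.\ $(M_W)_i$ would already lie in $M_{W,c}$. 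This sub-lemma implies that $\pi_Y|_{\langle W \rangle}$ factors as $\bar\pi \circ \pi_W$ for a uniquely determined $\bar\pi \colon T_W \to T_Y$. For the surjectivity of $\bar\pi$, I would invoke the Mumford--Tate minimality of $M_Y$ for $\wt Y$: the Hodge morphism at each constant component $y_j$ (indexing a simple factor of $M_{Y,c}$) must generate the whole of $(M_Y)_j$, and so too must the image of $M_W$, since $\wt W$ shares the same constant $j$-th projection. Together with the fact that the canonical adjoint decomposition into distinct $\QQ$-simple factors rules out diagonal identifications in $M_{Y,c}$, this shows that the image of $M_W$ in $M_{Y,c}$ is the full product, so $\bar\pi$ is surjective. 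A surjection of connected Shimura varieties gives $\dim T_W \geq \dim T_Y$, which rearranges to the desired defect condition.

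The main obstacle is the sub-lemma, because $M_W$ need not respect the adjoint decomposition of $M_Y^{\ad}$ in any functorial way; the covering-map argument between symmetric domains bridges this gap, but handling the case where a single $(M_W)_i$ maps non-trivially to several factors $(M_Y)_j$ requires replacing $X_{(M_W)_i}$ by the product of the relevant $X_{(M_W)_{i'}}$ and invoking irreducibility of $\wt W$ in that product, together with a careful analysis of how the Mumford--Tate groups interact across the adjoint factors.
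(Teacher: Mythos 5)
Your high-level plan is essentially the paper's: both realize $\delta(Z)-\delta_{\rm ws}(Z)$ as the dimension of the ``constant'' adjoint factor $X_{Z,c}$ of the pre-special domain attached to $\langle Z\rangle$, and both deduce the defect condition from a surjection of the constant factor for $W$ onto the constant factor for $Y$. (After the paper's reductions $\langle Y\rangle=S$, $G=G^{\ad}$ and $\Gamma=\Gamma_1\times\Gamma_2$, the surjection is $X_{M_2}\twoheadrightarrow X_2$, obtained from $p_2(M^{\rm der})=G_2$ together with $p_2(M'_1)=1$.) Two of the steps you sketch, however, have genuine gaps.

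The sub-lemma argument does not go through as written. To begin with, the inclusion $M_W\hookrightarrow M_Y$ need not descend to a map $M_W^{\ad}\to M_Y^{\ad}$ (that would require $Z(M_W)\subseteq Z(M_Y)$, which can fail); what one actually has is the restriction to $M_W^{\rm der}$ of $M_Y^{\rm der}\to M_{Y,c}$, and the correct statement concerns the normal subgroup of $M_W^{\rm der}$ corresponding to $M_{W,v}$. More seriously, a nontrivial homomorphism from the adjoint $\QQ$-simple group $(M_W)_i$ into $(M_Y)_j$ is automatically injective (trivial kernel), so the induced map of domains is an embedding onto a possibly proper sub-domain, not a finite cover of $X_{(M_Y)_j}$; and you rightly flag that when several simple factors interact, the slice-by-slice irreducibility argument does not obviously close up. The clean way to obtain the sub-lemma, and the route the paper in fact takes, is to observe that the fiber $\{y_c\}\times X_{Y,v}$ of $X_{M_Y}\to X_{Y,c}$ through $\wt W$ is a pre-weakly special subvariety of $X$ containing $\wt W$, hence contains $\langle\wt W\rangle_{\rm ws}$; since $\langle\wt W\rangle_{\rm ws}$ is an orbit of the normal subgroup of $M_W^{\rm der}$ corresponding to $M_{W,v}$, the image of that subgroup in $M_{Y,c}$ stabilises $y_c$, is therefore compact, and is therefore trivial (no compact $\QQ$-factors by axiom (SV3)). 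This is precisely the paper's sentence ``Since $W$ is contained in $S_1\times\{s_2\}$, the projection of $M'_1$ to $G_2$ must be trivial.''

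The surjectivity argument is also flawed: ``distinct $\QQ$-simple factors rule out diagonal identifications'' is false when two factors of $M_{Y,c}$ happen to be $\QQ$-isomorphic, in which case surjection onto each factor does not preclude a diagonal image. The fix is to treat $M_{Y,c}$ as a whole: the smallest $\QQ$-subgroup of $M_{Y,c}$ through which $y_c:\SSS\to M_{Y,c,\RR}$ factors is $M_{Y,c}$ itself, since a proper one would, after pulling back, contradict the minimality of $M_Y=\MT(x_Y)$; and since the Hodge generic point $x_W$ of $\wt W$ also projects to $y_c$, the image of $M_W$ in $M_{Y,c}$ contains this Mumford--Tate group. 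This is the paper's ``$\MT(x_2)=G_2$'' step.
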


\begin{proof}
We need to show that
\begin{align*}
\dim\langle Y\rangle-\dim\langle Y\rangle_{\rm ws}\leq\dim\langle W\rangle-\dim\langle W\rangle_{\rm ws}.
\end{align*}
By Remark \ref{introreductions}, we can and do assume that $G$ is the generic Mumford-Tate group on $X$, that it is equal to $G^{\ad}$, and that $Y$ is Hodge generic in $S$. By definition, there exists a decomposition
\begin{align*}
{ G}={ G}_1\times{ G}_2,
\end{align*}
which induces a splitting
\begin{align*}
X=X_1\times X_2,
\end{align*}
such that $\langle Y\rangle_{\rm ws}$ is equal to the image of $X_1\times\{x_2\}$ in $S$, for some $x_2\in X_2$. 

Let $\Gamma_1:={\rm p}_1(\Gamma)$ and $\Gamma_2:={\rm p}_2(\Gamma)$, where ${\rm p}_1$ and ${\rm p}_2$ are the projections from $G$ to $G_1$ and $G_2$, respectively. Then $\Gamma':=\Gamma_1\times \Gamma_2$ is a congruence subgroup of $G(\QQ)_+$ containing $\Gamma$ as a finite index subgroup. Let $\phi:\Gamma\backslash X\rightarrow\Gamma'\backslash X$ denote the natural (finite) morphism. Then $\phi(W)\subseteq \phi(Y)\subseteq S':=\Gamma'\backslash X$, and we have
\begin{align*}
\dim \langle Y\rangle &=\dim \langle \phi(Y)\rangle,\\
\dim \langle W\rangle &=\dim \langle \phi(W)\rangle,\\
\dim \langle Y\rangle_{\rm ws}&=\dim \langle \phi(Y)\rangle_{\rm ws},\\
\dim \langle W\rangle_{\rm ws}&=\dim \langle \phi(W)\rangle_{\rm ws}.
\end{align*}
Therefore, after replacing $Y$, $W$, and $S$ by $\phi(Y)$, $\phi(W)$, and $S'$, respectively, we may assume that $\Gamma$ is of the form $\Gamma_1\times \Gamma_2$, and $S=\Gamma_1\backslash X_1\times \Gamma_2\backslash X_2=S_1\times S_2$. 

Thus, $\langle Y\rangle_{\rm ws}=S_1\times\{s_2\}$, where $s_2$ is the image of $x_2$ in $S_2$, $Y=Y_1\times \{s_2\}$, where $Y_1$ is the projection of $Y$ to $S_1$, and $W=W_1\times \{s_2\}$, where $W_1$ is the projection of $W$ to $S_1$. In particular, we can take 
\begin{align*}
x:=(x_1,x_2)\in X_1\times X_2
\end{align*}
such that $\langle W\rangle$ is equal to the image in $S$ of the $M(\RR)^+$ conjugacy class $X_M$ of $x$, where $M:=\MT(x)$. 

Again, there exists a decomposition
\begin{align*}
M^\ad=M_1\times M_2,
\end{align*}
which induces a splitting
\begin{align*}
X_M=X_{M_1}\times X_{M_2}\subseteq X=X_1\times X_2
\end{align*}
such that $\langle W\rangle_{\rm ws}$ is equal to the image in $S$ of $X_{M_1}\times\{y_2\}$, for some $y_2\in X_{M_2}$. 

Since $\MT(x_2)$ is equal to $G_2$, it follows that $M$ is a subgroup of $G_1\times G_2$ that surjects on to the second factor. In particular, 
\begin{align*}
X_M=M^{\rm der}(\RR)^+x
\end{align*}
surjects on to $X_2$. Therefore, let $M'_1$ and $M'_2$ be two normal semisimple subgroups of $M^{\rm der}$ corresponding to $M_1$ and $M_2$, respectively, so that
\begin{align*}
M^{\rm der}(\RR)^+x=M'_1(\RR)^+M'_2(\RR)^+x.
\end{align*}
Since $W$ is contained in $S_1\times\{s_2\}$, the projection of $M'_1$ to $G_2$ must be trivial. Hence, $M'_1(\RR)^+x$ is contained in $X_1\times\{x_2\}$ and we conclude that $M'_2(\RR)^+x$ surjects on to $X_2$. Since \begin{align*}
M'_2(\RR)^+x=\{y_1\}\times X_{M_2},
\end{align*}
for some $y_1\in X_{M_1}$, we have
\begin{align*}
\dim\langle W\rangle-\dim\langle W\rangle_{\rm ws}=\dim X_{M_2}\geq\dim X_2=\dim\langle Y\rangle-\dim\langle Y\rangle_{\rm ws},
\end{align*}
as required.
\end{proof}

\begin{corollary}[cf. \cite{hp:o-min}, Proposition 4.5]\label{owo}
Let $V$ be a subvariety of $S$. A subvariety of $V$ that is optimal in $V$ is weakly optimal in $V$.
\end{corollary}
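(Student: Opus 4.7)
The plan is to prove the contrapositive: assume $W \subseteq V$ is \emph{not} weakly optimal in $V$, and deduce that $W$ is not optimal in $V$. So suppose there exists a subvariety $Y$ with $W \subsetneq Y \subseteq V$ and $\delta_{\rm ws}(Y) \leq \delta_{\rm ws}(W)$. I want to produce a strictly larger subvariety (in fact, $Y$ itself will work) of $V$ containing $W$ whose defect is at most $\delta(W)$.

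The key input is the defect condition of Proposition \ref{dc}, which applied to $W \subseteq Y$ gives
\begin{align*}
\delta(Y) - \delta_{\rm ws}(Y) \leq \delta(W) - \delta_{\rm ws}(W),
\end{align*}
or equivalently
\begin{align*}
\delta(Y) \leq \delta(W) + \bigl(\delta_{\rm ws}(Y) - \delta_{\rm ws}(W)\bigr).
\end{align*}
By the assumption that $\delta_{\rm ws}(Y) \leq \delta_{\rm ws}(W)$, the second term is non-positive, so $\delta(Y) \leq \delta(W)$.

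But this immediately contradicts the optimality of $W$ in $V$: by Definition \ref{defopt}, since $W \subsetneq Y \subseteq V$, we would need $\delta(Y) > \delta(W)$. Hence no such $Y$ exists, and $W$ is weakly optimal in $V$, as required. There is no real obstacle here — the entire content of the corollary has been absorbed into Proposition \ref{dc}, and the argument is just a one-line manipulation of the inequality together with the definitions of optimal and weakly optimal.
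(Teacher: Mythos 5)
Your proof is correct, and it is exactly the intended one-line deduction from Proposition \ref{dc}: the paper states Corollary \ref{owo} without proof, treating it as an immediate consequence of the defect condition. Your contrapositive phrasing is logically equivalent to the direct argument (for $W \subsetneq Y \subseteq V$, rearrange the defect condition to $\delta_{\rm ws}(Y) \geq \delta_{\rm ws}(W) + \delta(Y) - \delta(W) > \delta_{\rm ws}(W)$ using optimality of $W$), so there is no meaningful difference in approach.
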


\section{The hyperbolic Ax-Schanuel conjecture}\label{AS}
In this section, we formulate various conjectures about Shimura varieties that are analogous to the original Ax-Schanuel theorem from functional transcendence theory.
\begin{theorem}[cf. \cite{Ax71}, Theorem 1] \label{asorigin}
Let $f_1,...,f_n\in \CC[[t_1,...,t_m]]$ be power series that are $\QQ$-linearly independent modulo $\CC$. Then we have the following inequality
\[
{\trdeg}_{\CC} \CC (f_1,...,f_n,e(f_1),...,e(f_n))\geq n+{\rm rank} (\frac{\partial f_i}{\partial t_j})_{\begin{subarray}{l}i=1,\ldots,n\\j=1,\ldots, m\end{subarray}
}
\]
where $e(f)=e^{2\pi i f}\in\CC[[t_1,...,t_m]]$.
\end{theorem}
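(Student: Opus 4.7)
The plan is to follow Ax's original differential-algebraic strategy from \cite{Ax71}. View the fraction field $F$ of $\CC[[t_1,\ldots,t_m]]$ as a differential field under the commuting derivations $\partial_j = \partial/\partial t_j$, whose common field of constants is precisely $\CC$. Setting $u_i = e(f_i)$ turns the exponential relation into the purely differential identity $\partial_j u_i = 2\pi i\,(\partial_j f_i)\,u_i$, and after the rescaling $y_i := 2\pi i f_i$, $z_i := u_i$, we obtain the clean K\"ahler-differential equation $dz_i/z_i = dy_i$ in $\Omega^1_{F/\CC}$, with the $y_i$ still $\QQ$-linearly independent modulo $\CC$.

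The statement will then follow from an abstract lemma of differential algebra that is the genuine content of Ax's proof. Namely, let $(K,\Delta)$ be any differential field with field of constants $C$, and let $y_1,\ldots,y_n,z_1,\ldots,z_n \in K$ with $z_i \neq 0$ satisfy $dz_i/z_i = dy_i$. If the $y_i$ are $\QQ$-linearly independent modulo $C$, then
\[
\trdeg_C C(y_1,\ldots,y_n,z_1,\ldots,z_n) \;\geq\; n + \mathrm{rank}_K(dy_1,\ldots,dy_n),
\]
the rank being taken in $\Omega^1_{K/C}$. Applying this to $K = F$ recovers the theorem at once, because $\mathrm{rank}_F(dy_i)$ coincides with $\mathrm{rank}(\partial f_i/\partial t_j)$ in the given basis.

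To prove the lemma I would argue by contradiction. Assume the transcendence degree $r$ satisfies $r < n + s$, where $s$ is the rank. Re-index so that $dy_1,\ldots,dy_s$ span the image of $(dy_i)$. The assumed failure of the inequality produces a nontrivial polynomial relation among the $y_i,z_i$ over $C$. Differentiating this relation and substituting $dz_i = z_i\,dy_i$, together with the $C$-linear dependencies of $dy_{s+1},\ldots,dy_n$ on $dy_1,\ldots,dy_s$, the goal is to extract a nonzero integer vector $(a_1,\ldots,a_n) \in \ZZ^n$ with $\prod_i z_i^{a_i} \in C$. Taking logarithmic derivatives of this identity and using $dz_i/z_i = dy_i$ yields $\sum_i a_i\,dy_i = 0$ in $\Omega^1_{K/C}$; since the kernel of $d\colon K \to \Omega^1_{K/C}$ is exactly $C$, this forces $\sum_i a_i y_i \in C$, contradicting the $\QQ$-linear independence hypothesis.

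The principal obstacle is the step in which one upgrades a bare algebraic dependence of the $(y_i,z_i)$ to an \emph{integer}-linear multiplicative relation $\prod z_i^{a_i} \in C$. Ax handles this by induction on $n$ combined with a careful analysis of which algebraic subvarieties of $\GG_a^n \times \GG_m^n$ can contain the image of $t \mapsto (y(t),z(t))$: the system $dz_i = z_i\,dy_i$ cuts out a foliation, and any algebraic subvariety invariant under it must be a translate of an algebraic subgroup. Marrying this invariance to the hypothesis that $C$ is the \emph{full} field of constants is the technical heart of the argument, and the only genuinely delicate point of the plan; every other step is a formal manipulation of K\"ahler differentials in a differential field of characteristic zero.
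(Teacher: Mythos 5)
The paper does not prove Theorem~\ref{asorigin}: it is quoted with ``cf.\ \cite{Ax71}, Theorem~1'' and used as a black-box input, so there is no argument in the paper against which to compare. Your sketch therefore has to stand against Ax's original 1971 proof.

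The overall shape you describe is right, and so is the reduction to a differential-algebraic lemma about $dz_i/z_i = dy_i$ in $\Omega^1_{K/C}$. But the proposal stops exactly at the point you yourself flag as ``the technical heart'': extracting a nonzero integer vector $(a_1,\ldots,a_n)$ with $\prod_i z_i^{a_i} \in C$ from the assumed deficiency in transcendence degree. That is precisely the content of Ax's key algebraic lemma, and without it the outline is a plan, not a proof. Moreover, the gloss you offer for that step --- analysing which subvarieties of $\GG_a^n\times\GG_m^n$ are invariant under the foliation $dz_i = z_i\,dy_i$ and showing they are translates of algebraic subgroups --- is not how Ax argues in 1971. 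That geometric picture belongs to later reformulations (for instance the statement recorded as Theorem~\ref{asorigin2} in this paper and its descendants); Ax's own argument is a direct K\"ahler-differential computation combining a normalization, a dimension count in $\Omega^1$, and a structure lemma on logarithmic differentials. Finally, be careful with the ``abstract lemma'' as you have written it: asserting $dz_i/z_i = dy_i$ in $\Omega^1_{K/C}$ is \emph{a priori} stronger than $Dz_i/z_i = Dy_i$ for every $D\in\Delta$, because the evaluation map $\Omega^1_{K/C}\to K^\Delta$ need not be injective for a general differential field; Ax's hypotheses are framed to respect this distinction, and the passage between the two formulations in the power-series setting is something to be verified rather than assumed.
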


The following theorem is then an immediate corollary.

\begin{theorem}\label{wasorigin}
Let $f_1,...,f_n\in \CC[[t_1,...,t_m]]$ as above. Then
\begin{align*}
{\trdeg}_{\CC} \CC (f_1,...,f_n)+{\trdeg}_{\CC} \CC(e(f_1),...,e(f_n))\geq n+{\rm rank} (\frac{\partial f_i}{\partial t_j})_{\begin{subarray}{l}i=1,\ldots,n\\j=1,\ldots, m\end{subarray}}.
\end{align*}
\end{theorem}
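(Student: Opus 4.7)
The plan is to derive Theorem \ref{wasorigin} from Theorem \ref{asorigin} using nothing more than the standard sub-additivity of transcendence degree. For any two subsets $A, B$ of an extension field of $\CC$, one has
\[
\trdeg_{\CC}\CC(A\cup B)\leq \trdeg_{\CC}\CC(A)+\trdeg_{\CC}\CC(B),
\]
since a transcendence basis of $\CC(A\cup B)/\CC$ can be extracted from the union of transcendence bases of $\CC(A)/\CC$ and $\CC(B)/\CC$.

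Applying this with $A=\{f_1,\ldots,f_n\}$ and $B=\{e(f_1),\ldots,e(f_n)\}$ in the field $\CC((t_1,\ldots,t_m))$ (or any convenient common overfield of the power series rings involved) yields
\[
\trdeg_{\CC}\CC(f_1,\ldots,f_n,e(f_1),\ldots,e(f_n))\leq \trdeg_{\CC}\CC(f_1,\ldots,f_n)+\trdeg_{\CC}\CC(e(f_1),\ldots,e(f_n)).
\]
Under the hypothesis that the $f_i$ are $\QQ$-linearly independent modulo $\CC$, Theorem \ref{asorigin} bounds the left-hand side from below by $n+\rank(\partial f_i/\partial t_j)$. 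Chaining the two inequalities gives exactly the desired conclusion.

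There is no real obstacle: the statement is presented as an immediate corollary precisely because sub-additivity of transcendence degree is a formal property of field extensions, and the hypothesis on $\QQ$-linear independence modulo $\CC$ is inherited directly from Theorem \ref{asorigin}. The only thing worth being slightly careful about is that the $f_i$ and the $e(f_j)$ all live in a common integral domain (namely $\CC[[t_1,\ldots,t_m]]$) whose fraction field contains the composite $\CC(f_1,\ldots,f_n,e(f_1),\ldots,e(f_n))$, so that sub-additivity applies without ambiguity.
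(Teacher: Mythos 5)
Your proof is correct and uses exactly the argument the paper has in mind: the paper simply labels Theorem \ref{wasorigin} an ``immediate corollary'' of Theorem \ref{asorigin}, and the implicit step is precisely the sub-additivity of transcendence degree that you spell out.
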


Let $\pi$ denote the uniformization map
\begin{align*}
\CC^n\rightarrow(\CC^{\times})^n:(x_1,...,x_n)\mapsto (e(x_1),...,e(x_n))
\end{align*}
and let $D_n$ denote its graph in $\CC^n\times(\CC^{\times})^n$. We can rephrase Theorem \ref{asorigin} as follows. 

\begin{theorem}[cf. \cite{Tsi2015minimality}, Theorem 1.2]\label{asorigin2}
Let $V$ be a subvariety of $\CC^n\times (\CC^{\times})^n$ and let $U$ be an irreducible analytic component of $V\cap D_n$. Assume that the projection of $U$ to $(\CC^{\times})^n$ is not contained in a coset of a proper subtorus of $(\CC^{\times})^n$. Then
\begin{align*}
\dim V\geq \dim U+n.
\end{align*}
\end{theorem}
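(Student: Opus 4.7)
The plan is to deduce this geometric reformulation from the power-series statement in Theorem \ref{asorigin} by choosing a local analytic parametrization of $U$ at a smooth point.

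First, I would pick a smooth point $u$ of $U$ and local analytic coordinates $t_1,\ldots,t_m$ centred at $u$, where $m:=\dim U$. Since $U$ is contained in the graph $D_n$, its embedding in $\CC^n\times(\CC^{\times})^n$ is determined by its projection to the first factor, so there exist convergent power series $f_1,\ldots,f_n\in\CC\{t_1,\ldots,t_m\}\subseteq\CC[[t_1,\ldots,t_m]]$ such that a neighbourhood of $u$ in $U$ is parametrized by
\[
t=(t_1,\ldots,t_m)\longmapsto\bigl(f_1(t),\ldots,f_n(t),e(f_1(t)),\ldots,e(f_n(t))\bigr).
\]
Because this map is a local embedding onto an $m$-dimensional analytic set and the $e(f_i)$ are functionally determined by the $f_i$, the Jacobian $(\partial f_i/\partial t_j)$ must have rank exactly $m$.

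Next, I would translate the torus-coset hypothesis into the $\QQ$-linear independence hypothesis of Theorem \ref{asorigin}. Suppose, for contradiction, that there exist rationals $q_1,\ldots,q_n$, not all zero, and $c\in\CC$ with $\sum_i q_i f_i=c$. After clearing denominators we may assume the $q_i$ are integers, and exponentiating gives $\prod_i e(f_i)^{q_i}=e(c)$, a constant. This forces the projection of (a neighbourhood of $u$ in) $U$ to $(\CC^\times)^n$ to lie inside the coset $\{z\in(\CC^{\times})^n:\prod_i z_i^{q_i}=e(c)\}$ of the proper subtorus cut out by $\prod z_i^{q_i}=1$, contradicting the hypothesis. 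Hence $f_1,\ldots,f_n$ are $\QQ$-linearly independent modulo $\CC$, so Theorem \ref{asorigin} applies and yields
\[
\trdeg_{\CC}\CC\bigl(f_1,\ldots,f_n,e(f_1),\ldots,e(f_n)\bigr)\geq n+m.
\]

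Finally, let $Z$ denote the Zariski closure in $\CC^n\times(\CC^{\times})^n$ of the image of the parametrization above. Its dimension equals the transcendence degree on the left-hand side of the inequality, so $\dim Z\geq n+m=n+\dim U$. On the other hand, the parametrization lands in $V\cap D_n$, in particular in the (closed) subvariety $V$, so $Z\subseteq V$ and thus $\dim V\geq\dim Z\geq\dim U+n$, as required.

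The only genuine subtlety is the translation in the second paragraph between \emph{not lying in a coset of a proper subtorus} and \emph{$\QQ$-linear independence modulo constants}; everything else is a routine dimension/transcendence-degree bookkeeping once the local parametrization has been set up. I do not anticipate a serious obstacle, since the statement is essentially a geometric repackaging of Theorem \ref{asorigin}.
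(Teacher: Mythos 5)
The paper states this theorem as a known reformulation, citing Tsimerman, and gives no proof of its own, so there is nothing internal to compare against. Your argument is the natural deduction from Theorem \ref{asorigin} and is essentially correct.

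One small gap is worth flagging in the second paragraph. From $\sum_i q_i f_i=c$ you correctly obtain that a \emph{neighbourhood} of $u$ in $U$ projects into the coset $\{z:\prod_i z_i^{q_i}=e(c)\}$; but the hypothesis you must contradict is that \emph{all of} $U$ is not contained in such a coset. To close this, observe that the preimage in $\CC^n\times(\CC^\times)^n$ of that coset is an analytic (indeed algebraic) subset, and since $U$ is an irreducible analytic set containing a nonempty open subset of itself inside that analytic set, the whole of $U$ must lie in it; only then do you have the desired contradiction. The rest is fine: your observation that $D(e\circ f)$ has rows which are nonzero scalar multiples of the rows of $Df$, so the two Jacobians have equal rank, justifies $\operatorname{rank}(\partial f_i/\partial t_j)\geq m$ (which is all Theorem \ref{asorigin} requires), and the identification of $\dim Z$ with the transcendence degree of $\CC(f_1,\ldots,f_n,e(f_1),\ldots,e(f_n))$, together with the Zariski-closedness of $V$ (per the paper's conventions), completes the argument.
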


Similarly, we can rephrase Theorem \ref{wasorigin} as follows.

\begin{theorem} \label{wasorigin2}
Let $W$ be a subvariety of $\CC^n$ and $V$ a subvariety of $(\CC^{\times})^n$. Let $A$ be an irreducible analytic component of $W\cap \pi^{-1}(V)$. If $A$ is not contained in $b+L$, for any proper $\QQ$-linear subspace $L$ of $\CC^n$ and any $b\in\CC^n$, then
\begin{align*}
\dim V+\dim W\geq \dim A+n.
\end{align*}
\end{theorem}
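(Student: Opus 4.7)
The plan is to reduce Theorem \ref{wasorigin2} to the power-series statement of Theorem \ref{wasorigin} by parametrizing $A$ near a smooth point. Fix a smooth point $p\in A$ and choose a system of local analytic parameters $t_{1},\dots,t_{m}$ at $p$, where $m:=\dim A$. Writing the coordinate functions on $\CC^{n}$ restricted to $A$ as formal power series in the $t_{j}$, one obtains $f_{1},\dots,f_{n}\in\CC[[t_{1},\dots,t_{m}]]$ (after translating so that $p$ is the origin). Since the parameters are chosen on a smooth point and the embedding $A\hookrightarrow\CC^{n}$ is an immersion there, the Jacobian $(\partial f_{i}/\partial t_{j})$ has rank exactly $m=\dim A$.

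Next I would identify the two transcendence degrees appearing in Theorem \ref{wasorigin} with quantities bounded by $\dim W$ and $\dim V$. By construction, $\trdeg_{\CC}\CC(f_{1},\dots,f_{n})$ equals the dimension of the smallest algebraic subvariety of $\CC^{n}$ containing the germ of $A$ at $p$; since $A\subseteq W$, this smallest variety is contained in $W$, so the quantity is at most $\dim W$. Since $A\subseteq\pi^{-1}(V)$, we have $\pi(A)\subseteq V$, and the power series $e(f_{1}),\dots,e(f_{n})$ are analytic coordinates of $\pi(A)$ near $\pi(p)$; hence $\trdeg_{\CC}\CC(e(f_{1}),\dots,e(f_{n}))\leq\dim V$ for the same reason.

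The crucial remaining step is to translate the geometric hypothesis on $A$ into the algebraic hypothesis of Theorem \ref{wasorigin}, namely that $f_{1},\dots,f_{n}$ are $\QQ$-linearly independent modulo $\CC$. If some nontrivial $\QQ$-linear combination $\sum a_{i}f_{i}$ were equal to a constant $c\in\CC$, then every point of (the germ of) $A$ would satisfy the linear equation $\sum a_{i}x_{i}=c$, which defines a coset $b+L$ of the proper $\QQ$-linear subspace $L:=\{x:\sum a_{i}x_{i}=0\}$. By analytic continuation, all of $A$ would lie in this coset, contradicting the hypothesis. This is essentially a bookkeeping step and should be the only place where the specific hypothesis enters; I do not anticipate a serious obstacle, but it is the one point where care is needed to ensure the equivalence of the two formulations.

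Putting it all together, Theorem \ref{wasorigin} applied to $f_{1},\dots,f_{n}$ yields
\[
\dim W+\dim V\;\geq\;\trdeg_{\CC}\CC(f_{1},\dots,f_{n})+\trdeg_{\CC}\CC(e(f_{1}),\dots,e(f_{n}))\;\geq\;n+m\;=\;n+\dim A,
\]
which is exactly the conclusion of Theorem \ref{wasorigin2}. So the overall argument is: pick a smooth point, read off power series, match the three quantities in Theorem \ref{wasorigin} to $\dim W$, $\dim V$ and $\dim A$, and verify the $\QQ$-linear independence condition from the coset hypothesis.
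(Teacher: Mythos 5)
Your proof is correct. The paper itself offers no argument for this statement (it simply says Theorem~\ref{wasorigin} "can be rephrased" as Theorem~\ref{wasorigin2}), so there is nothing to compare against; what you have supplied is the standard dictionary between the two formulations, carried out carefully. Two small points worth making explicit, though neither is a real gap: (i) the Jacobian rank in Theorem~\ref{wasorigin} is taken over the fraction field $\CC((t_1,\dots,t_m))$, and what your smoothness argument gives is rank $m$ at the closed point; since the generic rank dominates the rank at any point and is trivially $\leq m$, you get equality over the fraction field as required. (ii) The identification of $\trdeg_{\CC}\CC(f_1,\dots,f_n)$ with the dimension of the Zariski closure of the germ, and the bound by $\dim W$, uses that $W$ is Zariski closed (so that it actually contains that Zariski closure); this holds under the paper's convention that subvarieties are closed, but it is worth saying. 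Likewise $\trdeg_{\CC}\CC(e(f_1),\dots,e(f_n))$ is the dimension of the Zariski closure of the germ of $\pi(A)$, which sits inside the closed $V$, giving the bound by $\dim V$. With those two remarks, the chain
\begin{align*}
\dim W+\dim V \geq \trdeg_{\CC}\CC(f_1,\dots,f_n)+\trdeg_{\CC}\CC(e(f_1),\dots,e(f_n))\geq n+m=n+\dim A
\end{align*}
is exactly right, and your translation of the coset hypothesis into $\QQ$-linear independence modulo $\CC$ via analytic continuation on the irreducible analytic set $A$ is the correct bookkeeping.
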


Recall that $X$ is naturally endowed with the structure of a hermitian symmetric domain. In particular, it is a complex manifold. We define an (irreducible algebraic) {\bf subvariety} of $X$ as in Appendix B of \cite{kuy:ax-lindemann}. In particular, we consider the Harish-Chandra realization of $X$, which is a bounded domain in $\CC^N$, for some $N\in\NN$, and we define an (irreducible algebraic) subvariety of $X$ to be an irreducible analytic component of the intersection of $X$ with an algebraic subvariety of $\CC^N$. We define an (irreducible algebraic) subvariety of $X\times S$ to be an irreducible analytic component of the intersection of $X\times S$ with an algebraic subvariety of $\CC^N\times S$. We note, however, that, by \cite{kuy:ax-lindemann}, Corollary B.2, the algebraic structure that we are putting on $X$ and $X\times S$ does not depend on our particular choice of the Harish-Chandra realization of $X$; any realization of $X$ would yield the same algebraic structures. 

We are, therefore, able to formulate conjectures for Shimura varieties that are analogous to those above. Let $\pi$ henceforth denote the uniformization map
\begin{align*}
X\rightarrow S
\end{align*}
and let $D_S$ denote the graph of $\pi$ in $X\times S$. The following conjecture generalizes Conjecture 1.1 of \cite{pila2014ax}.

\begin{conjecture}[hyperbolic Ax-Schanuel]\label{hasc}
Let $V$ be a subvariety of $X\times S$ and let $U$ be an irreducible analytic component of $V\cap D_S$. Assume that the projection of $U$ to $S$ is not contained in a weakly special subvariety of $\Sh_K(G,\mathfrak{X})$ strictly contained in $S$. Then
\begin{align*}
\dim V\geq\dim U+\dim S.
\end{align*}
\end{conjecture}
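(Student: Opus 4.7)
The plan is to adapt the Pila--Tsimerman strategy from \cite{pila2014ax}, which established the analogous statement for the $j$-function, to the setting of a general Shimura variety. The overall architecture rests on three pillars: definability in $\RR_{\rm an,exp}$ of $\pi$ restricted to a Siegel fundamental domain, the Pila--Wilkie counting theorem applied to a definable family of monodromy elements, and the hyperbolic Ax--Lindemann theorem of Klingler--Ullmo--Yafaev and Mok \cite{kuy:ax-lindemann} to convert the real-algebraic output into weakly special subvarieties.

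First I would fix a Siegel fundamental set $\cF\subseteq X$ for the action of $\Gamma$, so that $\pi|_{\cF}$ is definable in $\RR_{\rm an,exp}$. Passing to the lifted picture, I work with the preimage $\widetilde V$ of $V$ in $X\times X$ under $(\mathrm{id},\pi)$ and the ``diagonal monodromy'' condition $(x,\gamma\cdot x)\in\widetilde V$ for $\gamma\in\Gamma$; the complex-algebraic structure on $X$, via the Harish-Chandra realization recalled after Theorem~\ref{wasorigin2}, makes this intersection unambiguous. Arguing by contradiction, assume $\dim V<\dim U+\dim S$; choose a lift $\widetilde U\subseteq\cF$ of an analytic component of $V\cap D_S$ and form the definable set
\begin{align*}
\cE:=\{g\in G(\RR):g\cdot\widetilde U\cap\widetilde V\neq\emptyset\}.
\end{align*}
A hyperbolic volume estimate of Hwang--To--Mok type, controlling the growth of the volume of $\widetilde U$ inside balls of radius $R$ in the Bergman metric, should convert the strict inequality $\dim U+\dim S-\dim V>0$ into polynomial growth of $|\cE\cap\Gamma|$ in terms of a natural height on $G(\QQ)$, thereby placing us in the regime where the Pila--Wilkie theorem produces nontrivial semi-algebraic structure.

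The step that I expect to be the main obstacle is converting the Pila--Wilkie output -- a positive-dimensional real semi-algebraic arc $\sigma\subseteq\cE$ through the identity -- into a complex-algebraic obstruction inside $X$, and from there into a weakly special subvariety of $S$. The bi-algebraicity encoded in $D_S$ combined with the action of $\sigma$ should produce a positive-dimensional algebraic subvariety $Y\subseteq X$ stabilized by infinitely many elements of $\Gamma$; hyperbolic Ax--Lindemann then forces $Y$ to be pre-weakly-special, i.e.\ of the form $M(\RR)^+\cdot x$ for a proper $\QQ$-subgroup $M\subsetneq G$. Projecting to $S$ yields a proper weakly special subvariety containing $\pi(U)$, contradicting the hypothesis. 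The technical heart -- verifying the volume lower bound in arbitrary rank, and checking that the weakly special subvariety produced genuinely contains $\pi(U)$ rather than a smaller projection through which $U$ does not factor -- is precisely the part of the argument addressed by the recent Mok--Pila--Tsimerman announcement \cite{MPT:AS}, and is what prevents a more elementary treatment along the lines of the one-dimensional case.
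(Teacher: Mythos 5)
The statement you are proving is labelled a \emph{conjecture} in this paper, and the authors do \emph{not} give a proof of it. It is stated (Conjecture~\ref{hasc}) and then used throughout as a hypothesis; the only thing the paper says about its status is the one-line remark that Mok, Pila, and Tsimerman have announced a proof in full \cite{MPT:AS}, and that the $S=\CC^n$ case was done in \cite{pila2014ax}. There is therefore no ``paper's own proof'' to compare your attempt against. What the paper does prove from Conjecture~\ref{hasc} is the implication to its weaker form (Conjecture~\ref{whasc}), by the trivial ``take $Y=W\times V$'' argument given after the statement; and it proves that Conjecture~\ref{whasc2} specializes to the hyperbolic Ax--Lindemann theorem (Theorem~\ref{halw}). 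Your proposal is addressing a problem that the authors deliberately left outside the scope of this article.

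Evaluated as a sketch of the Mok--Pila--Tsimerman strategy rather than of anything in this paper, your outline identifies the right ingredients at the coarsest level (Siegel-domain definability, Pila--Wilkie applied to a monodromy set, hyperbolic volume estimates \`a la Hwang--To). But two of your steps are not mere technicalities you can defer to \cite{MPT:AS}; they are the substance of the theorem, and your description of them is either underspecified or circular. First, you ask a Hwang--To-type volume bound to ``convert the strict inequality $\dim U+\dim S-\dim V>0$ into polynomial growth of $|\cE\cap\Gamma|$,'' but the volume estimate alone controls the count of \emph{translates of a fundamental domain} met by $\widetilde U$; extracting from this polynomially many genuine lattice points of bounded height in $\cE$ requires the additional geometric input that the monodromy elements so produced are distinct and of controlled height, which is not automatic in arbitrary rank. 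Second, invoking the hyperbolic Ax--Lindemann theorem to close the argument is structurally wrong: Ax--Lindemann characterizes the \emph{maximal} algebraic subvarieties of $\pi^{-1}(V)$, which is precisely the case of trivial monodromy and zero Zariski defect (this is exactly Theorem~\ref{halw} of the paper, stated as a special case of Conjecture~\ref{whasc2}). Ax--Schanuel must handle nontrivial intersection components with positive defect, and the Pila--Wilkie output in that regime is a semi-algebraic arc of nontrivial $\Gamma$-elements whose processing requires a stabilizer and induction argument (on the $\QQ$-group generated by the monodromy), not a reduction to the Lindemann--Weierstrass case. Since you explicitly concede that ``the technical heart\dots{} is precisely the part of the argument addressed by\dots{} \cite{MPT:AS},'' your text is an accurate statement of the proof's \emph{outline} but does not constitute a proof, and in any case there is nothing in this paper for it to be compared against.
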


For $S=\CC^n$, Conjecture \ref{hasc} and its generalization involving derivatives were obtained in \cite{pila2014ax}. Mok, Pila, and Tsimerman have very recently announced a proof of Conjecture \ref{hasc} in full \cite{MPT:AS}. 

For applications to the Zilber-Pink conjecture, only the following weaker version will be needed.

\begin{conjecture}[cf. \cite{hp:o-min}, Conjecture 5.10]\label{whasc}
Let $W$ be a subvariety of $X$ and let $V$ be a subvariety of $S$. Let $A$ be an irreducible analytic component of $W\cap \pi^{-1}(V)$ and assume that $\pi(A)$ is not contained in a weakly special subvariety of $\Sh_K(G,\mathfrak{X})$ strictly contained in $S$. Then
\begin{align*}
\dim V+\dim W\geq \dim A+\dim S.
\end{align*}
\end{conjecture}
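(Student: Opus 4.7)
The plan is to deduce Conjecture \ref{whasc} directly from Conjecture \ref{hasc} by a formal product construction, in exact analogy with how Theorem \ref{wasorigin2} follows from Theorem \ref{asorigin2}. In one sentence: feeding the product $W \times V$ into the hyperbolic Ax--Schanuel conjecture produces precisely the weak inequality.

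More concretely, I would first take the subvariety $W \times V$ of $X \times S$ (this is a subvariety in the algebraic sense defined just before Conjecture \ref{hasc}, being a product of a subvariety of $X$ with a subvariety of $S$). Unwinding the definition of $D_S$, a point $(x,s) \in X \times S$ lies in $(W \times V) \cap D_S$ if and only if $x \in W$, $s = \pi(x)$, and $s \in V$; equivalently, $x \in W \cap \pi^{-1}(V)$ and $s = \pi(x)$. The graph map $x \mapsto (x,\pi(x))$ is therefore a biholomorphism from $W \cap \pi^{-1}(V)$ onto $(W \times V) \cap D_S$, so it induces a dimension-preserving bijection between the irreducible analytic components on the two sides.

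Now let $A$ be the given irreducible analytic component of $W \cap \pi^{-1}(V)$, and let $U$ be its image in $(W \times V) \cap D_S$ under the graph map. Then $U$ is an irreducible analytic component of $(W \times V) \cap D_S$, with $\dim U = \dim A$, and its projection to $S$ is exactly $\pi(A)$. By hypothesis, $\pi(A)$ is not contained in any weakly special subvariety of $\Sh_K(G,\mathfrak{X})$ strictly contained in $S$, so the hypothesis of Conjecture \ref{hasc} is satisfied. Applying that conjecture to $V \leftarrow W \times V$ and $U$ yields
\begin{align*}
\dim W + \dim V \;=\; \dim(W \times V) \;\geq\; \dim U + \dim S \;=\; \dim A + \dim S,
\end{align*}
which is exactly the conclusion of Conjecture \ref{whasc}.

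There is no real obstacle: the only thing to verify carefully is that the graph embedding really sets up a bijection between irreducible analytic components (which is immediate because it is a biholomorphism onto its image) and that $W \times V$ qualifies as a subvariety of $X \times S$ in the sense fixed just before Conjecture \ref{hasc} (which is immediate from the definition via the Harish-Chandra realization of $X$, since a product of algebraic subvarieties of $\CC^N$ and $S$ is an algebraic subvariety of $\CC^N \times S$).
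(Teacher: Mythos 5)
Your proposal is correct and is essentially identical to the paper's own proof: the paper also sets $Y := W \times V$, takes $U := \{(a,\pi(a)) : a \in A\}$ as the corresponding irreducible analytic component of $Y \cap D_S$, notes that the projection of $U$ to $S$ is $\pi(A)$, and invokes Conjecture \ref{hasc}. Your spelling-out of why the graph map identifies the analytic components and why $W \times V$ is a subvariety in the required sense is fine but adds no new idea.
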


\begin{proof}[Proof that Conjecture \ref{hasc} implies Conjecture \ref{whasc}]
Consider the situation described in the statement of Conjecture \ref{whasc}. Then $Y:=W\times V$ is an algebraic subvariety of $X\times S$ and
\begin{align*}
U:=\{(a,\pi(a));a\in A\}
\end{align*}
is an irreducible analytic component of $Y\cap D_S$. Clearly, the projection of $U$ to $S$ is not contained in a weakly special subvariety of $\Sh_K(G,\mathfrak{X})$ strictly contained in $S$. Therefore, by Conjecture \ref{hasc}, \begin{align*}
\dim Y\geq \dim U+\dim S
\end{align*}
and the result follows since $\dim U=\dim A$ and $\dim Y=\dim W+\dim V$.
\end{proof}

In our applications, we will use a reformulation of Conjecture \ref{whasc}. For this reformulation, we will need the following definitions. 

Fix a subvariety $V$ of $S$.

\begin{definition}
An {\bf intersection component} of $\pi^{-1}(V)$ is an irreducible analytic component of the intersection of $\pi^{-1}(V)$ with a subvariety of $X$.
\end{definition}

For any intersection component $A$ of $\pi^{-1}(V)$, there exists a smallest subvariety of $X$ containing $A$; we denote it $\langle A\rangle_{\rm Zar}$. It follows that $A$ is an irreducible analytic component of
\begin{align*}
\langle A\rangle_{\rm Zar}\cap\pi^{-1}(V).
\end{align*}

\begin{definition}
Let $A$ be an intersection component of $\pi^{-1}(V)$. We define the {\bf Zariski defect} of $A$ to be
\begin{align*}
\delta_{\rm Zar}(A):=\dim\langle A\rangle_{\rm Zar}-\dim A.
\end{align*}
\end{definition}

\begin{definition}
We say that an intersection component $A$ of $\pi^{-1}(V)$ is {\bf Zariski optimal} in $\pi^{-1}(V)$ if, for any intersection component $B$ of $\pi^{-1}(V)$, 
\begin{align*}
A\subsetneq B\subseteq\pi^{-1}(V)\implies\delta_{\rm Zar}(B)>\delta_{\rm Zar}(A).
\end{align*}
\end{definition}

\begin{definition}
Let $x\in X$ and let $X_M$ denote the $M(\RR)^+$ conjugacy class of $x$ in $X$, where $M:=\MT(x)$. Write $M^{\rm ad}$ as a product
\begin{align*}
M^{\rm ad}=M_1\times M_2
\end{align*}
of two normal $\QQ$-subgroups, either of which may be trivial, thus inducing a splitting
\begin{align*}
X_M=X_1\times X_2.
\end{align*}
For any $x_1\in X_1$ or $x_2\in X_2$, we obtain a subvariety $\{x_1\}\times X_2$ or $X_1\times\{x_2\}$ of $X$. We refer to any subvariety of $X$ taking this form as a {\bf pre-weakly special subvariety} of $X$. That is, a weakly special subvariety of $\Sh_K(G,\mathfrak{X})$ contained in $S$ is, by definition, the image in $S$ of a pre-weakly special subvariety of $X$.
\end{definition}

\begin{remark}
Note that pre-weakly special subvarieties of $X$ are indeed subvarieties of $X$ (see \cite{gao:AO}, Lemma 6.2, for example). In particular, they are irreducible analytic subsets of $X$. As explained in \cite{Moonen:linear1}, pre-weakly special subvarieties of $X$ are totally geodesic subvarieties of $X$.
\end{remark}

\begin{definition}
An intersection component $A$ of $\pi^{-1}(V)$ is called {\bf pre-weakly special} if $\langle A\rangle_{\rm Zar}$ is a pre-weakly special subvariety of $X$.
\end{definition}

\begin{conjecture}[weak hyperbolic Ax-Schanuel]\label{whasc2}
Let $A$ be an intersection component of $\pi^{-1}(V)$ that is Zariski optimal in $\pi^{-1}(V)$. Then $A$ is pre-weakly special.
\end{conjecture}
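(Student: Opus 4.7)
My plan is to apply weak hyperbolic Ax-Schanuel (Conjecture \ref{whasc}) to $W = \langle A\rangle_{\rm Zar}$ and $V$ to get a lower bound on $\delta_{\rm Zar}(A)$, and to compare it with a trivial upper bound on $\delta_{\rm Zar}(B)$ for an irreducible analytic component $B$ of $\pi^{-1}(V)$ containing $A$; Zariski optimality will then force $A=B$ and, via the Ax-Schanuel inequality, show that $\langle A\rangle_{\rm Zar}$ fills up the ambient pre-weakly special subvariety of $X$.

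To apply Conjecture \ref{whasc}, I first reduce to the case where $\pi(A)$ is not contained in a proper weakly special subvariety of $S$. Let $\widetilde T := \langle A\rangle_{\rm geo}$, the smallest pre-weakly special subvariety of $X$ containing $A$, let $T := \pi(\widetilde T)$, and let $V'$ be the irreducible component of $V\cap T$ containing $\pi(A)$. Since pre-weakly special subvarieties are algebraic, $\langle A\rangle_{\rm Zar}\subseteq\widetilde T$. A routine verification, using that irreducible analytic components coming from different components of $V\cap T$ do not merge and that any algebraic subvariety of $\widetilde T$ is an algebraic subvariety of $X$, shows that $A$ is an intersection component of $\pi|_{\widetilde T}^{-1}(V')$ in $\widetilde T$, that $A$ remains Zariski optimal there, and that $\delta_{\rm Zar}$ computed inside $\widetilde T$ agrees with $\delta_{\rm Zar}$ computed inside $X$ for intersection components contained in $\widetilde T$. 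Via the standard correspondence between weakly special subvarieties of $S$ contained in $T$ and pre-weakly special subvarieties of $X$ contained in $\widetilde T$, the minimality of $\widetilde T$ translates into $\pi(A)$ being Hodge generic in $T$. Applying Conjecture \ref{whasc} to the Shimura-like datum underlying $\widetilde T\to T$, we may replace $(X,S,V)$ by $(\widetilde T,T,V')$ and reduce to showing $\langle A\rangle_{\rm Zar}=X$ in the new setting, which will give $\langle A\rangle_{\rm Zar}=\widetilde T$ in the original, as required.

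In this reduced setting, Conjecture \ref{whasc} applied to $W=\langle A\rangle_{\rm Zar}$ and $V$ gives
\[
\dim\langle A\rangle_{\rm Zar}+\dim V \;\geq\; \dim A+\dim S,
\]
so $\delta_{\rm Zar}(A)\geq \dim S-\dim V$. Let $B$ be any irreducible analytic component of $\pi^{-1}(V)$ containing $A$: then $B$ is an intersection component of $\pi^{-1}(V)$ with $\dim B=\dim V$ and $\langle B\rangle_{\rm Zar}\subseteq X$, so $\delta_{\rm Zar}(B)\leq \dim S-\dim V \leq \delta_{\rm Zar}(A)$. Zariski optimality of $A$ forces $A=B$, whence $\dim A=\dim V$, and the Ax-Schanuel inequality then yields $\dim\langle A\rangle_{\rm Zar}\geq\dim S$, i.e.\ $\langle A\rangle_{\rm Zar}=X$, which is pre-weakly special. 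The most delicate step is the reduction, where one must verify that intersection components, Zariski optimality, and Zariski defects all behave well when the ambient is replaced by a pre-weakly special subdomain, and invoke weak Ax-Schanuel for the smaller Shimura datum; once the Hodge generic hypothesis is in place, the main argument is just a comparison of two dimension bounds.
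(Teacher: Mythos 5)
Your argument follows essentially the same route as the paper's (which defers to Habegger--Pila, Section 5.2, and whose intended proof---visible in an orphaned fragment of the source---begins with exactly the same reduction): pass to $X' := \langle A\rangle_{\rm geo}$, the smallest pre-weakly special subvariety of $X$ containing $A$, replace $V$ by the component $V'$ of $V\cap\pi(X')$ through $\pi(A)$, then apply Conjecture \ref{whasc} in the reduced setting. Your conclusion---comparing $\delta_{\rm Zar}(A)\geq\dim S-\dim V$ against the trivial bound $\delta_{\rm Zar}(B)\leq\dim S-\dim V$ for an irreducible analytic component $B\supseteq A$ of $\pi^{-1}(V)$, forcing $A=B$ and then $\langle A\rangle_{\rm Zar}=X$ by maximal dimension---is clean and correct. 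Two small points worth being more careful about in the ``routine verification'': (i) $T=\pi(\widetilde T)$ need not be a Shimura \emph{sub}variety (it has no special point unless $\widetilde T$ is pre-special), but it is a connected component $\Gamma_1\backslash X_1$ of some Shimura variety, which suffices to apply Conjecture \ref{whasc}; (ii) to see that an intersection component $B$ of $\pi|_{\widetilde T}^{-1}(V')$ in $\widetilde T$ remains an intersection component of $\pi^{-1}(V)$ in $X$, one has to argue that the irreducible analytic component of $Y\cap\pi^{-1}(V)$ containing $B$ is mapped by $\pi$ into the single component $V'$ of $V\cap T$, which uses that $B$ is connected and $\pi(B)\subseteq V'$. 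Neither is a gap, just the kind of detail that the phrase ``routine verification'' is concealing.
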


Note that Conjecture \ref{whasc2} is a direct generalization of the hyperbolic Ax-Lindemann theorem.

\begin{theorem}[hyperbolic Ax-Lindemann]\label{halw}
The maximal subvarieties contained in $\pi^{-1}(V)$ are pre-weakly special.
\end{theorem}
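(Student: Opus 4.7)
The plan is to follow the Pila-Zannier strategy as implemented for general Shimura varieties by Klingler, Ullmo, and Yafaev in \cite{kuy:ax-lindemann}. Let $Y \subseteq X$ be a maximal irreducible algebraic subvariety contained in $\pi^{-1}(V)$; the goal is to show that $Y$ is pre-weakly special, equivalently, totally geodesic in $X$. First, I would fix a Siegel fundamental set $\mathcal{F} \subseteq X$ for the action of $\Gamma$, so that the restriction $\pi|_{\mathcal{F}}$ is definable in $\RR_{\textup{an,exp}}$ (via the Harish-Chandra realization of $X$ referred to in the paper's conventions).

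Next, I would reformulate the problem in terms of the stabilizer by introducing the definable set
\[
\Sigma := \{ g \in G(\RR)^+ : \dim_{\RR}(gY \cap \mathcal{F}) = \dim_{\RR} Y \text{ and } \pi(gY \cap \mathcal{F}) \subseteq V \}.
\]
Any $\gamma \in \Gamma$ whose translate $\gamma Y$ meets $\mathcal{F}$ in full real dimension lies in $\Sigma$, since $\gamma$ preserves $\pi^{-1}(V)$. A counting step, combining a volume estimate on how the $\Gamma$-translates of $\mathcal{F}$ fill a ball of radius $T$ in $X$ with the observation that any translate of $Y$ must cross many fundamental domains, produces a polynomial lower bound on the number of $\gamma \in \Gamma \cap \Sigma$ of height at most $T$. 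Applying the Pila-Wilkie counting theorem to the definable set $\Sigma$ then forces $\Sigma$ to contain a connected semi-algebraic arc $B$ of positive real dimension.

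After left-translating by the inverse of a point of $B$, one may assume $1 \in B$. Let $H$ be the identity component of the real algebraic Zariski closure of the subgroup of $G(\RR)$ generated by $B$; continuity and the defining conditions of $\Sigma$ yield $H \cdot Y \subseteq \pi^{-1}(V)$, so the maximality of $Y$ forces $H \cdot Y = Y$. Thus $Y$ is a single orbit of a positive-dimensional real algebraic subgroup of $G(\RR)$ acting on $X$.

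Finally, one must descend $H$ to a $\QQ$-structure: a Mumford-Tate and algebraic monodromy argument should show that $H$ contains the real points of a normal $\QQ$-semisimple subgroup of the generic Mumford-Tate group on the $\pi$-image of $Y$ whose real orbit on $Y$ is transitive, and then Moonen's characterization of totally geodesic subvarieties \cite{Moonen:linear1} identifies $Y$ as pre-weakly special. The main obstacle is precisely this rational-structure descent: the counting only supplies a real algebraic object, while the conclusion is intrinsically $\QQ$-structural. This is where the algebraicity of $V$ over $\CC$, the arithmetic of the congruence lattice $\Gamma$, and the hermitian symmetric structure of $X$ must be combined carefully, and it is the technical heart of the argument of \cite{kuy:ax-lindemann}.
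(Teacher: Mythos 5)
Your proposal and the paper's argument take genuinely different routes. The paper's proof of Theorem \ref{halw} is a one-line deduction from Conjecture \ref{whasc2} (the weak hyperbolic Ax-Schanuel conjecture, which the paper has just finished setting up in this section): the maximal algebraic subvarieties of $\pi^{-1}(V)$ are precisely the intersection components that are Zariski optimal with Zariski defect zero, so the conclusion that such components are pre-weakly special is the defect-zero specialization of Conjecture \ref{whasc2}. That observation is the entire proof; it is short precisely because the paper is trying to demonstrate that the Ax-Schanuel framework it is building subsumes Ax-Lindemann. You instead sketch the unconditional, direct proof \`a la Pila--Zannier, which is essentially the strategy of \cite{kuy:ax-lindemann}: definability of $\pi|_{\mathcal{F}}$, a lower bound for lattice points in the stabilizer set $\Sigma$ via hyperbolic volume growth, Pila--Wilkie to extract a positive-dimensional semialgebraic block, stabilization by a real algebraic group, maximality of $Y$, and finally the descent from a real algebraic group to a $\QQ$-semisimple normal subgroup of the generic Mumford--Tate group. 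The trade-off is clear: your route is a self-contained argument that actually proves the theorem, but its technical heart (the $\QQ$-descent and monodromy argument, which you correctly flag as the main obstacle) is substantial and deferred to the literature; the paper's route is immediate but conditional on the far stronger Ax-Schanuel statement, and its value in context is conceptual --- it exhibits Ax-Lindemann as the $\delta_{\rm Zar}=0$ instance of the machinery (Zariski optimality, Zariski defect) that the rest of the paper is built on. One small caution on your sketch: it is worth being explicit that Pila--Wilkie alone does not hand you a semialgebraic arc \emph{inside} $\Sigma$; you need to combine it with the lower bound on $\#\{\gamma\in\Gamma\cap\Sigma : H(\gamma)\le T\}$ growing polynomially in $T$, and you also need to rule out the degenerate case where $Y$ is already contained in a proper weakly special subvariety (replacing $S$ by the smallest such and arguing by Noetherian induction on the dimension). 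These are exactly the points that make the KUY proof long, and they do not arise at all in the paper's derivation.
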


\begin{proof}[Proof that Conjecture \ref{whasc2} implies Theorem \ref{halw}]
The maximal subvarieties contained in $\pi^{-1}(V)$ are precisely the intersection components of $\pi^{-1}(V)$ that are Zariski optimal in $\pi^{-1}(V)$ and whose Zariski defect is zero.
\end{proof}

Although \cite{hp:o-min}, Section 5.2 is dedicated to products of modular curves, the proof that Formulations A and B of {\it Weak Complex Ax} are equivalent is completely general and, when translated into our terminology, yields the following.

\begin{lemma}
Conjecture \ref{whasc} and Conjecture \ref{whasc2} are equivalent.
\end{lemma}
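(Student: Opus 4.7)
The plan is to establish both implications of the equivalence, generalizing the argument in \cite{hp:o-min}, Section 5.2, to our setting.

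\textbf{Conjecture \ref{whasc2} implies Conjecture \ref{whasc}.} Given data $W, V, A$ satisfying the hypotheses of Conjecture \ref{whasc}, I would first observe that $A$ is an irreducible analytic component of $\langle A\rangle_{\rm Zar}\cap\pi^{-1}(V)$ (since $\langle A\rangle_{\rm Zar}\subseteq W$), so $A$ is an intersection component of $\pi^{-1}(V)$. I would then iteratively enlarge $A$ to a Zariski optimal intersection component $B\supseteq A$ with $\delta_{\rm Zar}(B)\leq\delta_{\rm Zar}(A)$: whenever the current component is not Zariski optimal, replace it by a strictly larger intersection component whose Zariski defect does not exceed it. This terminates, as strict inclusion of irreducible analytic components strictly increases dimension (bounded above by $\dim V$). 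Applying Conjecture \ref{whasc2} to $B$, $\langle B\rangle_{\rm Zar}$ is pre-weakly special, so $\pi(\langle B\rangle_{\rm Zar})$ is a weakly special subvariety of $S$ containing $\pi(A)$; by the hypothesis on $\pi(A)$, this must be all of $S$, forcing $\langle B\rangle_{\rm Zar}=X$. Since $B\subseteq\pi^{-1}(V)$ gives $\dim B\leq\dim V$, we obtain
\begin{align*}
\delta_{\rm Zar}(A)\geq\delta_{\rm Zar}(B)=\dim X-\dim B\geq\dim S-\dim V,
\end{align*}
and combined with $\langle A\rangle_{\rm Zar}\subseteq W$, this gives $\dim V+\dim W\geq\dim A+\dim S$.

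\textbf{Conjecture \ref{whasc} implies Conjecture \ref{whasc2}.} Let $A$ be a Zariski optimal intersection component of $\pi^{-1}(V)$. The case $\dim A=0$ is handled directly: then $\langle A\rangle_{\rm Zar}=A$ is a single point and hence pre-weakly special (using the trivial splitting $M_2=1$ in the definition). So assume $\dim A>0$, and let $\tilde T$ denote the smallest pre-weakly special subvariety of $X$ containing $A$, let $T=\pi(\tilde T)$ (the smallest weakly special subvariety of $S$ containing $\pi(A)$), and let $V'$ be the irreducible component of $V\cap T$ containing $\pi(A)$. By minimality of $T$, $\pi(A)$ is not contained in any proper weakly special subvariety of $T$, viewed as a Shimura (sub)variety with uniformization $\pi|_{\tilde T}:\tilde T\to T$.

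The key step is to compare $A$ to the irreducible analytic component $B$ of $\tilde T\cap\pi^{-1}(V)$ containing $A$. Since $B$ is an analytic component of $(\pi|_{\tilde T})^{-1}(V')$, we have $\dim B=\dim V'$, $B$ is an intersection component of $\pi^{-1}(V)$, and $\langle B\rangle_{\rm Zar}\subseteq\tilde T$. Applying Conjecture \ref{whasc} inside $T$ to $A$ yields $\delta_{\rm Zar}(A)\geq\dim T-\dim V'$, while the bound $\dim\langle B\rangle_{\rm Zar}\leq\dim\tilde T=\dim T$ gives $\delta_{\rm Zar}(B)\leq\dim T-\dim V'$. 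If $B\supsetneq A$ strictly, then Zariski optimality of $A$ would force $\delta_{\rm Zar}(B)>\delta_{\rm Zar}(A)\geq\dim T-\dim V'$, contradicting the upper bound on $\delta_{\rm Zar}(B)$. Therefore $B=A$, so $\dim A=\dim V'$, and Conjecture \ref{whasc} applied once more to $A$ in $T$ yields $\dim\langle A\rangle_{\rm Zar}\geq\dim T$. Since $\langle A\rangle_{\rm Zar}\subseteq\tilde T$, I conclude $\langle A\rangle_{\rm Zar}=\tilde T$, which is pre-weakly special.

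\textbf{Main obstacle.} The delicate point is the application of Conjecture \ref{whasc} \emph{inside} the weakly special subvariety $T$. One must view $T$ as a Shimura variety in its own right, with uniformization $\pi|_{\tilde T}:\tilde T\to T$, and verify that its weakly special subvarieties correspond precisely to the weakly special subvarieties of $S$ contained in $T$, so that the minimality of $T$ indeed supplies the non-degeneracy hypothesis of \ref{whasc}. Once this identification is in place, the two dimensional inequalities combine as above to force $B=A$ and $\langle A\rangle_{\rm Zar}=\tilde T$, and the rest of the argument is a matter of dimensional bookkeeping.
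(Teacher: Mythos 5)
Your proof follows the same route as the paper, which simply cites \cite{hp:o-min}, Section 5.2 and asserts that the argument there translates to the general Shimura setting; in fact your write-up is more explicit than the paper's. Both directions are logically sound. In the direction \ref{whasc2} $\Rightarrow$ \ref{whasc}, the enlargement to a Zariski optimal $B$ terminates because strict containment of irreducible closed analytic sets of the same dimension is impossible, and the rest is the correct bookkeeping with $\langle A\rangle_{\rm Zar}\subseteq W$ and $\dim B\le\dim V$. In the direction \ref{whasc} $\Rightarrow$ \ref{whasc2}, the two applications of \ref{whasc} inside $T=\pi(\tilde T)$ combined with Zariski optimality of $A$ do force $B=A$ and then $\langle A\rangle_{\rm Zar}=\tilde T$.

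You have also correctly isolated the one nontrivial point, which is precisely the translation work the paper alludes to: to apply \ref{whasc} "inside $T$", one needs $T$ to be (isomorphic to) a connected component $\Gamma_2\backslash X_2$ of a Shimura variety, with $\pi|_{\tilde T}$ identified with that uniformization, and one needs the weakly special subvarieties of $T$ under this identification to coincide with the weakly special subvarieties of $S$ contained in $T$. Note that $T$ itself need not be special — the first coordinate $x_1$ need not be pre-special — so $T$ carries the Shimura structure only through this identification with the "second factor" $\Gamma_2\backslash X_2$, not as a Shimura subvariety of $S$. The required compatibility of weakly special subvarieties is standard and can be extracted from Moonen's characterization of weakly special subvarieties as totally geodesic subvarieties (\cite{Moonen:linear1}, Section 4): totally geodesic subvarieties of $T$ in the ambient metric are exactly the totally geodesic subvarieties of $S$ contained in $T$, so the two notions of weakly special agree. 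Since the minimality of $T$ among weakly special subvarieties containing $\pi(A)$ was taken with respect to $S$, this identification is exactly what supplies the non-degeneracy hypothesis of \ref{whasc} for the restricted problem. With that in place, the dimensional bookkeeping you give closes the argument.

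One small point worth tightening: in the backward direction you assert that $B$, defined as the component of $\tilde T\cap\pi^{-1}(V)$ containing $A$, satisfies $\dim B=\dim V'$. This uses that $B$ is actually a component of $(\pi|_{\tilde T})^{-1}(V')$ rather than of $(\pi|_{\tilde T})^{-1}(V\cap T)$; since the irreducible components of $V\cap T$ give a locally finite closed cover of $(\pi|_{\tilde T})^{-1}(V\cap T)$, an irreducible analytic subset lies in one of them, namely the one containing $\pi(A)$, which is $V'$. That justifies $\pi(B)=V'$ and hence $\dim B=\dim V'$.
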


We conclude this section with the following consequence of the weak hyperbolic Ax-Schanuel conjecture. Here and elsewhere, we will tacitly make use of the following remark.

\begin{remark}
Let $W$ be a subvariety of $S$ and let $A$ denote an irreducible analytic component of $\pi^{-1}(W)$ in $X$. Then, since $W$ is analytically irreducible, every irreducible analytic component of $\pi^{-1}(W)$ is equal to a $\Gamma$-translate of $A$ (as mentioned in \cite{ey:subvarieties}, Section 4, for example). In particular, $\pi(A)$ is equal to $W$.
\end{remark}

\begin{lemma}\label{ZOimpliesclosed}
Assume that the weak hyperbolic Ax-Schanuel conjecture is true for $V$. Let $A$ be a Zariski optimal intersection component of $\pi^{-1}(V)$. Then $\pi(A)$ is a closed irreducible subvariety of $V$ and, as such, is weakly optimal in $V$. 
\end{lemma}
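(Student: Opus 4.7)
The plan is to split the argument in two: first show that $\pi(A)$ is a closed irreducible subvariety of $V$---specifically, an irreducible component of $V\cap \langle\pi(A)\rangle_{\rm ws}$---and then deduce weak optimality from Zariski optimality via a dimension count.

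I would begin by applying Conjecture \ref{whasc2} to $A$, obtaining that $\langle A\rangle_{\rm Zar}$ is a pre-weakly special subvariety of $X$. Set $F:=\pi(\langle A\rangle_{\rm Zar})$, a weakly special subvariety of $\Sh_K(G,\mathfrak{X})$ contained in $S$; the restriction $\pi|_{\langle A\rangle_{\rm Zar}}\colon \langle A\rangle_{\rm Zar}\to F$ is the covering map by the stabilizer $\Gamma_F:={\rm Stab}_\Gamma(\langle A\rangle_{\rm Zar})$. Decompose $V\cap F=\bigcup_i Y_i$ into irreducibles. Since $\pi^{-1}(V)\cap\langle A\rangle_{\rm Zar}=\bigcup_i (\pi^{-1}(Y_i)\cap\langle A\rangle_{\rm Zar})$ with each summand equidimensional of dimension $\dim Y_i$ (by local-homeomorphism), the irreducibility of $A$ forces it to be an irreducible analytic component of $\pi^{-1}(Y_j)\cap\langle A\rangle_{\rm Zar}$ for some $j$, with $\dim A=\dim Y_j$. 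The $\Gamma_F$-saturation $\widetilde A$ of $A$ is locally finite by proper discontinuity, hence closed in $\langle A\rangle_{\rm Zar}$; by the quotient topology, $\pi(A)=\pi(\widetilde A)$ is closed in $F$, and since $\pi(A)\subseteq Y_j$ is irreducible and of the same dimension, $\pi(A)=Y_j$. Thus $\pi(A)$ is a closed irreducible subvariety of $V$ and an irreducible component of $V\cap F$. The minimality of $\langle A\rangle_{\rm Zar}$ rules out $\langle\pi(A)\rangle_{\rm ws}\subsetneq F$ (any such $F'\subsetneq F$ containing $\pi(A)$ would lift to a proper pre-weakly special subvariety of $\langle A\rangle_{\rm Zar}$ still containing $A$), and we conclude $\delta_{\rm ws}(\pi(A))=\delta_{\rm Zar}(A)$.

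For weak optimality, suppose, for contradiction, $\pi(A)\subsetneq Y\subseteq V$ is a subvariety with $\delta_{\rm ws}(Y)\leq\delta_{\rm ws}(\pi(A))$. Set $F_Y:=\langle Y\rangle_{\rm ws}\supseteq F$ and let $X_{F_Y}$ be the connected component of $\pi^{-1}(F_Y)$ containing $\langle A\rangle_{\rm Zar}$---a pre-weakly special subvariety of $X$ with $\pi(X_{F_Y})=F_Y$. Pick an irreducible analytic component $B$ of $\pi^{-1}(Y)\cap X_{F_Y}$ containing $A$ (so $\dim B=\dim Y$ by the covering property), then enlarge $B$ to an irreducible analytic component $B^\ast$ of $\pi^{-1}(V)\cap X_{F_Y}$. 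Then $B^\ast$ is an intersection component of $\pi^{-1}(V)$, its Zariski closure $\langle B^\ast\rangle_{\rm Zar}$ is contained in $X_{F_Y}$, and $A\subseteq B\subseteq B^\ast$. Combining,
\[
\delta_{\rm Zar}(B^\ast)\leq \dim X_{F_Y}-\dim B^\ast\leq \dim F_Y-\dim Y=\delta_{\rm ws}(Y)\leq\delta_{\rm ws}(\pi(A))=\delta_{\rm Zar}(A).
\]
Zariski optimality of $A$ then forces $B^\ast=A$, hence $B=A$ and $\dim A=\dim Y$, giving $\pi(A)=Y$ and contradicting strict inclusion.

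The main obstacle is the bookkeeping of the first part---in particular the dimension/covering argument identifying $\pi(A)$ as the irreducible component $Y_j$ of $V\cap F$ and the passage from the analytic hull $\langle A\rangle_{\rm Zar}$ to the algebraic hull $\langle\pi(A)\rangle_{\rm ws}$ via minimality. Once $\delta_{\rm ws}(\pi(A))=\delta_{\rm Zar}(A)$ is in hand, the weak-optimality half is a clean dimension comparison that leverages Zariski optimality through the pre-weakly special "envelope" $X_{F_Y}$ and the associated intersection component $B^\ast$.
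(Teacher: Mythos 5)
Your proof is correct, but it takes a genuinely different and somewhat longer route than the paper's. You split the argument into two independent halves: first you establish that $\pi(A)$ is a closed irreducible subvariety by arguing with the covering $\pi|_{\langle A\rangle_{\rm Zar}}\colon \langle A\rangle_{\rm Zar}\to F$, the $\Gamma_F$-saturation of $A$, and the quotient topology, thereby pinning down $\pi(A)$ as an irreducible component $Y_j$ of $V\cap F$ and proving the equality $\delta_{\rm ws}(\pi(A))=\delta_{\rm Zar}(A)$; only then do you run the by-now-standard lift-and-compare-defects argument to obtain weak optimality. The paper collapses both halves into a single reduction: it takes a weakly optimal $W\subseteq V$ containing $\overline{\pi(A)}$ with $\delta_{\rm ws}(W)\le\delta_{\rm ws}(\overline{\pi(A)})$, lifts it to an irreducible analytic component $B\supseteq A$ of $\pi^{-1}(W)$, and uses the single chain of inequalities $\delta_{\rm Zar}(B)\le\delta_{\rm ws}(W)\le\delta_{\rm ws}(\overline{\pi(A)})\le\delta_{\rm Zar}(A)$ (the last step again via pre-weakly specialness of $\langle A\rangle_{\rm Zar}$) together with Zariski optimality of $A$ to conclude $B=A$; the equality $\pi(A)=\pi(B)=W$ then delivers closedness of $\pi(A)$ as a byproduct, so no separate topological argument is needed. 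Your approach is more explicit about what $\pi(A)$ is, but the saturation-is-closed step is stated a bit loosely: "locally finite by proper discontinuity" isn't quite the reason, since $A$ is not compact; what actually saves the argument is that $\Gamma_F A$ is a union of irreducible analytic components of the $\Gamma_F$-invariant analytic set $\pi^{-1}(V)\cap\langle A\rangle_{\rm Zar}$, and such a union is locally finite, hence closed. The paper's proof sidesteps this care entirely by working with $\overline{\pi(A)}$ from the start, which is the main economy it buys.
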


\begin{proof}
Clearly, the Zariski closure $\overline{\pi(A)}$ of $\pi(A)$ is irreducible. Therefore, let $W$ be a subvariety of $V$ containing $\overline{\pi(A)}$ such that $\delta_{\rm ws}(W)\leq\delta_{\rm ws}(\overline{\pi(A)})$. We can and do assume that $W$ is weakly optimal in $V$. Let $B$ be an irreducible analytic component of $\pi^{-1}(W)$ containing $A$. We have
\begin{align*}
\delta_{\rm Zar}(B)&=\dim\langle B\rangle_{\rm Zar}-\dim B=\dim\langle B\rangle_{\rm Zar}-\dim W\\
&\leq\dim\langle W\rangle_{\rm ws}-\dim W=\delta_{\rm ws}(W)\leq\delta_{\rm ws}(\overline{\pi(A)})\\
&\leq\dim\langle A\rangle_{\rm Zar}-\dim A=\delta_{\rm Zar}(A),
\end{align*}
where we use the fact that, by the weak hyperbolic Ax-Schanuel conjecture, $\langle A\rangle_{\rm Zar}$ is pre-weakly special. Therefore, we conclude that $B=A$. Hence, $\pi(A)=\pi(B)=W$.

\end{proof}

\section{A finiteness result for weakly optimal subvarieties}\label{finiteweak}

In this section, we deduce from the weak hyperbolic Ax-Schanuel conjecture a finiteness statement for the weakly optimal subvarieties of a given subvariety $V$.

\begin{definition}
Let $x\in X$ and let $X_M$ denote the $M(\RR)^+$ conjugacy class of $x$ in $X$, where $M:=\MT(x)$. Then $X_M$ is a subvariety of $X$ and we refer to any subvariety of $X$ taking this form as a {\bf pre-special subvariety} of $X$. In particular, a pre-special subvariety of $X$ is a pre-weakly special subvariety of $X$. If $X_M$ is a point, that is, if $M$ is a torus, we refer to $X_M$ as a {\bf pre-special point} of $X$. A special subvariety of $\Sh_K(G,\mathfrak{X})$ contained in $S$ is, by definition, the image in $S$ of a pre-special subvariety of $X$.  
\end{definition}

\begin{definition}
Let $x\in X$ and let $X_M$ denote the $M(\RR)^+$ conjugacy class of $x$ in $X$, where $M:=\MT(x)$. Decomposing $M^{\rm ad}$ as a product
\begin{align*}
M^{\rm ad}=M_1\times M_2
\end{align*}
of two normal $\QQ$-subgroups, either of which may be trivial, induces a splitting
\begin{align*}
X_M=X_1\times X_2.
\end{align*}
For any such splitting, and any $x_1\in X_1$ or $x_2\in X_2$, we refer to the pre-weakly special subvariety $\{x_1\}\times X_2$ or $X_1\times\{x_2\}$ as a {\bf fiber of (the pre-special subvariety) $X_M$}. In particular, the points of $X_M$ are all fibers of $X_M$, and so too is $X_M$ itself.
\end{definition}

The main result of this section is the following.

\begin{proposition}[cf. \cite{hp:o-min}, Proposition 6.6]\label{fwo} Let $V$ be a subvariety of $S$ and assume that the weak hyperbolic Ax-Schanuel conjecture is true for $V$. Then there exists a finite set $\Sigma$ of pre-special subvarieties of $X$ such that the following holds.

Let $W$ be a subvariety of $V$ that is weakly optimal in $V$. Then there exists $Y\in\Sigma$ such that $\langle W\rangle_{\rm ws}$ is equal to the image in $S$ of a fiber of $Y$.
\end{proposition}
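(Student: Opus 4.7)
The plan is to lift each weakly optimal subvariety $W$ of $V$ to a Zariski optimal intersection component of $\pi^{-1}(V)$, apply the weak hyperbolic Ax-Schanuel conjecture to identify its Zariski closure with a pre-weakly special subvariety of $X$, and then use o-minimal definability to extract a finite list $\Sigma$ of pre-special subvarieties.

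First I would carry out the lifting step. Given $W$ weakly optimal in $V$, let $A$ be an irreducible analytic component of $\pi^{-1}(W)$. I claim $A$ is Zariski optimal in $\pi^{-1}(V)$: if $A \subsetneq B \subseteq \pi^{-1}(V)$ were an intersection component with $\delta_{\rm Zar}(B) \leq \delta_{\rm Zar}(A)$, then (mimicking Lemma \ref{ZOimpliesclosed}) the Zariski closure $\overline{\pi(B)}$ would be a subvariety of $V$ strictly containing $W$ and satisfying $\delta_{\rm ws}(\overline{\pi(B)}) \leq \delta_{\rm ws}(W)$, contradicting the weak optimality of $W$. By Conjecture \ref{whasc2}, $P := \langle A \rangle_{\rm Zar}$ is pre-weakly special. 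Now $\pi(P)$ is a weakly special subvariety of $S$ containing $W$, so it contains $\langle W \rangle_{\rm ws}$; and since some pre-weakly special lift of $\langle W \rangle_{\rm ws}$ contains $A$, minimality of $P$ forces $\pi(P) = \langle W \rangle_{\rm ws}$. Thus each weakly optimal $W$ yields a pre-weakly special subvariety $P$ with $\pi(P) = \langle W \rangle_{\rm ws}$.

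Next I would set up a definable parameterization of pre-special subvarieties of $X$. A pre-special subvariety has the form $X_M = M(\RR)^+ \cdot x$ with $M = \MT(x)$; the $\QQ$-subgroups $M$ arising this way split into countably many $G(\QQ)$-conjugacy classes. For each class, the associated pre-special subvarieties form a family that becomes definable in $\RR_{\rm an,exp}$ once restricted to a semi-algebraic fundamental set $\mathcal{F}$ for $\Gamma$ (on which $\pi$ is definable). The decomposition $M^{\ad} = M_1 \times M_2$ and the induced splitting $X_M = X_1 \times X_2$ yield corresponding definable subfamilies of fibers.

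The final step---and the main obstacle---is to pass from countably many $G(\QQ)$-conjugacy classes to finitely many. For each class, consider the definable subset of its parameter space consisting of pre-weakly special $P$'s (fibers of some $X_M$ in the class) that meet $\mathcal{F}$, satisfy $\pi(P) \subseteq V$, and arise as the Zariski closure of some Zariski optimal component of $\pi^{-1}(V)$. Using definability of $V$, of $\pi|_{\mathcal{F}}$, and of the family itself, this set is indeed definable, and by the Ax-Schanuel-derived dimension bounds it has uniformly bounded Zariski defect. To obtain the finite list, one must argue that only finitely many conjugacy classes can produce a nonempty such subset: this is plausibly done by Noetherian induction on $\dim V$, combined with the observation that the image in $V$ of each contributing definable set is a constructible family of weakly special subvarieties of $V$, so intersecting with a cover by finitely many maximal weakly optimal subvarieties controls the count. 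Selecting one pre-special $Y$ per contributing conjugacy class then produces the required finite set $\Sigma$.
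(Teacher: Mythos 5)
Your lifting step is correct and matches the paper exactly: it is precisely the content of Proposition \ref{ZOsummary} (Lemma \ref{ZOimpliesclosed} plus Lemma \ref{upstairs} plus Lemma \ref{geo}). The problem lies in your third step, where you openly flag the finiteness as ``the main obstacle'' and then propose a resolution that is both the wrong mechanism and circular.

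The first issue is that you parameterize by $G(\QQ)$-conjugacy classes of Mumford--Tate groups, which is a \emph{countably infinite} index set, and you then have to argue that only finitely many classes contribute. The paper instead parameterizes from the outset by $G(\RR)$-conjugacy classes of semisimple real subgroups without compact factors; the set $\Omega$ of such classes is already \emph{finite} (a non-trivial input, cited as Corollary 0.2 of \cite{BDR}). Definability lives over $\RR$, not $\QQ$, so this is the right parameter space. The second and more serious issue is that your proposed route through ``Noetherian induction on $\dim V$, combined with \ldots intersecting with a cover by finitely many maximal weakly optimal subvarieties'' is circular: Proposition \ref{fwo} is essentially what one needs in order to know that the weakly optimal subvarieties of $V$ organize into finitely many families, so you cannot presuppose such a finite cover.

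The missing idea is the standard o-minimality principle that \emph{a definable set which is countable must be finite}. Concretely, for each $F\in\Omega$ the paper builds a definable set $\Pi_F\subseteq\mathcal{V}\times G(\RR)\times\{F\}$ parameterizing candidate totally geodesic subvarieties through the definable fundamental domain, cuts it down (via the intersection-theoretic Lemma \ref{lem3}) to a definable locus $\Pi_2$ of ``optimal'' parameters, and considers the definable map $\Pi_F\cap\Pi_2\to G(\RR)/N_{G(\RR)}(F)$. Its image parameterizes the distinct conjugates $gFg^{-1}$. This image is definable, and it is \emph{countable} because, by the weak Ax-Schanuel conjecture (Lemma \ref{countable}), every such $gFg^{-1}$ is the non-compact part of a $\QQ$-defined semisimple subgroup of $G$, and there are only countably many of those. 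Hence the image is finite (Lemma \ref{finiteset}), which delivers the finite list $\Sigma$ after passing to Mumford--Tate groups. That ``definable and countable, hence finite'' step is exactly what your proposal lacks, and without it your argument does not close.
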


Note that similar theorems also hold for abelian varieties (see \cite{hp:o-min}, Proposition 6.1 and \cite{remond2009intersection}, Proposition 3.2).

Now fix a subvariety $V$ of $S$. Given an intersection component $A$ of $\pi^{-1}(V)$, there is a smallest totally geodesic subvariety $\langle A\rangle_{\rm geo}$ of $X$ that contains $A$. In particular, we may make the following definition.

\begin{definition}
Let $A$ be an intersection component of $\pi^{-1}(V)$. We define the {\bf geodesic defect} of $A$ to be
\begin{align*}
\delta_{\rm geo}(A):=\dim\langle A\rangle_{\rm geo}-\dim A.
\end{align*}
We note that, in \cite{hp:o-min}, this notion was referred to as the M\"obius defect of $A$.
\end{definition}

\begin{definition}
We say that an intersection component $A$ of $\pi^{-1}(V)$ is {\bf geodesically optimal} in $\pi^{-1}(V)$ if, for any intersection component $B$ of $\pi^{-1}(V)$, 
\begin{align*}
A\subsetneq B\subseteq \pi^{-1}(V)\implies\delta_{\rm geo}(B)>\delta_{\rm geo}(A).
\end{align*}
We note that the terminology geodesically optimal has a different meaning in \cite{hp:o-min}.
\end{definition}

\begin{remark} 
Let $A$ be an intersection component of $\pi^{-1}(V)$. If $A$ is geodesically optimal in $\pi^{-1}(V)$, then $A$ is an irreducible analytic component of
\begin{align*}
\langle A\rangle_{\rm geo}\cap\pi^{-1}(V).
\end{align*}
\end{remark}

\begin{lemma}\label{geo}
Assume that the weak hyperbolic Ax-Schanuel conjecture is true for $V$ and let $A$ be an intersection component of $\pi^{-1}(V)$. If $A$ is geodesically optimal in $\pi^{-1}(V)$, then $A$ is Zariski optimal in $\pi^{-1}(V)$.
\end{lemma}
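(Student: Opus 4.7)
The plan is to argue by contradiction, exploiting the fact that the weak hyperbolic Ax-Schanuel conjecture forces Zariski optimal intersection components to be pre-weakly special, which collapses the Zariski and geodesic defects for such components.

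First I would record two elementary comparisons. Since pre-weakly special subvarieties of $X$ are in particular (algebraic) subvarieties of $X$, the smallest totally geodesic subvariety $\langle A\rangle_{\rm geo}$ containing any intersection component $A$ is itself a subvariety, so $\langle A\rangle_{\rm Zar}\subseteq\langle A\rangle_{\rm geo}$, giving $\delta_{\rm Zar}(A)\leq\delta_{\rm geo}(A)$. Conversely, if an intersection component $B$ is pre-weakly special, meaning $\langle B\rangle_{\rm Zar}$ is pre-weakly special, then $\langle B\rangle_{\rm Zar}$ is totally geodesic, so $\langle B\rangle_{\rm geo}\subseteq\langle B\rangle_{\rm Zar}$, and combined with the previous inclusion we obtain $\delta_{\rm geo}(B)=\delta_{\rm Zar}(B)$.

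Now suppose for contradiction that $A$ is geodesically optimal but not Zariski optimal. Then there exists an intersection component $B$ of $\pi^{-1}(V)$ with $A\subsetneq B\subseteq\pi^{-1}(V)$ and $\delta_{\rm Zar}(B)\leq\delta_{\rm Zar}(A)$. Among all such $B$, I would choose one that is Zariski optimal in $\pi^{-1}(V)$; such a choice exists because if $B$ fails to be Zariski optimal we can enlarge it further without increasing $\delta_{\rm Zar}$, and this process must terminate since $\dim B$ is bounded by $\dim\pi^{-1}(V)$. By Conjecture \ref{whasc2} (the weak hyperbolic Ax-Schanuel conjecture, which we are assuming for $V$), this Zariski optimal $B$ is pre-weakly special.

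Combining the observations: $\delta_{\rm geo}(B)=\delta_{\rm Zar}(B)\leq\delta_{\rm Zar}(A)\leq\delta_{\rm geo}(A)$, while $A\subsetneq B\subseteq\pi^{-1}(V)$. This directly contradicts the geodesic optimality of $A$, which would require $\delta_{\rm geo}(B)>\delta_{\rm geo}(A)$. Hence $A$ must be Zariski optimal. The argument is essentially a one-line deduction once the two defect comparisons are in place, so there is no real obstacle; the only minor subtlety is justifying that the enlargement process producing a Zariski optimal $B$ terminates, which is immediate from the bound on dimensions.
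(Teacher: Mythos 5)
Your proof is correct and follows essentially the same route as the paper: pass from the competing $B$ to a Zariski-optimal enlargement, invoke the weak hyperbolic Ax-Schanuel conjecture to get that $\langle B\rangle_{\rm Zar}$ is pre-weakly special hence totally geodesic, and then compare the two defects, the key inequalities $\delta_{\rm geo}(B)=\delta_{\rm Zar}(B)\leq\delta_{\rm Zar}(A)\leq\delta_{\rm geo}(A)$ being exactly those the paper uses. The only cosmetic differences are that you frame it as a contradiction while the paper directly concludes $B=A$, and you make the two defect comparisons fully explicit.
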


\begin{proof}
Suppose that $B$ is an intersection component of $\pi^{-1}(V)$ containing $A$ such that
\begin{align*}
\delta_{\rm Zar}(B)\leq\delta_{\rm Zar}(A).
\end{align*}
We can and do assume that $B$ is Zariski optimal and so, by the weak hyperbolic Ax-Schanuel conjecture, it is pre-weakly special. In particular, $\langle B\rangle_{\rm Zar}$ is a pre-weakly special subvariety of $X$ and, therefore, equal to $\langle B\rangle_{\rm geo}$. Then
\begin{align*}
\delta_{\rm geo}(B)=\delta_{\rm Zar}(B)\leq\delta_{\rm Zar}(A)\leq\delta_{\rm geo}(A),
\end{align*}
and, since $A$ is geodesically optimal in $\pi^{-1}(V)$, we conclude that $B=A$.
\end{proof}

\begin{lemma}\label{upstairs}
Assume that the weak hyperbolic Ax-Schanuel conjecture is true for $V$ and let $W$ be a subvariety of $V$ that is weakly optimal in $V$. Let $A$ be an irreducible analytic component of $\pi^{-1}(W)$. Then $A$ is an intersection component of $\pi^{-1}(V)$ and is geodesically optimal in $\pi^{-1}(V)$.
\end{lemma}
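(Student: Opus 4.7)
The plan is to prove the two conclusions---that $A$ is an intersection component of $\pi^{-1}(V)$ and that it is geodesically optimal in $\pi^{-1}(V)$---by transporting containment and dimension information between $X$ and $S$ and then invoking the weak optimality of $W$. Before either step, I will record a preliminary dictionary. Since $\pi$ is a local biholomorphism and $A$ is an irreducible analytic component of $\pi^{-1}(W)$, we have $\pi(A)=W$ and $\dim A=\dim W$. Moreover, $\langle A\rangle_{\rm geo}$ is pre-weakly special, so $\pi(\langle A\rangle_{\rm geo})$ is a weakly special subvariety of $S$ containing $W$, hence contains $\langle W\rangle_{\rm ws}$; conversely, any irreducible analytic component of $\pi^{-1}(\langle W\rangle_{\rm ws})$ containing $A$ is pre-weakly special, so by minimality it contains $\langle A\rangle_{\rm geo}$. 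This forces $\pi(\langle A\rangle_{\rm geo})=\langle W\rangle_{\rm ws}$ and, by the local-biholomorphism property of $\pi$, $\delta_{\rm geo}(A)=\delta_{\rm ws}(W)$.

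For the intersection-component claim, I will take as ambient subvariety of $X$ the pre-weakly special $Y:=\langle A\rangle_{\rm geo}$ and show that $A$ is an irreducible analytic component of $Y\cap\pi^{-1}(V)$. If some irreducible analytic $A'$ satisfies $A\subsetneq A'\subseteq Y\cap\pi^{-1}(V)$, then its Zariski closure $W':=\overline{\pi(A')}$ is an irreducible subvariety of $V\cap\langle W\rangle_{\rm ws}$ of dimension $\dim A'>\dim W$, so $W\subsetneq W'\subseteq V$. Using $\langle W'\rangle_{\rm ws}\subseteq\langle W\rangle_{\rm ws}$, a direct computation gives
\[
\delta_{\rm ws}(W')=\dim\langle W'\rangle_{\rm ws}-\dim W'<\dim\langle W\rangle_{\rm ws}-\dim W=\delta_{\rm ws}(W),
\]
contradicting weak optimality of $W$ in $V$.

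For geodesic optimality, I suppose $B$ is an intersection component of $\pi^{-1}(V)$ containing $A$ with $\delta_{\rm geo}(B)\leq\delta_{\rm geo}(A)$, and split on $W'':=\overline{\pi(B)}$. If $W''=W$, then $\pi(B)\subseteq W$, so $B\subseteq\pi^{-1}(W)$; as $B$ is an irreducible analytic set containing the irreducible analytic component $A$ of $\pi^{-1}(W)$, we must have $B=A$. If instead $W''\supsetneq W$, then $W''\subseteq V$ strictly contains $W$, and since $\pi(\langle B\rangle_{\rm geo})$ is a weakly special subvariety containing $W''$ we get $\langle W''\rangle_{\rm ws}\subseteq\pi(\langle B\rangle_{\rm geo})$; together with $\dim W''\geq\dim B$ this yields
\[
\delta_{\rm ws}(W'')\leq\dim\langle B\rangle_{\rm geo}-\dim B=\delta_{\rm geo}(B)\leq\delta_{\rm geo}(A)=\delta_{\rm ws}(W),
\]
again contradicting weak optimality.

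The main technical content is bookkeeping: systematically comparing the geodesic hull in $X$, the Zariski closures of $\pi$-images in $S$, and the weakly special hull in $S$, and using that $\pi$ is a local biholomorphism to identify dimensions across these ledgers. Interestingly, the weak hyperbolic Ax--Schanuel conjecture does not appear to enter this particular lemma directly; it is presumably assumed in the statement for uniformity with Lemmas \ref{geo} and \ref{ZOimpliesclosed}, which are invoked together in the subsequent deduction of Proposition \ref{fwo}.
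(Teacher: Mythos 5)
Your first part, showing that $A$ is an intersection component, is essentially sound: it is a minor variant of the paper's argument (which instead observes directly that $W$ is an irreducible component of $\langle W\rangle_{\rm ws}\cap V$ and that $\pi^{-1}(\langle W\rangle_{\rm ws})$ is the $\Gamma$-orbit of a pre-weakly special subvariety containing $A$). The second part, however, has a genuine gap, and your closing remark that the weak hyperbolic Ax-Schanuel conjecture ``does not appear to enter this particular lemma directly'' is wrong; it enters precisely at the step you have glossed over. You assert that $\pi(\langle B\rangle_{\rm geo})$ is a weakly special subvariety of $S$. But $\langle B\rangle_{\rm geo}$ is only the smallest \emph{totally geodesic} subvariety of $X$ containing the analytic set $B$, i.e., of the form $gF(\RR)^+g^{-1}x$ for a \emph{real} semisimple $F\subseteq G_\RR$ (Lemma \ref{characterisation}); nothing forces $F$ to come from a $\QQ$-group, so $\langle B\rangle_{\rm geo}$ need not be pre-weakly special and $\pi(\langle B\rangle_{\rm geo})$ need not be an algebraic, let alone a weakly special, subvariety of $S$. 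Without that, the inclusion $\langle W''\rangle_{\rm ws}\subseteq\pi(\langle B\rangle_{\rm geo})$ and hence the bound $\delta_{\rm ws}(W'')\leq\delta_{\rm geo}(B)$ simply do not follow. The same issue already infects your ``preliminary dictionary'' claim that $\langle A\rangle_{\rm geo}$ is pre-weakly special, which you use to get the \emph{equality} $\delta_{\rm geo}(A)=\delta_{\rm ws}(W)$. (Fortunately only the one-sided inequality $\delta_{\rm geo}(A)\leq\delta_{\rm ws}(W)$ is needed, and that one is automatic since $\langle A\rangle_{\rm geo}$ sits inside a $\Gamma$-translate of the pre-weakly special set above $\langle W\rangle_{\rm ws}$.)

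Ruling out such irrational geodesic hulls is exactly what the weak hyperbolic Ax-Schanuel conjecture supplies, and the paper's proof uses it in three interlocking places: one first replaces $B$ by a \emph{geodesically optimal} intersection component containing it (preserving $\delta_{\rm geo}(B)\leq\delta_{\rm geo}(A)$), then Lemma \ref{geo} (which invokes weak Ax-Schanuel) upgrades this to \emph{Zariski optimal}, whence weak Ax-Schanuel itself makes $\langle B\rangle_{\rm Zar}=\langle B\rangle_{\rm geo}$ pre-weakly special; finally Lemma \ref{ZOimpliesclosed} (again weak Ax-Schanuel) shows $Z:=\pi(B)$ is a closed subvariety of $V$, which is what allows the clean identity $\delta_{\rm ws}(Z)=\delta_{\rm geo}(B)$. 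To repair your argument you need to insert this machinery before your defect comparison; as written, the comparison rests on an unproved pre-weakly-speciality claim.
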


\begin{proof}
Clearly, $A$ is an intersection component of $\pi^{-1}(V)$ since $W$ is an irreducible component of $\langle W\rangle_{\rm ws}\cap V$ and $\pi^{-1}\langle W\rangle_{\rm ws}$ is equal to the $\Gamma$-orbit of a pre-weakly special subvariety of $X$.

Therefore, let $B$ be an intersection component of $\pi^{-1}(V)$ containing $A$ such that
\begin{align*}
\delta_{\rm geo}(B)\leq\delta_{\rm geo}(A).
\end{align*}
We can and do assume that $B$ is geodesically optimal in $\pi^{-1}(V)$ and so, by Lemma \ref{geo}, $B$ is Zariski optimal in $\pi^{-1}(V)$. Therefore, by the weak hyperbolic Ax-Schanuel conjecture, $B$ is pre-weakly special i.e.  $\langle B\rangle_{\rm Zar}$ is a pre-weakly special subvariety of $X$. 

Let $Z:=\pi(B)$ (which is a closed irreducible subvariety of $V$ by Lemma \ref{ZOimpliesclosed}). We claim that $\langle Z\rangle_{\rm ws}=\pi(\langle B\rangle_{\rm Zar})$. To see this, note that $Z$ is contained in $\pi(\langle B\rangle_{\rm Zar})$ and so $\langle Z\rangle_{\rm ws}$ is contained in $\pi(\langle B\rangle_{\rm Zar})$. On the other hand, $\langle B\rangle_{\rm Zar}$ is contained in $\pi^{-1}(\langle Z\rangle_{\rm ws})$ and so $\pi(\langle B\rangle_{\rm Zar})$ is contained in $\langle Z\rangle_{\rm ws}$, which proves the claim. Therefore,
\begin{align*}
\delta_{\rm ws}(Z)&=\dim\pi(\langle B\rangle_{\rm Zar})-\dim Z\\&=\dim \langle B\rangle_{\rm Zar}-\dim Z\\
&=\dim \langle B\rangle_{\rm Zar}-\dim B=\delta_{\rm geo}(B)\leq\delta_{\rm geo}(A)\\
&\leq\dim\langle W\rangle_{\rm ws}-\dim W=\delta_{\rm ws}(W).
\end{align*}
Since $W$ is weakly optimal in $V$ and contained in $Z$, we conclude that $Z=W$. In particular, $B$ is contained in $\pi^{-1}(W)$ and, therefore, $B=A$.
\end{proof}

Let us briefly summarize the relationship between Zariski optimal and weakly optimal.

\begin{proposition}\label{ZOsummary}
Assume that the weak hyperbolic Ax-Schanuel conjecture is true for $V$. 

If $A$ is a Zariski optimal intersection component of $\pi^{-1}(V)$, then $\pi(A)$ is a closed irreducible subvariety of $V$ that is weakly optimal in $V$. 

On the other hand, if $W$ is a subvariety of $V$ that is weakly optimal in $V$, and $A$ is an irreducible analytic component of $\pi^{-1}(W)$, then $A$ is a Zariski optimal intersection component of $\pi^{-1}(V)$.
\end{proposition}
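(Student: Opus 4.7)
The plan is to observe that Proposition \ref{ZOsummary} is a summary statement whose two halves have essentially already been established by the preceding lemmas; the proof amounts to assembling them.

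For the first half, I would simply invoke Lemma \ref{ZOimpliesclosed}: under the weak hyperbolic Ax-Schanuel conjecture for $V$, any Zariski optimal intersection component $A$ of $\pi^{-1}(V)$ has image $\pi(A)$ equal to a closed irreducible subvariety of $V$ that is weakly optimal in $V$. Nothing more needs to be said here.

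For the second half, let $W$ be a subvariety of $V$ that is weakly optimal in $V$ and let $A$ be an irreducible analytic component of $\pi^{-1}(W)$. By Lemma \ref{upstairs}, $A$ is an intersection component of $\pi^{-1}(V)$ that is geodesically optimal in $\pi^{-1}(V)$. Then by Lemma \ref{geo}, any geodesically optimal intersection component of $\pi^{-1}(V)$ is Zariski optimal in $\pi^{-1}(V)$, under the weak hyperbolic Ax-Schanuel conjecture for $V$. Combining the two lemmas yields precisely the claim that $A$ is Zariski optimal in $\pi^{-1}(V)$.

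There is no serious obstacle here, since the conceptual work has been done in Lemmas \ref{ZOimpliesclosed}, \ref{geo}, and \ref{upstairs}. The only minor point worth emphasizing in the write-up is that Lemma \ref{ZOimpliesclosed} and Lemma \ref{upstairs} both rely crucially on weak hyperbolic Ax-Schanuel for $V$, so the hypothesis is used in both directions of the equivalence; this is what makes the proposition a genuine characterization rather than merely a pair of implications.
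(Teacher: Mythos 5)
Your proposal matches the paper's own proof exactly: the first half is Lemma \ref{ZOimpliesclosed}, and the second half is Lemma \ref{upstairs} followed by Lemma \ref{geo}. Nothing to add.
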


\begin{proof}
The first claim is Lemma \ref{ZOimpliesclosed}, whereas the second claim is Lemma \ref{upstairs} and Lemma \ref{geo}.
\end{proof}

As explained in \cite{kuy:ax-lindemann}, there exists an open semialgebraic fundamental set $\mathcal{F}$ in $X$ for the action of $\Gamma$ such that the set $\mathcal{V}:=\pi^{-1}(V)\cap\mathcal{F}$ is definable. 

\begin{definition}\label{fs}
Once and for all, let $\mathcal{F}$ denote an open semialgebraic fundamental set $\mathcal{F}$ in $X$ for the action of $\Gamma$, as above, and let $\mathcal{V}$ denote the definable set $\pi^{-1}(V)\cap\mathcal{F}$.
\end{definition}
 
Recall from \cite{vdD:tame-topology}, 1.17 that the {\bf local dimension} $\dim_xA$ of a definable set $A$ at a point $x\in A$ is definable. By \cite{hp:o-min}, Lemma 6.2, if $A$ is also a (complex) analytic set, then this dimension is exactly twice the local analytic dimension at $x$. Furthermore, if $A$ is analytically irreducible, then its local dimension at the points of $A$ is constant. For the remainder of this section, dimensions will be taken in the sense of definable sets. The key step in the proof of Proposition \ref{fwo} is the following. 
 
\begin{proposition}\label{geoprop}
Assume that the weak hyperbolic Ax-Schanuel theorem is true for $V$. There exists a finite set $\Sigma$ of pre-special subvarieties of $X$ such that the following holds.

Let $A$ be an intersection component of $\pi^{-1}(V)$ that is pre-weakly special such that, for some $x\in\langle A\rangle_{\rm Zar}\cap\mathcal{V}$,
\begin{align*}
\dim A=\dim_x(\langle A\rangle_{\rm Zar}\cap\mathcal{V}).
\end{align*} 
Then there exists $Y\in\Sigma$ such that $\langle A\rangle_{\rm Zar}$ is equal to a fiber of $Y$.
\end{proposition}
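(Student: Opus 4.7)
My plan is to combine a classical finiteness theorem for $G(\mathbb{Q})$-conjugacy classes of semisimple $\mathbb{Q}$-subgroups of $G$ with an o-minimality argument applied to a definable family that parameterizes pre-weakly special subvarieties of $X$ cut out by the local dimension hypothesis. The weak hyperbolic Ax--Schanuel assumption itself will not be used explicitly here; it has already been absorbed into the standing hypothesis that we are dealing with pre-weakly special intersection components.

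First, I would reduce to a fixed conjugacy class. By a classical result from the theory of algebraic groups (used in this context, e.g., in \cite{ullmo:applications}), there are only finitely many $G(\mathbb{Q})$-conjugacy classes of connected semisimple $\mathbb{Q}$-subgroups of $G$. Hence there are only finitely many $G(\mathbb{Q})$-conjugacy classes of pre-special subvarieties of $X$, and similarly only finitely many decompositions $M^{\mathrm{ad}} = M_1 \times M_2$ for each Mumford--Tate group $M$. It therefore suffices, having fixed a representative pre-special $X_{M_0}$ and a decomposition $X_{M_0} = X_{0,1} \times X_{0,2}$ (with $d := \dim X_{0,1}$), to show that finitely many $G(\mathbb{Q})$-conjugates $gX_{M_0}$ need to appear in $\Sigma$ to cover all $\langle A\rangle_{\rm Zar}$ of the corresponding ``type'' as fibers.

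Second, I would introduce the definable family. For $(g,y) \in G(\mathbb{R})^+ \times X_{0,2}$, the subvariety $P_{g,y} := g \cdot (X_{0,1} \times \{y\})$ is a pre-weakly special subvariety of $X$, and by definability of the Harish--Chandra embedding \cite{kuy:ax-lindemann} this forms a definable family. Using the definability of $\mathcal{V}$ (Definition \ref{fs}) together with definability of local dimension \cite{vdD:tame-topology}, the set
\[
T := \bigl\{(g,y) : \exists\, x \in P_{g,y} \cap \mathcal{V}\text{ with } \dim_x(P_{g,y} \cap \mathcal{V}) = d\bigr\}
\]
is a definable subset of $G(\mathbb{R})^+ \times X_{0,2}$, and so by o-minimality of $\mathbb{R}_{\mathrm{an},\exp}$ it has only finitely many connected components $C_1, \ldots, C_N$.

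Finally, I would carry out the rigidity step. On each $C_i$, the assignment $(g,y) \mapsto g M_0 g^{-1}$ is a continuous, definable map into the real-analytic variety of $G(\mathbb{R})$-conjugates of $M_0$, and only those $g$ with $gM_0g^{-1}$ defined over $\mathbb{Q}$ yield genuine pre-special subvarieties of $X$. The aim is to combine the definable structure on $C_i$ with the fact that the $\mathbb{Q}$-rational conjugates form a discrete subset of the $G(\mathbb{R})$-orbit of $M_0$ in order to conclude that only finitely many pre-specials $gX_{M_0}$ arise as images from each $C_i$; taking their union over the finitely many $C_i$ (and over all finitely many conjugacy classes and decompositions from the first step) then produces the required finite $\Sigma$. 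The main obstacle I anticipate is precisely this last rigidity step: one must carefully exploit the interaction between definability in $\mathbb{R}_{\mathrm{an},\exp}$ and the arithmetic structure of $G(\mathbb{Q})$-conjugates of $M_0$ to rule out continuous positive-dimensional families of $\mathbb{Q}$-rational conjugates meeting $\mathcal{V}$ in good dimension. I expect this to proceed via a boundedness argument (the dimension condition constrains $g$ to a region meeting the fundamental set $\mathcal{F}$) together with the observation that a definable family of rational pre-specials contained in a bounded region of the parameter space must be finite --- an assertion in the same spirit as, but weaker than, the counting statements on which the Pila--Zannier method rests.
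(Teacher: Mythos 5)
Your proposal has a genuine gap, and it centers on exactly the step you flag as the ``main obstacle.'' You assert that the weak hyperbolic Ax--Schanuel assumption ``will not be used explicitly here; it has already been absorbed into the standing hypothesis.'' This is incorrect, and it is the source of the gap. The hypothesis of the proposition does say that the given $A$ is pre-weakly special, but that is not enough: to produce a \emph{finite} set $\Sigma$ you need a uniform finiteness statement, and the route to that is to construct a definable set whose members are automatically $\QQ$-rational, then invoke ``definable $+$ countable $=$ finite.'' In the paper's proof this is precisely where Ax--Schanuel enters: the definable sets $\Pi_1$ and $\Pi_2$ are cut out not merely by a local-dimension condition (as your $T$ is) but by definable \emph{optimality} comparisons between Zariski defects and dimensions, and Lemma \ref{countable} uses the weak hyperbolic Ax--Schanuel conjecture to show that every $(x,g,F)\in\Pi_2$ gives rise to a pre-weakly special, hence $\QQ$-rational, subgroup $gFg^{-1}$. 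That converts $\Pi_2$ into a countable definable set, and Lemma \ref{finiteset} then applies o-minimality. Without Ax--Schanuel you cannot conclude that the members of any reasonable definable parameter set are $\QQ$-groups, and ``$\QQ$-rationality'' is not itself a definable condition you can intersect with $T$.

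Your set $T$ is therefore too coarse: it only records that $P_{g,y}\cap\mathcal{V}$ has somewhere local dimension $d$, so it will contain transcendental conjugates in abundance and need not be countable. Your suggested fix --- a boundedness argument together with the observation that ``a definable family of rational pre-specials contained in a bounded region of the parameter space must be finite'' --- does not hold as stated: the $\QQ$-rational conjugates $gM_0g^{-1}$ lying in a bounded region of $G(\RR)$ can be infinite in number (think of rational points on a quotient variety in a compact neighbourhood), so boundedness alone is not a finiteness mechanism. What rescues the argument in the paper is that the extra defect-optimality conditions built into $\Pi_1,\Pi_2$ are definable and, \emph{via Ax--Schanuel}, force $\QQ$-rationality, which gives countability; o-minimality then gives finiteness of the image of $\Pi_2$ in $G(\RR)/N_{G(\RR)}(F)$. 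Your first step (reducing to finitely many $G(\QQ)$-conjugacy classes of semisimple $\QQ$-subgroups) is a reasonable alternative to the paper's use of $G(\RR)$-conjugacy classes without compact factors, and step two's definability observations are in the right spirit, but the plan for the rigidity step is not a sketch of an argument so much as a restatement of the problem, and the dismissal of Ax--Schanuel removes the tool that actually closes it.
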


In order to prove Proposition \ref{geoprop}, we require some further preparations.

\begin{definition}
We say that a real semisimple algebraic group $ F$ is {\bf without compact factors} if it is equal to an almost direct product of almost simple subgroups whose underlying real Lie groups are not compact. We allow the product to be trivial i.e. we consider the trivial group as a real semisimple algebraic group without compact factors.
\end{definition}

\begin{lemma}\label{characterisation}
A subvariety of $X$ that is totally geodesic in $X$ is of the form
\begin{align*}
{ F}(\RR)^+ x,
\end{align*}
where $ F$ is a semisimple algebraic subgroup of ${ G}_{\RR}$ without compact factors and $x\in X$ factors through
\begin{align*}
{ G}_{ F}:={ F}{ Z}_{{ G}_{\RR}}({ F})^{\circ}.
\end{align*}
Conversely, if ${ F}$ is a semisimple algebraic subgroup of ${ G}_{\RR}$ without compact factors and $x\in X$ factors through ${ G}_{ F}$, then ${ F}(\RR)^+x$ is a subvariety of $X$ that is totally geodesic in $X$.
\end{lemma}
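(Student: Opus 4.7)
The plan is to combine the classical correspondence, due to Élie Cartan, between totally geodesic submanifolds of a Riemannian symmetric space and Lie triple systems in the tangent space at a base point, with the algebraicity results of Moonen (\cite{Moonen:linear1}) which bridge the differential-geometric and the Harish-Chandra-algebraic viewpoints. Concretely, if we fix $x_0 \in X$ with associated Cartan decomposition $\mathfrak{g}_\RR = \mathfrak{k}_{x_0} \oplus \mathfrak{m}_{x_0}$ (where $\mathfrak{k}_{x_0}$ is the Lie algebra of the stabilizer of $x_0$ in $G(\RR)^+$), then totally geodesic submanifolds of $X$ through $x_0$ are in bijection with Lie triple systems in $\mathfrak{m}_{x_0}$, and an orbit of a $\theta_{x_0}$-stable semisimple subgroup recovers such a submanifold.

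For the forward direction, I would start with a totally geodesic subvariety $Y$ of $X$, pick $x_0 \in Y$, and let $\mathfrak{m}_Y := T_{x_0}Y \subseteq \mathfrak{m}_{x_0}$. This is a Lie triple system, so $\mathfrak{f} := [\mathfrak{m}_Y,\mathfrak{m}_Y] \oplus \mathfrak{m}_Y$ is a real Lie subalgebra of $\mathfrak{g}_\RR$, stable under the Cartan involution $\theta_{x_0}$, and reductive. Let $F_0$ be the connected Lie subgroup of $G(\RR)^+$ integrating $\mathfrak{f}$; the bijection above yields $Y = F_0 \cdot x_0$. Algebraicity of $F_0$ as a subgroup of $G_\RR$ follows from the fact that $Y$ is algebraic in the Harish-Chandra realization (cf.\ \cite{Moonen:linear1}, Section 3), which forces $\mathfrak{f}$ to be the Lie algebra of an $\RR$-algebraic subgroup. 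Decomposing $F_0$ as an almost direct product of its compact and noncompact almost-simple factors and of its central torus, the compact and central parts lie in the isotropy $\mathfrak{k}_{x_0} \cap \mathfrak{f}$ and therefore act trivially on the orbit; replacing $F_0$ by its noncompact semisimple part yields the required $F$ with $F(\RR)^+ \cdot x_0 = Y$. Finally, $\theta_{x_0}$-stability of $\mathfrak{f}$ combined with the fact that the Cartan involution at $x_0$ is conjugation by $x_0(i)$ shows that $x_0(\SSS)$ normalizes $F$; since $F$ is semisimple without compact factors and $x_0(\SSS) \subseteq K_\infty$ commutes with $Z_{G_\RR}(F)^\circ$ up to its action inside $F$ itself, one deduces $x_0(\SSS) \subseteq G_F = F\,Z_{G_\RR}(F)^\circ$.

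For the converse, given $F$ semisimple without compact factors and $x \in X$ factoring through $G_F$, the image of $x$ in $G_F^\ad$ defines a Cartan involution on $F$ (since the factor through $Z_{G_\RR}(F)^\circ$ contributes nothing to $F^\ad$), so $F(\RR)^+ \cdot x$ is naturally identified with the symmetric space $F(\RR)^+/K_F$, where $K_F$ is the stabilizer of $x$ in $F(\RR)^+$. Its tangent space at $x$ is the $(-1)$-eigenspace of this Cartan involution inside $\mathfrak{f}$, which is automatically a Lie triple system in $\mathfrak{m}_x$, so $F(\RR)^+ \cdot x$ is totally geodesic. Algebraicity in the Harish-Chandra sense follows from the Borel embedding applied to $F$: the symmetric space of $F$ embeds as a bounded domain that is the intersection of $X$ with the Harish-Chandra realization of the complex orbit of $F_\CC$, an algebraic condition.

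The main obstacle is the interface between differential geometry and algebraicity: a priori the subgroup $F_0$ obtained from a Lie triple system is only a real Lie subgroup, and one must invoke the algebraicity of $Y$ in the Harish-Chandra realization to upgrade this to an $\RR$-algebraic subgroup. The delicate point is also verifying the condition that $x$ factors through $G_F$ rather than merely through the normalizer of $F$; this requires using that $F$ has no compact factors so that the would-be compact part of the stabilizer of $x$ inside the normalizer is forced into the centralizer $Z_{G_\RR}(F)^\circ$.
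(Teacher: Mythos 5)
The paper does not prove this lemma itself; the proof there is a single citation to \cite{uy:algebraic-flows}, Proposition~2.3. Your route through Cartan's dictionary between totally geodesic submanifolds of a symmetric space and Lie triple systems is the standard one, and it is essentially the argument underlying that reference, so the framework is sound and the converse direction is fine modulo the usual details (restriction of Cartan involutions to $\theta$-stable semisimple subgroups, and intersecting $X$ with the $F(\CC)$-orbit in the compact dual for algebraicity).

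There is, however, a genuine gap in the forward direction at the step ``$\theta_{x_0}$-stability of $\mathfrak{f}$ \dots\ shows that $x_0(\SSS)$ normalizes $F$.'' Since $\theta_{x_0}=\ad(x_0(i))$, $\theta_{x_0}$-stability of $\mathfrak{f}$ only tells you that the single element $x_0(i)$ normalizes $\mathfrak{f}$; but $x_0(i)$ has finite order, while $x_0(\SSS)$ is a positive-dimensional torus, so normalization by $x_0(i)$ alone does not propagate to all of $x_0(\SSS)$, and without that you cannot conclude that $x_0$ factors through $G_F$. The missing ingredient is precisely the hypothesis that $Y$ is a \emph{subvariety} of $X$, i.e.\ a complex (holomorphic) submanifold, not merely a totally geodesic real submanifold: holomorphicity forces the tangent space $\mathfrak{m}_Y$ to be stable under the complex structure $J$ on $\mathfrak{m}_{x_0}$, and then $\mathfrak{m}_{Y,\CC}$ decomposes as a sum of $x_0(\SSS)$-weight spaces inside $\mathfrak{g}^{-1,1}\oplus\mathfrak{g}^{1,-1}$, so that $x_0(\SSS)$ preserves $\mathfrak{m}_Y$ and therefore $\mathfrak{f}$. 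This is not cosmetic: a real geodesic in $\HH$ is totally geodesic, arises from an abelian Lie triple system, and is not of the stated form --- holomorphicity is exactly what excludes such examples. Once $x_0(\SSS)\subseteq N_{G_\RR}(F)$ is established, the conclusion $x_0(\SSS)\subseteq G_F$ follows simply from connectedness of $x_0(\SSS)$ together with $N_{G_\RR}(F)^\circ=F\,Z_{G_\RR}(F)^\circ$, which holds because $\mathfrak{f}$ is semisimple (so $\mathfrak{n}_{\mathfrak{g}_\RR}(\mathfrak{f})=\mathfrak{f}\oplus\mathfrak{z}_{\mathfrak{g}_\RR}(\mathfrak{f})$); the appeal to ``no compact factors'' in your final sentence is not what is doing the work there.
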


\begin{proof}
See \cite{uy:algebraic-flows}, Proposition 2.3.
\end{proof}

We let $\Omega$ denote a set of representatives for the ${ G}(\RR)$-conjugacy classes of semisimple algebraic subgroups of ${G}_{\RR}$ that are without compact factors. Note that $\Omega$ is a finite set (see \cite{BDR}, Corollary 0.2, for example), and it is clear that the set
\begin{align*}
\Pi_0:=\{(x,g,F)\in \mathcal{V}\times{ G}(\RR)\times\Omega:x(\mathbb{S})\subseteq g{ G}_{{ F}}g^{-1}\},
\end{align*}
parametrising (albeit in a many-to-one fashion) the totally geodesic subvarieties of $X$ passing through $\mathcal{V}$, is definable. Consider the two functions
\begin{align*}
d(x,g,F)&:=\dim_x(g{ F}(\RR)^+g^{-1}x)=\dim(g{ F}(\RR)^+g^{-1}x)\\
d_{\mathcal{V}}(x,g,F)&:=\dim_x(\mathcal{V}\cap g{ F}(\RR)^+g^{-1}x),
\end{align*}
and let $\Pi_1$ denote the definable set
\begin{align*}
\{(x,g,F)\in\Pi_0:&\ (x,g_1,F_1)\in\Pi_0,\ gF(\RR)^+g^{-1} x\subsetneq g_1F_1(\RR)^+g_1^{-1} x\\
& \implies d(x,g,F)-d_{\mathcal{V}}(x,g,F)<d(x,g_1,F_1)-d_{\mathcal{V}}(x,g_1,F_1)\}.
\end{align*}
Finally, let $\Pi_2$ denote the definable set
\begin{align*}
\{(x,g,F)\in\Pi_1:&\ (x,g_1,F_1)\in\Pi_0,\ g_1F_1(\RR)^+g_1^{-1} x\subsetneq gF(\RR)^+g^{-1} x\\
& \implies d_{\mathcal{V}}(x,g_1,F_1)<d_{\mathcal{V}}(x,g,F)\}.
\end{align*}

The proof of Proposition \ref{geoprop} will require the following three lemmas.

\begin{lemma}\label{lem3}
Let $A$ be an intersection component of $\pi^{-1}(V)$ that is pre-weakly special such that, for some $x\in\langle A\rangle_{\rm Zar}\cap\mathcal{V}$,
\begin{align*}
\dim A=\dim_x(\langle A\rangle_{\rm Zar}\cap\mathcal{V}).
\end{align*}  
Then we can write
\begin{align*}
\langle A\rangle_{\rm Zar}=g{ F}(\RR)^+g^{-1} x,
\end{align*}
where $(x,g,F)\in\Pi_2$.
\end{lemma}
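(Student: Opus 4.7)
The plan is to construct the triple $(x, g, F) \in \Pi_2$ satisfying $\langle A \rangle_{\rm Zar} = g F(\RR)^+ g^{-1}x$, verifying in turn membership in $\Pi_0$, in $\Pi_1$, and then in $\Pi_2$. Since $A$ is pre-weakly special, $\langle A \rangle_{\rm Zar}$ is a pre-weakly special subvariety of $X$, hence totally geodesic. Applying Lemma \ref{characterisation}, I can write $\langle A \rangle_{\rm Zar} = E(\RR)^+ y$ for some semisimple $E \subseteq G_{\RR}$ without compact factors and $y \in X$ factoring through $G_E$. Choosing $F \in \Omega$ and $g \in G(\RR)$ with $E = gFg^{-1}$ and translating along the $E(\RR)^+$-orbit to replace $y$ by $x \in \langle A \rangle_{\rm Zar}$, I obtain $\langle A \rangle_{\rm Zar} = gF(\RR)^+ g^{-1} x$. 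The factorization condition $x(\SSS) \subseteq g G_F g^{-1}$ follows from $x \in \langle A \rangle_{\rm Zar}$ via Lemma \ref{characterisation}, so $(x, g, F) \in \Pi_0$.

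The crux is verifying $(x, g, F) \in \Pi_1$. Since $\mathcal{F}$ is open around $x$ and the hypothesis $\dim A = \dim_x(\langle A \rangle_{\rm Zar} \cap \mathcal{V})$ holds, one computes $d(x, g, F) - d_{\mathcal{V}}(x, g, F) = \dim \langle A \rangle_{\rm Zar} - \dim A = \delta_{\rm Zar}(A)$ in the sense of definable dimensions. Given $(x, g_1, F_1) \in \Pi_0$ with $Y_1 := g_1 F_1(\RR)^+ g_1^{-1} x \supsetneq \langle A \rangle_{\rm Zar}$, I must show that every irreducible analytic component $B$ of $Y_1 \cap \pi^{-1}(V)$ through $x$ satisfies $\dim B < \dim Y_1 - \delta_{\rm Zar}(A)$. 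My approach is to pick $B$ containing a component of $\langle A \rangle_{\rm Zar} \cap \pi^{-1}(V)$ through $x$ of the maximal dimension $\dim A$ afforded by the hypothesis. Then $\langle B \rangle_{\rm Zar}$ is contained in $Y_1$ and, by the minimality of $\langle A \rangle_{\rm Zar}$, contains $\langle A \rangle_{\rm Zar}$. If $\langle B \rangle_{\rm Zar} = \langle A \rangle_{\rm Zar}$, maximality of the local component forces $\dim B = \dim A$, and the strict bound follows since $\dim Y_1 > \dim \langle A \rangle_{\rm Zar}$. Otherwise $\langle A \rangle_{\rm Zar} \subsetneq \langle B \rangle_{\rm Zar} \subseteq Y_1$; I then pass to a Zariski-optimal intersection component $B^*$ containing $B$ (so $\delta_{\rm Zar}(B^*) \leq \delta_{\rm Zar}(B)$), invoke the weak hyperbolic Ax-Schanuel conjecture to conclude that $\langle B^* \rangle_{\rm Zar}$ is pre-weakly special, and compare the dimensions of $\langle A \rangle_{\rm Zar}$, $\langle B^* \rangle_{\rm Zar}$, and $Y_1$ to force the required strict inequality.

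Finally, for the extra condition defining $\Pi_2$, let $(x, g_1, F_1) \in \Pi_0$ with $Y_1 \subsetneq \langle A \rangle_{\rm Zar}$. If $d_{\mathcal{V}}(x, g_1, F_1) = \dim A$, then some irreducible analytic component $C$ of $Y_1 \cap \pi^{-1}(V)$ through $x$ would satisfy $\dim C = \dim A$, and $\langle C \rangle_{\rm Zar} \subseteq Y_1 \subsetneq \langle A \rangle_{\rm Zar}$ would give a strictly smaller subvariety of $X$ containing a maximal-dimensional analytic component of $\langle A \rangle_{\rm Zar} \cap \pi^{-1}(V)$ through $x$, contradicting minimality of $\langle A \rangle_{\rm Zar}$. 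The principal obstacle is the strict inequality in the $\Pi_1$ step: ruling out the borderline case where $Y_1 \cap \pi^{-1}(V)$ has a maximal-dimensional analytic component through $x$ is exactly where weak hyperbolic Ax-Schanuel is brought to bear, while the remainder of the proof is careful bookkeeping between totally geodesic and algebraic structures on $X$ and between real and complex dimensions.
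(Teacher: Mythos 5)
There is a genuine gap in your treatment of the $\Pi_1$ membership, and it masks the key tool the paper actually uses.

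You correctly reduce matters to showing that, for any $(x,g_1,F_1)\in\Pi_0$ with $Y_1:=g_1F_1(\RR)^+g_1^{-1}x\supsetneq\langle A\rangle_{\rm Zar}$, one has $d_{\mathcal V}(x,g_1,F_1)<\dim Y_1-\delta_{\rm Zar}(A)$. The quantity $d_{\mathcal V}(x,g_1,F_1)$ is the \emph{maximum} local dimension at $x$ over all irreducible analytic components of $Y_1\cap\pi^{-1}(V)$ through $x$. You then restrict attention to a particular $B$ --- one chosen to contain a maximal-dimensional component of $\langle A\rangle_{\rm Zar}\cap\pi^{-1}(V)$ through $x$ --- and bound $\dim B$ for that $B$. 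Nothing in your argument rules out a different component $B'$ of $Y_1\cap\pi^{-1}(V)$ through $x$, having nothing to do with $\langle A\rangle_{\rm Zar}$, realizing the maximum. The paper avoids exactly this by taking $B$ to be a component of \emph{maximal} local dimension at $x$ and then invoking the Intersection Inequality (\cite{gr:sheaves}, Chapter 5, \S3) inside the smooth ambient $Y_1$ to conclude $\dim_x(B\cap\langle A\rangle_{\rm Zar})\geq\dim A$; from that, $B$ contains a neighbourhood of $x$ in $A$, hence $A\subseteq B$, and Zariski optimality plus the inequality $\delta_{\rm Zar}(B)\le\delta_{\rm Zar}(A)$ forces $A=B$, after which the strict containment $\langle A\rangle_{\rm Zar}\subsetneq Y_1$ produces the desired contradiction. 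You do not use the Intersection Inequality at all, and the final sentence of your $\Pi_1$ step --- ``compare the dimensions of $\langle A\rangle_{\rm Zar}$, $\langle B^*\rangle_{\rm Zar}$, and $Y_1$ to force the required strict inequality'' --- is not an argument; indeed $\langle B^*\rangle_{\rm Zar}$ need not even be contained in $Y_1$, since $B^*$ is an arbitrary Zariski optimal intersection component of $\pi^{-1}(V)$ enlarging $B$.

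A structurally important discrepancy: you invoke the weak hyperbolic Ax-Schanuel conjecture to deduce that $\langle B^*\rangle_{\rm Zar}$ is pre-weakly special. But the statement of this lemma carries no such hypothesis, and the paper's proof makes no use of Ax-Schanuel here; it is deliberately kept unconditional at this stage (Ax-Schanuel enters only afterwards, in Lemma \ref{countable} and Lemma \ref{finiteset}). Importing Ax-Schanuel into this lemma would quietly weaken the statement. Your $\Pi_0$ and $\Pi_2$ steps are in line with the paper, but the $\Pi_1$ step needs to be replaced by the Intersection Inequality argument to be both correct and free of unstated hypotheses.
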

\begin{proof}
By Lemma \ref{characterisation}, we can write
\begin{align*}
\langle A\rangle_{\rm Zar}=g{ F}(\RR)^+g^{-1} x
\end{align*}
for some $F\in\Omega$ and some $x\in\mathcal{V}$ that factors through $gG_Fg^{-1}$. In particular, $(x,g,F)\in\Pi_0$. By assumption, we can and do choose $x\in\langle A\rangle_{\rm Zar}\cap\mathcal{V}$ such that
\begin{align*}
\dim A=\dim_x(\langle A\rangle_{\rm Zar}\cap\mathcal{V})=d_{\mathcal{V}}(x,g,F).
\end{align*}

Suppose that $(x,g,F)$ does not belong to $\Pi_1$ i.e. that there exists $(x,g_1,F_1)\in\Pi_0$ such that 
\begin{align*}
gF(\RR)^+g^{-1} x\subsetneq g_1F_1(\RR)^+g_1^{-1} x,
\end{align*}
and
\begin{align}\label{ineq}
d(x, g,F)-d_{\mathcal{V}}(x,g,F)\geq d(x,g_1,F_1)-d_{\mathcal{V}}(x,g_1,F_1).
\end{align}
Let $B$ be an irreducible analytic component of 
\begin{align*}
g_1F_1(\RR)^+g_1^{-1} x\cap\pi^{-1}(V)
\end{align*}	
passing through $x$ such that
\begin{align*}
\dim B=d_{\mathcal{V}}( x,g_1,F_1).
\end{align*}
From (\ref{ineq}), we obtain $\delta_{\rm Zar}(B)\leq\delta_{\rm Zar}( A)$.

On the other hand, the Intersection Inequality (see \cite{gr:sheaves}, Chapter 5, \S3) yields
\begin{align*}
\dim_{x} (B\cap \langle A\rangle_{\rm Zar})\geq\dim B+\dim \langle A\rangle_{\rm Zar}-d( x,g_1,F_1)
\end{align*}
and, from (\ref{ineq}), we obtain
\begin{align*}
\dim_{ x}( B\cap  \langle A\rangle_{\rm Zar})\geq\dim A.
\end{align*}
It follows that $B\cap\langle A\rangle_{\rm Zar}$, and hence $B$ itself, contains a complex neighbourhood of $ x$ in $A$, which implies that $A$ is contained in $B$.

Since $A$ is Zariski optimal, we conclude that $ A=B$. However, this implies that
\begin{align*}
d( x,g_1,F_1)-d_{\mathcal{V}}( x,g_1,F_1)>2\delta_{\rm Zar}(B)=2\delta_{\rm Zar}( A)=d( x, g, F)-d_{\mathcal{V}}( x, g,F),
\end{align*}
which contradicts (\ref{ineq}). Therefore, $(x,g,F)\in\Pi_1$.

Now suppose that $( x, g,F)$ does not belong to $\Pi_2$ i.e. that there exists $( x,g_1,F_1)\in\Pi_0$ such that 
\begin{align*}
g_1F_1(\RR)^+g_1^{-1} x\subsetneq gF(\RR)^+g^{-1} x,
\end{align*}
and
\begin{align*}
d_{\mathcal{V}}(x,g_1,F_1)=d_{\mathcal{V}}( x, g,F)=\dim  A.
\end{align*}
But then $ A$ is contained in
\begin{align*}
g_1F_1(\RR)^+g_1^{-1} x\subsetneq\langle A\rangle_{\rm Zar},
\end{align*}
which contradicts the definition of $\langle A\rangle_{\rm Zar}$.
\end{proof}

\begin{lemma}\label{countable}
Assume that the weak hyperbolic Ax-Schanuel conjecture is true for $V$. Then, if $(x,g,F)\in\Pi_2$, there exists a semisimple subgroup $F'$ of $G$ defined over $\QQ$ such that $g{ F}g^{-1}$ is equal to the almost direct product of the almost simple factors of $F'_{\RR}$ whose underlying real Lie groups are non-compact. 
\end{lemma}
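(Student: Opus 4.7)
The plan is to associate to $(x,g,F) \in \Pi_2$ the totally geodesic subvariety $T := gF(\RR)^+g^{-1}x$ together with an irreducible analytic component $A$ of $T \cap \pi^{-1}(V)$ passing through $x$ whose local complex dimension at $x$ equals $d_{\mathcal{V}}(x,g,F)/2$, and to show, via the weak hyperbolic Ax-Schanuel conjecture, that $T$ itself is pre-weakly special. Once this is granted, unpacking the definition of pre-weakly special gives a $\QQ$-Mumford-Tate group $M$, a $\QQ$-splitting $M^{\ad}=M_1\times M_2$, and the corresponding normal $\QQ$-semisimple subgroup $F'\subseteq M^{\der}$ surjecting onto $M_1$ (say) such that $T=F'(\RR)^+x$. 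Since both $F'(\RR)^+$ and $gF(\RR)^+g^{-1}$ act transitively on $T$ with compact stabiliser at $x$, and $gFg^{-1}$ is semisimple without compact factors, the compact almost simple factors of $F'_{\RR}$ act trivially on $T$, while the non-compact factors must coincide, in the almost direct product sense, with $gFg^{-1}$, which is precisely what the lemma asserts.

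The main technical step is to prove that $A$ is geodesically optimal in $\pi^{-1}(V)$ and that $\langle A\rangle_{\rm geo}=T$. The latter follows from the $\Pi_2$ condition: writing $\langle A\rangle_{\rm geo}=g_1F_1(\RR)^+g_1^{-1}x$ using Lemma \ref{characterisation}, a strict containment $\langle A\rangle_{\rm geo}\subsetneq T$ would force $d_{\mathcal{V}}(x,g_1,F_1)\geq 2\dim A=d_{\mathcal{V}}(x,g,F)$, contradicting the defining condition of $\Pi_2$. For geodesic optimality, suppose $B\supseteq A$ is an intersection component of $\pi^{-1}(V)$ with $\delta_{\rm geo}(B)\leq\delta_{\rm geo}(A)$ and write $\langle B\rangle_{\rm geo}=g_2F_2(\RR)^+g_2^{-1}x$; then $T\subseteq\langle B\rangle_{\rm geo}$. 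A short dimension count using $2\dim B\leq d_{\mathcal{V}}(x,g_2,F_2)$, $2\dim T=d(x,g,F)$, and $2\dim\langle B\rangle_{\rm geo}=d(x,g_2,F_2)$ converts the hypothesis $\delta_{\rm geo}(B)\leq\delta_{\rm geo}(A)$ into $d(x,g_2,F_2)-d_{\mathcal{V}}(x,g_2,F_2)\leq d(x,g,F)-d_{\mathcal{V}}(x,g,F)$. If $T\subsetneq\langle B\rangle_{\rm geo}$, this contradicts the $\Pi_1$ condition, forcing $\langle B\rangle_{\rm geo}=T$ and hence $B=A$. The delicate bookkeeping between real definable and complex analytic dimensions, and the choice of representative in $\Omega$, is where the argument has to be run carefully; this is the principal obstacle.

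With geodesic optimality in hand, Lemma \ref{geo} (which itself invokes the weak hyperbolic Ax-Schanuel conjecture for $V$) gives that $A$ is Zariski optimal, and Conjecture \ref{whasc2} then asserts that $\langle A\rangle_{\rm Zar}$ is pre-weakly special, hence in particular totally geodesic. The two inclusions $T=\langle A\rangle_{\rm geo}\subseteq\langle A\rangle_{\rm Zar}$ (since $\langle A\rangle_{\rm Zar}$ is totally geodesic and contains $A$) and $\langle A\rangle_{\rm Zar}\subseteq T$ (since $T$ is a subvariety of $X$ containing $A$) then force $T=\langle A\rangle_{\rm Zar}$. Thus $T$ is pre-weakly special, and the extraction of $F'$ outlined in the first paragraph completes the proof.
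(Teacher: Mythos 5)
Your proof is correct, but it routes through the optimality hierarchy in a different order than the paper does, so it is worth recording the comparison.

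The paper's argument starts by taking a Zariski optimal intersection component $B\supseteq A$ with $\delta_{\rm Zar}(B)\leq\delta_{\rm Zar}(A)$, applies the weak hyperbolic Ax-Schanuel conjecture once to deduce that $\langle B\rangle_{\rm Zar}$ is pre-weakly special, and then uses the $\Pi_2$ condition on an irreducible analytic component $Y$ of the intersection $gF(\RR)^+g^{-1}x\cap\langle B\rangle_{\rm Zar}$ containing $A$ (implicitly invoking the fact that components of intersections of totally geodesic subvarieties are again totally geodesic, so that $Y$ can be written as $g_1F_1(\RR)^+g_1^{-1}x$) to force $gF(\RR)^+g^{-1}x\subseteq\langle B\rangle_{\rm Zar}$; it then applies the $\Pi_1$ condition to $\langle B\rangle_{\rm Zar}$ itself to obtain equality. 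Your argument instead establishes, purely from the definitions of $\Pi_1$ and $\Pi_2$ and without any transcendence input, that $\langle A\rangle_{\rm geo}=gF(\RR)^+g^{-1}x$ and that $A$ is geodesically optimal in $\pi^{-1}(V)$; it then plugs this into Lemma \ref{geo} (the already-proved implication ``geodesically optimal $\Rightarrow$ Zariski optimal'' under weak hyperbolic Ax-Schanuel) and into Conjecture \ref{whasc2} directly, before closing with the sandwich $T=\langle A\rangle_{\rm geo}\subseteq\langle A\rangle_{\rm Zar}\subseteq T$. The net effect is the same, but your version makes the ``no-transcendence'' content of the $\Pi_1,\Pi_2$ conditions explicit as a geodesic-optimality statement, feeds it through the stated Lemma \ref{geo} rather than re-running a version of its proof inline, and avoids the auxiliary component $Y$ and the subsidiary geometric fact about intersections of totally geodesic sets; the paper's version is a little more economical in that it never needs to show that $A$ itself is optimal, only that some optimal $B$ above it has $\langle B\rangle_{\rm Zar}=gF(\RR)^+g^{-1}x$. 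Your final paragraph also re-derives, rather than simply cites, the content of Proposition 3.1 of the cited paper of Ullmo for the extraction of $F'$, which is harmless but longer than the paper's one-line appeal. One small bookkeeping note: you silently switch between complex-analytic dimension and the definable (real) dimension used by the $d,d_{\mathcal{V}}$ functions; the paper fixes the convention that definable dimension is meant, so when you write $2\dim A=d_{\mathcal{V}}(x,g,F)$ you should say explicitly that $\dim A$ denotes complex dimension, as you do in the first paragraph but not consistently thereafter.
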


\begin{proof}
By \cite{ullmo:applications}, Proposition 3.1, it suffices to show that $g{ F}(\RR)^+g^{-1} x$ is a pre-weakly special subvariety of $X$. Therefore, let $A$ be an irreducible analytic component of
\begin{align*}
g{ F}(\RR)^+g^{-1} x\cap\pi^{-1}(V)
\end{align*}
passing through $x$ such that
\begin{align*}
\dim A=d_{\mathcal{V}}(x,g,F).
\end{align*}	

Let $B$ be an intersection component of $\pi^{-1}(V)$ containing $A$ such that $\delta_{\rm Zar}(B)\leq\delta_{\rm Zar}(A)$. We can and do assume that $B$ is Zariski optimal and, therefore, by the weak hyperbolic Ax-Schanuel conjecture, pre-weakly special i.e. $B$ is an irreducible component of
\begin{align*}
\langle B\rangle_{\rm Zar}\cap\pi^{-1}(V)
\end{align*}
and $\langle B\rangle_{\rm Zar}$ is a pre-weakly special subvariety of $X$. 

Therefore, $A$ is contained in
\begin{align*}
g{ F}(\RR)^+g^{-1} x\cap\langle B\rangle_{\rm Zar}
\end{align*}
and we let $Y$ be an irreducible analytic component of this intersection containing $A$. Then $Y$ is a subvariety of $X$ that is totally geodesic is $X$ and, hence, equal to $g_1F_1(\RR)^+g_1^{-1}x$ for some $(x,g_1,F_1)\in\Pi_0$. Furthermore,
\begin{align*}
d_{\mathcal{V}}(x,g_1,F_1)=\dim A=d_{\mathcal{V}}(x,g,F)
\end{align*}
and, since $(x,g,F)\in\Pi_2$, we conclude that 
\begin{align*}
g_1F_1(\RR)^+g_1^{-1}x=gF(\RR)^+g^{-1}x\subseteq\langle B\rangle_{\rm Zar}.
\end{align*}
We also have
\begin{align*}
\dim \langle B\rangle_{\rm Zar}-\dim_x(\langle B\rangle_{\rm Zar}\cap\mathcal{V})&\leq\delta_{\rm Zar}(B)\leq\delta_{\rm Zar}(A)\\
&\leq d(x,g,F)-\dim A\\
&=d(x,g,F)-d_{\mathcal{V}}(x,g,F),
\end{align*}
and so, since $(x,g,F)\in\Pi_1$, we conclude that
\begin{align*}
g{ F}(\RR)^+g^{-1} x=\langle B\rangle_{\rm Zar}.
\end{align*}
\end{proof}

\begin{lemma}\label{finiteset}
Assume that the weak hyperbolic Ax-Schanuel conjecture is true for $V$. Then, the set
\begin{align*}
\{g{ F}g^{-1}:(x,g,F)\in\Pi_2\}
\end{align*}
is finite.
\end{lemma}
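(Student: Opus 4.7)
The plan is to combine Lemma \ref{countable} with the fundamental o-minimal dichotomy that every definable subset of $\RR^n$ is either finite or uncountable. First I would encode the assignment $(x,g,F)\mapsto gFg^{-1}$ as a definable map into a finite disjoint union of Grassmannians of $\Lie(G(\RR))$, sending $(x,g,F)$ to the Lie subalgebra $\mathrm{Ad}(g)(\Lie F)$. Since $\Omega$ is finite and each $F\in\Omega$ is a real algebraic semisimple subgroup, this map is clearly definable in $\RR_{\rm an,exp}$. Moreover, the identity component of $gFg^{-1}$ is recovered from its Lie algebra, and since the component group of each $F\in\Omega$ is finite, the resulting parametrization of $\{gFg^{-1}:(x,g,F)\in\Pi_2\}$ by its image in the Grassmannians is finite-to-one. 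Hence finiteness of the set in the lemma is equivalent to finiteness of this definable image.

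Next, since $\Pi_2$ is definable in $\RR_{\rm an,exp}$ by its very construction, the image $\cI$ of $\Pi_2$ under the definable map above is itself a definable set. By Lemma \ref{countable}, every element of $\cI$ is obtained as the almost direct product of the non-compact almost simple factors of $F'_{\RR}$ for some semisimple $\QQ$-subgroup $F'$ of $G$. Because $G$ is defined over $\QQ$, it admits only countably many algebraic $\QQ$-subgroups (each cut out by finitely many polynomial relations with rational coefficients), so $\cI$ is at most countable.

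Finally, I would invoke the standard consequence of cell decomposition in an o-minimal structure: every definable subset of $\RR^n$ is a finite union of cells, each homeomorphic either to a point or to an open ball of positive real dimension, so any infinite definable set is uncountable. Since $\cI$ is definable and countable, it must be finite, which is the desired conclusion. The only mild obstacle is the bookkeeping needed to make the Lie-algebra encoding of subgroups genuinely definable and finite-to-one, but this is routine given that $\Omega$ is finite and each element of $\Omega$ is a fixed real semisimple algebraic subgroup of $G_{\RR}$.
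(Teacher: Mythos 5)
Your proof is correct and follows essentially the same route as the paper's: decompose by $F\in\Omega$, realize the assignment $(x,g,F)\mapsto gFg^{-1}$ via a definable map into an auxiliary definable set (you use the Grassmannian of Lie subalgebras, while the paper uses the quotient $G(\RR)/N_{G(\RR)}(F)$, which is the orbit of $\Lie F$ under $\mathrm{Ad}$ and hence amounts to the same encoding), invoke Lemma \ref{countable} for countability, and finish with the o-minimal dichotomy that a definable set is finite or uncountable. The only cosmetic point worth tightening is the phrasing of the Lie-algebra step: since the groups $F\in\Omega$ are Zariski-connected semisimple subgroups, the Lie algebra already determines $gFg^{-1}$, so the finite-to-one bookkeeping is in fact injective and can be stated more simply.
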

\begin{proof}
Decompose $\Pi_2$ as the finite union of the $\Pi_F$, varying over the members $F$ of $\Omega$, where $\Pi_F$ denotes the set of tuples $(x,g,F)\in\Pi_2$. For each $F\in\Omega$, consider the map
\begin{align*}
\Pi_F\rightarrow{ G}(\RR)/{ N}_{{ G}(\RR)}({ F}),
\end{align*}		
defined by
\begin{align*}
(x,g,F)\mapsto g{ N}_{{ G}(\RR)}({ F}),
\end{align*}
whose image, therefore, is in bijection with $\{g{ F}g^{-1}:(x,g,F)\in\Pi_2\}$. It is also definable and, by Lemma \ref{countable}, it is countable. Hence, it is finite.
\end{proof}

\begin{proof}[Proof of Proposition \ref{geoprop}]
Let $A$ be an intersection component of $\pi^{-1}(V)$ that is pre-weakly special such that, for some $x\in\langle A\rangle_{\rm Zar}\cap\mathcal{V}$,
\begin{align*}
\dim A=\dim_x(\langle A\rangle_{\rm Zar}\cap\mathcal{V}).
\end{align*}
Then, by Lemma \ref{lem3}, we can write
\begin{align*}
\langle A\rangle_{\rm Zar}=gF(\RR)^+g^{-1} x
\end{align*}
where $(x,g,F)\in\Pi_2$. By Lemma \ref{countable}, there exists a semisimple subgroup $F'$ of $G$ defined over $\QQ$ such that $g{ F}g^{-1}$ is equal to the almost direct product of the almost simple factors of $F'_{\RR}$ whose underlying real Lie groups are non-compact. In fact, by \cite{ullmo:applications}, Proposition 3.1, $F'$ is the smallest subgroup of $G$ defined over $\QQ$ containing $gFg^{-1}$. Since, by Lemma \ref{finiteset}, $gFg^{-1}$ comes from a finite set, so too does $F'$. Therefore, the reductive algebraic group
\begin{align*}
M:=F' Z_G(F')^{\circ}
\end{align*}
is defined over $\QQ$ and belongs to a finite set. 

If we write $M^{\rm nc}$ for the almost direct product of the almost $\QQ$-simple factors of $M$ whose underlying real Lie groups are not compact, then $x$ factors through $M'_{\RR}:=Z(M)^{\circ}_{\RR} M^{\rm nc}_{\RR}$ and, if we write $\mathfrak{X}_M$ for the $M'(\RR)$ conjugacy class of $x$ in $\mathfrak{X}$, then, by \cite{ullmo:equidistribution}, Lemme 3.3, $(M',\mathfrak{X}_M)$ is a Shimura subdatum of $(G,\mathfrak{X})$. Furthermore, by \cite{uy:andre-oort}, Lemma 3.7, the number of Shimura subdatum $(M',\mathfrak{Y})$ is finite. Therefore, since the $M'(\RR)^+$ conjugacy class $X_M$ of $x$ in $X$ is a pre-special subvariety of $X$ and $\langle A\rangle_{\rm Zar}$ is a fiber of $X_M$, the proof is complete.
\end{proof}

\begin{proof}[Proof of Proposition \ref{fwo}]
Let $A$ be an irreducible analytic component of $\pi^{-1}(W)$. By Proposition \ref{ZOsummary}, $A$ is an intersection component of $\pi^{-1}(V)$ and is Zariski optimal in $\pi^{-1}(V)$. Therefore, by the weak hyperbolic Ax-Schanuel conjecture, $A$ is pre-weakly special. It follows that the image of $\langle A\rangle_{\rm Zar}$ in $S$ is equal to $\langle W\rangle_{\rm ws}$.

After possibly replacing $A$ by a $\gamma A$, for some $\gamma\in\Gamma$, we can and do assume that there exists $x\in\langle A\rangle_{\rm Zar}\cap\mathcal{V}$ such that
\begin{align*}
\dim(A)=\dim_x(\langle A\rangle_{\rm Zar}\cap\mathcal{V}).
\end{align*}
By Proposition \ref{geoprop}, $\langle A\rangle_{\rm Zar}$ is a fiber of $Y\in\Sigma$, where $\Sigma$ is a finite set of pre-special subvarieties of $X$ depending only on $V$.
\end{proof}

\section{Anomalous subvarieties}

In this section, we recall the notion of an anomalous subvariety, which is defined by Bombieri, Masser, and Zannier in \cite{bombieri2007anomalous} for subvarieties of algebraic tori. In fact, we give the more general notion of an $r$-anomalous subvariety, as introduced by R\'emond \cite{remond2009intersection}. 

Let $V$ be a subvariety of $S$. We will use Proposition \ref{fwo} to show that, under the weak hyperbolic Ax-Schanuel conjecture, the union of the subvarieties of $V$ that are $r$-anomalous in $V$ constitutes a Zariski closed subset of $V$. We will then give a criterion for when it is a proper subset.

\begin{definition}
Let $r\in \ZZ$. A subvariety $W$ of $V$ is called {\bf $r$-anomalous} in $V$ if
\begin{align*}
\dim W\geq \max\{1,r+\dim \langle W\rangle_{\rm ws}-\dim S\}.
\end{align*}
 A subvariety of $V$ is {\bf maximal $r$-anomalous} in $V$ if it is $r$-anomalous in $V$ and not strictly contained in another subvariety of $V$ that is also $r$-anomalous in $V$. 
\end{definition}

We denote by ${\rm an}(V,r)$ the set of subvarieties of $V$ that are maximal $r$-anomalous in $V$ and by $V^{{\rm an},r}$ the union of the elements of ${\rm an}(V,r)$, which is then the union of all the subvarieties of $V$ that are $r$-anomalous in $V$.

We say that a subvariety of $V$ is {\bf anomalous} if it is $(1+\dim V)$-anomalous. We write ${\rm an}(V)$ for ${\rm an}(V,1+\dim V)$ and $V^{{\rm an}}$ for $V^{{\rm an},1+\dim V}$.

\begin{theorem} \label{openan}
Assume that the weak hyperbolic Ax-Schanuel conjecture is true for $V$ and let $r\in\ZZ$. Then $V^{{\rm an},r}$ is a Zariski closed subset of $V$. 
\end{theorem}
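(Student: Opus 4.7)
I would begin by observing that $V^{{\rm an},r}$ equals the union of those subvarieties of $V$ that are both weakly optimal in $V$ and $r$-anomalous in $V$. Given any $r$-anomalous $W \subseteq V$, choose $W^{\ast} \supseteq W$ maximal among subvarieties $W' \subseteq V$ with $\delta_{\rm ws}(W') \le \delta_{\rm ws}(W)$. Then $W^{\ast}$ is weakly optimal in $V$ (any strictly larger $Y \subseteq V$ with $\delta_{\rm ws}(Y) \le \delta_{\rm ws}(W^{\ast})$ would contradict the choice of $W^{\ast}$). Moreover $\dim W^{\ast} \ge \dim W \ge 1$ and
\[
\dim W^{\ast} - \dim\langle W^{\ast}\rangle_{\rm ws} \;=\; -\delta_{\rm ws}(W^{\ast}) \;\ge\; -\delta_{\rm ws}(W) \;\ge\; r - \dim S,
\]
so $W^{\ast}$ is itself $r$-anomalous in $V$ and contains $W$, proving the claim.

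Next, I would apply Proposition \ref{fwo} to fix a finite set $\Sigma$ of pre-special subvarieties of $X$ such that every weakly optimal $W \subseteq V$ has $\langle W\rangle_{\rm ws}$ equal to the image in $S$ of a fibre of some $Y \in \Sigma$. Each $Y$ admits only finitely many splittings $Y = X_1 \times X_2$; for each such $Y$ and splitting, the family of weakly special subvarieties $\pi(\{x_1\} \times X_2)$, all of common dimension $\dim X_2$, arises (after passing to a suitable finite cover of $S$, which does not affect the conclusion by Remark \ref{introreductions}) as the family of fibres of an algebraic morphism of Shimura varieties $\varphi_Y \colon T_Y \to B_Y$, where $T_Y := \pi(Y)$ and $B_Y$ is the Shimura subvariety associated with the first factor of the splitting.

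Now fix such a $Y$ and splitting, set $d_Y := \max\{1, r + \dim X_2 - \dim S\}$, and let $V \cap T_Y = \bigcup_i C_i$ be the irreducible decomposition. A weakly optimal $W \subseteq V$ with $\langle W\rangle_{\rm ws} = \varphi_Y^{-1}(b)$ is an irreducible component of $V \cap \varphi_Y^{-1}(b)$ and hence of $C_i \cap \varphi_Y^{-1}(b)$ for the $i$ with $W \subseteq C_i$; it is $r$-anomalous in $V$ precisely when $\dim W \ge d_Y$. Set
\[
Z_{Y,i} \;:=\; \{v \in C_i : \dim_v\bigl(C_i \cap \varphi_Y^{-1}(\varphi_Y(v))\bigr) \ge d_Y\}.
\]
By upper semicontinuity of the local dimension of fibres of a morphism of algebraic varieties, each $Z_{Y,i}$ is Zariski closed in $V$, and it is precisely the union of those irreducible components of fibres of $\varphi_Y|_{C_i}$ of dimension at least $d_Y$; each such component is manifestly $r$-anomalous in $V$ (its weakly special hull is contained in the ambient fibre, hence of dimension at most $\dim X_2$). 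Combining with the first two steps, $V^{{\rm an},r} = \bigcup_{Y \in \Sigma}\bigcup_{\text{splittings}}\bigcup_{i} Z_{Y,i}$ is a finite union of Zariski closed subsets of $V$, hence Zariski closed.

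The principal obstacle is securing, in the second step, the algebraic family structure of Shimura morphisms: one needs that the assignment $x_1 \mapsto \pi(\{x_1\} \times X_2)$ really descends to the fibration of an algebraic morphism of varieties (not merely an analytic family), so that the upper semicontinuity of fibre dimensions can be invoked in the third step; this follows from the general theory of Shimura subdata and the associated normal quotients, combined with the reductions in Remark \ref{introreductions}.
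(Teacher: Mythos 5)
Your proposal is correct and takes essentially the same approach as the paper: you reduce to weakly optimal $r$-anomalous subvarieties (your first step is an equivalent reformulation of the paper's Lemma~\ref{anomisopt}), invoke Proposition~\ref{fwo} to cut down to finitely many pre-special $Y \in \Sigma$ and splittings, and then apply upper semicontinuity of fibre dimensions for the resulting algebraic fibrations (the paper's Proposition~\ref{dimension}). The one small imprecision is in the cover: since the splitting lives in $M^{\ad}$ rather than $G^{\ad}$, one passes to a finite cover of $T_Y = \pi(Y)$ rather than of $S$ --- in the paper's proof this is the finite morphism $\phi \colon \Gamma_M\backslash X_M \to T_Y \subseteq S$, with $\Gamma_M \subseteq \Gamma \cap M(\QQ)_+$ chosen so that the fibration $\Gamma_M\backslash X_M \to \Gamma_1\backslash X_1$ is an algebraic morphism, and Remark~\ref{introreductions} alone does not deliver this.
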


We refer the reader to \cite{bombieri2007anomalous}, \cite{remond2009intersection}, and \cite{hp:o-min} for similar results on algebraic tori and abelian varieties. We will require the following facts.

\begin{proposition}[cf. \cite{hartshorne1977algebraic}, Chapter 2, Exercise 3.22 (d)]\label{dimension}
Let $f:W\rightarrow Y$ be a dominant morphism between two integral schemes of finite type over a field and let 
\begin{align*}
e:=\dim W-\dim Y
\end{align*}
denote the {\bf relative dimension}. For $h\in\ZZ$, let $E_h$ denote the set of points $x\in W$ such that the fibre $f^{-1}(f(x))$ possesses an irreducible component of dimension at least $h$ that contains $x$. Then 
\begin{enumerate}
\item[(1)] $E_h$ is a Zariski closed subset of $W$,
\item[(2)] $E_e=W$, and
\item[(3)] if $h>e$, $E_h$ is not Zariski dense in $W$.

\end{enumerate}
\end{proposition}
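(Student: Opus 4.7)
The plan is to establish (2) and (3) first, each a direct consequence of classical fibre-dimension results (respectively the dimension inequality for dominant morphisms and the generic-fibre dimension), and then deduce (1) from these by Noetherian induction on $Y$.

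For (2), I would invoke the classical fibre dimension inequality: for any dominant morphism $f: W \to Y$ of integral schemes of finite type over a field, every irreducible component of every non-empty fibre has dimension at least $e = \dim W - \dim Y$. The standard proof is by induction on $\dim Y$, cutting $Y$ by a codimension-one closed subscheme through a chosen image point and applying Krull's principal ideal theorem to produce an irreducible component of the preimage of dimension $\geq \dim W - 1$. Applied here, any $x \in W$ lies on an irreducible component of $f^{-1}(f(x))$ of dimension at least $e$, so $x \in E_e$, giving (2).

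For (3), I would pass to the generic point $\eta$ of $Y$. The generic fibre $W \times_Y \operatorname{Spec} k(Y)$ is of finite type over $k(Y)$ of dimension exactly $e$, so by spreading out (via constructibility of the image and Chevalley's theorem on fibre dimensions) there is a non-empty open $U \subseteq Y$ such that every fibre over $U$ has dimension exactly $e$. Then $f^{-1}(U)$ is open and dense in $W$ by dominance, and $E_h \cap f^{-1}(U) = \emptyset$ for every $h > e$. Hence $E_h \subseteq f^{-1}(Y \setminus U)$, a proper closed subset of $W$, so $E_h$ is not Zariski dense in $W$.

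For (1), I would run Noetherian induction on $\dim Y$, with base case $\dim Y = 0$ being immediate. By (2) we may assume $h > e$. From the argument for (3), $E_h$ is contained in $f^{-1}(Z)$ for the proper closed subscheme $Z := Y \setminus U$. Decompose $f^{-1}(Z)$ into its finitely many irreducible components $W_1, \dots, W_r$, and on each $W_i$ replace the target by the closure of $f(W_i)$ to obtain a dominant morphism of integral schemes with strictly smaller base dimension. Since the local fibre dimension $\dim_x f^{-1}(f(x))$ at a point of $W_i$ is intrinsic (it depends only on $f$ and $x$, not on the restriction), the intersection $E_h \cap W_i$ coincides with the analogue of $E_h$ for the restricted morphism, which is closed in $W_i$ by the inductive hypothesis, hence closed in $W$. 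Taking the finite union over $i$ gives closedness of $E_h$ in $W$.

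The main obstacle I anticipate is the bookkeeping in the induction step of (1): ensuring that after restricting to $f^{-1}(Z)$ one still has dominant morphisms of integral schemes with strictly smaller base dimension, so that the hypothesis is preserved. Decomposing $f^{-1}(Z)$ into irreducible components and replacing the target with the closure of each image is precisely the device that keeps the hypotheses of the proposition in force, while the intrinsic nature of local fibre dimension allows the pieces $E_h \cap W_i$ to be reassembled into $E_h$ itself.
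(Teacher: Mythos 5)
The paper offers no proof of this proposition at all: it is quoted directly from Hartshorne, Chapter~2, Exercise~3.22(d), so your argument is filling in a proof the authors delegate to the literature. Your parts (2) and (3) are fine: (2) is the lower bound on fibre--component dimensions for a dominant morphism of integral finite-type schemes, and (3) follows once one produces a dense open $U\subseteq Y$ over which the fibres have dimension exactly $e$ (generic Noether normalization or generic flatness gives this cleanly; note that if you invoke the full Chevalley semicontinuity theorem here you have essentially assumed (1), so the weaker ``generic fibre dimension'' statement is the right input). The induction scheme for (1) is also the standard one.

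The one step that does not hold as stated is the reassembly claim that $E_h\cap W_i$ coincides with the set $E_h(f_i)$ attached to the restriction $f_i\colon W_i\to \overline{f(W_i)}$, justified by saying the local fibre dimension is ``intrinsic''. It is not: for $x\in W_i$ the fibre of $f_i$ through $x$ is $W_i\cap f^{-1}(f(x))$, which can be strictly smaller than $f^{-1}(f(x))$, and the component $C$ of the big fibre of dimension $\geq h$ through $x$ may lie in a different component $W_j$ of $f^{-1}(Z)$, with $x\in W_i\cap W_j$; then $x\in E_h\cap W_i$ but possibly $x\notin E_h(f_i)$. The repair is short and stays within your framework: since $h>e$, any witnessing component $C$ satisfies $C\subseteq f^{-1}(f(x))\subseteq f^{-1}(Z)=\bigcup_j W_j$, and $C$ is irreducible, so $C\subseteq W_j$ for some $j$; then $C$ is an irreducible closed subset of $f_j^{-1}(f(x))$ of dimension $\geq h$ containing $x$, whence $x\in E_h(f_j)$. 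Conversely each $E_h(f_j)\subseteq E_h$, so $E_h=\bigcup_j E_h(f_j)$ is a finite union of sets that are closed in the $W_j$ (hence in $W$) by the inductive hypothesis, since $\dim \overline{f(W_j)}\leq\dim Z<\dim Y$. With that correction the proof is complete.
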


\begin{lemma}\label{anomisopt}
Let $W\in{\rm an}(V,r)$. Then $W$ is weakly optimal in $V$.
\end{lemma}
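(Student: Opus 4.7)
The plan is a straightforward argument by contradiction: I assume $W$ is maximal $r$-anomalous in $V$ but \emph{not} weakly optimal in $V$, produce a strictly larger subvariety $Y$ of $V$ that is still $r$-anomalous, and thereby contradict maximality.

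More precisely, if $W$ is not weakly optimal in $V$, then by definition there exists a subvariety $Y$ of $S$ with $W\subsetneq Y\subseteq V$ and $\delta_{\rm ws}(Y)\le\delta_{\rm ws}(W)$. Unpacking the definition of the weakly special defect, this rearranges to
\begin{align*}
\dim Y-\dim W\ge\dim\langle Y\rangle_{\rm ws}-\dim\langle W\rangle_{\rm ws}.
\end{align*}
The strategy is then to plug this into the defining inequality of $r$-anomalous for $W$ and read off $r$-anomalous for $Y$.

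Concretely, since $W\in{\rm an}(V,r)$, one has $\dim W\ge 1$ and $\dim W\ge r+\dim\langle W\rangle_{\rm ws}-\dim S$. Because $W\subsetneq Y$ with both geometrically irreducible, $\dim Y>\dim W\ge 1$, so the ``$\ge 1$'' half of the $r$-anomalous condition for $Y$ is automatic. For the other half, I add the displayed inequality to the lower bound on $\dim W$:
\begin{align*}
\dim Y&\ge\dim W+\dim\langle Y\rangle_{\rm ws}-\dim\langle W\rangle_{\rm ws}\\
&\ge\bigl(r+\dim\langle W\rangle_{\rm ws}-\dim S\bigr)+\dim\langle Y\rangle_{\rm ws}-\dim\langle W\rangle_{\rm ws}\\
&=r+\dim\langle Y\rangle_{\rm ws}-\dim S.
\end{align*}
Thus $\dim Y\ge\max\{1,r+\dim\langle Y\rangle_{\rm ws}-\dim S\}$, i.e.\ $Y$ is $r$-anomalous in $V$. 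Since $W\subsetneq Y\subseteq V$, this contradicts the maximality of $W$ among $r$-anomalous subvarieties of $V$.

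There is no real obstacle here: the proof is a one-line manipulation of the two dimension inequalities, and it does not require the weak hyperbolic Ax-Schanuel conjecture or any finiteness input. The only mildly delicate point is to verify the ``$\ge 1$'' clause in the definition of $r$-anomalous for $Y$, which is immediate from $\dim Y>\dim W\ge 1$. The lemma is in this sense a purely formal consequence of the definitions of $r$-anomalous, $\langle\cdot\rangle_{\rm ws}$, and weakly optimal.
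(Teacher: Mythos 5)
Your proof is correct and is essentially the argument the paper gives: both take a $Y$ with $W\subsetneq Y\subseteq V$ and $\delta_{\rm ws}(Y)\le\delta_{\rm ws}(W)$, combine the $r$-anomalous inequality for $W$ with the defect comparison to deduce that $Y$ is $r$-anomalous, and then invoke maximality of $W$. The only cosmetic difference is that you phrase it as a contradiction and observe $\dim Y>\dim W$, whereas the paper argues directly and only needs $\dim Y\ge\dim W\ge 1$; the paper also harmlessly reduces to $Y$ weakly optimal, which your argument shows is unnecessary.
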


\begin{proof}
Let $Y$ be a subvariety of $V$ containing $W$ such that $\delta_{\rm ws}(Y)\leq \delta_{\rm ws}(W)$. We can and do assume that $Y$ is weakly optimal. Then
\begin{align*}
\dim Y &=\dim \langle Y\rangle_{\rm ws}-\delta_{\rm ws}(Y)\\&\geq \dim \langle Y\rangle_{\rm ws}-\delta_{\rm ws}(W)\\
&=\dim \langle Y\rangle_{\rm ws}-(\dim \langle W\rangle_{\rm ws}-\dim W)\\
 &\geq \dim \langle Y\rangle_{\rm ws}+r-\dim S.
\end{align*}
Since $Y$ contains $W$, we know that $\dim Y\geq 1$, and so $Y$ is $r$-anomalous in $V$. Since $W$ is maximal $r$-anomalous in $V$, we conclude that $Y$ must be equal to $W$. Therefore, $W$ is weakly optimal.
\end{proof}

\begin{proof}[Proof of Theorem \ref{openan}]
Let $\Sigma$ be a finite set of pre-special subvarieties of $X$ (whose existence is ensured by Proposition \ref{fwo}) such that, if $W$ is a subvariety of $V$ that is weakly optimal in $V$, then there exists $x\in X$ such that, if $M:=\MT(x)$, the $M(\RR)^+$ conjugacy class $X_M$ of $x$ in $X$ belongs to $\Sigma$ and $\langle W \rangle_{\rm ws}$ is equal to the image in $S$ of a fiber of $X_M$. That is, we may write $M^{\ad}$ as a product $M_1\times M_2$ of two normal $\QQ$-subgroups, which induces a splitting $X=X_1\times X_2$, such that $\langle W \rangle_{\rm ws}$ is equal to the image in $S$ of $\{x_1\}\times X_2$, for some $x_1\in X_1$. 

Let $W\in{\rm an}(V,r)$. By Lemma \ref{anomisopt}, there exists $X_M\in\Sigma$ such that $\langle W \rangle_{\rm ws}$ is equal to the image in $S$ of $\{x_1\}\times X_2$, for some $x_1\in X_1$, where $X_M=X_1\times X_2$, as above. 

Let $\Gamma_M$ be a congruence subgroup of $M(\QQ)_+$ contained in $\Gamma$, where $M(\QQ)_+$ denotes the subgroup of $M(\QQ)$ acting on $X_M$, and let $\Gamma_1$ denote the image of $\Gamma$ under the natural maps
\begin{align*}
M(\QQ)\rightarrow M^{\ad}(\QQ)\rightarrow M_1(\QQ).
\end{align*}
We denote by $f$ the restriction of
\begin{align*}
\Gamma_M\backslash X_M\rightarrow\Gamma_1\backslash X_1
\end{align*}
to an irreducible component $\widetilde{V}$ of $\phi^{-1}(V)$, such that $\dim\widetilde{V}=\dim V$, where $\phi$ denotes the natural map
\begin{align*}
\Gamma_M\backslash X_M\rightarrow\Gamma\backslash X=S.
\end{align*}
In particular, $\phi(\widetilde{V})=V$.
Therefore, by Proposition \ref{dimension} (1), the set $E_h$ of points $z$ in $\widetilde{V}$ such that the fibre $f^{-1}(f(z))$ possesses an irreducible component of dimension at least $h\in\ZZ$ that contains $z$ is a Zariski closed subset of $\widetilde{V}$. Since $\phi$ is a closed morphism, $\phi(E_h)$ is Zariski closed in $V$. 

We claim that $W$ is contained in $\phi(E_h)$, where
\begin{align*}
h:=\max\{1,r+ \dim X_2-\dim S\}.
\end{align*}
To see this, fix an irreducible component $\widetilde{W}$ of $\phi^{-1}(W)$ contained in $\widetilde{V}$ such that $\dim\widetilde{W}=\dim W$. Then $\langle \widetilde{W} \rangle_{\rm ws}$ is equal to the image of $\{x_1\}\times X_2$ in $\Gamma_M\backslash X_M$ and so $\widetilde{W}$ lies in a fiber of $f$. Since
\begin{align*}
\dim \widetilde{W}=\dim W\geq\max\{1,r+\dim \langle W \rangle_{\rm ws}-\dim S\}=\max\{1,r+\dim X_2-\dim S\},
\end{align*}
$\widetilde{W}$ is contained in $E_h$, which implies that $W$ is contained in $\phi(E_h)$.

On the other hand, we claim that $\phi(E_h)$ is contained in $V^{{\rm an},r}$. To see this, let $z\in E_h$ and let $Y$ be an irreducible component of the fibre $f^{-1}(f(z))$ of dimension at least $h$ containing $z$. Then $Y$ is contained in the image of $\{x_1\}\times X_2$ in $\Gamma_M\backslash X_M$, where $x_1\in X_1$ lies above $f(z)\in\Gamma_1\backslash X_1$, and so
\begin{align*}
\dim \langle Y \rangle_{\rm ws}\leq\dim X_2.
\end{align*}
Therefore,
\begin{align*}
\dim\phi(Y)=\dim Y\geq h=\max\{1,r+ \dim X_2-\dim S\}\geq\max\{1,r+\dim\langle \phi(Y) \rangle_{\rm ws}-\dim S\}
\end{align*}
and so $\phi(Y)$ is $r$-anomalous in $V$.

Hence, if we let $E$ denote the union of the $\phi(E_h)$ as we vary over the finitely many maps $f$ obtained from the $X_M\in\Sigma$ and their possible splittings, we conclude that $E=V^{{\rm an},r}$, which finishes the proof.
\end{proof}

We denote by $V^{\rm oa}$ the complement in $V$ of $V^{\rm an}$. By Theorem \ref{openan}, this is a (possibly empty) open subset of $V$. In the literature, it is sometimes referred to as the open-anomalous locus, hence the subscript.
We conclude this section by showing that, when $V$ is sufficiently generic, $V^{\rm oa}$ is not empty.

\begin{proposition}\label{equal}
Suppose that $V$ is Hodge generic in $S$. Then $V^{\rm an}=V$ if and only if we can write $G^\ad=G_1\times G_2$, and thus $X=X_1\times X_2$, such that
\begin{align*}
\dim f(V)<{\rm min}\{\dim V,\dim X_1\},
\end{align*}
where $f$ denotes the projection map
\begin{align*}
\Gamma\backslash X\rightarrow\Gamma_1\backslash X_1,
\end{align*}
and $\Gamma_1$ denotes the image of $\Gamma$ under the natural maps
\begin{align*}
G(\QQ)\rightarrow G^\ad(\QQ)\rightarrow G_1(\QQ).
\end{align*}
\end{proposition}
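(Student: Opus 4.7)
The plan is to prove both directions by a direct dimension count, using Theorem \ref{openan} (and its proof) for the ``only if'' direction.

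For the ``if'' direction, assume we have a $\QQ$-decomposition $G^{\ad}=G_1\times G_2$ with induced splitting $X=X_1\times X_2$ and $\dim f(V)<\min\{\dim V,\dim X_1\}$. Given any $v\in V$, let $W$ be an irreducible component of $f|_V^{-1}(f(v))$ passing through $v$. The standard lower bound on fibre dimensions gives $\dim W\geq\dim V-\dim f(V)\geq 1$. The fibre $f^{-1}(f(v))\subseteq S$ is the image in $S$ of $\{x_1\}\times X_2$ for any lift $x_1$ of $f(v)$, which is a weakly special subvariety of dimension $\dim S-\dim X_1$, so $\dim\langle W\rangle_{\rm ws}\leq\dim S-\dim X_1$. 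The hypothesis then gives
$\dim W\geq\max\{1,1+\dim V-\dim X_1\}\geq\max\{1,(1+\dim V)+\dim\langle W\rangle_{\rm ws}-\dim S\}$,
so $W$ is anomalous in $V$ and contains $v$. Since $v$ was arbitrary, $V^{\rm an}=V$.

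For the ``only if'' direction, suppose $V^{\rm an}=V$. The proof of Theorem \ref{openan} exhibits $V^{\rm an}$ as a finite union of Zariski closed sets $\phi(E_h)$, indexed by pre-special subvarieties $X_M$ in a finite set $\Sigma$ together with $\QQ$-splittings $X_M=X_{M,1}\times X_{M,2}$, with $h=\max\{1,1+\dim V-\dim X_{M,1}\}$. Since $V$ is irreducible, at least one $\phi(E_h)$ equals $V$. Fix such a choice: the component $\widetilde V\subseteq\Gamma_M\backslash X_M$ of $\phi^{-1}(V)$ surjects onto $V$ via the finite map $\phi$, so the special subvariety $\pi(X_M)$ of $S$ contains $V$; Hodge genericity forces $\pi(X_M)=S$, hence $X_M=X$. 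Taking $\Gamma_M=\Gamma$, the map $\phi$ becomes the identity, $\widetilde V=V$, the splitting of $X_M$ is a splitting of $X$ arising from a $\QQ$-decomposition $G^{\ad}=G_1\times G_2$, and the associated map $\Gamma_M\backslash X_M\to\Gamma_1\backslash X_{M,1}$ is the projection $f$ in the statement. The equality $\phi(E_h)=V$ then reads $E_h=V$, which by Proposition \ref{dimension}(3) forces $h\leq\dim V-\dim f(V)$; since $h\geq 1$ and $h\geq 1+\dim V-\dim X_1$, we deduce $\dim f(V)<\dim V$ and $\dim f(V)<\dim X_1$.

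The main obstacle is in the ``only if'' direction: one must unpack the proof of Theorem \ref{openan} and recognise, via the cover of $V^{\rm an}$ by the various $\phi(E_h)$, that the pre-special $X_M$ relevant to covering a Hodge generic $V$ must equal $X$ itself, so that the splittings $M^{\ad}=M_1\times M_2$ appearing there descend to genuine splittings of $G^{\ad}$. Once this is in hand, Proposition \ref{dimension}(3) converts the covering condition $E_h=V$ into the required dimensional inequality in one step. The ``if'' direction, by contrast, is a routine construction of anomalous subvarieties as fibres of $f$.
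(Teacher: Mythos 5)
Your proof is correct and follows essentially the same route as the paper's. Both directions use Proposition \ref{dimension} (parts (2) and (3)) applied to the restriction of the projection $f$ to $V$, and the ``only if'' direction in both cases boils down to noting that a Hodge generic, irreducible $V$ can only equal $V^{\rm an}$ if one of the closed pieces $\phi(E_h)$ built from an $X_M\in\Sigma$ with $X_M=X$ already covers it. You are slightly more explicit than the paper in spelling out the irreducibility step (``$V$ irreducible, so some $\phi(E_h)$ equals $V$, hence $\pi(X_M)=S$, hence $X_M=X$''), which the paper compresses into ``since $V$ is Hodge generic, it must be that $X\in\Sigma$''; and in the ``if'' direction you build the anomalous fibre components directly from the definition rather than citing the containment $E_h\subseteq V^{\rm an}$ established in the proof of Theorem \ref{openan}, but the content is identical. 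One tiny imprecision worth noting: in general the exponent in $\phi(E_h)$ from Theorem \ref{openan} is $h=\max\{1,\,1+\dim V+\dim X_{M,2}-\dim S\}$, which only simplifies to $\max\{1,\,1+\dim V-\dim X_{M,1}\}$ once $X_M=X$; since you reduce to that case before using the formula, this does not affect the argument.
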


\begin{proof}
First suppose that $V^{\rm an}=V$. Then, for any set $\Sigma$ as in the proof of Theorem \ref{openan}, $V$ is contained in the (finite) union of the images in $S$ of the $X_M\in\Sigma$. Therefore, since $V$ is assumed to be Hodge generic in $S$, it must be that $X\in\Sigma$ and, furthermore, that there exists $W\in{\rm an}(V)$ such that $G^\ad=G_1\times G_2$, and thus $X=X_1\times X_2$, such that $\langle W\rangle_{\rm ws}$ is equal to the image in $S$ of $\{x_1\}\times X_2$, for some $x_1\in X_1$. 

Let $f$ denote the projection map
\begin{align*}
\Gamma\backslash X\rightarrow\Gamma_1\backslash X_1
\end{align*}
and consider its restriction
\begin{align*}
V\rightarrow\overline{f(V)},
\end{align*}
where $\overline{f(V)}$ denotes the Zariski closure of $f(V)$ in $\Gamma_1\backslash X_1$. Since $V^{\rm an}=V$, it follows from Proposition \ref{dimension} (3), that
\begin{align*}
h:=\max\{1,1+\dim V+ \dim X_2-\dim X\}\leq\dim V-\dim f(V).
\end{align*}
Hence,
\begin{align*}
\dim f(V)<\dim X-\dim X_2=\dim X_1
\end{align*}
and
\begin{align*}
\dim f(V)\leq\dim V-h\leq\dim V-1<\dim V.
\end{align*}

Conversely, suppose that $G^\ad=G_1\times G_2$, and thus $X=X_1\times X_2$, such that
\begin{align*}
\dim f(V)<{\rm min}\{\dim V,\dim X_1\},
\end{align*}
where $f$ again denotes the projection map
\begin{align*}
\Gamma\backslash X\rightarrow\Gamma_1\backslash X_1.
\end{align*}
Restricting $f$ to
\begin{align*}
V\rightarrow\overline{f(V)},
\end{align*}
as before, we see from Proposition \ref{dimension} (2) that the set $E_h$ of points $z$ in $V$ such that the fibre $f^{-1}(f(z))$ possesses an irreducible component of dimension at least 
\begin{align*}
h:=\max\{1,1+\dim V-\dim X_1\}\leq \dim V-\dim f(V)=\dim V-\dim\overline{f(V)}
\end{align*}
that contains $z$ is equal to $V$. However, from the proof of Theorem \ref{openan}, we have seen that $E_h$ is contained in $V^{\rm an}$, so the claim follows.
\end{proof}

\begin{corollary}\label{coran}
If $G^\ad$ is $\QQ$-simple and $V$ is a Hodge generic subvariety in $S$, then $V^{\rm an}$ is strictly contained in $V$. In particular, $V^{\rm an}$ is strictly contained in $V$ whenever $V$ is a Hodge generic subvariety of $\mathcal{A}_g$.
\end{corollary}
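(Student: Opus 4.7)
The plan is to deduce the corollary from Proposition \ref{equal} by observing that $\QQ$-simplicity of $G^\ad$ forces any decomposition $G^\ad = G_1 \times G_2$ into normal $\QQ$-subgroups to be trivial, and that neither trivial possibility can satisfy the strict inequality in Proposition \ref{equal}.

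More precisely, I argue by contradiction. Suppose $V^{\rm an} = V$. By Proposition \ref{equal}, there is a decomposition $G^\ad = G_1 \times G_2$ of $\QQ$-algebraic groups, inducing $X = X_1 \times X_2$, such that
\begin{align*}
\dim f(V) < \min\{\dim V, \dim X_1\},
\end{align*}
where $f : \Gamma \backslash X \to \Gamma_1 \backslash X_1$ is the natural projection. Since $G^\ad$ is $\QQ$-simple, at least one of $G_1, G_2$ is trivial. If $G_1$ is trivial, then $X_1$ is a point, $\dim X_1 = 0$, and the inequality forces $\dim f(V) < 0$, which is absurd. If instead $G_2$ is trivial, then $X = X_1$, so $f$ is a finite morphism and $\dim f(V) = \dim V$, contradicting $\dim f(V) < \dim V$. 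In either case we obtain a contradiction, so $V^{\rm an}$ must be a proper subset of $V$.

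For the second assertion, the Shimura datum attached to $\mathcal{A}_g$ has $G = \GSp_{2g}$, whose adjoint group $G^\ad = \PGSp_{2g}$ is $\QQ$-simple (in fact absolutely simple). Thus the first part of the corollary applies directly and yields $V^{\rm an} \subsetneq V$ for any Hodge generic subvariety $V$ of $\mathcal{A}_g$.

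The argument is essentially bookkeeping once Proposition \ref{equal} is in hand; the only step to watch is ruling out both trivial factorizations cleanly, which is immediate from the displayed strict inequality applied separately to $\dim X_1$ and $\dim V$. No further obstacle is anticipated.
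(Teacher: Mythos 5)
Your proof is correct and is exactly the argument the authors leave implicit: the paper states Corollary \ref{coran} immediately after Proposition \ref{equal} with no written proof, treating it as immediate. Your case analysis on which factor of $G^\ad = G_1 \times G_2$ is trivial, combined with the observation that $\dim f(V) < 0$ is impossible in one case and $\dim f(V) = \dim V$ in the other, is precisely the intended bookkeeping, and the identification $G^\ad = \PGSp_{2g}$ handles the $\mathcal{A}_g$ case.
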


\section{Main results (part 1): Reductions to point counting}\label{reductions}

In this section, we prove our main theorem: under the weak hyperbolic Ax-Schanuel conjecture, the Zilber-Pink conjecture can be reduced to a problem of point counting. We also give a reduction of Pink's conjecture in the case when the open-anomalous locus is non-empty. 

\begin{definition}
Let $V$ be a subvariety of $S$. We denote by $\Opt_0(V)$ the set of all points in $V$ that are optimal in $V$.
\end{definition}

Consider the following corollary of the Zilber-Pink conjecture.

\begin{conjecture}\label{zerodim}
Let $V$ be a subvariety of $S$. Then $\Opt_0(V)$ is finite.
\end{conjecture}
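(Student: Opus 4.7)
The plan is to observe that Conjecture \ref{zerodim} is an entirely formal consequence of the Zilber-Pink conjecture in its optimal formulation (Conjecture \ref{zp2}), which by Lemma \ref{equivalence} is equivalent to the Zilber-Pink conjecture (Conjecture \ref{zp'}). There is no real content to prove here beyond a set-theoretic containment.

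Concretely, the first step is to unwind Definition \ref{defopt} for the special case of a zero-dimensional subvariety. A point $P\in V$ is, in particular, a (closed, irreducible, zero-dimensional) subvariety of $V$, and the condition of being optimal in $V$ does not depend on the dimension of the subvariety in question: one demands that for every subvariety $Y$ of $S$ with $P\subsetneq Y\subseteq V$ one has $\delta(Y)>\delta(P)$. Thus a point is optimal in $V$ in the sense of the definition of $\Opt_0(V)$ if and only if it is optimal in $V$ in the sense of Definition \ref{defopt}, giving the inclusion
\begin{equation*}
\Opt_0(V)\subseteq\Opt(V).
\end{equation*}

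The second step is simply to invoke Conjecture \ref{zp2}, according to which $\Opt(V)$ is a finite set. Combined with the inclusion above, this yields the finiteness of $\Opt_0(V)$, which is the content of Conjecture \ref{zerodim}.

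There is no genuine obstacle at this stage. The substantive work in the remainder of this section will not be to prove this corollary directly, but rather to reduce Conjecture \ref{zerodim} (and more generally Conjecture \ref{zp2}) to a point counting problem via the weak hyperbolic Ax-Schanuel conjecture, the finiteness results for weakly optimal subvarieties established in Section \ref{finiteweak}, the structural results on anomalous loci from the previous section, and the definability of $\pi$ restricted to a fundamental set $\mathcal{F}$ as in Definition \ref{fs}; any difficulty lies there, not in the elementary observation recorded above.
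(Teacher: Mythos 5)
Your observation that $\Opt_0(V)\subseteq\Opt(V)$ by unwinding Definition \ref{defopt}, so that Conjecture \ref{zp2} formally implies Conjecture \ref{zerodim}, is correct and is exactly what the paper has in mind when it introduces Conjecture \ref{zerodim} with the phrase ``Consider the following corollary of the Zilber-Pink conjecture.'' The paper gives no further argument for this implication, precisely because it is the trivial containment you record.

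One caveat worth flagging about your closing paragraph: this does not prove Conjecture \ref{zerodim}, which remains a conjecture in the paper because Conjecture \ref{zp2} is itself open; and the logical direction emphasized in Section \ref{reductions} is the converse of what you describe. The paper's Theorem \ref{main theorem} shows that Conjecture \ref{zerodim} (together with weak hyperbolic Ax-Schanuel) \emph{implies} Conjecture \ref{zp2}, i.e.\ Conjecture \ref{zerodim} \emph{is} the residual point-counting problem to which the Zilber-Pink conjecture is reduced, rather than something that is itself further reduced to point counting in that section. The substantive unconditional-style progress toward Conjecture \ref{zerodim} is Theorem \ref{Opt0}, which proves it for curves assuming Pila--Wilkie-compatible arithmetic inputs (Conjectures \ref{LGO} and \ref{conj}, or \ref{count complexity}, \ref{fieldofdef}, \ref{orr2}).
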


We will later show that, under certain arithmetic hypotheses, one can prove Conjecture \ref{zerodim} when $V$ is a curve. Our main result in this section is that (under the weak hyperbolic Ax-Schanuel conjecture), Conjecture \ref{zerodim} implies the Zilber-Pink conjecture.

\begin{theorem}\label{main theorem}
Assume that the weak hyperbolic Ax-Schanuel conjecture is true and assume that Conjecture \ref{zerodim} holds. 

Let $V$ be a subvariety of $S$. Then $\Opt(V)$ is finite.
\end{theorem}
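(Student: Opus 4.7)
The plan is to induct on $\dim V$, the base case $\dim V=0$ being immediate. For $W\in\Opt(V)$, three cases exhaust: $W=V$ (one contribution), $\dim W=0$ (handled by Conjecture \ref{zerodim}), and $0<\dim W<\dim V$. For this last case, Corollary \ref{owo} gives weak optimality of $W$, and Proposition \ref{fwo} (which relies on the weak hyperbolic Ax--Schanuel conjecture) provides a finite set $\Sigma$ of pre-special subvarieties of $X$ such that $\langle W\rangle_{\rm ws}$ is the image in $S$ of a fiber of some $X_M\in\Sigma$. It thus suffices to bound the number of such $W$ for each fixed $X_M$ and splitting $X_M=X_1\times X_2$ arising from a decomposition $M^{\ad}=M_1\times M_2$. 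I would pass to the auxiliary Shimura variety $S_M=\Gamma_M\backslash X_M$ with finite morphism $\phi:S_M\to S$ and projection $f:S_M\to S_1:=\Gamma_1\backslash X_1$, fix an irreducible component $\widetilde V$ of $\phi^{-1}(V)$, and lift each such $W$ to an irreducible component $\widetilde W$ of $f^{-1}(p_W)\cap\widetilde V$, where $p_W:=f(\widetilde W)\in V_1:=\overline{f(\widetilde V)}\subseteq S_1$ and $\langle\widetilde W\rangle_{\rm ws}=f^{-1}(p_W)$.

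Let $d:=\dim\widetilde V-\dim V_1$ be the generic fiber dimension. If $\dim\widetilde W>d$, Proposition \ref{dimension}(3) shows that $\widetilde W\subseteq E_{d+1}$, a proper Zariski closed subset of $\widetilde V$, whence $W\subseteq\phi(E_{d+1})\subsetneq V$. Since $W\in\Opt(V)$ remains optimal in any irreducible subvariety of $V$ containing $W$ (a direct consequence of the definition of optimal), the inductive hypothesis applied to each irreducible component of $\phi(E_{d+1})$ gives finiteness in this subcase.

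If instead $\dim\widetilde W=d$, I claim that $p_W\in\Opt_0(V_1)$. Assuming this, Conjecture \ref{zerodim} applied to $V_1\subseteq S_1$ bounds the $p_W$, and for each the fibre $f^{-1}(p_W)\cap\widetilde V$ has finitely many irreducible components, so (since $\phi$ is finite) finitely many $W$ arise. To prove the claim, suppose $\{p_W\}\subsetneq U\subseteq V_1$ satisfies $\delta(U)\le\delta(p_W)$; since $\dim\widetilde W=d$, the irreducible component $\widetilde Y$ of $f^{-1}(U)\cap\widetilde V$ through a smooth point of $\widetilde W$ dominates $U$ with $\dim\widetilde Y=\dim U+d$, so $Y:=\phi(\widetilde Y)\supsetneq W$ in $V$. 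A key input is that any special subvariety of $S_M$ containing a full $f$-fibre equals $f^{-1}$ of its image (saturation); applied to $\langle\widetilde W\rangle\supseteq\langle\widetilde W\rangle_{\rm ws}=f^{-1}(p_W)$, and similarly to $\langle\widetilde Y\rangle$, this yields $\langle\widetilde W\rangle=f^{-1}(\langle p_W\rangle)$ and $\langle\widetilde Y\rangle=f^{-1}(\langle U\rangle)$, whence
\begin{align*}
\delta(Y)-\delta(W)=\bigl(\dim\langle U\rangle+\dim X_2-\dim U-d\bigr)-\bigl(\dim\langle p_W\rangle+\dim X_2-d\bigr)=\delta(U)-\delta(p_W)\le 0,
\end{align*}
contradicting $W\in\Opt(V)$.

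The main obstacle I anticipate is the saturation statement invoked above. Since $S_M$ is not literally a product of Shimura varieties (the decomposition $M^{\ad}=M_1\times M_2$ only splits $X_M$ at the adjoint level), one must descend to the adjoint quotient $S_M^{\ad}$, where the product structure makes saturation transparent, and then lift the statement back up through the finite morphism $S_M\to S_M^{\ad}$. A secondary concern is justifying that the component $\widetilde Y$ has the asserted dimension and dominates $U$; this follows from the local product structure of $f|_{\widetilde V}$ at a smooth point of $\widetilde W$ in the generic-fibre case, together with the fact that generic-fibre components deform in algebraic families over $V_1$.
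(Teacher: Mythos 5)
Your strategy is essentially the paper's: induct on $\dim V$, reduce to positive-dimensional $W\in\Opt(V)$ via Conjecture~\ref{zerodim}, invoke Corollary~\ref{owo} and Proposition~\ref{fwo} to get the finite set $\Sigma$, pass to $\phi:S_M\to S$ and $f:S_M\to S_1$, and show the image point in $V_1$ is optimal so that one can appeal to a finiteness result downstairs (you use Conjecture~\ref{zerodim} applied to $V_1$; the paper instead uses the inductive hypothesis, which is equally valid). The defect bookkeeping and the implicit ``saturation'' of special subvarieties under $f$ are both present in the paper too, so that concern, while real, is not a point where you diverge. The genuine gap is in your second case, $\dim\widetilde W=d$.

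The issue is that $\dim\widetilde W = d$ does \emph{not} imply that $\widetilde W$ meets the locus over which $f|_{\widetilde V}$ is flat (or smooth) of relative dimension $d$: $\widetilde W$ can be a $d$-dimensional component of a ``bad'' fibre lying entirely in the non-flat locus, and $V_1$ itself may be singular. In that situation the claim that the component $\widetilde Y$ of $f^{-1}(U)\cap\widetilde V$ through a smooth point of $\widetilde W$ dominates $U$ with $\dim\widetilde Y=\dim U+d$ is not justified --- the naive fibre-dimension inequality for preimages of subvarieties ($\dim\widetilde Y\ge \dim U+d$) can fail over a singular base, and your appeal to ``local product structure at a smooth point of $\widetilde W$'' and ``generic-fibre components deform in algebraic families'' does not apply outside the flat locus. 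The paper avoids this by choosing the dichotomy differently: it fixes a dense open $V_0\subseteq\widetilde V$ on which $f$ is smooth of relative dimension $\nu=d$ and splits on whether $\widetilde W\cap V_0$ is empty. When $\widetilde W\cap V_0\ne\emptyset$ one automatically has $\dim\widetilde W=\nu$ \emph{and} the smoothness of $f_0$ guarantees the component $B$ of $f^{-1}(A)$ containing $\widetilde W$ satisfies $\dim B=\dim A+\nu$; when $\widetilde W\cap V_0=\emptyset$, $\widetilde W$ sits in a proper closed subvariety of $\widetilde V$ and induction applies. Your ``$\dim\widetilde W>d$'' case is a strict subset of the paper's ``misses $V_0$'' case, and the leftover instances ($\dim\widetilde W=d$ but $\widetilde W\cap V_0=\emptyset$) are exactly where your fibre-dimension step breaks down. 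Replacing your dichotomy with the paper's closes the gap.
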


\begin{proof}
We prove Theorem \ref{main theorem} by induction on $\dim V$. Of course, Theorem \ref{main theorem} is trivial when $\dim V=0$ or $\dim V=1$. Therefore, we assume that $\dim V\geq 2$ and that Theorem \ref{main theorem} holds whenever the subvariety in question is of lower dimension.

We need to show that the induction hypothesis implies that there are only finitely many subvarieties of positive dimension belonging to $\Opt(V)$.

Let $\Sigma$ be a finite set of pre-special subvarieties of $X$, as in the proof of Theorem \ref{openan}, and let $W\in\Opt (V)$ be of positive dimension. 

By Corollary \ref{owo}, $W$ is weakly optimal and, therefore, there exists $x\in X$ such that, if $M:=\MT(x)$, the $M(\RR)^+$ conjugacy class $X_M$ of $x$ in $X$ belongs to $\Sigma$ and $\langle W \rangle_{\rm ws}$ is equal to the image in $S$ of a fiber of $X_M$. That is, we may write $M^{\ad}$ as a product
\begin{align*}
M^{\ad}=M_1\times M_2
\end{align*} 
of two normal $\QQ$-subgroups, thus inducing a splitting
\begin{align*}
X_M=X_1\times X_2,
\end{align*} 
such that $\langle W \rangle_{\rm ws}$ is equal to the image in $S$ of $\{x_1\}\times X_2$, for some $x_1\in X_1$.

Let $\Gamma_M$ be a congruence subgroup of $M(\QQ)_+$ contained in $\Gamma$, where $M(\QQ)_+$ denotes the subgroup of $M(\QQ)$ acting on $X_M$, such that the image of $\Gamma_M$ under the natural map
\begin{align*}
M(\QQ)\rightarrow M^{\ad}(\QQ)=M_1(\QQ)\times M_2(\QQ)
\end{align*}
is equal to a product $\Gamma_1\times\Gamma_2$. We denote by $f$ the natural morphism
\begin{align*}
\Gamma_M\backslash X_M\rightarrow\Gamma_1\backslash X_1,
\end{align*}
and by $\phi$ the finite morphism
\begin{align*}
\Gamma_M\backslash X_M\rightarrow\Gamma\backslash X=S.
\end{align*}
Let $\widetilde{V}$ be an irreducible component of $\phi^{-1}(V)$ such that $\dim\widetilde{V}=\dim V$, and let $\widetilde{W}$ denote an irreducible component of $\phi^{-1}(W)$ contained in $\widetilde{V}$ such that $\dim\widetilde{W}=\dim W$. Then $\widetilde{W}$ is optimal in $\widetilde{V}$. On the other hand, by the generic smoothness property, there exists a dense open subset $V_0$ of $\widetilde{V}$ such that the restriction $f_0$ of $f$ to $V_0$ is a smooth morphism of relative dimension $\nu$. We denote by $V_1$ the Zariski closure of $f(V_0)$ in $\Gamma_1\backslash X_1$. 

Now suppose that
\begin{align}\label{openset}
\widetilde{W}\cap V_0=\emptyset.
\end{align}
Then $\widetilde{W}$ is a subvariety of some irreducible component $V^0$ of $\widetilde{V}\setminus V_0$. Furthermore, $\widetilde{W}$ is optimal in $V^0$. However, since $\dim V^0$ is strictly less than $\dim V$, our induction hypothesis implies that $\Opt(V^0)$ is finite. 

Therefore, we assume that (\ref{openset}) does not hold. As an irreducible component of the fibre $f_0^{-1}(z)$, where $z$ denotes the image of $x_1$ in $V_1$, its dimension is equal to $\nu$. In particular,
\begin{align*}
\dim\widetilde{W}=\nu.
\end{align*}
We claim that $z$ is optimal in $V_1$. To see this, note that $f(\langle \widetilde{W}\rangle)$ contains $z$ and is a special subvariety of dimension
\begin{align*}
\dim\langle \widetilde{W}\rangle-\dim X_2=\dim\widetilde{W}+\delta(\widetilde{W})-\dim X_2=\nu+\delta(\widetilde{W})-\dim X_2.
\end{align*}
Therefore, let $A$ be a subvariety of $V_1$ containing $z$ such that 
\begin{align*}
\delta(A)\leq \delta(z)\leq\nu+\delta(\widetilde{W})-\dim X_2,
\end{align*}
and let $B$ be an irreducible component of $f^{-1}(A)$ containing $\widetilde{W}$. Since $V_0$ is open in $\widetilde{V}$ and $A$ is contained in $V_1$,
\begin{align*}
\dim B=\dim A+\nu.
\end{align*}
Therefore,
\begin{align*}
\delta(B)\leq\dim\langle A\rangle+\dim X_2-\dim B=\delta(A)+\dim A+\dim X_2-\dim B\leq\delta(\widetilde{W})
\end{align*}
and, since $\widetilde{W}$ is optimal in $\widetilde{V}$, we conclude that $B$ is equal to $\widetilde{W}$. In particular, $\widetilde{W}$ is an irreducible component of $f^{-1}(A)$ but, since it is also contained in $f^{-1}(z)$, it must be that $A$ is equal to $z$, proving the claim.

Since $W$ was assumed to be of positive dimension, so too must be $X_2$. It follows that $\dim V_1$ is strictly less than $\dim V$ and so, by the induction hypothesis, $\Opt(V_1)$ is finite. Since $z\in\Opt(V_1)$ and since $\Sigma$ and the number of splittings are finite, we are done.

\end{proof}

We will later prove that the following conjecture is a consequence of the weak hyperbolic Ax-Schanuel conjecture and our arithmetic conjectures. It is inspired by the cited theorem of Habegger and Pila.

\begin{conjecture}[cf. \cite{hp:o-min}, Theorem 9.15 (iii)]\label{oafinite}
Let $V$ be a subvariety of $S$. Then the set $V^{\rm oa}\cap S^{[1+\dim V]}$ is finite.
\end{conjecture}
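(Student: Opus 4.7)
The plan is to reduce Conjecture~\ref{oafinite} to Conjecture~\ref{zerodim} by establishing the inclusion
\begin{align*}
V^{\rm oa}\cap S^{[1+\dim V]}\subseteq\Opt_0(V).
\end{align*}
Once this inclusion is in hand, finiteness is immediate from Conjecture~\ref{zerodim}, which itself will be extracted later from the weak hyperbolic Ax--Schanuel conjecture together with the arithmetic hypotheses (large Galois orbits for optimal points and height bounds for pre-special subvarieties) via the Pila--Wilkie counting theorem.

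To establish the inclusion, I would fix a point $P\in V^{\rm oa}\cap S^{[1+\dim V]}$. Since $\langle P\rangle$ has codimension at least $1+\dim V$ in $S$, we have $\delta(P)=\dim\langle P\rangle\leq\dim S-1-\dim V$. Suppose for contradiction that $P$ is not optimal in $V$. Then there is a subvariety $W$ of $V$ with $P\subsetneq W\subseteq V$ and $\delta(W)\leq\delta(P)$, so in particular $\dim W\geq 1$. Using $\dim\langle W\rangle_{\rm ws}\leq\dim\langle W\rangle=\dim W+\delta(W)$ together with the bound on $\delta(P)$, one obtains
\begin{align*}
\dim\langle W\rangle_{\rm ws}-\dim W\leq\delta(W)\leq\dim S-1-\dim V,
\end{align*}
which rearranges to $\dim W\geq 1+\dim V+\dim\langle W\rangle_{\rm ws}-\dim S$. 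Combined with $\dim W\geq 1$, this is exactly the definition of $W$ being anomalous in $V$. Hence $W\subseteq V^{\rm an}$, and therefore $P\in W\subseteq V^{\rm an}$, contradicting $P\in V^{\rm oa}$.

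A subtlety the plan silently relies on is Theorem~\ref{openan}, which (conditional on weak hyperbolic Ax--Schanuel) ensures that $V^{\rm an}$ is Zariski closed in $V$, so that $V^{\rm oa}$ is a genuine open subvariety and the statement of Conjecture~\ref{oafinite} is unambiguous. Beyond this, the argument is purely formal, matching the dimension-chasing used to deduce Conjecture~\ref{zp} from Conjecture~\ref{zp2}. The real obstacle in proving Conjecture~\ref{oafinite} is therefore located downstream, in the proof of Conjecture~\ref{zerodim}: it is there, and not in this reduction, that o-minimality, point-counting in a fundamental domain for $\Gamma$ acting on $X$, Galois orbit lower bounds, and height bounds must all be orchestrated.
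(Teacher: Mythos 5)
Your inclusion
\begin{align*}
V^{\rm oa}\cap S^{[1+\dim V]}\subseteq\Opt_0(V)
\end{align*}
is correct, and in fact it appears verbatim in the paper as Remark~\ref{oainopt}, by essentially the same dimension count. So that step is sound and not the issue.

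The gap is in your second step: you propose to finish by invoking Conjecture~\ref{zerodim}, claiming it ``will be extracted later'' from weak hyperbolic Ax--Schanuel plus the arithmetic hypotheses via Pila--Wilkie. But the paper only establishes Conjecture~\ref{zerodim} for \emph{curves} (Theorem~\ref{Opt0}); the authors explicitly state that they were unable to push the Pila--Wilkie argument through for $\Opt_0(V)$ when $\dim V\geq 2$. The reason, which they explain after Theorem~\ref{Opt0}, is that in that setting one would need to contradict the optimality of a point $P$, which means producing a positive-dimensional subvariety through a conjugate of $P$ whose \emph{defect} $\delta$ is bounded by $\delta(P)$; the Ax--Schanuel machinery does not deliver this. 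Since Conjecture~\ref{zerodim} is strictly stronger than Conjecture~\ref{oafinite} (your inclusion shows exactly this), reducing to the stronger open statement loses ground rather than gaining it.

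The paper's actual route (Theorem~\ref{oa0}) bypasses $\Opt_0(V)$ entirely and attacks $V^{\rm oa}\cap S^{[1+\dim V]}$ directly. The crucial difference is that to contradict $P\in V^{\rm oa}$ one only needs to exhibit a positive-dimensional subvariety $Z$ through a conjugate of $P$ with $\dim Z\geq 1+\dim V+\dim\langle Z\rangle_{\rm ws}-\dim S$, i.e.\ one must control the \emph{weakly special} defect $\delta_{\rm ws}$ rather than the full defect $\delta$. That weaker bound is precisely what the weak hyperbolic Ax--Schanuel conjecture hands you when applied to the algebraic block produced by Pila--Wilkie (via a Zariski optimal intersection component $B$ with $\langle B\rangle_{\rm Zar}$ pre-weakly special, so that $\pi(\langle B\rangle_{\rm Zar})=\langle\pi(B)\rangle_{\rm ws}$). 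So the statement you want is provable for $V$ of arbitrary dimension, but only because it asks strictly less than Conjecture~\ref{zerodim}; routing through $\Opt_0(V)$ forfeits exactly the slack that makes the argument work.
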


The importance of Conjecture \ref{oafinite} for us is that, when $V$ is suitably generic, Conjecture \ref{oafinite} implies Pink's conjecture (assuming the weak hyperbolic Ax-Schanuel conjecture).

\begin{theorem}
Assume that the weak hyperbolic Ax-Schanuel conjecture is true and that Conjecture \ref{oafinite} holds.

Let $V$ be a Hodge generic subvariety of $S$ such that (even after replacing $\Gamma$) $S$ cannot be decomposed as a product $S_1\times S_2$ such that $V$ is contained in $V'\times S_2$, where $V'$ is a proper subvariety of $S_1$ of dimension strictly less than the dimension of $V$. Then 
\begin{align*}
V\cap S^{[1+\dim V]}
\end{align*}
is not Zariski dense in $V$.
\end{theorem}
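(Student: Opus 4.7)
The plan is to split the intersection $V\cap S^{[1+\dim V]}$ according to the decomposition $V=V^{\rm an}\cup V^{\rm oa}$, and to control each piece separately: the anomalous locus using Proposition \ref{equal} together with the hypothesis on $V$, and its complement using Conjecture \ref{oafinite}.

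First I would apply Conjecture \ref{oafinite} directly to conclude that $V^{\rm oa}\cap S^{[1+\dim V]}$ is a finite set, hence not Zariski dense in $V$ provided $\dim V\ge 1$. The case $\dim V=0$ is trivial, since a Hodge generic point of $S$ (with $\dim S>0$) does not lie on any proper special subvariety, making the intersection empty.

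The heart of the argument is to establish that, under the hypothesis of the theorem, $V^{\rm an}\subsetneq V$. I would prove this by contraposition via Proposition \ref{equal}. If $V^{\rm an}=V$, that proposition yields a nontrivial decomposition $G^{\rm ad}=G_1\times G_2$, inducing $X=X_1\times X_2$, together with the projection $f:\Gamma\backslash X\to\Gamma_1\backslash X_1$ satisfying
\[
\dim f(V)<\min\{\dim V,\dim X_1\}.
\]
To match the geometric form of the hypothesis, I would then pass to a congruence subgroup $\Gamma'$ of $G(\QQ)_+$, contained in $\Gamma$ and such that its image in $G^{\rm ad}(\QQ)$ is a product $\Gamma'_1\times\Gamma'_2$. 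The quotient $\Gamma'\backslash X$ then genuinely decomposes as a product $S_1\times S_2$, with $S_i=\Gamma'_i\backslash X_i$. Taking $\widetilde V$ to be an irreducible component of the preimage of $V$ in $S_1\times S_2$ of the same dimension as $V$, letting $f_1:S_1\times S_2\to S_1$ denote the first projection, and setting $V':=\overline{f_1(\widetilde V)}$, one has $\widetilde V\subseteq V'\times S_2$ and, using the displayed inequality, $\dim V'<\min\{\dim V,\dim S_1\}$. Thus $V'$ is a proper subvariety of $S_1$ of dimension strictly less than $\dim V$, contradicting the hypothesis on $V$ (which explicitly permits the replacement of $\Gamma$). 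Hence $V^{\rm an}\subsetneq V$, and by Theorem \ref{openan} it is a proper Zariski closed subset of the irreducible variety $V$.

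Putting the two pieces together, $V\cap S^{[1+\dim V]}$ is contained in the union of the proper Zariski closed set $V^{\rm an}$ and a finite set, which is again a proper Zariski closed subset of the irreducible variety $V$, and therefore is not Zariski dense in $V$. The main technical step, and the one I expect to require the most care, is the translation between the ``abstract'' group-theoretic decomposition of $G^{\rm ad}$ produced by Proposition \ref{equal} and the ``geometric'' product decomposition of $S$ in the hypothesis; this is a matter of choosing a suitable congruence cover and tracking preimages, but has to be executed precisely enough to legitimately invoke the ``even after replacing $\Gamma$'' clause.
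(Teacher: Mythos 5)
Your argument is correct and follows the paper's own proof almost exactly: use Proposition \ref{equal} and the hypothesis (interpreted via a suitable change of level structure) to show $V^{\rm an}\subsetneq V$, then invoke Theorem \ref{openan} for Zariski closedness of $V^{\rm an}$ and Conjecture \ref{oafinite} for finiteness of $V^{\rm oa}\cap S^{[1+\dim V]}$, and combine. The only cosmetic difference is that you realize the ``replacing $\Gamma$'' step by passing to a finite cover whose image in $G^{\rm ad}$ is a genuine product, whereas the paper passes to the finite quotient $\Gamma'=\Gamma_1\times\Gamma_2\supseteq\Gamma$ (as in the proof of Proposition \ref{dc}); both are valid and yield the same inequality $\dim V'<\min\{\dim V,\dim S_1\}$.
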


\begin{proof}
We claim that the assumptions guarantee that $V^{\rm an}$ is strictly contained in $V$. Otherwise, by Proposition \ref{equal}, we can write $G^\ad=G_1\times G_2$, and thus $X=X_1\times X_2$, such that
\begin{align*}
\dim f(V)<{\rm min}\{\dim V,\dim X_1\},
\end{align*}
where $f$ denotes the projection map
\begin{align*}
\Gamma\backslash X\rightarrow\Gamma_1\backslash X_1,
\end{align*}
and $\Gamma_1$ denotes the image of $\Gamma$ under the natural maps
\begin{align*}
G(\QQ)\rightarrow G^\ad(\QQ)\rightarrow G_1(\QQ).
\end{align*}
Therefore, after replacing $\Gamma$, we can write $S$ as a product $S_1\times S_2$ so that $f$ is simply the projection on to the first factor and $V$ is contained in $V'\times S_2$, where $V'$ is Zariski closure of $f(V)$ in $S_1$. However, since
\begin{align*}
\dim V'=\dim f(V),
\end{align*}
this is a contradiction.

Therefore, by Theorem \ref{openan}, $V^{\rm an}$ is a proper Zariski closed subset of $V$. On the other hand, $V\cap S^{[1+\dim V]}$ is contained in
\begin{align*}
V^{\rm an}\cup \left[V^{\rm oa}\cap S^{[1+\dim V]}\right]
\end{align*}
and so the theorem follows from Conjecture \ref{oafinite}.
\end{proof}

\section{The counting theorem}\label{count}

Henceforth, we turn our attention to the counting problems themselves. We will approach these problems using a theorem of Pila and Wilkie concerned with counting points in definable sets. We first recall the notations. 

Let $k\geq 1$ be an integer. For any real number $y$, we define its {\bf $k$-height} as
\begin{align*}
{\rm H}_k(y):=\min\{\max\{|a_0|,...,|a_k|\}:a_i \in \ZZ,\ {\rm gcd}\{a_0,...,a_k\}=1,\ a_0y^k+...+a_k=0\},
\end{align*}
where we use the convention that, if the set is empty i.e. $y$ is not algebraic of degree at most $k$, then ${\rm H}_k(y)$ is $+\infty$. For $y=(y_1,...,y_m)\in \RR^m$, we set
\begin{align*}
{\rm H}_k(y):=\max\{{\rm H}_k(y_1),...,{\rm H}_k(y_m)\}.
\end{align*}

For any set $A\subseteq\RR^m\times \RR^n$, and for any real number $T\geq 1$, we define
\begin{align*}
A(k,T):=\{(y,z)\in Z:{\rm H}_k(y)\leq T\}.
\end{align*}

The counting theorem of Pila and Wilkie is stated as follows. 

\begin{theorem}[cf. the proof of \cite{hp:o-min}, Corollary 7.2]\label{pilawilkie}
Let $D\subseteq \RR^l\times\RR^m\times \RR^n$ be a definable family parametrised by $\RR^l$, let $k$ be a positive integer, and let $\epsilon>0$. There exists a constant $c:=c(D,k,\epsilon)>0$ with the following properties. 

Let $x\in\RR^l$ and let 
\begin{align*}
D_x:=\{(y,z)\in\RR^m\times\RR^n:(x,y,z)\in D\}.
\end{align*}
Let $\pi_1$ and $\pi_2$ denote the projections $\RR^m\times \RR^n\rightarrow\RR^m$ and $\RR^m\times \RR^n\rightarrow\RR^n$, respectively. If $T\geq 1$ and $\Sigma\subseteq D_x(k,T)$ satisfies
\begin{align*}
\# \pi_2(\Sigma)>cT^{\epsilon},
\end{align*}
there exists a continuous and definable function $\beta: [0,1]\rightarrow D_x$ such that the following properties hold.
\begin{enumerate}
  \item  The composition $\pi_1\circ \beta: [0,1]\rightarrow \RR^m$ is semi-algebraic.
  \item  The composition $\pi_2\circ \beta:[0,1]\rightarrow \RR^n$ is non-constant.
  \item  We have $\beta(0)\in\Sigma$.
  \item  The restriction $\beta_{|(0,1)}$ is real analytic.
\end{enumerate}
\end{theorem}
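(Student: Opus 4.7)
The theorem is a parametric, partial-semi-algebraic refinement of the Pila--Wilkie counting theorem. The asymmetry in the statement is key: height is measured only on the $\RR^m$-factor (the $y$-coordinates), while semi-algebraicity is required only for $\pi_1\circ\beta$ (not for $\beta$ itself), with the $\RR^n$-factor (the $z$-coordinates) playing the role of a definable companion that is allowed to vary arbitrarily. My plan is therefore to invoke the block form of the Pila--Wilkie counting theorem, in its parametric, partial-semi-algebraic version, and then to extract a single arc by pigeonholing the rational points of $\Sigma$.

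Concretely, applied to the definable family $D\subseteq\RR^l\times\RR^m\times\RR^n$, the theorem produces, for each $\epsilon>0$ and each integer $k\geq 1$, a uniform constant $c_0:=c_0(D,k,\epsilon)$ and finitely many definable families of arcs $\beta_1,\ldots,\beta_J$, each a definable map $\RR^l\times[0,1]\to\RR^m\times\RR^n$, with the following properties: for each $x\in\RR^l$ and each $j$, the specialization $\beta_j(x,\cdot)$ is continuous and definable, real analytic on $(0,1)$, takes values in $D_x$, and the composition $\pi_1\circ\beta_j(x,\cdot)$ is semi-algebraic; moreover, for each $T\geq 1$, the set $D_x(k,T)$ is covered by the images of at most $c_0 T^\epsilon$ such arcs (specialized at the given $x$). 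This is the parametric analogue of exactly the statement Habegger--Pila invoke (in the case $l=0$) in the proof of their Corollary 7.2 from \cite{hp:o-min}.

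Taking $c:=c_0$, the hypothesis $\#\pi_2(\Sigma)>cT^\epsilon$ together with the pigeonhole principle guarantees that some single arc $\beta_{j_0}(x,\cdot)$ contains two points $\sigma_1,\sigma_2\in\Sigma$ with $\pi_2(\sigma_1)\neq\pi_2(\sigma_2)$. After a semi-algebraic affine reparametrization of $[0,1]$, we may assume that the resulting arc $\beta:[0,1]\to D_x$ satisfies $\beta(0)=\sigma_1\in\Sigma$, which is property (3). Property (2) then follows because $\beta$ also passes through $\sigma_2$, forcing $\pi_2\circ\beta$ to take at least two distinct values. Properties (1) and (4) are preserved by the semi-algebraic reparametrization and hold by construction of the block families.

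The main obstacle is the invocation of the parametric, partial-semi-algebraic block form of the Pila--Wilkie theorem. This is a well-established but non-trivial refinement of the original counting theorem, in which blocks are required to be semi-algebraic only in the $y$-variables; in the modular case, the non-parametric version of this refinement is what underlies Corollary 7.2 of \cite{hp:o-min}. The extension to a general definable family parametrized by $\RR^l$ is routine from the standard parametric form of Pila--Wilkie, with $c$ and $J$ depending only on $D$, $k$, and $\epsilon$, as required by the statement.
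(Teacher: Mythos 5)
The paper does not supply its own proof of this theorem; it simply cites the proof of Habegger--Pila, Corollary 7.2, and records two supplements: that $\beta(0)\in\Sigma$ is implicit in that proof, and that real analyticity of $\beta$ on $(0,1)$ follows because $\RR_{\rm an,exp}$ admits analytic cell decomposition. Your reconstruction follows the same underlying route: a ``semi-rational,'' partial, parametric block version of Pila--Wilkie, then pigeonhole, then a semi-algebraic affine reparametrization. The pigeonhole step is right (if no arc met $\Sigma$ in two points with distinct $\pi_2$-values, then $\pi_2(\Sigma)$ would be covered by $\leq c_0T^\epsilon$ singletons), and the reparametrization correctly delivers properties (1)--(4).

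The soft spot is the black box you invoke. The ``parametric, partial-semi-algebraic block form'' as you state it — finitely many definable \emph{arc} families $\beta_j\colon\RR^l\times[0,1]\to\RR^m\times\RR^n$, each specialization continuous, real analytic on $(0,1)$, valued in $D_x$, with $\pi_1\circ\beta_j(x,\cdot)$ semi-algebraic, and with $D_x(k,T)$ covered by $\leq c_0T^\epsilon$ such arcs — is not a standard theorem in that exact form. Pila's block theorem covers $D_x(k,T)$ by \emph{blocks}, namely connected definable sets contained in semi-algebraic sets of the same dimension; it does not directly yield one-dimensional arcs, and for a block of dimension $\geq 2$ a generic path between two of its points need not have semi-algebraic $\pi_1$-image. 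The ``partial'' refinement (height and semi-algebraicity only in the $y$-coordinates) is itself precisely the substantive content of Habegger--Pila's Proposition 7.1, which their Corollary 7.2 packages. In other words, the pigeonhole/reparametrization wrapper is correct but minor; the arc-family statement you assume \emph{is} the result being cited, so the argument is circular unless you either cite the semi-rational counting theorem precisely or reprove it. Finally, you assert real analyticity of the specialized arcs on $(0,1)$ without justification; the paper flags explicitly that this step requires analytic cell decomposition in $\RR_{\rm an,exp}$, which deserves a mention.
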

Note that, although the conclusion $\beta(0)\in\Sigma$ does not appear in the statement of \cite{hp:o-min}, Corollary 7.2, it is, indeed, established in its proof. The final property holds because $\RR_{\rm an,exp}$ admits analytic cell decomposition (see \cite{vdDM:Ranexp}).

\section{Complexity}

In order to apply the counting theorem, we will need a way of counting special points and, more generally, special subvarieties. Recall that $S$ is a connected component of the Shimura variety $\Sh_K(G,\mathfrak{X})$ defined by the Shimura datum $(G,X)$ and the compact open subgroup $K$ of $G(\AAA_f)$.

Let $P$ be a special point in $S$ and let $x\in X$ be a pre-special point lying above $P$. In particular, $T:=\MT(x)$ is a torus and we denote by $D_T$ the absolute value of the discriminant of its splitting field. We let $K^m_T$ denote the maximal compact open subgroup of $T(\AAA_f)$ and we let $K_T\subseteq K^m_T$ denote $K\cap T(\AAA_f)$.

\begin{definition}
The {\bf complexity} of $P$ is the natural number
\begin{align*}
\Delta(P):=\max\{D_T,[K^m_T:K_T]\}.
\end{align*}
Note that this does not depend on the choice of $x$.
\end{definition}

Now let $Z$ be a special subvariety of $S$. There exists a Shimura subdatum $({ H},{\mathfrak{X}_H})$ of $({ G},{\mathfrak{X}})$, such that $H$ is the generic Mumford-Tate group on $\mathfrak{X}_H$, and a connected component $X_{ H}$ of ${\mathfrak{X}_H}$ contained in $X$ such that $Z$ is the image of $X_{ H}$ in $\Gamma\backslash X$. In fact, these choices are well-defined up to conjugation by $\Gamma$. 

By the {\bf degree} $\deg(Z)$ of $Z$, we refer to the degree (in the sense of \cite{KY:AO}, Section 5.1) of the Zariski closure of $Z$ in the Baily-Borel compactification of $S$ with respect to the line bundle $L_K$ defined in \cite{KY:AO}, Proposition 5.3.2 (1).

\begin{definition}
The {\bf complexity} of $Z$ is the natural number
\begin{align*}
\Delta(Z):=\max\{\deg(Z),\text{min}\{\Delta(P):P\in Z\text{ is a special point}\}\}.
\end{align*}
Note that when $Z$ is a special point, this complexity coincides with the former.
\end{definition}

This is a natural generalization of the complexities given in \cite{hp:o-min}, Definition 3.4 and Definition 3.8. In order to count special subvarieties, however, it is crucial that the complexity of $Z$ satisfies the following property.

\begin{conjecture} \label{count complexity}
For any $b\geq 1$, we have
\begin{align*}
\#\{Z\subseteq S:Z\text{ is special and }\Delta(Z)\leq b\}<\infty.
\end{align*}
\end{conjecture}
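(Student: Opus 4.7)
The plan is to decompose the bound $\Delta(Z) \leq b$ into its two constituents: $\deg(Z) \leq b$ and the existence of a special point $P \in Z$ with $\Delta(P) \leq b$. Correspondingly, I would establish (A) finiteness of the set of special points of complexity at most $b$, and (B) for each such $P$, finiteness of the set of special subvarieties $Z \ni P$ of degree at most $b$; combining (A) and (B) yields the conjecture.

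For (A), a special point $P$ lifts to a pre-special point $x \in X$ whose Mumford--Tate group $T := \MT(x)$ is a $\QQ$-torus in $G$. Since $T$ has rank bounded by $\Rank G$, its splitting field $E/\QQ$ has degree bounded independently of $P$, and Hermite's theorem then restricts $E$ to a finite set once $D_T \leq b$. For each such $E$, the $E$-split $\QQ$-tori in $G$ fall into finitely many $G(\QQ)$-conjugacy classes, and for each such $T$ there are only finitely many pre-special points in $X$ with Mumford--Tate group $T$ modulo the action of $N_G(T)(\QQ)$ (the relevant set of cocharacters being discrete). The remaining bound $[K^m_T : K_T] \leq b$ then cuts out a finite-index family of level subgroups, producing finitely many $\Gamma$-orbits of such $P$ in all.

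For (B), any special subvariety $Z \ni P$ corresponds, modulo $\Gamma$-conjugacy, to a Shimura subdatum $(H, \mathfrak{X}_H)$ of $(G, \mathfrak{X})$ with $T \subseteq H$ and $x$ factoring through $H_\RR$. Via the interpretation of the Baily--Borel line bundle as an automorphic line bundle, together with Wirtinger-type estimates relating degree to invariant volume (in the spirit of Hwang--To and Ullmo--Yafaev), the bound $\deg(Z) \leq b$ forces the invariant volume of $Z$ with respect to the Bergman metric to be bounded; this in turn confines the $\QQ$-simple factors of $H^{\der}$ to finitely many isogeny classes. Hence $H$ ranges over finitely many $G(\QQ)$-conjugacy classes, and for each such $(H, \mathfrak{X}_H)$ the number of $\Gamma$-orbits of images of $X_H$ passing through $P$ is controlled by a suitable double coset in $N_G(H)(\QQ) \backslash G(\QQ) / \Gamma$, which is finite by standard arithmetic arguments.

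The hard part will be (B): quantifying, uniformly in $(H, \mathfrak{X}_H)$, the passage from a degree bound in the Baily--Borel compactification to an effective finiteness statement for the isogeny class of $H^{\der}$. An alternative route would be to invoke the finiteness of Shimura subdata of bounded reflex-field discriminant (in the style of Tsimerman's counting estimates) and then control the Hecke translates of a fixed Shimura subvariety through $P$ via Galois-orbit lower bounds, thereby reducing (B) to the already-established finiteness (A) of special points.
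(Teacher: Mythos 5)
The statement you are trying to prove is, in the paper, genuinely a \emph{conjecture}: the authors do not claim an unconditional proof, and they explicitly single out the obstruction. Immediately after stating Conjecture \ref{count complexity}, the paper writes ``The obstruction to proving that this property holds for a general Shimura variety can be expressed as follows,'' and then states Conjecture \ref{degcom}: there should be a finite set $\Omega$ of semisimple $\QQ$-subgroups such that any special $Z$ with $\deg(Z)\leq b$ has $H^{\der}=\gamma F\gamma^{-1}$ for $\gamma\in\Gamma$, $F\in\Omega$. The paper then proves that Conjecture \ref{degcom} implies Conjecture \ref{count complexity}, using the minimal-complexity special point on $Z$, the subdatum $(G_F,\mathfrak{X}')$, and \cite{uy:andre-oort}, Proposition 3.21 to control pre-special points of bounded complexity in a fixed connected component. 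Separately, Lemma \ref{conjimplic} shows Conjecture \ref{conj} also implies it via Northcott. Conjecture \ref{degcom} itself is only verified for products of modular curves.

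Your proposal has the right skeleton and the two halves (A) and (B) line up structurally with the paper's reduction: (A) is essentially the content supplied by \cite{uy:andre-oort}, Proposition 3.21 (finiteness of special points of bounded complexity), and (B) is exactly Conjecture \ref{degcom} in disguise, namely the passage from a degree bound to a finiteness statement for the $\Gamma$-conjugacy class of $H^{\der}$. The gap is precisely in (B), and you are candid about this. The sketch you offer --- Baily--Borel degree $\Rightarrow$ bounded invariant volume $\Rightarrow$ finitely many isogeny classes of $H^{\der}$ --- is a plausible heuristic (the first implication is in the spirit of Ullmo--Yafaev/Hwang--To, the second in the spirit of Clozel--Oh--Ullmo), but it is not a proof and conceals the main difficulty: bounding, uniformly across the infinitely many $\Gamma$-conjugates of a given $H$, the number of $\Gamma$-orbits of subdata with bounded Baily--Borel degree. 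Your final claim in (B) that the relevant double coset $N_G(H)(\QQ)\backslash G(\QQ)/\Gamma$ is finite ``by standard arithmetic arguments'' is also not correct as stated: that double coset is generally infinite, which is precisely why the degree bound must be invoked to cut it down. So your proposal does not contain a proof of the conjecture; rather, it reconstructs the conditional reduction and correctly locates the unresolved hypothesis that the paper makes explicit as Conjecture \ref{degcom}.
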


The obstruction to proving that this property holds for a general Shimura variety can be expressed as follows.

\begin{conjecture}\label{degcom}
For any $b\geq 1$, there exists a finite set $\Omega$ of semisimple subgroups of $G$ defined over $\QQ$ such that, if $Z$ is a special subvariety of $S$, and $\deg(Z)\leq b$, then
\begin{align*}
H^{\rm der}=\gamma F\gamma^{-1},
\end{align*}
for some $\gamma\in\Gamma$ and some $F\in\Omega$.
\end{conjecture}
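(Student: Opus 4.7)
The plan is to translate the degree bound into arithmetic data about $F := H^{\rm der}$, then combine a finiteness result for semisimple $\QQ$-subgroups of $G$ modulo $G(\QQ)$-conjugacy with a reduction-theoretic argument to refine to $\Gamma$-conjugacy. Concretely, fix a Shimura subdatum $(H, \mathfrak{X}_H)$ with $H$ generic Mumford-Tate on $\mathfrak{X}_H$ and a component $X_H \subseteq X$ whose image in $S$ is $Z$. By the Hirzebruch-Mumford proportionality principle (in the form used in \cite{KY:AO}), the degree of $Z$ with respect to $L_K$ is, up to a universal constant depending only on $(G,K)$, comparable to the hyperbolic volume $\mathrm{vol}(\Gamma_H \backslash X_H)$, where $\Gamma_H := H(\QQ)_+ \cap gKg^{-1}$ for a suitable $g \in G(\AAA_f)$. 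Since this is in turn comparable to an arithmetic covolume of a lattice in $F(\RR)^+$, the hypothesis $\deg(Z) \leq b$ translates into a uniform upper bound on that covolume.

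Next, one should extract from this a finiteness statement up to $G(\QQ)$-conjugacy. There are only countably many $G(\CC)$-conjugacy classes of semisimple subgroups of $G_\CC$ (a classical fact via root-datum rigidity and Dynkin's classification), and within each class the $\QQ$-forms are parameterised by a Galois cohomology set. One then invokes a theorem in the spirit of Borel and Prasad: inside a fixed reductive $\QQ$-group, arithmetic subgroups of bounded covolume fall into finitely many conjugacy classes. Combined with the covolume bound from the first step, this should yield finitely many $G(\QQ)$-conjugacy classes of semisimple $\QQ$-subgroups $F \subseteq G$ that can arise as $H^{\rm der}$ for some $Z$ with $\deg(Z) \leq b$.

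The hardest step, and the reason Conjecture \ref{degcom} is stated as a conjecture, is passing from $G(\QQ)$-conjugacy to $\Gamma$-conjugacy. Fixing a representative $F_0$ of such a $G(\QQ)$-class, the elements $F = g F_0 g^{-1}$ with $g \in G(\QQ)$ are parameterised by $G(\QQ)/N_G(F_0)(\QQ)$, and one needs those $g$ producing a bounded-degree $Z$ to lie in only finitely many $\Gamma$-orbits. The natural strategy is to equip $G(\QQ)/N_G(F_0)(\QQ)$ with a height from a fixed faithful embedding $G \hookrightarrow \GL_n$, show that $\deg(Z)$ controls this height, and then apply reduction theory and Northcott-style finiteness for the $\Gamma$-action. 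However, this last comparison is precisely the quantitative arithmetic content that the paper isolates as its separate hypothesis on heights of pre-special subvarieties (hypothesis (2) of the introduction), whose general proof is currently out of reach; I expect this comparison to be the genuine obstacle, which is why the authors leave Conjecture \ref{degcom} open and instead record it as the remaining impediment to Conjecture \ref{count complexity}.
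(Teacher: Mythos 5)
You have correctly identified that Conjecture \ref{degcom} is left open in the paper and is not a theorem there; the paper proves only the implication Conjecture \ref{degcom} $\Rightarrow$ Conjecture \ref{count complexity}, and then verifies Conjecture \ref{degcom} directly in the special case $S = \CC^n$ (products of modular curves). Your proposal is therefore an outline of a strategy rather than a proof, and your identification of the passage from $G(\QQ)$-conjugacy to $\Gamma$-conjugacy as the principal open ingredient is accurate.

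A few remarks. The paper's verification for $\CC^n$ is a hands-on instance of the scheme you describe: after reducing to a single modular factor, $H^{\rm der}$ is the image of $\SL_2$ under $g \mapsto (g_j g g_j^{-1})_j$ with $g_j \in \GL_2(\QQ)^+$ having coprime integer entries, and the computation from the proof of Conjecture \ref{orr2} shows that $\deg(Z)$ is at least the lowest common multiple of the determinants $\det(g_j)$, so $\deg(Z) \leq b$ confines the $g_j$ to finitely many Hecke double cosets $\Gamma g \Gamma$ and hence, since each such double coset is a finite union of right cosets $\Gamma h$, to finitely many $\Gamma$-conjugates. Beyond the gap you already flag, two further cautions on your proposed general route: the degree--covolume comparison via \cite{KY:AO} involves constants depending on the subdatum $(H,\mathfrak{X}_H)$ that would need uniform control, and the Borel--Prasad finiteness you invoke controls arithmetic lattices up to commensurability rather than directly yielding finiteness of semisimple $\QQ$-subgroups of the fixed ambient $G$ with bounded covolume up to $G(\QQ)$-conjugacy, so even that intermediate step requires additional argument.
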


We will later verify Conjecture \ref{degcom} for a product of modular curves.

\begin{proof}[Proof that Conjecture \ref{degcom} implies Conjecture \ref{count complexity}]
Let $Z$ be a special subvariety of $S$ such that $\Delta(Z)\leq b$. By Conjecture \ref{degcom}, there exists a finite set $\Omega$ of semisimple subgroups of $G$ defined over $\QQ$, independent of $Z$, such that
\begin{align*}
H^{\rm der}=\gamma F\gamma^{-1},
\end{align*}
for some $\gamma\in\Gamma$ and some $F\in\Omega$.

Let $P\in Z$ be a special point such that $\Delta(P)$ is minimal among all special points in $Z$ and let $x\in X$ be a point lying above $P$ such that $\MT(x)$ is contained in $H$. Therefore, $Z$ is equal to the image of $F(\RR)^+\gamma^{-1}x$ in $\Gamma\backslash X$. Furthermore, $\MT(\gamma^{-1}x)$ is contained in
\begin{align*}
G_F:=F Z_G(F)^{\circ}
\end{align*}
and, by \cite{ullmo:equidistribution}, Lemme 3.3, if we denote by $\mathfrak{X}'$ the $G_F(\RR)$ conjugacy class of $\gamma^{-1}x$, we obtain a Shimura subdatum $(G_F,\mathfrak{X}')$ of $(G,\mathfrak{X})$. 

Therefore, let $X'$ denote the connected component $G_F(\RR)^+\gamma^{-1}x$ of $\mathfrak{X}'$ and let $\Gamma'$ denote $\Gamma\cap G_F(\QQ)_+$, where $G_F(\QQ)_+$ denotes the subgroup of $G_F(\QQ)$ acting on $X'$. By \cite{uy:andre-oort}, Proposition 3.21 and its proof, there exist only finitely many $\Gamma'$ orbits of pre-special points in $X'$ whose image in $\Gamma'\backslash X'$ has complexity at most $b$. Therefore, there exists $\lambda\in\Gamma'$ such that $\gamma^{-1}x=\lambda y$, where $y\in X'$ belongs to a finite set. We conclude that $Z$ is equal to the image of
\begin{align*}
\Gamma F(\RR)^+\gamma^{-1}x=\Gamma F(\RR)^+\lambda y=\Gamma \lambda F(\RR)^+y=\Gamma F(\RR)^+y
\end{align*}
in $\Gamma\backslash X$, which concludes the proof.
\end{proof}

\section{Galois orbits}

In \cite{hp:o-min}, Habegger and Pila formulated a conjecture about Galois orbits of optimal points in $\CC^n$ that in \cite{hp:beyond} they had been able to prove for so-called asymmetric curves. In \cite{orr:unlikely}, Orr generalized the result to asymmetric curves in $\mathcal{A}^2_g$. 

Recall that $\Sh_K(G,\mathfrak{X})$ possesses a canonical model, defined over a number field $E$, which depends only on $(G,\mathfrak{X})$. Furthermore, $S$ is defined over a finite abelian extension $F$ of $E$. In particular, for any extension $L$ of $F$ contained in $\CC$, it makes sense to say that a subvariety $V$ of $S$ is defined over $L$. Moreover, if $V$ is such a subvariety, then ${\Aut}(\CC/L)$ acts on the points of $V$.

If $Z$ is a special subvariety of $S$ and $\sigma\in{\Aut}(\CC/F)$, then $\sigma(Z)$ is also a special subvariety of $S$ and its complexity is also $\Delta(Z)$. In particular, if $V$ is a subvariety of $S$, as above, then ${\Aut}(\CC/L)$ acts on $\Opt(V)$ and its orbits are finite.

\begin{conjecture}[large Galois orbits]\label{LGO}
Let $V$ be a subvariety of $S$, defined over a finitely generated extension $L$ of $F$ contained in $\CC$. There exist positive constants $c_G$ and $\delta_G$ such that the following holds. 

If $P\in\Opt_0(V)$, then
\begin{align*}
\#{\Aut}(\CC/L)\cdot P\geq c_G\Delta(\langle P\rangle)^{\delta_G}.
\end{align*}
\end{conjecture}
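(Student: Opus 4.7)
Since this is stated as a conjecture rather than a theorem, what follows is a speculative strategy rather than a proof attempt: the plan is to reduce the bound on the Galois orbit of an optimal point $P$ to conjectural (and in some cases known) lower bounds on Galois orbits of special subvarieties, ultimately of special points. The starting observation is that, because $P$ is an optimal zero-dimensional subvariety of $V$, it is an isolated component of $\langle P\rangle \cap V$; and because $V$ is defined over $L$, for every $\sigma\in\Aut(\CC/L)$ the conjugate $\sigma(P)$ is an isolated component of $\sigma(\langle P\rangle)\cap V$. This yields a natural map from the Galois orbit of $P$ to the Galois orbit of the special subvariety $\langle P\rangle$, whose fibres consist of the isolated intersection points on a given conjugate.

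For the fibre sizes, a Bézout-type argument in the Baily--Borel compactification (using the line bundle $L_K$ from the definition of $\deg$) should give an upper bound of the form $c\cdot\deg(\langle P\rangle)$ on the number of isolated components of $\sigma(\langle P\rangle)\cap V$, with $c$ depending only on $V$ and the ambient data. Since $\deg(\langle P\rangle)\leq \Delta(\langle P\rangle)$, this loss is absorbed into the exponent $\delta_G$.

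For the base, I would pick a special point $Q\in\langle P\rangle$ of minimal complexity, so that $\Delta(Q)\leq \Delta(\langle P\rangle)$ by definition of $\Delta$ on special subvarieties. The Galois action on $\Sh_K(G,\mathfrak{X})$ commutes with the formation of $\langle-\rangle$, so the orbit of $\langle P\rangle$ under $\Aut(\CC/L)$ has cardinality comparable to (in fact bounded below by a Hecke-controlled divisor of) the orbit of $Q$. Invoking the conjectural lower bound
\begin{align*}
\#\Aut(\CC/L)\cdot Q\;\gg\;\Delta(Q)^{\delta}
\end{align*}
for some $\delta>0$ --- which is the classical large Galois orbits hypothesis for special points, known unconditionally for $\mathcal{A}_g$ via Tsimerman's theorem and the Andreatta--Goren--Howard--Madapusi Pera and Yuan--Zhang results on the averaged Colmez conjecture --- one concludes the desired bound by combining with the Bézout step.

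The hard part will clearly be the lower bound for $\#\Aut(\CC/L)\cdot Q$ in full generality, since outside $\mathcal{A}_g$ the requisite Colmez-type input is still open; this is exactly the obstruction that keeps the conjecture a conjecture. A secondary, more technical difficulty is controlling the fibres of $P\mapsto \langle P\rangle$ uniformly: one needs that different conjugates of $Q$ generate distinct conjugates of $\langle P\rangle$ outside a controllably small exceptional set, which appears to require Hecke-theoretic information about how $\Gal(\overline{L}/L)$ acts on the set of Shimura subdata of a given isomorphism type, together with reciprocity for the canonical model. Finally, to pass from the bound for special points contained in $\langle P\rangle$ to a bound for $\langle P\rangle$ itself, one should verify that $\Delta(\langle P\rangle)$ is realized, up to polynomial factors, by some special point of $\langle P\rangle$ --- this is the role played by the definition $\Delta(Z)=\max\{\deg(Z),\min_P \Delta(P)\}$ and would need to be reconciled with the inherent minimization in that definition.
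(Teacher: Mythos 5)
The statement you are trying to prove is precisely one of the paper's \emph{arithmetic hypotheses}: the authors state it as an open conjecture, give no proof of it, and make their main theorems conditional on it. The only progress the paper cites is Habegger--Pila's verification for so-called asymmetric curves in $\CC^n$ and Orr's extension to curves in $\mathcal{A}_g^2$, neither of which appears in this paper and neither of which follows the route you sketch. So there is no ``paper proof'' to compare against; I will instead flag the structural problem with your plan.

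Your proposed reduction has a genuine gap at its central step. You want a chain of lower bounds of the shape: $\#\Aut(\CC/L)\cdot P$ is at least $\#\Aut(\CC/L)\cdot\langle P\rangle$, and $\#\Aut(\CC/L)\cdot\langle P\rangle$ is comparable to $\#\Aut(\CC/L)\cdot Q$ for a minimal-complexity special point $Q\in\langle P\rangle$. The second link breaks: $\langle P\rangle$ may be defined over $L$ (or a bounded extension of it), so that its Galois orbit is a single element, while it still contains special points $Q$ with enormous Galois orbits. In that case $\#\Aut(\CC/L)\cdot\langle P\rangle=1$ is in no way bounded below by a power of $\#\Aut(\CC/L)\cdot Q$, and the Tsimerman/Colmez input for special points cannot be transported to $P$. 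In fact this is exactly the hard regime of the conjecture: $\langle P\rangle$ is Galois-stable, all conjugates of $P$ lie inside the fixed finite set of isolated points of $\langle P\rangle\cap V$, and one must show directly that $\Aut(\CC/L)$ permutes enough of them. The known cases are proved by a quite different mechanism --- Masser--W\"ustholz isogeny estimates and height bounds coming from periods --- not by funnelling through the large Galois orbits of special points. A secondary remark: your B\'ezout step is oriented the wrong way for the inequality you need. An upper bound on the fibres of $\sigma(P)\mapsto\sigma(\langle P\rangle)$ yields an \emph{upper} bound $\#\Aut(\CC/L)\cdot P\le c\deg(\langle P\rangle)\cdot\#\Aut(\CC/L)\cdot\langle P\rangle$; for the lower bound you already have $\#\Aut(\CC/L)\cdot P\ge\#\Aut(\CC/L)\cdot\langle P\rangle$ from surjectivity alone, with no B\'ezout input and no loss --- but, as above, this lower bound is vacuous precisely when the conjecture is hard.
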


\begin{remark}
In the context of the Andr\'{e}-Oort conjecture, there is the pioneering hypothesis that Galois orbits of special points should be large. See \cite{basopen}, Problem 14 for the formulation for special points in $\mathcal{A}_g$ and see \cite{yafaevduke}, Theorem 2.1 for special points in a general Shimura variety. This hypothesis, which was verified by Tsimerman for special points of $\mathcal{A}_g$ \cite{tsimerman:AO} via progress on the Colmez conjecture due to Andreatta, Goren, Howard, Madapusi Pera \cite{AGHM:colmez} and Yuan and Zhang \cite{YZ:colmez}, is now the only obstacle in an otherwise unconditional proof of the Andr\'{e}-Oort conjecture. The conjecture is that there exist positive constants $c$ and $\delta$ such that, for any special point $P\in S$,
\begin{align*}
\#{\Gal}(\bar{\QQ}/\QQ)\cdot P\geq c\Delta(P)^\delta.
\end{align*}

Of course, this conjecture does not follow from Conjecture \ref{LGO} because special points lying in $V$ need not be optimal in $V$. However, the proof of the Andr\'{e}-Oort conjecture only requires the bound for special points that are not contained in the positive dimensional special subvarieties contained in $V$ i.e. special points contained in $\Opt_0(V)$ (see \cite{daw:book} for more details). Furthermore, since special points are defined over number fields, we may also assume in that case that $V$ is defined over a finite extension of $F$.  It follows that Conjecture \ref{LGO} is sufficient to prove the Andr\'e-Oort conjecture.
\end{remark}

To prove Conjecture \ref{oafinite}, however, one only requires the following hypothesis.

\begin{conjecture}\label{LGOoa}
Let $V$ be a subvariety of $S$, defined over a finitely generated extension $L$ of $F$ contained in $\CC$. There exist positive constants $c_G$ and $\delta_G$ such that the following holds. 

If $P\in V^{\rm oa}\cap S^{[1+\dim V]}$, then
\begin{align*}
\#{\Aut}(\CC/L)\cdot P\geq c_G\Delta(\langle P\rangle)^{\delta_G}.
\end{align*}
\end{conjecture}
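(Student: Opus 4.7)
Since Conjecture \ref{LGOoa} is an arithmetic hypothesis rather than a theorem being proved, what follows is a strategy one might pursue to establish it, modelled on Tsimerman's work on special points in $\mathcal{A}_g$ and Habegger--Pila's approach for asymmetric curves in products of modular curves. The plan is to reduce the problem for a general optimal point $P$ in the open-anomalous locus to a problem about Galois orbits of special points contained in $\langle P\rangle$, and then to invoke (in the cases where they are available) the existing polynomial lower bounds for those orbits coming from Colmez-type estimates and class field theory.

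First I would analyse the structure of the smallest special subvariety $T=\langle P\rangle$. Writing $T$ as the image of a pre-special subvariety $X_M$ with $M=\MT(x)$ for some pre-image $x$ of $P$, I would decompose $M=Z(M)^{\circ}\cdot M^{\rm der}$ and exploit the fact that the $Z(M)^{\circ}$-orbit component encodes the CM part of $P$. The key reciprocity input is that ${\rm Aut}(\CC/L)$ acts on $P$ through the Shimura reciprocity map, so the size of the Galois orbit is bounded below, up to a fixed index coming from $[L:F]$ and from the level structure, by the size of a certain class group attached to $Z(M)^{\circ}$ times the number of Galois conjugates of the datum $(M,\mathfrak{X}_M)$. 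The complexity $\Delta(T)$ was designed precisely to capture both of these contributions (via the discriminant $D_{Z(M)^\circ}$ of the splitting field and the index $[K^m:K]$ at the maximal compact level, plus the degree term for the non-toric part), so the goal becomes to show
\begin{align*}
\#{\rm Aut}(\CC/L)\cdot P \;\geq\; c_G\bigl(D_{Z(M)^\circ}\bigr)^{\delta_1}\cdot\deg(T)^{\delta_2}
\end{align*}
for suitable positive $\delta_1,\delta_2$.

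The strategy for each factor would then be as follows. For the discriminant factor, I would invoke the results of Tsimerman \cite{tsimerman:AO}, together with the Colmez conjecture in the averaged form of Andreatta--Goren--Howard--Madapusi Pera and Yuan--Zhang, applied to a ``most special'' CM point in $T$; this gives a polynomial-in-$D$ lower bound for its Galois orbit, and hence, after accounting for the bounded index between the reflex fields, for the orbit of $P$ itself. For the degree factor, I would use that $\deg(T)$ is controlled by the index of the congruence subgroup cut out by $M^{\rm der}$, and that the Shimura reciprocity map together with strong approximation yields a contribution to the orbit of size at least a fixed power of this index. The optimality hypothesis $P\in V^{\rm oa}$ enters crucially here: it guarantees that $P$ is not trapped inside a positive-dimensional anomalous component of $V$, and so the ``generic'' Galois conjugates of $P$ do not collapse onto $V$ in a way that would obstruct the lower bound.

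The hard part, and the genuine obstruction to making this unconditional, is the non-toric contribution. For $\mathcal{A}_g$ and its subvarieties one has access to isogeny estimates of Masser--W\"ustholz type and to Tsimerman's bounds, but for a general Shimura variety the polynomial lower bound for Galois orbits of special points is itself still conjectural, and the step of passing from orbits of special points to orbits of arbitrary optimal points $P$ in $V^{\rm oa}\cap S^{[1+\dim V]}$ requires a uniform ``drop'' estimate controlling how much the Galois orbit of $P$ can shrink compared with that of the nearest special point inside $\langle P\rangle$. I would expect the right formulation to involve a Masser--W\"ustholz-style bound for isogenies between the Mumford-Tate groups attached to $P$ and to that nearest special point, together with an application of Conjecture \ref{degcom} to keep the finitely many relevant semisimple groups under control. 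Verifying this drop estimate uniformly across all optimal points is, in my view, the principal obstacle; once it is in place, the conjecture reduces cleanly to the (conjectural or, in favourable cases, proved) large Galois orbit bound for special points.
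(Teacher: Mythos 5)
You have correctly recognised that the statement labelled Conjecture~\ref{LGOoa} is an arithmetic hypothesis, not a theorem: the paper offers no proof of it, and neither does your proposal claim to. The only concrete result the paper gives concerning this conjecture is Remark~\ref{oainopt}, a short reduction showing that Conjecture~\ref{LGOoa} is a formal consequence of the more general Conjecture~\ref{LGO} (large Galois orbits for optimal points). That reduction rests on the observation that any $P\in V^{\rm oa}\cap S^{[1+\dim V]}$ is automatically optimal: if $W\supseteq P$ is optimal in $V$ with $\delta(W)\leq\delta(P)$, then combining $\dim\langle P\rangle\leq \dim S-1-\dim V$ with the defect inequality gives $\dim W\geq 1+\dim V+\dim\langle W\rangle_{\rm ws}-\dim S$, and since $P\notin V^{\rm an}$ this forces $\dim W=0$, hence $W=P$. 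Your proposal does not make this reduction explicit, and as a result it aims directly at the full conjecture rather than at the (arguably more tractable) problem of Galois orbits of genuinely optimal points.

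On the substance of your sketch: it is a fair description of the current state of the art and of where the genuine difficulties lie. You correctly identify (i) the toric part, where Colmez-type bounds of Tsimerman, Andreatta--Goren--Howard--Madapusi Pera, and Yuan--Zhang give polynomial lower bounds for CM points in $\mathcal{A}_g$, (ii) the semisimple part, where the degree term of the complexity $\Delta$ enters, and (iii) the ``drop'' estimate that would relate the Galois orbit of an arbitrary optimal point to that of a nearby special point. None of this, however, constitutes a proof, and you say as much yourself. The uniform drop estimate you flag as the principal obstacle is indeed the open problem; Habegger--Pila in \cite{hp:beyond} and Orr in \cite{orr:unlikely} obtained it only for so-called asymmetric curves in $\CC^n$ and $\mathcal{A}_g^2$, precisely because in general one lacks the required isogeny estimates or height bounds for the relevant Mumford--Tate groups. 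So your write-up should not be read as a proof, only as a plausible research plan whose key step is missing; this is consistent with the paper, which treats the statement as a conjecture throughout and never claims it is within reach unconditionally.
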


\begin{remark}\label{oainopt}
Note that, if $P\in V^{\rm oa}\cap S^{[1+\dim V]}$, then $P\in\Opt_0(V)$. To see this, let $W$ be a subvariety of $V$ containing $P$ such that $\delta(W)\leq\delta(P)$ i.e.
\begin{align*}
\dim\langle W\rangle-\dim W\leq\dim\langle P\rangle\leq\dim S-1-\dim V.
\end{align*}
In fact, we can and do assume that $W$ is optimal. We have
\begin{align*}
\dim W\geq 1+\dim V+\dim\langle W\rangle-\dim S\geq 1+\dim V+\dim\langle W\rangle_{\rm ws}-\dim S,
\end{align*}
and so $\dim W=0$, as $P\notin V^{\rm an}$, which implies that $W=P$, proving the claim. Therefore, Conjecture \ref{LGOoa} follows from Conjecture \ref{LGO}, but the former may turn out to be more tractable. It is worth recalling that, when $S$ is an abelian variety and $V$ is a subvariety defined over $\bar{\QQ}$, Habegger \cite{habegger:abelian} famously showed that the N\'eron-Tate height is bounded on $\bar{\QQ}$-points of $V^{\rm oa}\cap S^{[\dim V]}$.
\end{remark}

\section{Further arithmetic hypotheses}\label{fah}

The principal obstruction to applying the Pila-Wilkie counting theorems to our point counting problems (except for the availability of lower bounds for Galois orbits) is the ability to parametrize pre-special subvarieties of $S$ using points of bounded height in a definable set.

\begin{definition}
We say that a semisimple algebraic group defined over $\QQ$ is of non-compact type if its almost-simple factors all have the property that their underlying real Lie group is not compact. 
\end{definition}

Let $\Omega$ be a set of representatives for the semisimple subgroups of $G$ defined over $\QQ$ of non-compact type modulo the equivalence relation 
\begin{align*}
H_2\sim H_1\iff H_{2,\RR}=gH_{1,\RR}g^{-1}\text{ for some }g\in G(\RR).
\end{align*}
Then $\Omega$ is a finite set (see \cite{BDR}, Corollary 0.2, for example). Add the trivial group to $\Omega$. Recall that we realise $X$ as a bounded symmetric domain in $\CC^N$ for some $N\in\NN$, which we identify with $\RR^{2N}$. We fix an embedding of $G$ into $\GL_n$ such that $\Gamma$ is contained in $\GL_n(\ZZ)$. We consider $\GL_n(\RR)$ as a subset of $\RR^{n^2}$ in the natural way. Recall the definition of $\mathcal{F}$ (Definition \ref{fs}).

\begin{conjecture}[cf. \cite{hp:o-min}, Proposition 6.7]\label{conj}
There exist positive constants $d$, $c_{\mathcal{F}}$, and $\delta_{\mathcal{F}}$ such that, if $z\in\mathcal{F}$, then the smallest pre-special subvariety of $X$ containing $z$ can be written $gF(\RR)^+g^{-1}x$, where $F\in\Omega$, and $g\in G(\RR)$ and $x\in X$ satisfy 
\begin{align*}
{\rm H}_d(g,x)\leq c_{\mathcal{F}}\Delta(\langle \pi(z)\rangle)^{\delta_{\mathcal{F}}}.
\end{align*}
\end{conjecture}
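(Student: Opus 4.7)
My plan is to split the problem into finding a bounded base point $x$ and a bounded conjugator $g$, drawing on the definition of $\Delta(\langle\pi(z)\rangle)$ and on known height bounds for pre-special points. Set $M := \MT(z)$, $Z := \langle \pi(z)\rangle$, and $X_M := M(\RR)^+ z$, the smallest pre-special subvariety of $X$ containing $z$. Let $F' \subseteq G_\RR$ be the almost direct product of the almost-simple factors of $M^\der_\RR$ whose underlying real Lie groups are non-compact; by Lemma \ref{characterisation}, $X_M = F'(\RR)^+ x$ for any $x \in X_M$ factoring through $G_{F'}$. Choosing a representative $F \in \Omega$ of the $G(\RR)$-conjugacy class of $F'$ and any $g \in G(\RR)$ with $gFg^{-1} = F'$, we obtain $X_M = gF(\RR)^+ g^{-1}x$, reducing the problem to controlling the heights of $g$ and $x$.

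For $x$: by the very definition of $\Delta(Z)$ there is a special point $P_0 \in Z$ with $\Delta(P_0) \leq \Delta(Z)$, and any pre-special lift $x \in X_M$ of $P_0$ is a candidate base point (it is itself a pre-special point of $X$ because $X_M$ is pre-special and $P_0$ is special). I would invoke the Daw--Orr upper bound for the heights of pre-special points \cite{Daw2016}, combined with an effective argument locating such a lift on the specific pre-special subvariety $X_M$ rather than merely in some $\Gamma$-translate of it, to produce $x$ with ${\rm H}_d(x)$ polynomially bounded in $\Delta(P_0)$, hence in $\Delta(Z)$.

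For $g$: this is the harder step. One needs $g \in G(\RR)$ of height polynomially bounded in $\Delta(Z)$ with $gFg^{-1} = F'$. The natural route is to first bound the coefficients of a defining system of polynomials for the $\QQ$-group $M$ (equivalently, the structure constants of its Lie algebra), and then to use effective real semialgebraic geometry to produce a bounded conjugator realising the conjugacy $gFg^{-1}=F'$. Control on $M$ should come from combining the degree bound on the Zariski closure of $Z$ in the Baily--Borel compactification with the existence of the pre-special point $x \in X_M$ found above, which constrains $M$ to contain the torus $\MT(x)$.

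The main obstacle is precisely this passage from $\Delta(Z)$ to a bounded $\QQ$-rational description of $M$: knowing only a small pre-special point in $X_M$ and $\deg(Z)$ does not immediately suffice to describe $M$ by polynomials of controlled height, because $\MT(x)$ is generically a much smaller torus than $M$, and $\deg(Z)$ encodes $Z$ (and hence $M$) only indirectly through the projective embedding supplied by Baily--Borel. For $S = Y(1)^n$ one can bypass this by writing all Mumford--Tate $\QQ$-subgroups of $\SL_2^n$ explicitly as diagonal-type embeddings twisted by the Hecke correspondences cutting out $Z$, with those correspondences controlled by $\deg(Z)$; this is the content of our verification of the hypothesis in the modular case. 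For a general Shimura variety one would require either an analogous explicit description or a quantitative theorem on the arithmetic of semisimple subgroups of a fixed reductive $\QQ$-group, neither of which is currently available in the form needed.
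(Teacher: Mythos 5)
This statement is labelled (and treated throughout) as a \emph{conjecture} in the paper, not a theorem, so there is no ``paper's own proof'' to compare your attempt against: the paper simply observes that for $S=\CC^n$ the statement follows from \cite{hp:o-min}, Proposition~6.7, and for general $S$ it is offered as one of two alternative arithmetic inputs into Theorems~\ref{Opt0} and~\ref{oa0}. Your proposal correctly recognises this, and your honest identification of the obstruction is exactly the reason the authors keep the statement conjectural.

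Your decomposition into ``bound $x$'' plus ``bound a conjugator $g$'' is the natural one, and it mirrors what the paper does implicitly in the \emph{alternative} route: Conjecture~\ref{orr2} combined with Theorem~\ref{daworr} (via Lemma~\ref{heightx}) supplies the bounded base point $y$, while Conjecture~\ref{fieldofdef} is precisely designed to side-step the need to bound the conjugator $g$ --- it packages the Galois conjugates of $\langle\pi(z)\rangle$ as orbits of a \emph{single} $\QQ$-group $H$ with control on the degree of the field $L$ rather than on a matrix realising a conjugacy. It is worth noticing that the paper's alternative hypotheses are shaped exactly around the gap you name. Two smaller points: (i) you should keep the distinction between the real semisimple group $F'$ (the non-compact part of $M^{\rm der}_\RR$) and a $\QQ$-group --- the set $\Omega$ in Section~\ref{fah} consists of $\QQ$-groups of non-compact type, so what one really needs is $g\in G(\RR)$ conjugating $F_\RR$ to $M^{\rm der}_\RR$ (or $M^{\rm nc}_\RR$); the orbit $gF(\RR)^+g^{-1}x$ then coincides with $F'(\RR)^+x$ because compact real factors lie in the stabiliser of $x$; (ii) the attribution at the end is slightly off --- the modular verification is a citation to \cite{hp:o-min}, not a verification carried out within this paper.
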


This is seemingly a natural generalization of the following theorem due to Orr and the first author on the heights of pre-special points, which plays a crucial role in the proof of the Andr\'e-Oort conjecture.

\begin{theorem}[cf. \cite{Daw2016}, Theorem 1.4]\label{daworr}  
There exist positive constants $d$, $c_{\mathcal{F}}$ and $\delta_{\mathcal{F}}$ such that, if $z\in\mathcal{F}$ is a pre-special, then
\begin{align*}
{\rm H}_d(z)\leq c_{\mathcal{F}}\Delta(\pi(z))^{\delta_{\mathcal{F}}}.
\end{align*}
\end{theorem}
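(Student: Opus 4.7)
The idea is to translate the height bound on $z \in \mathcal{F}$ into an arithmetic statement about the $\QQ$-torus $T := \MT(z) \subseteq G$, which is precisely what the complexity $\Delta(\pi(z)) = \max\{D_T, [K_T^m : K_T]\}$ measures. Since the morphism $z \colon \SSS \to G_\RR$ factors through $T_\RR$, we may write $z = g \cdot x_0$ where $x_0 \in X$ factors through a fixed base torus $T_0 \subseteq G$ and $g \in G(\RR)$ conjugates $T_0$ to $T$. The goal is to produce, after multiplying $g$ on the left by a suitable $\gamma \in \Gamma$ to keep $z$ inside $\mathcal{F}$, an element of polynomially bounded height in $D_T$ and $[K_T^m : K_T]$, and to deduce the claimed bound on ${\rm H}_d(z)$.

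First I would reduce to the case $G = G^{\ad}$ via Remark \ref{introreductions}, and fix a faithful embedding $G \hookrightarrow \GL_n$ with $\Gamma \subseteq \GL_n(\ZZ)$, so that both $\mathcal{F}$ and the realization $X \hookrightarrow \CC^N$ are semialgebraic of bounded complexity. I would then classify $\QQ$-tori $T \subseteq G$ by their cocharacter lattice viewed as a $\Gal(\bar{\QQ}/\QQ)$-module: only finitely many such modules correspond to tori with splitting field discriminant at most $D_T$, and within each $G(\bar{\QQ})$-conjugacy class of tori the $\QQ$-forms are parametrised by $\HF^1(\QQ, N_G(T_0)/T_0)$, whose cardinality grows polynomially in $D_T$ by a theorem of Borel--Serre. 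Combined with Minkowski-type bounds on the lattice in $\QQ^n$ cut out by the relevant integral structure, this gives a polynomially bounded choice of a conjugator $g$ taking a representative $T_0$ to $T$.

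The main analytic step is then to bring $z = g \cdot x_0$ back into $\mathcal{F}$ by left-multiplication by some $\gamma \in \Gamma$ and to bound $\gamma g$ in $\GL_n(\RR)$ in terms of the second ingredient $[K_T^m : K_T]$, which measures the level of $T$ inside $K$. Here I would appeal to Siegel-set reduction theory for $\Gamma$ in $G(\RR)$ and observe that the set of $\gamma \in \Gamma$ bringing $g \cdot x_0$ into $\mathcal{F}$ is contained in a single coset of $\Gamma \cap g T_0(\RR) g^{-1}$. The problem thereby reduces to bounding representatives of $T(\QQ) / (K_T \cap T(\QQ))$ in $T(\RR)$, where Dirichlet's unit theorem, together with the direct relationship between $[K_T^m : K_T]$ and class numbers (and unit indices) of orders in the splitting field of $T$, supplies the polynomial control.

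The main obstacle I foresee is combining these three ingredients into a single uniform polynomial bound, particularly when $T$ is anisotropic modulo the centre, so that the archimedean and non-archimedean aspects of $T$ interact non-trivially and one cannot separate the $D_T$ and $[K_T^m:K_T]$ contributions cleanly. I expect this requires invoking a Masser--W\"ustholz-type isogeny estimate for CM abelian varieties, made effective for $\mathcal{A}_g$ by Tsimerman via progress on the Colmez conjecture, and then transferring the resulting bound to a general Shimura variety through a morphism to a suitable Siegel modular datum; this is essentially the route pursued in \cite{Daw2016}.
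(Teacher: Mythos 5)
The paper does not actually prove this statement: Theorem~\ref{daworr} is cited verbatim from \cite{Daw2016}, Theorem~1.4, with no argument given here. So the comparison must be against the cited reference rather than against anything in this article.

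Your first three paragraphs track the shape of the Daw--Orr argument reasonably well: reduce to $G=G^{\ad}$, fix an integral embedding so that $\mathcal{F}$ and the realization are semialgebraic, encode the pre-special point by its Mumford--Tate torus $T$ and a conjugator $g$ carrying a reference torus $T_0$ to $T$, and then bound $g$ (after correcting by some $\gamma\in\Gamma$) using reduction theory together with discriminant and level bounds for $T$. That is genuinely the route in \cite{Daw2016}. Two points, however, are off. First, the attribution of a \emph{polynomial} bound on $\#\HF^1(\QQ, N_G(T_0)/T_0)$ to Borel--Serre is incorrect: Borel--Serre give \emph{finiteness} of such cohomology sets, not a quantitative bound in the discriminant, and in any case Daw--Orr do not parametrise $\QQ$-forms via Galois cohomology but instead control the conjugating element directly via lattice-theoretic estimates (their ``heights of conjugating elements'' result), which is where the genuine technical work lies.

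The last paragraph is where the proposal goes seriously wrong. You suggest that the argument ultimately requires a Masser--W\"ustholz-type isogeny estimate for CM abelian varieties, made effective via Tsimerman's work on Colmez's conjecture, and transferred to a general Shimura variety via a Siegel embedding. This conflates two entirely distinct ingredients of the Andr\'e--Oort machinery. The Daw--Orr \emph{upper} bound on heights of pre-special points is unconditional and comparatively elementary: it needs only reduction theory for arithmetic groups, the arithmetic of algebraic tori (discriminants, orders, unit and class groups), and Minkowski-type lattice bounds. No isogeny estimate or Colmez-type input enters anywhere. The Colmez/Tsimerman contribution (and AGHMP, Yuan--Zhang) is used exclusively on the opposite side of the Pila--Zannier strategy, namely to establish \emph{lower} bounds for Galois orbits of special points in $\mathcal{A}_g$ --- the analogue, in the present paper, of Conjecture~\ref{LGO}. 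The fact that Theorem~\ref{daworr} is independent of those deep arithmetic inputs is precisely what makes it a usable, unconditional ingredient; if your sketch were correct, the logical structure of the Andr\'e--Oort proof would be circular. So the core gap is one of scope: you have imported the hard arithmetic from the wrong half of the argument to patch a step that in fact closes by reduction theory alone.
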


We remark that the problem of finding $d$ as in Conjecture \ref{conj} poses no obstacle in itself. Indeed a proof of the following theorem will appear in a forthcoming article of Borovoi and the authors.

\begin{theorem}[cf. \cite{BDR}, Corollary 0.7]
There exists a positive constant $d$ such that, for any two semisimple subgroups $H_1$ and $H_2$ of $G$ defined over $\QQ$ that are conjugate by an element of $G(\RR)$, there exists a number field $K$ contained in $\RR$ of degree at most $d$, and an element $g\in G(K)$, such that 
\begin{align*}
H_{2,K}=gH_{1,K}g^{-1}.
\end{align*}
\end{theorem}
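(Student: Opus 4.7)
The plan is to combine the finiteness of $G(\RR)$-conjugacy classes of semisimple $\QQ$-subgroups of $G$ (Corollary 0.2 of \cite{BDR}, as cited above) with a geometric intersection argument that produces small-degree points on transporter torsors. First I would pick representatives $H^{(1)},\dots,H^{(m)}$ of the finitely many $G(\RR)$-conjugacy classes of semisimple $\QQ$-subgroups of $G$. It then suffices to prove, for each fixed $H:=H^{(i)}$, the existence of a bound $d_H$ such that every $\QQ$-subgroup $H_2$ that is $G(\RR)$-conjugate to $H$ is in fact $G(K)$-conjugate to $H$ for some $K\subset\RR$ with $[K:\QQ]\leq d_H$. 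The uniform constant is then $d:=\max_i d_{H^{(i)}}^2$, obtained by composing two such conjugations over the compositum of the corresponding fields.

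Next I would fix $H$, set $N:=N_G(H)$, and, for each admissible $H_2$, consider the transporter
\begin{align*}
T_{H_2}:=\{g\in G: gHg^{-1}=H_2\},
\end{align*}
which is a closed $\QQ$-subvariety of $G$ carrying the structure of a right $N$-torsor. The hypothesis gives $T_{H_2}(\RR)\neq\emptyset$. Using the fixed $\QQ$-embedding $G\subset\GL_n\subset\mathbb{A}^{n^2}$, the equality $T_{H_2}=g_0N$ for any $g_0\in T_{H_2}$, combined with the translation-invariance of the affine degree under the affine-linear automorphism given by left multiplication by $g_0$, shows that the affine degree of $T_{H_2}$ equals the affine degree of $N$, a quantity $\delta_H$ depending only on $H$ and not on $H_2$.

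The key step is then to produce a $K$-point of $T_{H_2}$ with $[K:\QQ]\leq\delta_H$ and $K\subset\RR$. Fix $t_0\in T_{H_2}(\RR)$. I claim there exists a $\QQ$-rational affine-linear subspace $L\subset\mathbb{A}^{n^2}$ of codimension $\dim N$ satisfying (a) $L\cap T_{H_2}$ is $0$-dimensional and reduced, and (b) $L(\RR)\cap T_{H_2}(\RR)\neq\emptyset$. Condition (a) defines a Zariski-open subset of the variety parametrising affine-linear subspaces by Bertini (using that $T_{H_2}$ is smooth, being a torsor under the smooth group $N$); condition (b) defines a real-open subset containing every $L$ passing sufficiently close to $t_0$; and $\QQ$-rational subspaces are dense in the real topology on the parameter variety. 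Any such $L$ exhibits a finite $\QQ$-scheme $L\cap T_{H_2}$ of total degree at most $\delta_H$, and the closed point corresponding to the real intersection has residue field $K\subset\RR$ with $[K:\QQ]\leq\delta_H$, furnishing the desired $g\in T_{H_2}(K)\subset G(K)$ satisfying $gHg^{-1}=H_2$ over $K$.

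The main obstacle will be the Bertini-type step: one must produce a subspace $L$ that is simultaneously $\QQ$-rational, transverse to $T_{H_2}$, and meets $T_{H_2}(\RR)$. The density of $\QQ$-points in the parameter space is standard, but reconciling the Zariski-open transversality condition with the real-analytic proximity condition requires checking that their common refinement is a non-empty real-open subset, which one verifies by noting that both contain a common neighbourhood of any real subspace transverse to $T_{H_2}$ at $t_0$. A secondary subtlety is the choice of embedding: working affinely with $G\subset\GL_n\subset\mathbb{A}^{n^2}$ suffices for the translation-invariance of degree, but one should handle possible behaviour at infinity carefully, for instance by further embedding $\GL_n\hookrightarrow\GL_{n+1}$ so that $G$ becomes closed in affine space before applying Bertini.
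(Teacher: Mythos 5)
The paper does not prove this statement; it quotes it from [BDR], Corollary 0.7, and explicitly remarks that a proof will appear in a forthcoming article of Borovoi and the authors. There is therefore no in-paper argument for your proposal to be compared against.

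Taken on its own terms, your Bertini-style degree-counting approach is a plausible and genuinely different route from the Galois-cohomological one in [BDR], and its skeleton is sound: reduce via [BDR], Corollary 0.2 to a fixed representative $H$; observe that the transporter $T_{H_2}$ is a $\QQ$-subvariety of $G$ which is a right $N:=N_G(H)$-torsor with a real point, and hence (comparing projective closures via the linear automorphism of $\PP^{n^2}$ induced by left translation by $g_0$) has degree $\delta_H:=\deg N$ independent of $H_2$; then slice by a suitable $\QQ$-rational affine-linear space $L$ of complementary dimension to produce a real point of bounded residue degree. Three details need tightening. First, condition (a) is not generally satisfied on a whole real neighbourhood of $L_0$: your $L_0$ was only arranged to be transverse to $T_{H_2}$ at the single point $t_0$, and it may fail transversality at other intersection points. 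What is true is that (a) cuts out a nonempty Zariski-open subset $U$ of the parameter variety $\mathcal{P}$ (which is $\QQ$-rational with plenty of $\QQ$-points), so $U(\RR)$ is dense in $\mathcal{P}(\RR)$ and in particular meets the real-open set coming from (b); that suffices. Second, $N$ may be disconnected, so $T_{H_2}$ may be geometrically reducible; but it is equidimensional, the degree is additive over top-dimensional components, and Bertini and Bezout apply unchanged. Third, the bound $\#(L\cap T_{H_2})\leq\delta_H$ should be justified via projective closures: impose, as one more Zariski-open condition on $L$, that $\overline{L}\cap\overline{T_{H_2}}\subset\PP^{n^2}$ is proper; Bezout then gives this intersection length exactly $\delta_H$, which dominates the length of the affine scheme $L\cap T_{H_2}$ and hence the residue degree $[K:\QQ]$ at the closed point supported on your real intersection point. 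With these refinements the argument does deliver the stated theorem.
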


A nice feature of Conjecture \ref{conj} is that it implies Conjecture \ref{count complexity} that there are only finitely many special subvarieties of bounded complexity.

\begin{lemma}\label{conjimplic}
Conjecture \ref{conj} implies Conjecture \ref{count complexity}.
\end{lemma}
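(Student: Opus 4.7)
The plan is to use Conjecture \ref{conj} to attach, to each special subvariety $Z\subseteq S$ with $\Delta(Z)\leq b$, a parametrising triple $(F,g,x)$ lying in a finite set, and then to conclude via Northcott's theorem. Given such a $Z$, I first pass to a Shimura subdatum $(H,\mathfrak{X}_H)$ with $H$ the generic Mumford-Tate group on $\mathfrak{X}_H$, and fix a connected component $X_H\subseteq X$ whose image under $\pi$ is $Z$. Next I choose a Hodge generic point $z_0\in X_H$ (i.e.\ $\MT(z_0)=H$; such points are dense in $X_H$), and pick $\gamma\in\Gamma$ with $z:=\gamma z_0\in\mathcal{F}$. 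Setting $X_H':=\gamma X_H$, the $\Gamma$-translate $X_H'$ is a pre-special subvariety still projecting to $Z$, with $\MT(z)=\gamma H\gamma^{-1}$ Hodge generic on $X_H'$.

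The key observation is that this choice of $z$ serves two purposes at once. On the one hand, $\pi(z)=\pi(z_0)$ is Hodge generic in $Z$, so $\langle\pi(z)\rangle=Z$ and hence $\Delta(\langle\pi(z)\rangle)\leq b$. On the other hand, every pre-special subvariety of $X$ containing $z$ has the form $M(\RR)^+z$ with $\MT(z)\subseteq M$, so the smallest one is $\MT(z)(\RR)^+z=X_H'$ itself. Applying Conjecture \ref{conj} to $z\in\mathcal{F}$ therefore produces $F\in\Omega$, $g\in G(\RR)$, and $x\in X$ with
\begin{align*}
X_H'=gF(\RR)^+g^{-1}x\quad\text{and}\quad{\rm H}_d(g,x)\leq c_{\mathcal{F}}\,b^{\delta_{\mathcal{F}}}.
\end{align*}

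It remains to invoke Northcott. Viewing $(g,x)\in\RR^{n^2}\times\RR^{2N}$, the bound ${\rm H}_d(g,x)\leq c_{\mathcal{F}}b^{\delta_{\mathcal{F}}}$ forces each coordinate to be a real algebraic number of degree at most $d$ and naive height at most $c_{\mathcal{F}}b^{\delta_{\mathcal{F}}}$, and only finitely many such tuples exist. Since $\Omega$ is itself finite, the triple $(F,g,x)$ ranges over a finite set depending only on $b$, and each such triple determines $X_H'$ and hence $Z=\pi(X_H')$. This proves Conjecture \ref{count complexity}. The argument is short once Conjecture \ref{conj} is in hand; the only real care lies in the double rôle played by the Hodge generic lift $z$, which must simultaneously make $\langle\pi(z)\rangle$ equal to $Z$ (so that the complexity bound translates into the desired height bound) and make the minimal pre-special subvariety through $z$ equal to the $X_H'$ projecting to $Z$ (so that the parametrisation supplied by Conjecture \ref{conj} really recovers $Z$).
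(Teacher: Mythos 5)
Your proof is correct and takes essentially the same approach as the paper's: produce a Hodge generic lift $z\in\mathcal{F}$ of $Z$, observe that the smallest pre-special subvariety through $z$ projects to $Z$, apply Conjecture \ref{conj} to bound ${\rm H}_d(g,x)$, and conclude by Northcott and the finiteness of $\Omega$. The only cosmetic difference is the direction of construction: you translate a Hodge generic point $z_0\in X_H$ into $\mathcal{F}$ by some $\gamma\in\Gamma$, whereas the paper lifts a Hodge generic $P\in Z$ directly to a point of $\mathcal{F}$.
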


\begin{proof}
Let $Z$ be a special subvariety of $S$ such that $\Delta(Z)\leq b$ and let $P\in Z$ be such that $\langle P\rangle=Z$. Let $z\in\mathcal{F}$ be such that $\pi(z)=P$ and let $X_H$ be the smallest pre-special subvariety of $X$ containing $z$. Then $\pi(X_H)=Z$ and, by Conjecture \ref{conj}, $X_H=gFg^{-1}x$, where $F\in\Omega$, and $g\in G(\RR)$ and $x\in X$ satisfy 
\begin{align*}
{\rm H}_d(g,x)\leq c_{\mathcal{F}}\Delta(Z)^{\delta_{\mathcal{F}}}\leq c_{\mathcal{F}}b^{\delta_{\mathcal{F}}}.
\end{align*}
The claim follows, therefore, from the fact that there are only finitely many algebraic numbers of bounded degree and height i.e. Northcott's property.

\end{proof}

Another, albeit longer, approach to our point counting problems can be given by replacing Conjecture \ref{conj} with two related conjectures, although we will have to additionally assume Conjecture \ref{count complexity} in this case. We will also rely on the fact that Theorem \ref{pilawilkie} is uniform in families. The advantage is that the following two conjectures are seemingly more accessible.

\begin{conjecture}\label{fieldofdef}
For any $\kappa>0$, there exists a positive constant $c_\kappa$ such that, if $Z$ is a special subvariety of $S$, then there exists a semisimple subgroup $H$ of $G$ defined over $\QQ$ of non-compact type, and an extension $L$ of $F$ satisfying
\begin{align*}
[L:F]\leq c_\kappa\Delta(Z)^\kappa,
\end{align*}
such that, for any $\sigma\in{\Aut}(\CC/L)$,
\begin{align*}
\sigma(Z)=\pi(H(\RR)^+x_{\sigma}),
\end{align*}
where $H(\RR)^+x_\sigma$ is a pre-special subvariety of $X$ intersecting $\mathcal{F}$.
\end{conjecture}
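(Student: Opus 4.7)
The strategy is to identify Galois conjugates of $Z$ with special subvarieties sharing the same underlying semisimple $\QQ$-group, then to bound the degree of the field over which this identification holds using Minkowski's bound on number-field discriminants.

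First, I would attach to $Z$ the canonical Shimura subdatum $(H_0, \mathfrak{X}_{H_0})$, where $H_0$ is the generic Mumford--Tate group on $\mathfrak{X}_{H_0}$ and a connected component $X_{H_0}$ of $\mathfrak{X}_{H_0}$ lies in $X$, such that $Z$ is an irreducible component of the image in $S$ of the natural morphism $\Sh_{K_{H_0}}(H_0,\mathfrak{X}_{H_0}) \to \Sh_K(G,\mathfrak{X})$, possibly composed with a Hecke correspondence $T_{K,a}$. Let $H \subseteq H_0^{\rm der}$ denote the almost direct product of the $\QQ$-simple factors of $H_0^{\rm der}$ that are non-compact. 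Then $H$ is a semisimple $\QQ$-subgroup of $G$ of non-compact type, and $Z = \pi(H(\RR)^+ x)$ for any $x \in X_{H_0}$ with $\MT(x) = H_0$ (adjusted by the Hecke parameter), since $H$ is exactly $\MT(x)^{\rm der,nc}$.

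Next, I would invoke Shimura's reciprocity law. The canonical model of $\Sh_{K_{H_0}}(H_0,\mathfrak{X}_{H_0})$, its morphism into $\Sh_K(G,\mathfrak{X})$, and the relevant Hecke correspondence are all defined over the compositum $E_0 \cdot E$, where $E_0 := E(H_0,\mathfrak{X}_{H_0})$ and $E := E(G,\mathfrak{X})$ are the reflex fields. Hence, for $\sigma \in \Aut(\CC/L_0)$ with $L_0 := F \cdot E_0$, the automorphism $\sigma$ preserves the union of the finitely many irreducible components of the image, each of which can be written as $\pi(H(\RR)^+ y)$ for some $y \in X$ arising from a connected component of $\Sh_{K_{H_0}}(H_0,\mathfrak{X}_{H_0})$. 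After replacing $y$ by an appropriate $\Gamma$-translate, one ensures that $H(\RR)^+ y$ intersects $\mathcal{F}$. To pin down the specific conjugate $\sigma(Z)$, we enlarge $L_0$ to a field $L$ over which each such component is individually defined; this adds a factor whose degree is governed by the finite adelic double coset parametrising the connected components of $\Sh_{K_{H_0}}(H_0, \mathfrak{X}_{H_0})$.

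Finally, I would bound $[L:F]$. The reflex field $E_0$ is, by definition, the field of definition of the $H_0(\CC)$-conjugacy class of the cocharacter $\mu_x : \GG_m \to H_{0,\CC}$ attached to any $x \in X_{H_0}$; this class is defined over the splitting field $K_T$ of the Mumford--Tate torus $T$ of any special point $P \in Z$. By the definition of $\Delta(Z)$, we can choose such $P$ with $\Delta(P) \leq \Delta(Z)$, whence $D_T \leq \Delta(Z)$. Minkowski's discriminant bound yields $[K_T:\QQ] \ll \log D_T$, so $[E_0:\QQ] \ll \log \Delta(Z)$. The contribution from the connected-component double coset is similarly controlled by the index $[K^m_T : K_T] \leq \Delta(Z)$ together with finiteness of class numbers of $\QQ$-tori with bounded invariants. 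Combining these gives $[L:F] \leq c_\kappa \Delta(Z)^\kappa$ for any $\kappa > 0$.

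The principal obstacle lies in the second step: rigorously carrying out Shimura's reciprocity at the level of special subvarieties, rather than at the classical level of individual special points, so as to ensure that every $\sigma(Z)$ shares the \emph{same} semisimple $\QQ$-group $H$, not merely a $G(\overline{\QQ})$-conjugate of $H$. Deligne's theory of canonical models supplies descent of the Shimura subdatum over its reflex field, but the precise tracking of the Hecke parameter and of the connected components of $\Sh_{K_{H_0}}(H_0,\mathfrak{X}_{H_0})$ under the reciprocity isomorphism is delicate. This is the natural subvariety analogue of the height bound for pre-special points (Theorem \ref{daworr}).
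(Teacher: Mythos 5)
The statement you are proving is labelled a conjecture for good reason: the paper establishes it only for products of modular curves $S = \CC^n$, and for general Shimura varieties it remains open. Your outline should therefore be weighed against that $\CC^n$ argument, which is the paper's actual proof.

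Your strategy mirrors the paper's $\CC^n$ proof at a structural level: realize $Z$ as an irreducible component of the image of a finite morphism of Shimura varieties coming from a subdatum, observe that this morphism descends over a composite of reflex fields, and then bound the degree of the extension needed to fix the individual connected component mapping onto $Z$. In the $\CC^n$ case the paper chooses the morphism coming from $(\GL_2, \HH^\pm)$, whose reflex field is $\QQ$, so the reflex-field contribution is vacuous and the entire bound on $[L:F]$ comes from an explicit computation of $\pi_0(\Sh_K(\GL_2,\HH^\pm)) \cong \QQ_{>0}\backslash\AAA_f^\times/\nu(K)$ via the determinant, paired with the estimate $\deg(Z) \geq \prod_{p\in\Sigma}p$ and the primorial bound. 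Your Minkowski step — using $E_0 \subseteq E(T,\{x\}) \subseteq K_T$ and $[K_T:\QQ] \ll \log D_T \leq \log\Delta(Z)$ — is a sensible and correct supplement for the general reflex-field contribution, one the paper never needs because $E_0 = \QQ$ for $\GL_2$; and $\log$-growth is comfortably within the required $c_\kappa\Delta(Z)^\kappa$.

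Where I disagree with your self-assessment is in the location of the genuine difficulty. The concern you flag — that $\sigma(Z)$ might only share a $G(\overline{\QQ})$-conjugate of $H$ rather than $H$ itself — is in fact handled by the descent step: once the morphism $\Sh_{K_{H_0}}(H_0,\mathfrak{X}_{H_0}) \to \Sh_K(G,\mathfrak{X})$ is defined over $L_0 = F\cdot E_0$, enlarging $L$ to the field over which the particular connected component mapping onto $Z$ is individually defined forces $\sigma(Z)=Z$ for all $\sigma\in\Aut(\CC/L)$, which is stronger than the conjecture requires (the Hecke parameter, being given by a $G(\AAA_f)$-element and hence defined over the reflex field, causes no extra trouble). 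The true gap is your final sentence about the connected components. For a general subdatum the set $\pi_0(\Sh_{K_{H_0}}(H_0,\mathfrak{X}_{H_0})) \cong H_0(\QQ)_+\backslash H_0(\AAA_f)/K_{H_0}$ is a class-group-like object for the torus $H_0^{\ab}$, and bounding it by $c_\kappa\Delta(Z)^\kappa$ requires (i) a bound on the class number of $H_0^{\ab}$ relative to its maximal compact, in terms of the splitting field of a torus in $H_0$, and (ii) a lower bound for $\deg(Z)$ or $\Delta(Z)$ in terms of the level indices $[K^m_{H_0^{\ab}}:\nu(K_{H_0})]$. Neither is dispatched by the phrase ``finiteness of class numbers of $\QQ$-tori with bounded invariants''; the paper's determinant/primorial computation is precisely the $\CC^n$ instance of this and is the substantive content of the proof there. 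Until that step is supplied in general, your outline is a promising road map consistent with the paper's approach, but not a proof.
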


Recall that, for an abelian variety $A$, defined over a field $K$, every abelian subvariety of $A$ can be defined over a fixed, finite extension of $K$. The analogue of Conjecture \ref{fieldofdef} is, therefore, trivial. In a Shimura variety, one hopes that the degrees of fields of definition of strongly special subvarieties grow as in Conjecture \ref{fieldofdef}. If this were true, Conjecture \ref{fieldofdef} for strongly special subvarieties would follow easily.

Our final conjecture is also inspired by the abelian setting.

\begin{conjecture}[cf. \cite{hp:o-min}, Lemma 3.2]\label{orr2}
There exist positive constants $c_{\Gamma}$ and $\delta_{\Gamma}$ such that, if $X_H$ is a pre-special subvariety of $X$ intersecting $\mathcal{F}$ and $z\in\mathcal{F}$ belongs to $\Gamma X_H$, then $z\in\gamma X_H$, where $\gamma\in\Gamma$ satisfies
\begin{align*}
{\rm H}_1(\gamma)\leq c_{\Gamma}\deg(\pi(X_H))^{\delta_{\Gamma}}.
\end{align*}
\end{conjecture}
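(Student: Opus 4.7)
The plan is to combine reduction theory for the arithmetic stabilizer of $X_H$ inside $G$ with the known polynomial comparison between the Baily--Borel degree of a special subvariety and its hyperbolic volume.

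Setup: let $G_H := H \cdot Z_G(H)^\circ$ be the algebraic subgroup of $G$ stabilizing $X_H$ setwise (where $H$ denotes the generic Mumford--Tate group on $X_H$), and set $\Gamma_H := \Gamma \cap G_H(\QQ)_+$. Write $z = \gamma_0 y_0$ for some $\gamma_0 \in \Gamma$ and $y_0 \in X_H$; then every element of the coset $\gamma_0 \Gamma_H$ satisfies $z \in \gamma X_H$, so it suffices to find a short representative of this coset. Since $X_H \cap \mathcal{F} \neq \emptyset$, fix $y_1 \in X_H \cap \mathcal{F}$ and pick $h \in G_H(\RR)^+$ with $h y_1 = y_0$; then $\gamma_0 h$ carries $y_1 \in \mathcal{F}$ to $z \in \mathcal{F}$, so $\gamma_0 h \in E := \{g \in G(\RR) : g \mathcal{F} \cap \mathcal{F} \neq \emptyset\}$, a set bounded in height by a constant $c_0$ depending only on $\mathcal{F}$ thanks to standard properties of Siegel sets. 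The problem therefore reduces to choosing $s \in \Gamma_H$ making $s^{-1} h$ small in $G_H(\RR)^+$.

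To do this, I would apply Borel--Harish-Chandra reduction theory to the arithmetic lattice $\Gamma_H \subset G_H(\RR)^+$: there is a Siegel fundamental set $\mathfrak{S}_H$ for $\Gamma_H$, and some $s \in \Gamma_H$ satisfies $s^{-1} h \in \mathfrak{S}_H$. Putting $\gamma := \gamma_0 s \in \gamma_0 \Gamma_H$, the factorization $\gamma = (\gamma_0 h) \cdot (s^{-1} h)^{-1}$ and sub-multiplicativity of $\mathrm{H}_1$ under matrix multiplication in the fixed $\mathrm{GL}_n$-embedding yield
\[
\mathrm{H}_1(\gamma) \leq n \cdot c_0 \cdot \mathrm{H}_1((s^{-1} h)^{-1}).
\]
The remaining factor $\mathrm{H}_1((s^{-1} h)^{-1})$ is controlled by the diameter of $\mathfrak{S}_H$, which one expects to be polynomially bounded by the covolume $\mathrm{vol}(\Gamma_H \backslash G_H(\RR)^+)$. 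That covolume is, up to uniform centralizer factors, the hyperbolic volume of $\pi(X_H)$ in its canonical metric; by Hirzebruch--Mumford proportionality together with the degree--volume comparison underlying the definition of $\deg$ in \cite{KY:AO}, this volume is polynomially bounded in $\deg(\pi(X_H))$, giving the desired inequality.

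The main obstacle is the explicit polynomial control in the Siegel reduction step. Borel--Harish-Chandra theory only asserts the existence of $\mathfrak{S}_H$ and is qualitative about its size in terms of the arithmetic invariants of $\Gamma_H$ (ultimately the discriminant of the splitting field of a maximal torus in $G_H$). A uniform polynomial bound in this generality would require an effective reduction theory for arbitrary arithmetic subgroups of reductive $\QQ$-groups, which is not documented in the literature. This is why one should expect the conjecture to be verified first in the special case of products of modular curves, where the difficulty dissolves: $\mathcal{F}$ itself splits as a product of standard $\mathrm{SL}_2(\ZZ)$-fundamental domains on $\HH$, so $\mathcal{F} \cap X_H$ is automatically a fundamental domain for $\Gamma_H$ acting on $X_H$, one may take $y_1 = y_0$ directly and obtain $\gamma_0 \in E \cap \Gamma$ whose height is bounded independently of $X_H$, while the degree--volume comparison is completely explicit for congruence subgroups of $\mathrm{SL}_2(\ZZ)$.
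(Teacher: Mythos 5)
Your overall framework---translate inside $\Gamma_H$ to control the real piece by effective reduction theory---has the right flavor, and you correctly identify effective Siegel reduction as the fundamental obstruction; that is exactly why the paper states this as a conjecture and verifies it only for products of modular curves. But there is a more immediate error in the argument: the set $E := \{g \in G(\RR) : g\mathcal{F} \cap \mathcal{F} \neq \emptyset\}$ is \emph{not} bounded in height whenever $\Gamma$ is not cocompact. Already for $\SL_2(\ZZ)\backslash\HH$ with the standard fundamental domain, the element $a_T := \mathrm{diag}(\sqrt{T},1/\sqrt{T})$ satisfies $a_T \cdot i = iT$, and both $i$ and $iT$ lie in the fundamental domain for every $T\geq 1$, so $a_T \in E$ while $\mathrm{H}_1(a_T) \to \infty$. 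The Siegel property asserts finiteness of $\{\gamma\in\Gamma : \gamma\mathcal{F}\cap\mathcal{F}\neq\emptyset\}$---a statement about the \emph{discrete} group---and it does not extend to $G(\RR)$; your element $\gamma_0 h$ lies in $G(\RR)$, not in $\Gamma$. Consequently the first factor in the decomposition $\gamma=(\gamma_0 h)(s^{-1}h)^{-1}$ has no a priori bound, and the whole estimate collapses unless $y_1$ is chosen in a way that adapts to $z$, which is essentially the content of what one is trying to prove.

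The closing remarks on products of modular curves are also off. It is not true that $\mathcal{F}\cap X_H$ is a fundamental domain for $\Gamma_H$ acting on $X_H$, nor can one take $y_1 = y_0$, since $y_0 = \gamma_0^{-1}z$ need not lie in $\mathcal{F}$. The paper's verification for $\CC^n$ is not a degenerate case of your scheme: it writes $X_H$ via a tuple $(g_j)\subset\GL_2(\QQ)^+$ with $g_1 = 1$ and coprime integer entries, builds an explicit fundamental set for $\Lambda := \bigcap_j g_j^{-1}\SL_2(\ZZ)g_j$ using coset representatives of polynomially bounded height (exploiting $\Gamma(N)\subseteq\Lambda$ for $N = \mathrm{lcm}(\det g_j)$), and bounds the resulting elements using Orr's Siegel-set theorem $\mathrm{H}_1(g_j)\ll \det(g_j)^2$. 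The comparison $\deg(Z) \geq [\SL_2(\ZZ):\Lambda] \geq N$ then supplies the polynomial link to the degree. This is the concrete effective reduction step whose higher-rank analogue you rightly observe to be missing; without it, and without repairing the $E$-boundedness step, your proposal does not give a proof.
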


Conjecture \ref{orr2} has the following useful consequence.

\begin{lemma}\label{heightx}
Assume that Conjecture \ref{orr2} holds.

There exist positive constants $d$, $c_{\rm H}$, and $\delta_{\rm H}$ such that, if $H(\RR)^+x$ is a pre-special subvariety of $X$ intersecting $\mathcal{F}$, then
\begin{align*}
H(\RR)^+x=H(\RR)^+y
\end{align*}
where ${\rm H}_d(y)\leq c_{\rm H}\Delta(\pi(H(\RR)^+x))^{\delta_{\rm H}}$.
\end{lemma}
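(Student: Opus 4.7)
The strategy is to construct $y$ by starting from a low-height pre-special point lying above a low-complexity special point of $Z := \pi(H(\RR)^+x)$, and then using Conjecture \ref{orr2} to transport it into $X_H := H(\RR)^+x$ via a bounded-height element of $\Gamma$.

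First, by the definition of $\Delta(Z)$, we may choose a special point $P\in Z$ with $\Delta(P)\leq\Delta(Z)$. Lift $P$ to a pre-special point $z_0\in X$ and, after translating by an element of $\Gamma$ if necessary, assume $z_0\in\mathcal{F}$. Theorem \ref{daworr} then supplies constants $d_0,c_1,\delta_1>0$, independent of $Z$, such that
\begin{align*}
{\rm H}_{d_0}(z_0)\leq c_1\Delta(P)^{\delta_1}\leq c_1\Delta(Z)^{\delta_1}.
\end{align*}
Since $\pi(z_0)=P\in Z=\pi(X_H)$, we have $z_0\in\pi^{-1}(Z)=\Gamma\cdot X_H$ (the $\Gamma$-orbit of any pre-weakly-special subvariety of $X$ equals the preimage under $\pi$ of its image in $S$). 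Applying Conjecture \ref{orr2} to $z_0$ and $X_H$, and using $\deg(\pi(X_H))\leq\Delta(Z)$, we obtain $\gamma\in\Gamma$ with ${\rm H}_1(\gamma)\leq c_\Gamma\Delta(Z)^{\delta_\Gamma}$ such that $z_0\in\gamma X_H$. Set $y:=\gamma^{-1}z_0$. Then $y\in X_H=H(\RR)^+x$, and since $H(\RR)^+$ is a group, $H(\RR)^+y=H(\RR)^+x$, which is the first required property.

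It remains to bound ${\rm H}_d(y)$ for some $d$. Here we use that $G\subseteq\GL_n$ acts on the Harish-Chandra realisation $X\subseteq\CC^N$ via the restriction of a rational action of $G$ on the compact dual $\check{X}\subseteq\PP^N$; in affine coordinates, the map $(g,z)\mapsto g\cdot z$ is given by rational functions of fixed degree, whose coefficients have bounded height and whose denominators are nonvanishing on $X$. Standard estimates for the behaviour of $k$-heights under such a rational map then yield constants $d,a_1,a_2,c_3>0$ with
\begin{align*}
{\rm H}_d(y)={\rm H}_d(\gamma^{-1}z_0)\leq c_3\,{\rm H}_1(\gamma)^{a_1}{\rm H}_{d_0}(z_0)^{a_2},
\end{align*}
and combining with the bounds above gives ${\rm H}_d(y)\leq c_{\rm H}\Delta(Z)^{\delta_{\rm H}}$ for suitable constants. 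The main technical point is this last step: one must verify that the $G(\RR)$-action on $X$ is algebraic of bounded degree with coefficients of bounded height, and that heights of algebraic points in $X$ transform polynomially under such an action. This is a standard height-theoretic input also underlying the proof of Theorem \ref{daworr}, so no new obstacle arises beyond bookkeeping the exponents.
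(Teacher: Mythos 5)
Your proof follows essentially the same route as the paper's: both pick a special point of $Z=\pi(H(\RR)^+x)$ of complexity at most $\Delta(Z)$, lift it to a pre-special point in $\mathcal{F}$ so that Theorem \ref{daworr} controls its height, apply Conjecture \ref{orr2} to find a bounded-height $\gamma\in\Gamma$ carrying that lift into $X_H$, and then set $y$ to be the $\gamma^{-1}$-translate, using standard polynomial behaviour of heights under the rational $G$-action to bound ${\rm H}_d(y)$. The only cosmetic difference is that you make explicit the inequality $\deg(\pi(X_H))\leq\Delta(Z)$ needed to feed Conjecture \ref{orr2} into the final bound, which the paper leaves tacit.
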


\begin{proof}
Let $d$, $c_\mathcal{F}$, and $\delta_\mathcal{F}$ be the positive constants afforded to us by Theorem \ref{daworr}, and let $c_{\Gamma}$ and $\delta_{\Gamma}$ be the positive constants afforded to us by Conjecture \ref{orr2}. 

Let $x'\in\Gamma H(\RR)^+x\cap\mathcal{F}$ denote a pre-special point such that $\pi(x')$ is of minimal complexity among the special points of $\pi(H(\RR)^+x)$. By Theorem \ref{daworr}, we have
\begin{align*}
{\rm H}_d(x')\leq c_\mathcal{F}\Delta(\pi(H(\RR)^+x))^{\delta_\mathcal{F}}.
\end{align*}
On the other hand, by Conjecture \ref{orr2}, $x'\in\gamma H(\RR)^+x$, where
\begin{align*}
{\rm H}_1(\gamma)\leq c_\Gamma\Delta(\pi(H(\RR)^+x))^{\delta_\Gamma}.
\end{align*}
It follows easily from the properties of heights that there exist positive constants $c$ and $\delta$ depending only on the fixed data such that
\begin{align*}
{\rm H}_d(\gamma^{-1}x')\leq c{\rm H}_1(\gamma)^\delta{\rm H}_d(x')^\delta.
\end{align*}
Therefore, the previous remarks show that 
\begin{align*}
y:=\gamma^{-1}x'\in H(\RR)^+x
\end{align*}
satisfies the requirements of the lemma.
\end{proof}

We will now verify the arithmetic conjectures stated above in an arbitrary product of modular curves.

\section{Products of modular curves}

Our definition of a Shimura variety allows for the possibility that $S$ might be a product of modular curves. In that case $G=\GL^n_2$, where $n$ is the number of modular curves, and $\mathfrak{X}$ is the $G(\RR)$ conjugacy class of the morphism $\SSS\rightarrow G_{\RR}$ given by
\begin{align*}
a+ib\mapsto\left(\left( \begin{array}{cc}
a & b \\
-b & a \end{array} \right),\ldots,\left( \begin{array}{cc}
a & b \\
-b & a \end{array} \right)\right).
\end{align*}
We let $X$ denote the $G(\RR)^+$ conjugacy class of this morphism, which one identifies with the $n$-th cartesian power $\HH^n$ of the upper half-plane $\mathbb{H}$. 

For our purposes, we can and do suppose that $\Gamma$ is equal to $\SL_2(\ZZ)^n$ and we let $\mathcal{F}$ denote a fundamental set in $X$ for the action of $\Gamma$, equal to the $n$-th cartesian power of a fundamental set $\mathcal{F}_{\HH}$ in $\HH$ for the action of $\SL_2(\ZZ)$. Note that, as explained in \cite{orr:siegel}, Section 1.3, we can and do choose $\mathcal{F}_{\HH}$ in the image of a Siegel set. Via the $j$-function applied to each factor of $\HH^n$, the quotient $\Gamma\backslash X$ is isomorphic to the algebraic variety $\CC^n$. Special subvarieties have the following well-documented description.

\begin{proposition}[cf. \cite{edix:mod}, Proposition 2.1]
Let $I=\{1,\ldots, n\}$. A subvariety $Z$ of $\CC^n$ is a special subvariety if and only if there exists a partition $\Omega=(I_1,\ldots,I_t)$ of $I$, with $|I_i|=n_i$, such that $Z$ is equal to the product of subvarieties $Z_i$ of $\CC^{n_i}$, where, either
\begin{itemize}
\item $I_i $ is a one element set and $Z_i$ is a special point, or
\item $Z_i$ is the image of $\mathbb{H}$ in $\CC^{n_i}$ under the map sending $\tau\in\mathbb{H}$ to the image of $(g_j\tau)_{j\in I_i}$ in $\CC^{n_i}$ for elements $g_j\in\GL_2(\QQ)^+$.
\end{itemize}
\end{proposition}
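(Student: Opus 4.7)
The plan is to translate the statement into the language of Mumford–Tate groups and Shimura subdata, and then analyse the possible $\QQ$-subgroups of $G = \GL_2^n$ using the fact that $G^{\ad} = \PGL_2^n$ decomposes as a product of $\QQ$-simple factors each isomorphic to $\PGL_2$.

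For the easy direction, suppose $Z = \prod_i Z_i$ has the form described. Each factor that is a special point is special in its coordinate $\CC$. Each factor of the second type is the image of a weakly special Hecke translate of the diagonal $\HH \hookrightarrow \HH^{n_i}$ inside $\CC^{n_i}$, and it contains special points (images of CM $\tau$'s). One then checks directly that a product of special subvarieties in the factor coordinates is special: it arises as a component of the image of the Shimura subdatum obtained by multiplying together the subdata producing each $Z_i$. Alternatively, one exhibits the Mumford–Tate group at a CM point of $Z$ as the product of the Mumford–Tate groups produced factor by factor.

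For the non-trivial direction, take a special subvariety $Z$ of $\CC^n$ contained in $S$. By the description of special subvarieties in Section~\ref{preli}, $Z$ is the image in $S$ of $X_M$ for some $M = \MT(x)$ with $x \in X = \HH^n$. I would first pass to $M^{\ad}$, since the splitting $X_M = X_1 \times X_2$ is governed entirely by the decomposition of $M^{\ad}$ into $\QQ$-simple factors; only $M^{\ad}$ and its embedding in $G^{\ad} = \PGL_2^n$ matter for $X_M$. Writing $M^{\ad} = \prod_{i=1}^t N_i$ with each $N_i$ $\QQ$-simple and using Goursat's lemma applied to the inclusion $M^{\ad} \hookrightarrow \PGL_2^n$, each projection $N_i \to \PGL_2$ (to the $j$-th factor) is either trivial or an isomorphism (because the $N_i$ are simple and $\PGL_2$ has no nontrivial normal $\QQ$-subgroups). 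This partitions $I = \{1,\dots,n\}$ into the sets $I_i$ of coordinates on which $N_i$ projects isomorphically, together with a remainder consisting of coordinates where every $N_i$ projects trivially.

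For a coordinate $j$ in the remainder, $M^{\ad}$ projects trivially to the $j$-th $\PGL_2$, which forces the $j$-th coordinate of $X_M$ to be a single point; moreover, the Mumford–Tate group at $x$ then projects into a torus in $\GL_2$ at that coordinate, so the image in $\CC$ is a special point. This accounts for the one-element parts of the partition and the first bullet. For a block $I_i$ of size at least two, Goursat forces $N_i$ to sit diagonally inside $\prod_{j \in I_i} \PGL_2$ via a tuple of $\QQ$-isomorphisms $\PGL_2 \xrightarrow{\sim} \PGL_2$, each of which, by the structure of $\Aut(\PGL_2)$ over $\QQ$, is inner and given by conjugation by some $g_j \in \PGL_2(\QQ) = \GL_2(\QQ)^+/Z$. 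Lifting to $\GL_2(\QQ)^+$ and applying these elements to a fixed $\tau \in \HH$, one obtains that the $I_i$-block of $X_M$ is precisely the image of $\HH$ under $\tau \mapsto (g_j \tau)_{j \in I_i}$, yielding the second bullet. Taking the image in $\CC^n = \Gamma \backslash \HH^n$ of the resulting product decomposition of $X_M$ gives the desired factorization of $Z$.

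The only subtle step is the Goursat analysis: checking that any embedding $M^{\ad} \hookrightarrow \PGL_2^n$ of a semisimple $\QQ$-group is, up to reordering, a product of diagonal embeddings of $\PGL_2$ twisted by inner automorphisms defined over $\QQ$. This rests on the facts that $\PGL_2$ is $\QQ$-simple with no outer automorphisms over $\QQ$ and that every $\QQ$-automorphism is inner, so that the isomorphisms produced by Goursat are conjugations by $\GL_2(\QQ)^+$-elements — exactly the data that produces the Hecke-translated diagonal description in the second bullet. The rest of the argument is organisational.
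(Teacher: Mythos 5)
The paper does not actually prove this proposition; it is quoted as ``well-documented'' with a citation to \cite{edix:mod}, Proposition 2.1, so there is no in-paper argument to compare against. Your Goursat/Mumford--Tate route is the standard one and the overall skeleton is correct, but a few claims need repair.

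First, you write that ``only $M^{\ad}$ and its embedding in $G^{\ad}=\PGL_2^n$ matter.'' In general $M^{\ad}$ does \emph{not} embed into $G^{\ad}$: the kernel of $M\to G^{\ad}$ is $M\cap Z(G)$, which is contained in $Z(M)$ but may be strictly smaller (for instance when $M$ contains a CM torus larger than the diagonal $\GG_m$). The object Goursat should be applied to is the image of $M^{\der}$ inside $\PGL_2^n$; this is a genuine semisimple $\QQ$-subgroup isogenous to $M^{\der}$, and since the orbit $X_M$ only depends on this image, the rest of your analysis transfers. Second, the identity $\PGL_2(\QQ)=\GL_2(\QQ)^+/Z$ you invoke is false: the image of $\GL_2(\QQ)^+$ in $\PGL_2(\QQ)$ is an index-$2$ subgroup (the class of ${\rm diag}(1,-1)$ lies outside it). The reason you can nevertheless choose $g_j\in\GL_2(\QQ)^+$ is geometric, not algebraic: because $X_M$ is a connected subset of $\HH^n$, the induced map on the $j$-th coordinate must carry $\HH$ to $\HH$, forcing the inner automorphism to lie in $\PGL_2(\RR)^+\cap\PGL_2(\QQ)$, and every such class is represented by a positive-determinant rational matrix. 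You should also treat blocks $I_i$ of size one on which $N_i$ projects isomorphically (these give $Z_i=\CC$, the second bullet with $g_j=1$), rather than only blocks ``of size at least two.'' Otherwise the key steps --- each $\QQ$-simple adjoint factor hitting a coordinate nontrivially must be $3$-dimensional and hence $\QQ$-isomorphic to $\PGL_2$ since it embeds as a full-dimensional connected subgroup; no two such factors can hit the same coordinate because $\PGL_2$ has trivial centralizer in itself; and $\QQ$-automorphisms of $\PGL_2$ are inner --- are all correct, and the easy direction is fine.
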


First note that Conjecture \ref{conj} for $\CC^n$ follows from Proposition 6.7 of \cite{hp:o-min}. Hence, we will now verify Conjecture \ref{fieldofdef} and Conjecture \ref{orr2} in that setting.

\begin{proof}[Proof of Conjecture \ref{fieldofdef} for $\CC^n$]
Let $Z$ be a special subvariety of $\CC^n$, equal to a product of special subvarieties $Z_i$ of $\CC^{n_i}$, as above. Without loss of generality, we may assume that the product contains only one factor and, by Theorem \ref{daworr}, we may assume that it is not a special point. Therefore, $Z$ is equal to the image of $\mathbb{H}$ in $\CC^n$ under the map sending $\tau\in\mathbb{H}$ to the image of $(g_j\tau)_{j=1}^n$ in $\CC^{n}$ for elements $g_j\in\GL_2(\QQ)^+$.

In other words, we have a morphism of Shimura data from $(\GL_2,\HH^{\pm})$ to $(G,\mathfrak{X})$, where $\HH^{\pm}$ is the union of the upper and lower half-planes (or, rather, the conjugacy class we associate with it, as above), induced by the morphism
\begin{align*}
\GL_2\rightarrow\GL_2^n:g\mapsto(g_jgg^{-1}_j)_{j=1}^n,
\end{align*}
such that $Z$ is equal to the image of $\HH\times\{1\}$ under the corresponding morphism
\begin{align}\label{shimmap}
\Sh_{K}(\GL_2,\HH^{\pm})\rightarrow\Sh_{\GL_2(\hat{\ZZ})^n}(G,\mathfrak{X}),
\end{align}
where $K$ is the product of the groups
\begin{align*}
K_p:=g_1\GL_2(\ZZ_p)g^{-1}_1\cap\cdots\cap g_n\GL_2(\ZZ_p)g^{-1}_n
\end{align*}
over all primes $p$.

Since (\ref{shimmap}) is defined over $E(\GL_2,\HH^{\pm})=\QQ$, it suffices to bound the size of
\begin{align*}
\pi_0(\Sh_{K}(\GL_2,\HH^{\pm})),
\end{align*}
which, by \cite{milne:intro}, Theorem 5.17, is in bijection with
\begin{align*}
\QQ_{>0}\backslash \AAA^\times_f/\nu(K),
\end{align*}
where $\nu$ is the determinant map on $\GL_2$. However, since $\AAA^\times_f$ is equal to the direct product $\QQ_{>0}\hat{\ZZ}^\times$, it suffices to bound the size of $\hat{\ZZ}^\times/\nu(K)$.

To that end, let $\Sigma$ denote the (finite) set of primes $p$ such that $g_j\notin\GL_2(\ZZ_p)$, for some $j\in\{1,\dots,n\}$. In particular,
\begin{align*}
[\hat{\ZZ}^\times:\nu(K)]=\prod_{p\in\Sigma}[\ZZ^\times_p:\nu(K_p)]
\end{align*}
and, since $K_p$ contains the elements ${\rm diag}(a,a)$, where $a\in\ZZ^\times_p$, 
\begin{align*}
[\ZZ^\times_p:\nu(K_p)]\leq [\ZZ^\times_p:\ZZ^{\times 2}_p]\leq 4,
\end{align*}	
where $\ZZ^{\times 2}_p$ denotes the squares in $\ZZ^\times_p$.

On the other hand, by \cite{daw:aomod},
\begin{align*}
\deg(Z)\geq\prod_{p\in\Sigma}p,
\end{align*}
and the conjecture follows easily from the following classical fact regarding primorials.
\begin{lemma}
Let $n\in\NN$. The product of the first $n$ prime numbers is equal to
\begin{align*}
e^{(1+o(1))n\log n}.
\end{align*}

\end{lemma}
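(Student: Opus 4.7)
The plan is to deduce this from the Prime Number Theorem (PNT) via Chebyshev's $\theta$-function. Let $p_k$ denote the $k$-th prime, and write $P_n = \prod_{k=1}^{n} p_k$ for the $n$-th primorial. Taking logarithms transforms the statement into an additive one:
\begin{align*}
\log P_n = \sum_{k=1}^{n} \log p_k = \theta(p_n),
\end{align*}
where $\theta(x) := \sum_{p \leq x} \log p$ is the first Chebyshev function.

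Next, I would invoke two standard consequences of PNT. First, the prime number theorem in the form $\theta(x) \sim x$ as $x \to \infty$ (this is equivalent to the usual statement $\pi(x) \sim x/\log x$ via partial summation). Second, the asymptotic for the $n$-th prime, $p_n \sim n \log n$, which also follows from PNT by inverting $\pi(x) \sim x/\log x$. Combining these two facts yields
\begin{align*}
\log P_n = \theta(p_n) = (1+o(1)) p_n = (1+o(1)) n \log n,
\end{align*}
and exponentiating gives $P_n = e^{(1+o(1)) n \log n}$, which is exactly the claim.

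Since PNT is to be taken as an external input, there is no real obstacle: the only mild subtlety is being careful that the two $o(1)$ terms (one from $\theta(x)/x \to 1$ and one from $p_n/(n\log n) \to 1$) combine cleanly, which they do because $o(1) \cdot (1+o(1)) = o(1)$. In fact, even Chebyshev's elementary bounds $c_1 x \leq \theta(x) \leq c_2 x$ together with the elementary estimate $p_n \asymp n \log n$ would already give $\log P_n \asymp n \log n$, which is more than enough to conclude the proof of Conjecture~\ref{fieldofdef} for $\CC^n$ in the preceding argument, where only an upper bound of the shape $e^{O(n \log n)}$ on the primorial is needed to match with the lower bound $\deg(Z) \geq \prod_{p \in \Sigma} p$.
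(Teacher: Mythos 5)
Your proof is correct, and it is the standard classical argument via Chebyshev's $\theta$-function and the Prime Number Theorem. The paper itself states the lemma without proof, describing it as a "classical fact regarding primorials," so there is no paper proof to compare against; your derivation $\log P_n = \theta(p_n) = (1+o(1))p_n = (1+o(1))n\log n$ is exactly the expected route, and your closing observation is apt: the application in the proof of Conjecture \ref{fieldofdef} only needs a polynomial comparison between $\deg(Z)$ and $[\hat{\ZZ}^\times:\nu(K)]$, for which Chebyshev's elementary bounds $\theta(x)\asymp x$ and $p_n\asymp n\log n$ would already suffice without invoking the full PNT.
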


\end{proof}

\begin{proof}[Proof of Conjecture \ref{orr2} for $\CC^n$]
Let $X_H$ be a product of spaces $X_i\subseteq\HH^{n_i}$ each equal to either a pre-special point or to the image of $\HH$ given by the map sending $\tau$ to $(g_j\tau)_{j=1}^{n_i}$ for elements $g_j\in\GL_2(\QQ)^+$. Without loss of generality, we may assume that the product contains only one factor. If $X_H$ is a pre-special point contained in $\mathcal{F}$, then the claim follows from the fact that
\begin{align*}
\{\gamma\in\Gamma:\gamma\mathcal{F}\cap\mathcal{F}\neq\emptyset\}
\end{align*}
is finite. Therefore, assume that $X$ is equal to the image of $\HH$ in $\HH^{n}$ given by the map sending $\tau$ to $(g_j\tau)_{j=1}^{n}$ for elements $g_j\in\GL_2(\QQ)^+$. We can and do assume that $g_1$ is equal to the identity element and that all of the $g_j$ have coprime integer entries. 

As in the statement of Conjecture \ref{orr2}, we assume that $X$ intersects $\mathcal{F}$, and we let $x\in\mathcal{F}\cap X$. Therefore,
\begin{align*}
x=(g_j\tau_x)_{j=1}^n,
\end{align*}
where $\tau_x\in\mathcal{F}_{\HH}$. By \cite{orr:siegel}, Theorem 1.2 (cf. \cite{hp:beyond}, Lemma 5.2), ${\rm H}_1(g_j)\leq c_1\det(g_j)^2$, for all $j\in\{1,\ldots,n\}$, where $c_1$ is a positive constant not depending on $Z$. In particular,
\begin{align*}
{\rm H}_1(g^{-1}_j)\leq\det(g_j) {\rm H}_1(g_j)\leq c_1\det(g_j)^3,
\end{align*}	
for each $j\in\{1,\ldots,n\}$.

Now let $z:=(z_j)^n_{j=1}\in\mathcal{F}$ be a point belonging to $\Gamma X$. For each $j\in\{1,\ldots,n\}$,
\begin{align*}
z_j=\gamma_jg_jg\tau_x,
\end{align*}
for some $g\in\GL_2(\RR)^+$ and some $\gamma_j\in\SL_2(\ZZ)$. Therefore, let
\begin{align*}
\Lambda:=\bigcap_{j=1}^ng_j^{-1}\SL_2(\ZZ)g_j
\end{align*}
and let $\mathcal{C}$ denote a set of representatives in $\SL_2(\ZZ)$ for $\Lambda\backslash\SL_2(\ZZ)$. Note that, if we define
\begin{align*}
m_j:=\det(g_j),
\end{align*}
then, for any multiple $N$ of $m_j$, the principal congruence subgroup $\Gamma(N)$ is contained in $g^{-1}_j\SL_2(\ZZ)g_j$. In particular, if we define $N$ to be the lowest common multiple of the $m_j$, then $\Gamma(N)$ is contained in $\Lambda$. It follows that any subset of $\SL_2(\ZZ)$ mapping bijectively to $\SL_2(\ZZ/N\ZZ)$ contains a set $\mathcal{C}$, as above. Via the procedure outlined in \cite{DS:modularforms}, Exercise 1.2.2, it is straightforward to verify that we can (and do) choose $\mathcal{C}$ such that, for any $c\in\mathcal{C}$,
\begin{align*}
H(c)\leq 7N^5
\end{align*}
(though we certainly do not claim that this is the best possible bound; any polynomial bound would suffice for our purposes).

The union 
\begin{align*}
\bigcup_{c\in\mathcal{C}}c\mathcal{F}_{\HH}
\end{align*}
constitutes a fundamental set in $\HH$ for the action of $\Lambda$. Hence, there exists $c\in\mathcal{C}$ and $\lambda_g\in\Lambda$ such that
\begin{align*}
c^{-1}\lambda_gg\tau_x\in\mathcal{F}_{\HH}. 
\end{align*}
Furthermore, for each $j\in\{1,\ldots,n\}$, we can write $\lambda_g=g^{-1}_j\lambda_jg_j$, for some $\lambda_j\in\SL_2(\ZZ)$ and, hence,
\begin{align}\label{tousemartin}
z_j=\gamma_j g_j g \tau_x = \gamma_j g_j \lambda^{-1}_g c\cdot c^{-1}\lambda_g g \tau_x=\gamma_j\lambda^{-1}_jg_jc\cdot c^{-1}\lambda_g g \tau_x.
\end{align}
Therefore, by \cite{orr:siegel}, Theorem 1.2, we have ${\rm H}_1(\gamma_j\lambda^{-1}_jg_jc)\leq c_1m_j^2$, for all $j\in\{1,\ldots,n\}$.

We write
\begin{align*}
{\rm H}_1(\gamma_j\lambda^{-1}_j)={\rm H}_1(\gamma_j\lambda^{-1}_jg_jc\cdot c^{-1}g^{-1}_j)\leq c_2{\rm H}_1(\gamma_j\lambda^{-1}_jg_jc)^{\delta}{\rm H}_1(c^{-1})^{\delta}{\rm H}_1(g^{-1}_j)^{\delta},
\end{align*}
where $c_2$ and $\delta$ are positive constants not depending on $Z$, and we obtain
\begin{align*}
{\rm H}_1(\gamma_j\lambda^{-1}_j)\leq c_3N^{10\delta},
\end{align*}
for each $j\in\{1,\ldots,n\}$, where $c_3$ is a positive constant not depending on $Z$.

Conversely, by \cite{daw:aomod}, \S2,
\begin{align*}
\deg(Z)\geq [\SL_2(\ZZ):\Lambda]
\end{align*}
and, by writing the $g_j$ in Smith normal form i.e.
\begin{align*}
g_j=\gamma_{j,1}\left( \begin{array}{cc}
1 & 0 \\
0 & m_j \end{array} \right)\gamma_{j,2},
\end{align*}
where $\gamma_{j,1},\gamma_{j,2}\in\SL_2(\ZZ)$, we conclude that 
\begin{align*}
\SL_2(\ZZ)\cap g^{-1}_j\SL_2(\ZZ)g_j=\gamma^{-1}_{j,2}\Gamma_0(m_j)\gamma_{j,2},
\end{align*}
whose index in $\SL_2(\ZZ)$ is the same as $\Gamma_0(m_j)$, which is $m_j\varphi(m_j)$. It follows that
\begin{align*}
[\SL_2(\ZZ):\Lambda]\geq N,
\end{align*}
and so
\begin{align*}
{\rm H}_1(\gamma_j\lambda^{-1}_j)\leq c_3\deg(Z)^{10\delta}.
\end{align*}
Therefore, we let $\gamma:=(\gamma_j\lambda^{-1}_j)^n_{j=1}\in\Gamma$. By (\ref{tousemartin}), $z\in\gamma X$, and the result follows.

\end{proof}

Finally, we will verify Conjecture \ref{degcom} in this case.

\begin{proof}[Proof of Conjecture \ref{degcom} for $\CC^n$]

Let $Z$ be a special subvariety of $\CC^n$. Then $Z$ is a product of special subvarieties. Since there are only finitely many partitions of $\{1,\ldots,n\}$, we may assume that the product contains only one factor. If $Z$ is a special point, $H^\der$ is trivial. Therefore, we assume that $Z$ is equal to the image of $\HH$ in $\CC^n$ under the map sending $\tau\in\HH$ to the image of $(g_j\cdot\tau)^n_{j=1}$ in $\CC^n$ for elements $g_j\in\GL_2(\QQ)^+$. Then $H^\der$ is the image of $\SL_2$ under the morphism
\begin{align*}
\SL_2\rightarrow\GL_2^n:g\mapsto(g_jgg^{-1}_j)_{j=1}^n.
\end{align*}
We see from the calculations in the previous proof that the bound $\deg(Z)\leq b$ implies that the $g_j$ come from the union of finitely many double cosets $\Gamma g\Gamma$ for $g\in\GL_2(\QQ)^+$. Since each such double coset is equal to a finite union of single cosets $\Gamma h$ for $h\in\GL_2(\QQ)^+$, the result follows.

\end{proof}

\section{Main results (part 2): Conditional solutions to the counting problems}

We conclude by demonstrating how our arithmetic conjectures might be used to resolve the counting problems stated in Section \ref{reductions}. In our applications of the counting theorem, we will need the following.

\begin{lemma}\label{curve}
Let $\beta:[0,1]\rightarrow G(\RR)\times X$ be semi-algebraic. Then $\Im(\beta)$ is contained in a complex algebraic subset $B$ of $G(\CC)\times\CC^N$ of dimension at most $1$.
\end{lemma}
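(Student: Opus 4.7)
The plan is to show that $\Im(\beta)$, being the continuous semi-algebraic image of $[0,1]$, has real dimension at most $1$, and then to upgrade this into a bound on the complex dimension of an ambient complex algebraic subset. Via the fixed embedding $G\hookrightarrow\GL_n$ we regard $G(\RR)\subset\RR^{n^2}$, and via the Harish-Chandra realization we regard $X\subset\CC^N\cong\RR^{2N}$; hence $\Im(\beta)$ is a semi-algebraic subset of $\RR^{n^2+2N}$. Let $Y\subset\RR^{n^2+2N}$ be its real Zariski closure: this is a real algebraic subvariety of real dimension at most $1$.

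The key observation is that the complexification $Y_{\CC}\subset\CC^{n^2+2N}$ (defined by the same ideal of polynomials, now regarded over $\CC$) satisfies $\dim_{\CC}Y_{\CC}=\dim_{\RR}Y\leq 1$, since base change from $\RR$ to $\CC$ preserves Krull dimension of the coordinate ring. Now consider the $\CC$-linear surjection
\begin{align*}
\phi:\CC^{n^2}\times\CC^{2N}\to\CC^{n^2}\times\CC^N,\qquad (g,(u_k,v_k)_{k})\mapsto (g,(u_k+iv_k)_k).
\end{align*}
Being a morphism of complex algebraic varieties, $\phi$ maps $Y_{\CC}$ to a constructible set whose Zariski closure $B_0:=\overline{\phi(Y_{\CC})}\subset\CC^{n^2}\times\CC^N$ satisfies $\dim_{\CC}B_0\leq\dim_{\CC}Y_{\CC}\leq 1$. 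The restriction of $\phi$ to $\RR^{n^2+2N}$ coincides with the very identification $\RR^{2N}\cong\CC^N$ used above, so $\Im(\beta)=\phi(\Im(\beta))\subset\phi(Y)\subset B_0$. Setting $B:=B_0\cap(G(\CC)\times\CC^N)$, which is Zariski closed in $G(\CC)\times\CC^N$ (as $G(\CC)\subset\CC^{n^2}$ is), yields the required complex algebraic subset of $G(\CC)\times\CC^N$ of complex dimension at most $1$ containing $\Im(\beta)$.

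The only delicate step is the dimension equality $\dim_{\CC}Y_{\CC}=\dim_{\RR}Y$: one must take $Y$ to be the real Zariski closure, i.e.\ use the \emph{actual} vanishing ideal $I(\Im(\beta))\subset\RR[x_1,\dots,x_{n^2+2N}]$, rather than any ideal cutting $\Im(\beta)$ out set-theoretically, so that flatness of $\CC/\RR$ guarantees that the complexified ideal cuts out a complex variety of the same Krull codimension. Once this is in place, the rest is routine bookkeeping. An alternative, more hands-on route would be to apply semi-algebraic cell decomposition to $[0,1]$, breaking it into finitely many arcs on which each coordinate of $\beta$ is a Nash function satisfying a polynomial relation in $t$, and then take the finite union of the complex Zariski closures of these analytic arcs; either approach produces the desired $B$.
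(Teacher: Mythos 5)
Your proof is correct, and it rests on the same underlying geometric picture as the paper's — complexify the real Zariski closure $Y$ of $\Im(\beta)$, then observe that the $\CC$-linear map collapsing $\RR^{2N}\cong\CC^N$ carries $Y_\CC$ into a complex algebraic curve in $G(\CC)\times\CC^N$ — but the middle step is carried out differently. The paper obtains the dimension bound by a function-field argument: it picks a non-constant real coordinate $x_1$ on $Y$, notes that $z_1=x_1+iy_1$ is then non-constant on $Y_\CC$ so $\CC(z_1)$ is a transcendence base of $\CC(Y_\CC)$, and writes down explicit polynomial relations $f_j(z_1,z_j)=0$, $f_k(z_1,g_k)=0$ whose common vanishing locus furnishes $B$. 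You bypass all of this by invoking Chevalley directly: the algebraic morphism $\phi$ sends the curve $Y_\CC$ to a constructible set of dimension at most $1$, and its Zariski closure $B_0$ is the desired ambient curve. This is cleaner, avoids case-splitting on which coordinate is non-constant, and makes explicit the projection $\phi$ that is only implicit in the paper's computation. Your care in taking $Y$ to be the actual real Zariski closure so that $\dim_\CC Y_\CC=\dim_\RR Y$ (via integrality of $A\subset A\otimes_\RR\CC$) is the same point the paper relies on tacitly when it asserts that $Y_\CC$ is a complex algebraic curve. One small slip: $G(\CC)$ is not Zariski closed in $\CC^{n^2}$ — it is closed in $\GL_n(\CC)$, which is the open locus $\{\det\neq 0\}$ — so the parenthetical justification is misstated; but the conclusion stands, since $B=B_0\cap(G(\CC)\times\CC^N)$ is automatically closed in $G(\CC)\times\CC^N$, being the trace of the closed set $B_0\subset\CC^{n^2}\times\CC^N$.
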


\begin{proof}
Let $Y$ denote the real Zariski closure of $\Im(\beta)$ in $G(\RR)\times\RR^{2N}$. In particular, $\dim Y\leq 1$. Without loss of generality, we can and do assume that $Y$ is irreducible. If $Y$ is a point then there is nothing to prove. Therefore, we can and do assume that $Y$ is an irreducible real algebraic curve. In particular, the complexification $Y_\CC$ of $Y$ in $G(\CC)\times\CC^{2N}$ is an irreducible complex algebraic curve.

Let $g_1,\ldots,g_{n^2},x_1,y_1,\ldots,x_N,y_N$ denote the real coordinate functions on $G(\RR)\times X$ and let $z_j=x_j+iy_j$ denote the coordinate functions on $\CC^N=\RR^{2N}$. If all of the coordinates functions on $\RR^{2N}$ are constant on $Y$, the result is obvious. Therefore, without loss of generality, we can and do assume that $x_1$ is not constant on $Y$. 

We claim that each of the coordinate functions $x_2,y_2,\ldots,x_N,y_N$ on $\CC^{2N}$ is algebraic over the field $\CC(z_1)$, considered as a field of functions on $Y_\CC$. To see this, note that $z_1$ is non-constant on $Y_\CC$, and so $\CC(z_1)$ has transcendence degree at least $1$. On the other hand, $\CC(z_1)$ is contained in $\CC(x_1,y_1)$, which is algebraic over $\CC(x_1)$. 

In particular, each of the functions $x_2+iy_2,\ldots,x_N+iy_N$ is algebraic over the field $\CC(z_1)$. It follows that, for each $j\geq 2$, there exists a polynomial $f_j(z_1,z_j)\in\CC[z_1,z_j]$, non-trivial in $z_j$, such that $f_j(z_1,z_j)=0$ on $Y$. Similarly, for each $k=1,\ldots, n^2$, there exists a polynomial $f_j(z_1,g_k)\in\CC[z_1,g_k]$, non-trivial in $g_k$, such that $f_k(z_1,g_k)=0$ on $Y$. In particular, $Y$ is contained in the vanishing locus of the $f_j$ and the $f_k$, which define a complex algebraic curve in $G(\CC)\times\CC^N$.
\end{proof}

We denote by $X^\vee$ the compact dual of $X$, which is a complex algebraic variety on which $G(\CC)$ acts via an algebraic morphism
\begin{align*}
G(\CC)\times X^\vee\rightarrow X^\vee.
\end{align*}
Furthermore, $X$ naturally embeds into $X^\vee$ and the embedding factors through an embedding of $\CC^N$ i.e. the Harish-Chandra realization, into $X^\vee$. We could have defined subvarieties of $X$ using $X^\vee$ in the place of $\CC^N$ but, as mentioned previously, the two notions coincide. If we have a decomposition $G^\ad=G_1\times G_2$, and thus $X=X_1\times X_2$, we have a natural decomposition
\begin{align*}
X^\vee=X^\vee_1\times X^\vee_2.
\end{align*}
Furthermore, if $(H,\mathfrak{X}_H)$ denotes a Shimura subdatum of $(G,\mathfrak{X})$ and $X_H$ is a connected component of $\mathfrak{X}_H$ contained in $X$, then $X^\vee_H$ is naturally contained in $X^\vee$. We refer the reader to \cite{ey:subvarieties}, Section 3 for more details.

\begin{theorem}\label{Opt0}
Assume that Conjecture \ref{LGO} holds and assume that either
\begin{itemize}
\item Conjecture \ref{conj} holds or
\item Conjectures \ref{count complexity}, \ref{fieldofdef}, and \ref{orr2} hold.
\end{itemize}
Then Conjecture \ref{zerodim} is true for curves i.e. if $V$ is a curve contained in $S$, then the set $\Opt_0(V)$ is finite.
\end{theorem}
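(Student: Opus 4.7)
The strategy is a Pila--Zannier point count. Suppose for contradiction that $\Opt_0(V)$ is infinite. I would use the large Galois orbits hypothesis together with upper bounds for heights of parameters describing pre-special subvarieties to place many points in a suitable definable set, apply the Pila--Wilkie theorem to extract a non-trivial semi-algebraic arc of parameters, and invoke hyperbolic Ax-Lindemann to produce a positive-dimensional weakly special subvariety of $V$; since $V$ is a curve this would force $V$ itself to be weakly special, a case which I first rule out by a direct Mumford-Tate computation.

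I would begin by disposing of the case $V$ weakly special. In this case $V = \pi(\{x_1\} \times X_2)$ for a splitting $X_1 \times X_2$ of some pre-special subvariety of $X$, with $\dim X_2 = \dim V = 1$. The smallest pre-special subvariety of $X$ containing any $(x_1, y_2)$ projects onto $X_1$ into an orbit containing the $\MT(x_1)(\RR)^+$-orbit of $x_1$, which has dimension exactly $\delta(V)$; hence $\dim \langle P \rangle \geq \delta(V)$ for every $P \in V$, contradicting optimality. Thus $\Opt_0(V) = \emptyset$ and we may assume $V$ is not weakly special.

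For the main argument I introduce the definable family
\[
D := \bigcup_{F \in \Omega}\{((g,x),y) \in (G(\RR) \times X) \times \mathcal{V} : x(\SSS) \subseteq gG_Fg^{-1},\ y \in gF(\RR)^+g^{-1}x\},
\]
where $\Omega$ is the finite set of Section \ref{fah} and $\mathcal{V} = \pi^{-1}(V) \cap \mathcal{F}$ is from Definition \ref{fs}. Since $\dim \langle P \rangle \leq \dim \langle V \rangle - 2$ is bounded and, for each fixed special subvariety $Q$, the intersection $Q \cap V$ is finite (as $\langle P \rangle$ cannot contain $V$ without violating the defect inequality), Conjecture \ref{count complexity} (implied by Conjecture \ref{conj} via Lemma \ref{conjimplic}, or assumed directly in the alternative scenario) yields an infinite sequence $P_i \in \Opt_0(V)$ with $T_i := \Delta(\langle P_i \rangle) \to \infty$. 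Let $L$ be a finitely generated field of definition of $V$. By Conjecture \ref{LGO} the $\Aut(\CC/L)$-orbit of $P_i$ has size at least $c_G T_i^{\delta_G}$, each conjugate $\sigma(P_i)$ satisfies $\Delta(\langle \sigma(P_i)\rangle) = T_i$, and by Conjecture \ref{conj} lifts to a tuple $((g_\sigma, x_\sigma), \tilde{P}_\sigma) \in D$ with $\mathrm{H}_d(g_\sigma, x_\sigma) \leq c_\mathcal{F} T_i^{\delta_\mathcal{F}}$. Choosing $\epsilon > 0$ with $\epsilon \delta_\mathcal{F} < \delta_G$ and $T_i$ sufficiently large, Theorem \ref{pilawilkie} applied to $D$ supplies a continuous definable $\beta: [0,1] \to D$ whose projection to $(g,x)$ is semi-algebraic and whose projection to $y$ is non-constant.

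By Lemma \ref{curve}, the semi-algebraic parameter arc lies on a complex algebraic curve $B \subseteq G(\CC) \times X^\vee$, so the sweep $\mathcal{Y} := \bigcup_{(g,x) \in B} gF(\RR)^+g^{-1}x$ is an algebraic subvariety of $X$ containing the non-constant real analytic arc $t \mapsto y_t \in \pi^{-1}(V)$. Since $\pi^{-1}(V)$ has pure complex dimension $1$, one extracts from the intersection $\mathcal{Y} \cap \pi^{-1}(V)$ a positive-dimensional algebraic subvariety of $X$ contained in $\pi^{-1}(V)$; by Theorem \ref{halw} this is contained in a pre-weakly special subvariety $Y \subseteq \pi^{-1}(V)$ whose image $\pi(Y) \subseteq V$ is a positive-dimensional weakly special subvariety, necessarily equal to $V$, contradicting the weakly special reduction. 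Under the alternative hypotheses, the setup of $D$ is essentially unchanged but the height bound on the parameters is obtained differently: Conjecture \ref{fieldofdef} lets us work over a field of polynomially bounded degree so that all conjugates arise from a single $H$, and Conjecture \ref{orr2} together with Lemma \ref{heightx} supplies the required polynomial bound on $\mathrm{H}_d(x_\sigma)$. The main obstacle is precisely the passage from the semi-algebraic parameter arc and the merely analytic image arc to a genuine positive-dimensional algebraic subvariety of $\pi^{-1}(V)$; once this is handled, hyperbolic Ax-Lindemann closes the argument.
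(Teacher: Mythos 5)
Your Pila--Wilkie setup matches the paper's and you correctly identify that the difficulty lies in converting the arc output by the counting theorem into a contradiction. However, the gap you flag is fatal as written, and the way you propose to close it does not work, for the following reasons.

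\textbf{The sweep $\mathcal{Y}$ is not algebraic and the intersection is not algebraic.} The set $gF(\RR)^+g^{-1}x$ is a complex totally geodesic subvariety of $X$, but the union over a complex curve $B$ of such orbits is a real-analytic (definable) set, not a complex algebraic variety; to obtain something algebraic one must pass to $gF(\CC)g^{-1}x$ inside the compact dual $X^\vee$. More seriously, even after this repair, $\mathcal{Y}^\vee\cap\pi^{-1}(V)$ is a complex \emph{analytic} set, since $\pi^{-1}(V)$ is essentially never algebraic. A non-constant real-analytic arc inside it certainly produces a positive-dimensional \emph{analytic} component, but there is no mechanism, without further input, to extract a positive-dimensional \emph{algebraic} subvariety of $\pi^{-1}(V)$. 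Theorem~\ref{halw} (Ax--Lindemann) needs an honest algebraic variety contained in $\pi^{-1}(V)$, which you do not have. What you are secretly reaching for is an Ax--Schanuel-type statement (a lower bound on $\dim V+\dim W$ against $\dim A$ for an atypical analytic component $A$ of $W\cap\pi^{-1}(V)$), but the hyperbolic Ax--Schanuel conjecture is not among the hypotheses of Theorem~\ref{Opt0}, and Ax--Lindemann alone is not enough.

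\textbf{Optimality is never used in the contradiction.} In your write-up the bound $\dim\langle P\rangle\leq\dim\langle V\rangle-2$ appears only in the preamble (to argue $T_i\to\infty$), never in the step that produces the contradiction. This is a red flag: without that dimension constraint the arc supplied by Pila--Wilkie could be perfectly benign. The paper's proof uses optimality precisely at the geometric heart of the argument: the arc parameters land in a pre-special subvariety whose real form $Y_1$ has $\dim Y_1=\delta(P_0)\leq\delta(V)-1=\dim X-2$, and this bound is what makes the incidence variety $W_B\subseteq B\times Y^\vee$ (defined via $p_2(y)=p_2(g^{-1}x)$ and $gy\in\overline{V}_1$) have dimension at most $\dim X_1-1$, whereas its image $V_B$ must contain $\langle V_1\rangle_{\rm Zar}=X_1\times\{x_2\}$ and so have dimension at least $\dim X_1$. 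That contradiction \emph{is} the proof; it never requires an algebraic subvariety of $\pi^{-1}(V)$. The only transcendence input is the \emph{inverse} Ax--Lindemann theorem (cited as \cite{uy:algebraic-flows}, Theorem 1.3), applied to the irreducible analytic component $V_1$ of $\pi^{-1}(V)$ that contains the germ of the arc, to conclude that $\langle V_1\rangle_{\rm Zar}$ is pre-weakly special and identify the splitting $X=X_1\times X_2$.

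\textbf{Minor point.} The initial reduction to $V$ not weakly special via a Mumford--Tate computation is unnecessary (the paper handles all Hodge generic $V$ uniformly, and the dimension count degenerates harmlessly when $V$ is weakly special) and the justification you give for $\dim\langle P\rangle\geq\delta(V)$ for all $P\in V$ is at best imprecise. But this is a side issue; the substantive gap is the extraction step, and once you replace it with the compact-dual dimension count using $\dim Y_1\leq\dim X-2$, the argument closes as in the paper.
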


\begin{proof}
We will assume that Conjecture \ref{conj} holds. The proof in the case that Conjectures \ref{count complexity}, \ref{fieldofdef}, and \ref{orr2} hold is very similar, hence we omit it. To elucidate the use of Conjectures \ref{count complexity}, \ref{fieldofdef}, and \ref{orr2} we will use them in the proof of Theorem \ref{oa0}, at the expense of making the proof longer. We suffer no loss of generality if we assume, as we will, that $V$ is Hodge generic.

Let $\Omega$ denote a finite set of semisimple subgroups of $G$ defined over $\QQ$ as in Section \ref{fah} and let $d$, $c_{\mathcal{F}}$, and $\delta_{\mathcal{F}}$ be the constants afforded to us by Conjecture \ref{conj}. Let $L$ be a finitely generated extension of $F$ contained in $\CC$ over which $V$ is defined and let $c_G$ and $\delta_G$ be the constants afforded to us by Conjecture \ref{LGO}. Let $\kappa:=2\delta_G/3\delta_\mathcal{F}$. 

We claim that there exists a positive constant $c$ such that, for any $P\in\Opt_0(V)$, we have
\begin{align*}
\#{\Aut}(\CC/L)\cdot P\leq cc_{\mathcal{F}}^{\kappa}\Delta(\langle P\rangle)^{\kappa\delta_\mathcal{F}}.
\end{align*}
This would be sufficient to prove Theorem \ref{Opt0} since then, by Conjecture \ref{LGO}, we obtain
\begin{align*}
c_G\Delta(\langle P\rangle)^{\delta_G}\leq\#{\Aut}(\CC/L)\cdot P\leq cc_{\mathcal{F}}^{\kappa}\Delta(\langle P\rangle)^{\kappa\delta_\mathcal{F}}
\end{align*}
and, rearranging this expression, we obtain
\begin{align*}
\Delta(\langle P\rangle)\leq (cc_\mathcal{F}^\kappa c^{-1}_G)^{3/\delta_G},
\end{align*}
which is a bound independent of $P$. We remind the reader that $P$ is one of only finitely many irreducible components of $\langle P\rangle\cap V$. Hence, Theorem \ref{Opt0} would follow from Lemma \ref{conjimplic} and, therefore, it remains only to prove the claim. 

To that end, for each $\sigma\in{\rm Gal}(\CC/L)$, let $z_{\sigma}\in\mathcal{V}$ be a point in $\pi^{-1}(\sigma(P))$. Therefore, by Conjecture \ref{conj}, the smallest pre-special subvariety of $X$ containing $z_\sigma$ can be written $g_{\sigma}F_\sigma(\RR)^+g^{-1}_\sigma x_\sigma$, where $F_\sigma\in\Omega$, and $g_\sigma\in G(\RR)$ and $x_\sigma\in X$ satisfy
\begin{align*}
{\rm H}_d(g_\sigma,x_\sigma)\leq c_\mathcal{F}\Delta(\langle \sigma(P)\rangle)^{\delta_\mathcal{F}}=c_\mathcal{F}\Delta(\langle P\rangle)^{\delta_\mathcal{F}}.
\end{align*}
Without loss of generality, we can and do assume that $F:=F_\sigma$ is fixed. Therefore, for each $\sigma\in{\rm Gal}(\CC/L)$, the tuple $(g_\sigma,x_\sigma,z_\sigma)$ belongs to the definable set $D$ of tuples
\begin{align*}
(g,x,z)\in G(\RR)\times X\times X\subseteq\RR^{n^2+2N}\times\RR^{2N},
\end{align*}
such that $z\in\mathcal{V}\cap gF(\RR)^+g^{-1}x$ and $x(\SSS)\subseteq gG_Fg^{-1}$. We consider $D$ as a family over a point in an omitted parameter space and choose for $c$ the constant $c(D,d,\kappa)$ afforded to us by Theorem \ref{pilawilkie} applied to $D$. Since $\Omega$ is finite, we can and do assume that $c$ does not depend on $F$. We let $\Sigma$ denote the union over ${\rm Aut}(\CC/L)$ of the tuples $(g_\sigma,x_\sigma,z_\sigma)\in D$. In particular, $\Sigma$ is contained in the subset
\begin{align*}
D(d,c_{\mathcal{F}}\Delta(\langle P\rangle)^{\delta_{\mathcal{F}}}).
\end{align*}

Let $\pi_1$ and $\pi_2$ be the projection maps from $\RR^{n^2+2N}\times\RR^{2N}$ to $\RR^{n^2+2N}$ and $\RR^{2N}$, respectively, and suppose, for the sake of obtaining a contradiction, that 
\begin{align*}
\#{\Aut}(\CC/L)\cdot P=\#\pi_2(\Sigma)>cc_{\mathcal{F}}^{\kappa}\Delta(\langle P\rangle)^{\kappa\delta_\mathcal{F}}.
\end{align*}
Then, by Theorem \ref{pilawilkie}, there exists a continuous definable function 
\begin{align*}
\beta:[0,1]\rightarrow D,
\end{align*}
such that $\beta_1:=\pi_1\circ \beta$ is semi-algebraic, $\beta_2:=\pi_2\circ \beta$ is non-constant, $\beta(0)\in\Sigma$, and $\beta_{|(0,1)}$ is real analytic. Let $z_0:=\beta_2(0)$ and let $P_0:=\pi(z_0)$. To obtain a contradiction, we will closely imitate arguments found in \cite{orr:unlikely}.

It follows from the Global Decomposition Theorem (see \cite{gr:sheaves}, p172) that there exists $0<t\leq 1$ such that $\beta_2([0,t))$ intersects only finitely many of the irreducible analytic components of $\pi^{-1}(V)$. In fact, since $\beta_{2|(0,t)}$ is real analytic, $\beta_2((0,t))$ must be wholly contained in one such component $V_1$. Since $V_1$ is closed, we conclude from the fact that $\beta$ is continuous that $V_1$ contains $\beta_2([0,t])$.  

By \cite{uy:algebraic-flows}, Theorem 1.3 (the inverse Ax-Lindemann conjecture), $\langle V_1\rangle_{\rm Zar}$ is pre-weakly special and so, since $V$ is Hodge generic in $S$, we can decompose $G^\ad=G_1\times G_2$, and thus $X=X_1\times X_2$, so that
\begin{align*}
\langle V_1\rangle_{\rm Zar}=X_1\times\{x_2\},
\end{align*}
where $x_2\in X_2$ is Hodge generic. By abuse of notation, we denote by $\pi_2$ both the projection from $G$ to $G_2$ and from $X^\vee$ to $X^\vee_2$.

Note that, for any $(g,x)\in\Im(\beta_1)$, we have $(g^{-1}x)(\SSS)\subseteq G_{F,\RR}$. If we write $G'_F$ for the largest normal subgroup of $G_F$ of non-compact type, then the properties of Shimura data imply that $g^{-1}x$ factors through $G'_{F,\RR}$ and, if we write $\mathfrak{X}'$ for the $G'_F(\RR)$ conjugacy class of $g^{-1}x$ in $\mathfrak{X}$, then, by \cite{ullmo:equidistribution}, Lemme 3.3, $(G'_F,\mathfrak{X}')$ is a Shimura subdatum of $(G,\mathfrak{X})$. Furthermore, by \cite{uy:andre-oort}, Lemma 3.7, the number of Shimura subdata $(G'_F,\mathfrak{Y})$ of $(G,\mathfrak{X})$ is finite and, by \cite{milne:intro}, Corollary 5.3, the number of connected components $Y$ of $\mathfrak{Y}$ is also finite. It follows that, after possibly replacing $t$, we can and do assume that $g^{-1}x$ belongs to one such component $Y$, which we write as $Y_1\times Y_2$, such that $F(\RR)^+$ acts transitively on $Y_1$. In particular, 
\begin{align*}
\dim Y_1=\dim\langle P_0\rangle.
\end{align*}
We let $p_2$ denote the projection from $Y^\vee$ to $Y^\vee_2$.

Let $B$ denote the complex algebraic subset of $G(\CC)\times X^{\vee}$ of dimension at most $1$ containing $\Im(\beta_1)$ afforded to us by Lemma \ref{curve}. For any $(g,x)\in B$, we have $g^{-1}x\in Y^{\vee}$. 

Let $\overline{V}_1$ denote the Zariski closure of $V_1$ in $X^\vee$ and consider the complex algebraic set
\begin{align*}
W_B:=\{(g,x,y)\in B\times Y^{\vee}:p_2(y)=p_2(g^{-1}x),\ gy\in\overline{V}_1\}.
\end{align*}
Let $V_B$ denote the Zariski closure in $X^\vee$ of the set 
\begin{align*}
\{gy:(g,x,y)\in W_B\}.
\end{align*}
Since the latter is the image of $W_B$ under an algebraic morphism, we have $\dim V_B\leq\dim W_B$.

Since $V_1$ is an irreducible complex analytic curve having uncountable intersection with $V_B$, it follows that $V_1$ is contained in $V_B$.  Therefore, $\langle V_1\rangle_{\rm Zar}$ is contained in $V_B$ also, and so
\begin{align}\label{dimX1}
\dim X_1=\dim\langle V_1\rangle_{\rm Zar}\leq\dim V_B\leq \dim W_B.
\end{align}

Now, for each $(g,x)\in B$, consider the fibre $W_{(g,x)}$ of $W_B$ over $(g,x)$ i.e. the set
\begin{align*}
\{y\in Y^\vee:p_2(y)=p_2(g^{-1}x),\ \pi_2(y)=\pi_2(g)^{-1}x_2\}.
\end{align*}
Since $P_0\in V$, it follows that $\pi_2(F)=G_2$ and so, for any $y\in Y_2^\vee$, the natural projection
\begin{align*}
Y^\vee_1\times\{y\}\rightarrow X^\vee_2
\end{align*}
is an equivariant morphism of $F(\CC)$-homogeneous spaces. In particular, its fibres are equidimensional of dimension
\begin{align*}
\dim Y^\vee_1-\dim X^\vee_2=\dim Y_1-\dim X_2.
\end{align*}
Since $W_{(g,x)}$ is contained in such a fibre, we have
\begin{align*}
\dim W_{(g,x)}\leq \dim Y_1-\dim X_2\leq\dim X-2-\dim X_2=\dim X_1-2,
\end{align*}
where we use the fact that $P_0\in\Opt_0(V)$, hence,
\begin{align*}
\dim Y_1=\delta(P_0)\leq\delta(V)-1=\dim X-2.
\end{align*}
Since this holds for all $(g,x)\in B$ and $\dim B\leq 1$, we conclude that
\begin{align*}
\dim W_B\leq\dim X_1-1,
\end{align*}
which contradicts (\ref{dimX1}).
\end{proof}

Of course, Theorem \ref{Opt0} is not really satisfactory in the sense that it only deals with curves. One would hope that, for $V$ of arbitrary dimension, a path such as $\beta$ would yield, via the weak hyperbolic Ax-Schanuel conjecture, a positive dimensional subvariety of $V$, containing a conjugate of $P$, having defect at most $\delta(P)$, thus contradicting the optimality of $P$. However, the authors haven't been able to carry out this procedure. Instead, the very same idea appears to work when one attempts to contradict the membership of a point in the open-anomalous locus. The difference is that we are only required to bound the weakly special defect, as opposed to the defect itself.

\begin{theorem}\label{oa0}
Assume that Conjecture \ref{LGOoa} holds and assume that the weak hyperbolic Ax-Schanuel conjecture is true. Assume also that, either
\begin{itemize}
\item Conjecture \ref{conj} holds, or
\item Conjectures \ref{count complexity}, \ref{fieldofdef}, and \ref{orr2} hold.
\end{itemize}
Then, Conjecture \ref{oafinite} is true i.e. if $V$ is a subvariety of $S$, then the set 
\begin{align*}
V^{\rm oa}\cap S^{[1+\dim V]}
\end{align*}
is finite.
\end{theorem}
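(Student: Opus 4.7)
The plan is to follow the strategy of Theorem~\ref{Opt0}, but to invoke the weak hyperbolic Ax-Schanuel conjecture to control the \emph{weakly special} defect $\delta_{\rm ws}$ of an auxiliary subvariety of $V$. This is precisely the quantity needed to exhibit an anomalous subvariety through a Galois conjugate of a point $P\in V^{\rm oa}\cap S^{[1+\dim V]}$, whereas controlling the defect $\delta$ itself, as required in Theorem~\ref{Opt0}, appears out of reach without further input.

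Assume for contradiction that $V^{\rm oa}\cap S^{[1+\dim V]}$ is infinite, and let $L$ be a finitely generated extension of $F$ over which $V$ is defined, so that this set is $\Aut(\CC/L)$-stable. By Lemma~\ref{conjimplic} (or by Conjecture~\ref{count complexity} in the second case), only finitely many special subvarieties of $S$ have bounded complexity, so we may choose $P$ in this set with $\Delta(\langle P\rangle)$ arbitrarily large. By Remark~\ref{oainopt}, $P\in\Opt_0(V)$, hence $\delta(P)=\dim\langle P\rangle\leq\dim S-1-\dim V$. Now set up the counting framework exactly as in the proof of Theorem~\ref{Opt0}: for each $\sigma\in\Aut(\CC/L)$ lift $\sigma(P)$ to some $z_\sigma\in\mathcal V$, and by Conjecture~\ref{conj} write the smallest pre-special subvariety through $z_\sigma$ as $g_\sigma F(\RR)^+ g_\sigma^{-1}x_\sigma$ with ${\rm H}_d(g_\sigma,x_\sigma)\leq c_\mathcal{F}\Delta(\langle P\rangle)^{\delta_\mathcal{F}}$, after fixing a common $F\in\Omega$ by pigeonhole. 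Apply Theorem~\ref{pilawilkie} to the definable set $D$ of tuples $(g,x,z)$ with $z\in\mathcal V\cap gF(\RR)^+g^{-1}x$ and $x(\SSS)\subseteq gG_Fg^{-1}$; for $\Delta(\langle P\rangle)$ sufficiently large, the lower bound from Conjecture~\ref{LGOoa} produces a continuous definable path $\beta:[0,1]\to D$ with $\beta_1:=\pi_1\circ\beta$ semi-algebraic, $\beta_2:=\pi_2\circ\beta$ non-constant, $\beta(0)=(g_\sigma,x_\sigma,z_\sigma)$ for some $\sigma$, and $\beta|_{(0,1)}$ real analytic.

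The decisive new step is to turn $\beta$ into an anomalous subvariety of $V$ via weak Ax-Schanuel. By Lemma~\ref{curve}, the image of $\beta_1$ lies in a complex algebraic set $B\subseteq G(\CC)\times X^\vee$ of dimension at most one, so
\[
U:=\bigl\{\,g F(\CC)g^{-1}x : (g,x)\in B\,\bigr\}\cap X
\]
is a complex algebraic subset of $X$ of dimension at most $1+\dim\langle P\rangle$ containing the arc $\beta_2([0,1])$. Let $A$ be the irreducible analytic component of $U\cap\pi^{-1}(V)$ containing this arc; then $\dim A\geq 1$ and $\delta_{\rm Zar}(A)\leq\dim U-\dim A\leq\dim\langle P\rangle$. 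Enlarging $A$ within $\pi^{-1}(V)$ to a Zariski optimal intersection component $B'\supseteq A$, Conjecture~\ref{whasc2} forces $B'$ to be pre-weakly special. Setting $W:=\pi(B')$, Lemma~\ref{ZOimpliesclosed} makes $W$ a closed irreducible subvariety of $V$ containing $\sigma(P)$, with $\delta_{\rm ws}(W)=\delta_{\rm Zar}(B')\leq\dim\langle P\rangle\leq\dim S-1-\dim V$ and $\dim W\geq 1$. Rearranging the first inequality, $\dim W\geq 1+\dim V+\dim\langle W\rangle_{\rm ws}-\dim S$, so $W$ is anomalous in $V$, contradicting $\sigma(P)\in V^{\rm oa}$.

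The hard part will be the rigorous construction and dimension bound for $U$ (a close relative of the set $W_B$ appearing in the proof of Theorem~\ref{Opt0}): one must work in the compact dual $X^\vee$ and carefully account for possible reducibility of $B$ and for the action of the algebraic group generated by $F$, so that the ambient dimension is genuinely at most $1+\dim\langle P\rangle$. In the alternative hypothesis using Conjectures~\ref{count complexity}, \ref{fieldofdef}, and~\ref{orr2}, the Pila--Wilkie step will instead have to be run \emph{in families}: the family is parametrised by the finitely many Shimura subdata furnished by Conjecture~\ref{fieldofdef}, with heights controlled by Lemma~\ref{heightx} and the field-of-definition growth from Conjecture~\ref{fieldofdef} absorbing the loss; the bookkeeping is more delicate but the contradiction is obtained in exactly the same way.
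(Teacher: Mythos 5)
Your proposal is essentially correct and follows the same strategy as the paper: a Pila--Wilkie arc through conjugates of $P$, upgraded via the weak hyperbolic Ax-Schanuel conjecture to a positive-dimensional Zariski optimal intersection component whose image is a closed subvariety $W\ni\sigma(P)$ with small weakly special defect, hence anomalous, contradicting $\sigma(P)\in V^{\rm oa}$. You also correctly identify the conceptual point that makes this work: weak hyperbolic Ax-Schanuel controls $\delta_{\rm ws}$ rather than $\delta$, which is exactly the quantity needed to land in ${\rm an}(V)$, whereas for $\Opt_0(V)$ one would need to control the (special) defect itself.

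Two remarks. First, the paper presents the proof under the alternative hypothesis package (Conjectures \ref{count complexity}, \ref{fieldofdef}, \ref{orr2}) and notes that the Conjecture \ref{conj} case is analogous; you did the reverse, which is a legitimate choice since the paper had already detailed Conjecture \ref{conj} in the proof of Theorem \ref{Opt0}. Second, there is a small gap in the sentence ``Let $A$ be the irreducible analytic component of $U\cap\pi^{-1}(V)$ containing this arc'': a priori the arc $\beta_2([0,1])$ need not lie in a single irreducible analytic component of $U\cap\pi^{-1}(V)$. As in the paper, one should invoke the Global Decomposition Theorem to shrink to a subinterval $[0,t)$ meeting only finitely many components, and then use real analyticity of $\beta_2|_{(0,t)}$ to conclude the open arc lies in a single closed component, which then contains $\beta_2([0,t])$ (and in particular $z_0$). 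You have flagged the dimension bound $\dim U\leq 1+\dim\langle P\rangle$ as ``the hard part,'' which is fair — the paper likewise defers this to a claim ``easy to prove working with complex duals'' using the $W_B$ construction from Theorem \ref{Opt0} — so this is a shared rather than an extra gap.
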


\begin{proof}

We will assume that Conjectures \ref{count complexity}, \ref{fieldofdef}, and \ref{orr2} hold. The proof in the case that Conjecture \ref{conj} holds is very similar, hence we omit it. We used Conjecture \ref{conj} in the proof of Theorem \ref{Opt0}.

Let $\Omega$ denote a finite set of semisimple subgroups of $G$ defined over $\QQ$ as in Section \ref{fah}. Let $c_{\Gamma}$ and $\delta_{\Gamma}$ be the constants afforded to us by Conjecture \ref{orr2}, let $d$, $c_{\rm H}$, and $\delta_{\rm H}$ be the constants afforded to us by Lemma \ref{heightx}, and let
\begin{align*}
c:=\max\{c_{\rm H},c_{\Gamma}\}\text{ and }\delta:=\max\{\delta_{\rm H},\delta_{\Gamma}\}.
\end{align*}
Let $L'$ be a finitely generated extension of $F$ contained in $\CC$ over which $V$ is defined and let $c_G$ and $\delta_G$ be the constants afforded to us by Conjecture \ref{LGO}. Let $\kappa:=\delta_G/3$, and let $c_\kappa$ be the constant afforded to us by Conjecture \ref{fieldofdef}. Let 
\begin{align*}
P\in V^{\rm oa}\cap S^{[1+\dim V]}
\end{align*}
and let $L$ and $H$ be, respectively, the finite field extension of $F$ and the semisimple subgroup of $G$ defined over $\QQ$ of non-compact type afforded to us by Conjecture \ref{fieldofdef} applied to $\langle P\rangle$. Replacing $L$ by its compositum with $L'$, we have
\begin{align*}
[L:L']\leq c_\kappa\Delta(\langle P\rangle)^\kappa.
\end{align*}

We claim that there exists a positive constant $c_3$, independent of $P$, such that
\begin{align*}
\#{\Aut}(\CC/L)\cdot P\leq c_3c^{\frac{\kappa}{\delta}}\Delta(\langle P\rangle)^{\kappa}.
\end{align*}
This would be sufficient to prove Theorem \ref{oa0} since then, by Conjecture \ref{LGOoa}, we obtain
\begin{align*}
\frac{c_G}{c_\kappa}\Delta(\langle P\rangle)^{2\kappa}\leq\frac{1}{[L:L']}\#{\Aut}(\CC/L')\cdot P=\#{\Aut}(\CC/L)\cdot P\leq  c_3c^{\frac{\kappa}{\delta}}\Delta(\langle P\rangle)^{\kappa}
\end{align*}
and, rearranging this expression, we obtain
\begin{align*}
\Delta(\langle P\rangle)\leq (c_3c^{\frac{\kappa}{\delta}}c_\kappa c^{-1}_G)^{\frac{1}{\kappa}},
\end{align*}
which is a bound independent of $P$. We remind the reader that, as explained in Remark \ref{oainopt}, $P\in\Opt_0(V)$ and, therefore, $P$ is one of only finitely many irreducible components of $\langle P\rangle\cap V$. Hence, Theorem \ref{oa0} would follow from Conjecture \ref{count complexity} and it remains only, therefore, to prove the claim.

By Conjecture \ref{fieldofdef}, for each $\sigma\in{\Aut}(\CC/L)$,
\begin{align*}
\sigma(\langle P\rangle)=\pi(H(\RR)^+x_{\sigma}),
\end{align*}
where $H(\RR)^+x_{\sigma}$ is a pre-special subvariety of $X$ intersecting $\mathcal{F}$. By Lemma \ref{heightx}, we can and do assume that
\begin{align*}
{\rm H}_d(x_\sigma)\leq c_{\rm H}\Delta(\sigma(\langle P\rangle))^{\delta_{\rm H}}=c_{\rm H}\Delta(\langle P\rangle)^{\delta_{\rm H}}.
\end{align*}
We let $z_{\sigma}\in\mathcal{V}$ be a point in $\pi^{-1}(\sigma(P))$, so that
\begin{align*}
z_{\sigma}\in\Gamma H(\RR)^+x_{\sigma}
\end{align*}
and so, by Conjecture \ref{orr2}, there exists $\gamma_{\sigma}\in\Gamma$ satisfying
\begin{align*}
{\rm H}_1(\gamma_{\sigma})\leq c_{\Gamma}\Delta(\langle P\rangle)^{\delta_{\Gamma}}
\end{align*}
such that $z_{\sigma}\in \gamma_{\sigma}H(\RR)^+x_{\sigma}$.

By definition, there exists $F\in\Omega$ and $g\in G(\RR)$ such that $H_{\RR}$ is equal to $gF_{\RR}g^{-1}$. In particular, for each $\sigma\in{\Aut}(\CC/L)$, the tuple $(g,(\gamma_{\sigma},x_{\sigma}),z_{\sigma})$ belongs to the definable family $D$ of tuples
\begin{align*}
(g,(\gamma,x),z)\in G(\RR)\times [G(\RR)\times X]\times X\subseteq\RR^{n^2}\times\RR^{n^2+2N}\times\RR^{2N},
\end{align*}
parametrised by $G(\RR)$, such that 
\begin{align*}
z\in\mathcal{V}\cap\gamma gF(\RR)^+g^{-1}x,\text{ and }x(\SSS)\subseteq gG_Fg^{-1}.
\end{align*}
We choose, then, for $c_3$ the constant $c(D,d,\kappa/\delta)$ afforded to us by Theorem \ref{pilawilkie} applied to $D$. Since, $\Omega$ is finite, we can and do assume that $c_3$ does not depend on $F$. We let $\Sigma$ denote the union over ${\Aut}(\CC/L)$ of the tuples $((\gamma_\sigma,x_{\sigma}),z_{\sigma})\in D_{g}$ (to use the notation of Section \ref{count}). In particular, $\Sigma$ is contained in the subset
\begin{align*}
D_{g}(d,c\Delta(Z)^\delta).
\end{align*}

Let $\pi_1$ and $\pi_2$ be the projection maps from $\RR^{n^2+2N}\times\RR^{2N}$ to $\RR^{n^2+2N}$ and $\RR^{2N}$, respectively, and suppose, for the sake of obtaining a contradiction, that 
\begin{align*}
\#{\Aut}(\CC/L)\cdot P=\#\pi_2(\Sigma)>c_3c^{\frac{\kappa}{\delta}}\Delta(Z)^{\kappa}.
\end{align*}
Then, by Theorem \ref{pilawilkie}, there exists a continuous definable function 
\begin{align*}
\beta:[0,1]\rightarrow D_{g},
\end{align*}
such that $\beta_1:=\pi_1\circ \beta$ is semi-algebraic, $\beta_2:=\pi_2\circ \beta$ is non-constant, $\beta(0)\in\Sigma$, and $\beta_{|(0,1)}$ is real analytic. Let $z_0:=\beta_2(0)$ and $(\gamma_0,x_0):=\beta_1(0)$. Denote by $P_0$ the point $\pi(z_0)$ and denote by $X_0$ the pre-special subvariety $\gamma_0H(\RR)^+x_0$. 

We claim that there exists a positive dimensional intersection component $A$ of $\pi^{-1}(V)$ containing $z_0$. To see this, let $W$ denote the union of the totally geodesic subvarieties $\gamma H(\RR)^+x$ of $X$, where $(\gamma,x)$ varies over $\Im(\beta_1)$, and let $\overline{W}$ denote the Zariski closure of $W$ in $X^\vee$. The irreducible analytic components of $\overline{W}\cap\pi^{-1}(V)$ are, by definition, intersection components of $\pi^{-1}(V)$. It follows from the Global Decomposition Theorem (see \cite{gr:sheaves}, p172) that there exists $0<t\leq 1$ such that $\beta_2([0,t))$ intersects only finitely many of said components. In fact, since $\beta_{2|(0,t)}$ is real analytic, $\beta_2((0,t))$ must be wholly contained in one such component $A$. Since $A$ is closed, we conclude from the fact that $\beta$ is continuous that $A$ contains $\beta_2([0,t])$, which proves the claim.

Let $B$ denote a Zariski optimal intersection component of $\pi^{-1}(V)$ containing $A$ such that
\begin{align*}
\delta_{\rm Zar}(B)\leq\delta_{\rm Zar}(A),
\end{align*}
and let $Z$ denote the Zariski closure of $\pi(B)$ in $S$. By the weak hyperbolic Ax-Schanuel conjecture, $\langle B\rangle_{\rm Zar}$ is pre-weakly special and, as in the proof of Lemma \ref{upstairs}, 
\begin{align*}
\langle Z\rangle_{\rm ws}=\pi(\langle B\rangle_{\rm Zar}).
\end{align*}
Therefore, we have $\dim Z\geq 1$ and, also,
\begin{align*}
\dim Z\geq\dim B\geq\dim\langle B\rangle_{\rm Zar}-\delta_{\rm Zar}(A)\geq\dim\langle Z\rangle_{\rm ws}-(\dim\overline{W}-1),
\end{align*}
where we use the fact that $\delta_{\rm Zar}(A)$ is at most $\dim\overline{W}-1$. We claim that $\dim\overline{W}-1\leq\dim X_0$, which would conclude the proof as
\begin{align*}
\dim X_0\leq\dim S-\dim V-1
\end{align*}
and this would imply that $Z\in {\rm an}(V)$, which is not allowed as $P_0\in Z$.

Therefore, it remains to prove the claim. However, this is easy to prove working with complex duals and using the methods explained in the proof of Theorem \ref{Opt0}.

\end{proof}

\section{A brief note on special anomalous subvarieties}

In their paper \cite{bombieri2007anomalous}, Bombieri, Masser, and Zannier also defined what they referred to as a torsion anomalous subvariety. We will make the analogous definition in the context of Shimura varieties. Let $V$ be a subvariety of $S$.

\begin{definition}
A subvariety $W$ of $V$ is called {\bf special anomalous} in $V$ if
\begin{align*}
\dim W\geq \max\{1,1+\dim V+\dim \langle W\rangle-\dim S\}.
\end{align*}
 A subvariety of $V$ is {\bf maximal special anomalous} in $V$ if it is special anomalous in $V$ and not strictly contained in another subvariety of $V$ that is also special anomalous in $V$.
\end{definition}

The similarity with the definition of an atypical subvariety is clear. Indeed, it is immediate that a positive dimensional subvariety of $V$ that is atypical with respect to $V$ is special anomalous in $V$. However, since a subvariety $W$ of $V$ that is special anomalous in $V$ is not necessarily an irreducible component of $V\cap\langle W\rangle$, it is not necessarily the case that $W$ is atypical with respect to $V$. Nonetheless, it follows that the properties of being maximal special anomalous and atypical are equivalent for positive dimensional subvarieties of $V$. In particular, Conjecture \ref{zp'} implies the following anologue of the Torsion Openness conjecture of Bombieri, Masser, and Zannier.

\begin{conjecture}\label{TOC1}
There are only finitely many subvarieties of $V$ that are maximal special anomalous in $V$.
\end{conjecture}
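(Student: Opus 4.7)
The plan is to deduce Conjecture \ref{TOC1} directly from Conjecture \ref{zp'} via the equivalence, for positive dimensional subvarieties of $V$, between being maximal special anomalous in $V$ and being maximal atypical in $V$. This equivalence is essentially suggested in the discussion preceding the statement, and once it is pinned down the finiteness is immediate from Zilber--Pink.

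First I would verify that every positive dimensional atypical subvariety $W$ of $V$ is special anomalous in $V$. If the special subvariety $T$ witnesses the atypicality of $W$, then $\langle W\rangle\subseteq T$, and hence
\[
\dim W > \dim V + \dim T - \dim S \;\geq\; \dim V + \dim \langle W\rangle - \dim S,
\]
which, together with $\dim W \geq 1$, is precisely the special anomalous inequality.

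Next I would show that every special anomalous subvariety $W$ of $V$ sits inside an atypical and special anomalous subvariety $W'$ of $V$ of at least the same dimension. Let $W'$ be the irreducible component of $V\cap\langle W\rangle$ containing $W$. Then $\langle W'\rangle\subseteq\langle W\rangle$, so
\[
\dim W' \geq \dim W \geq 1 + \dim V + \dim \langle W\rangle - \dim S \geq 1 + \dim V + \dim\langle W'\rangle - \dim S,
\]
showing that $W'$ is special anomalous. Moreover $W'$ is an irreducible component of $V\cap\langle W'\rangle$: one has $W'\subseteq V\cap\langle W'\rangle\subseteq V\cap\langle W\rangle$, and $W'$ is already an irreducible component of the larger intersection. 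The strict inequality above then exhibits $W'$ as atypical, with witness $T:=\langle W'\rangle$.

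Combining these two observations, the maximal special anomalous subvarieties of $V$ coincide with the maximal atypical subvarieties of $V$ of positive dimension. Indeed, given a maximal special anomalous $W$, the passage $W\mapsto W'$ forces $W=W'$ (so $W$ is atypical), and any strictly larger atypical container of $W$ would, by the first observation, be a strictly larger special anomalous container, contradicting maximality; conversely, a positive dimensional maximal atypical $W$ is special anomalous by the first observation, and cannot lie strictly inside a special anomalous $W''$, since by the second observation such a $W''$ lies inside an atypical subvariety, contradicting maximality of $W$. Because no positive dimensional subvariety is strictly contained in a zero dimensional one, the maximal special anomalous subvarieties of $V$ inject into the set of maximal atypical subvarieties, which is finite by Conjecture \ref{zp'}. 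The only step requiring any thought is the replacement $W\mapsto W'$, introduced to repair the fact that a special anomalous $W$ need not itself be an irreducible component of $V\cap\langle W\rangle$; everything else is definitional bookkeeping.
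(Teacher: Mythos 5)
Your proposal is correct and follows essentially the same route as the paper: the text preceding the conjecture asserts that the properties of being maximal special anomalous and maximal atypical coincide for positive-dimensional subvarieties, and then invokes Conjecture~\ref{zp'}. You have simply spelled out the details of that equivalence, and the replacement $W\mapsto W'$ (passing to the component of $V\cap\langle W\rangle$ containing $W$) is exactly the repair the paper alludes to when it notes that a special anomalous $W$ need not itself be a component of $V\cap\langle W\rangle$.
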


Of course, one would more naturally translate the Torsion Openness conjecture as follows.

\begin{conjecture}\label{TOC2}
The complement $V^{\rm sa}$ in $V$ of the subvarieties of $V$ that are special anomalous in $V$ is open in $V$.
\end{conjecture}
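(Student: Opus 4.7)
The plan is to derive Conjecture \ref{TOC2} from Conjecture \ref{TOC1}, and hence ultimately from the Zilber-Pink conjecture (equivalently, via Theorem \ref{main theorem}, from the weak hyperbolic Ax-Schanuel conjecture together with Conjecture \ref{zerodim}). The argument will run closely parallel to the proof of Theorem \ref{openan}, but with the notion of being optimal replacing that of being weakly optimal throughout.

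The first step is to observe, in the spirit of Lemma \ref{anomisopt}, that any maximal special anomalous subvariety $W$ of $V$ lies in $\Opt(V)$. Indeed, suppose $W\subsetneq Y\subseteq V$ with $\delta(Y)\leq\delta(W)$. Then
\begin{align*}
\dim Y-\dim\langle Y\rangle\geq\dim W-\dim\langle W\rangle\geq 1+\dim V-\dim S,
\end{align*}
where the second inequality is the special anomalous condition on $W$. Since $W\subseteq Y$ forces $\dim Y\geq 1$, the subvariety $Y$ is itself special anomalous in $V$, contradicting the maximality of $W$. Hence $\delta(Y)>\delta(W)$, so $W\in\Opt(V)$.

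Now invoke Conjecture \ref{zp2} (equivalent to Conjecture \ref{zp'} by Lemma \ref{equivalence}), which asserts that $\Opt(V)$ is finite. It follows that there are only finitely many maximal special anomalous subvarieties $W_1,\ldots,W_k$ of $V$; this is precisely Conjecture \ref{TOC1}. Since the Zariski topology on $V$ is Noetherian, every ascending chain of special anomalous subvarieties must terminate, so every special anomalous subvariety of $V$ is contained in some $W_i$. Therefore
\begin{align*}
V\setminus V^{\rm sa}=W_1\cup\cdots\cup W_k
\end{align*}
is a finite union of closed subvarieties of $V$, confirming that $V^{\rm sa}$ is Zariski open in $V$.

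The hard part is of course that this route rests on the full Zilber-Pink conjecture. A direct proof under the weak hyperbolic Ax-Schanuel conjecture alone, analogous to the proof of Theorem \ref{openan} via Proposition \ref{fwo}, would require a finite family of pre-special subvarieties of $X$ from which the smallest special subvariety $\langle W\rangle$ of an optimal $W\subseteq V$ can be recovered, rather than merely $\langle W\rangle_{\rm ws}$. Bridging the gap between $\langle W\rangle_{\rm ws}$ and $\langle W\rangle$ in a uniform way seems to be essentially equivalent to the Zilber-Pink conjecture for $V$ itself, so any substantial weakening of the hypothesis appears out of reach by the methods developed here.
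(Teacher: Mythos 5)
Your derivation is correct and follows essentially the same route the paper takes: the paper also obtains Conjecture~\ref{TOC2} conditionally on the Zilber--Pink conjecture, by first deducing Conjecture~\ref{TOC1} and then noting that a finite union of closed special anomalous subvarieties is closed. The only cosmetic difference is that you deduce Conjecture~\ref{TOC1} from the formulation in terms of $\Opt(V)$ (Conjecture~\ref{zp2}) via the observation that maximal special anomalous subvarieties are optimal, whereas the paper reasons via the equivalent formulation in terms of atypical subvarieties (Conjecture~\ref{zp'}); your closing remark that an unconditional argument in the style of Theorem~\ref{openan} seems out of reach is consistent with the paper leaving the statement as a conjecture.
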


However, these two formulations are equivalent. Indeed, the statement that Conjecture \ref{TOC1} implies Conjecture \ref{TOC2} is obvious. On the other hand, suppose that Conjecture \ref{TOC2} were true. Then the union of all subvarieties of $V$ that are positive dimensional and atypical with respect to $V$ would be closed in $V$. In particular, we could write it as a finite union of subvarieties of $V$. However, since there are only countably many subvarieties of $V$ that are atypical with respect to $V$, it follows that each member of the aforementioned union would be atypical with respect to $V$.

Of course, if one could prove Conjecture \ref{TOC1}, one would reduce the Zilber-Pink conjecture to the following analogue of the Torsion Finiteness conjecture of Bombieri, Masser, and Zannier.

\begin{conjecture}\label{TFC1}
There are only finitely many points in $V$ that are maximal atypical with respect to $V$.
\end{conjecture}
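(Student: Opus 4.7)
The plan is to argue that every maximal atypical point of $V$ belongs to $\Opt_0(V)$, whence the conjecture reduces to Conjecture \ref{zerodim}. By Remark \ref{introreductions}, we may and do assume that $V$ is contained in $S$.

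Let $P\in V$ be a maximal atypical point with respect to $V$. Since $P$ is atypical, there is a special subvariety $T$ with $P$ an irreducible component of $V\cap T$ and $\dim T<\dim S-\dim V$; taking $T=\langle P\rangle$, this gives
\begin{align*}
\delta(P)=\dim\langle P\rangle<\dim S-\dim V.
\end{align*}
Suppose, for a contradiction, that $P\notin\Opt_0(V)$. Then the set of subvarieties $Y$ of $V$ strictly containing $P$ with $\delta(Y)\leq\delta(P)$ is non-empty; pick such a $Y$ of smallest possible dimension. For any $Y\subsetneq Y'\subseteq V$ we have $Y'\supsetneq P$, and, by the minimality of $\dim Y$, $Y'$ cannot belong to this set, so $\delta(Y')>\delta(P)\geq\delta(Y)$. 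Hence $Y$ is optimal in $V$ and, in particular, is an irreducible component of $\langle Y\rangle\cap V$. Moreover, $\delta(Y)\leq\delta(P)<\dim S-\dim V$ rearranges to
\begin{align*}
\dim Y>\dim V+\dim\langle Y\rangle-\dim S,
\end{align*}
so $Y$ is a positive-dimensional atypical subvariety of $V$ strictly containing $P$. This contradicts the maximality of $P$. Therefore $P\in\Opt_0(V)$, and Conjecture \ref{TFC1} follows from Conjecture \ref{zerodim} applied to $V$.

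The principal obstacle is thus Conjecture \ref{zerodim} itself. Theorem \ref{Opt0} supplies it (conditionally on the weak hyperbolic Ax-Schanuel conjecture and the arithmetic hypotheses of Section \ref{fah}) only when $V$ is a curve, so the reduction above yields Conjecture \ref{TFC1} immediately for curves but not in higher dimensions. The difficulty in extending Theorem \ref{Opt0} is precisely the one flagged in the paper after its proof: the weak hyperbolic Ax-Schanuel conjecture applied to the real-analytic curve produced by Pila--Wilkie controls the weakly special defect of the resulting subvariety, but not its defect proper, which is insufficient to contradict the optimality of an isolated point. A full unconditional proof of Conjecture \ref{TFC1} would therefore either require an unconditional treatment of Conjecture \ref{zerodim}, or a direct attack modelled on Theorem \ref{oa0} but adapted so as to exclude only positive-dimensional atypical (equivalently, special anomalous) subvarieties of $V$ rather than merely anomalous ones; the inclusion $V^{\rm oa}\subseteq V^{\rm sa}$ being strict in general, Conjecture \ref{oafinite} alone does not suffice.
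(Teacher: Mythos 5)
The statement you were asked about is labelled a \emph{conjecture} in the paper, and the paper does not prove it; it merely observes (without proof) that a subvariety of $V$ that is maximal atypical is optimal, so that maximal atypical points form a subset of $\Opt_0(V)$, and it proves instead (Proposition \ref{TOimpliesTF}) that Conjecture \ref{TOC2} for $S\times Y(1)$ implies the equivalent Conjecture \ref{TFC2}. Your reduction of Conjecture \ref{TFC1} to Conjecture \ref{zerodim} via this inclusion is therefore exactly in line with the paper's own remarks, and your final paragraph correctly diagnoses why the reduction remains conditional. So the overall strategy is right.

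However, the justification of the key lemma --- that a maximal atypical point is optimal --- has a genuine gap. You pick $Y\supsetneq P$ in $V$ with $\delta(Y)\leq\delta(P)$ of \emph{smallest possible dimension}, and then assert that for any $Y\subsetneq Y'\subseteq V$, minimality of $\dim Y$ forces $Y'$ out of the candidate set, hence $\delta(Y')>\delta(P)$. This does not follow: any $Y'$ strictly containing $Y$ has $\dim Y'>\dim Y$, so minimality of $\dim Y$ says nothing about whether $Y'$ can also satisfy $\delta(Y')\leq\delta(P)$. A $Y'$ of larger dimension can perfectly well lie in the set. So, as written, you have not shown $Y$ is optimal.

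The fix is standard and does not change the architecture. Either (a) choose $Y$ with $\delta(Y)$ minimal among subvarieties with $P\subsetneq Y\subseteq V$ and, among those of minimal defect, of \emph{maximal} dimension; then optimality of $Y$ follows, since any $Y\subsetneq Y'\subseteq V$ with $\delta(Y')\leq\delta(Y)$ would force $\delta(Y')=\delta(Y)$ by minimality and $\dim Y'\leq\dim Y$ by maximality, contradicting $Y'\supsetneq Y$. Or (b) bypass optimality of $Y$ entirely: take any $Y\supsetneq P$ in $V$ with $\delta(Y)\leq\delta(P)$, and let $W$ be an irreducible component of $V\cap\langle Y\rangle$ containing $Y$. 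Then $W\supseteq Y\supsetneq P$, $\dim W\geq\dim Y$, $\langle W\rangle\subseteq\langle Y\rangle$, and
\begin{align*}
\dim W\geq\dim Y=\dim\langle Y\rangle-\delta(Y)\geq\dim\langle Y\rangle-\delta(P)>\dim\langle Y\rangle-(\dim S-\dim V)=\dim V+\dim\langle Y\rangle-\dim S,
\end{align*}
so $W$ is atypical with respect to $V$ (via $T=\langle Y\rangle$) and strictly contains $P$, contradicting maximality of $P$. Either fix makes your reduction complete; the remaining obstruction is, as you say, Conjecture \ref{zerodim} itself, which is unconditional nowhere in the paper and is only handled (Theorem \ref{Opt0}) for curves under the arithmetic hypotheses.
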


In this article, we have concerned ourselves with optimal subvarieties. Now, it is straightforward to verify that a subvariety $W$ of $V$ that is optimal in $V$ is atypical with respect to $V$. However, it is not necessarily the case that $W$ is maximal atypical. On the other hand, a subvariety of $V$ that is maximal atypical with respect to $V$ is optimal. In particular, the points in $V$ that are maximal atypical with respect to $V$ constitute a (possibly proper) subset of $\Opt_0(V)$. 

Again, it would be more natural to translate the Torsion Finiteness conjecture as follows.

\begin{conjecture}\label{TFC2}
For any integer $d$, let $S^{[d]}$ denote the union of the special subvarieties contained in $S$ having codimension at least $d$. Then
\begin{align*}
V^{\rm sa}\cap S^{[1+\dim V]}
\end{align*}
is finite. Equivalently, there are only finitely many points $P\in V^{\rm sa}$ such that
\begin{align*}
\dim\langle P\rangle\leq\dim S-\dim V-1.
\end{align*}
\end{conjecture}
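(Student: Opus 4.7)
The plan is to reduce the finiteness of $V^{\rm sa}\cap S^{[1+\dim V]}$ to the finiteness of $\Opt_0(V)$ (that is, Conjecture \ref{zerodim}), which is itself a direct consequence of the Zilber-Pink conjecture via Theorem \ref{main theorem} and has been established conditionally for curves in Theorem \ref{Opt0}. The bridge is the set-theoretic inclusion
\begin{align*}
V^{\rm sa}\cap S^{[1+\dim V]}\subseteq\Opt_0(V),
\end{align*}
after which finiteness of the left-hand side is immediate.

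First I would unwind the codimension condition: since $\langle P\rangle$ is by definition the smallest special subvariety containing $P$, the condition $P\in S^{[1+\dim V]}$ is equivalent to $\dim\langle P\rangle\leq\dim S-\dim V-1$, which is the alternate formulation supplied in the conjecture.

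The heart of the argument is the containment above. Take $P$ in the left-hand side, and suppose $W$ is a subvariety of $V$ containing $P$ with $\delta(W)\leq\delta(P)=\dim\langle P\rangle$. Then
\begin{align*}
\dim W=\dim\langle W\rangle-\delta(W)\geq\dim\langle W\rangle-(\dim S-\dim V-1),
\end{align*}
which rearranges to
\begin{align*}
\dim W\geq 1+\dim V+\dim\langle W\rangle-\dim S.
\end{align*}
If $W$ is strictly larger than $\{P\}$, then $\dim W\geq 1$ and $W$ satisfies the defining inequality for being special anomalous in $V$. But $P\in W$, contradicting $P\in V^{\rm sa}$. Hence $W=\{P\}$, so $P\in\Opt_0(V)$. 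Applying Conjecture \ref{zerodim} to the subvariety $V$ then yields the desired finiteness.

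The main obstacle is of course that Conjecture \ref{zerodim} is itself unproven in general. As the paper explains, it has been verified (under the arithmetic hypotheses of Section \ref{fah}) only in the case where $V$ is a curve, via Theorem \ref{Opt0}; for higher-dimensional $V$ the argument breaks down because a Pila-Wilkie path does not obviously produce a positive-dimensional subvariety of defect at most $\delta(P)$. An alternative route that tries to bypass Conjecture \ref{zerodim} and reduce directly to Theorem \ref{oa0} would have to control the (typically strict) containment $V^{\rm oa}\subseteq V^{\rm sa}$, that is, force any point of $V^{\rm sa}\setminus V^{\rm oa}$ that lies in $S^{[1+\dim V]}$ into the finite set $V^{\rm oa}\cap S^{[1+\dim V]}$; this seems to demand genuinely new input, since a subvariety that is anomalous (with respect to the weakly special defect) need not be special anomalous.
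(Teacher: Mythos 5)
The statement you are proving is stated in the paper as a conjecture, so there is no unconditional proof to match; what the paper offers are conditional reductions, and your proposal is a correct reduction of exactly the kind the paper has in mind. Your key containment $V^{\rm sa}\cap S^{[1+\dim V]}\subseteq\Opt_0(V)$ is proved correctly (the inequality manipulation and the use of irreducibility to force $\dim W\geq 1$ are both fine), and it is in substance the same observation the paper makes when showing that Conjecture \ref{TFC1} implies Conjecture \ref{TFC2}: a point $P\in V^{\rm sa}$ with $\dim\langle P\rangle\leq\dim S-\dim V-1$ must be an irreducible component of $\langle P\rangle\cap V$, since otherwise the component through $P$ would be special anomalous; hence $P$ is maximal atypical, hence optimal, and finiteness then follows from Conjecture \ref{zerodim}, i.e.\ from Zilber--Pink, exactly as you say (this parallels Remark \ref{oainopt}, which is the analogous argument for $V^{\rm oa}$). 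The genuinely different route that the paper actually proves is Proposition \ref{TOimpliesTF}: instead of passing through optimality and point counting, it deduces Conjecture \ref{TFC2} for $S$ from the openness statement (Conjecture \ref{TOC2}) applied to $V\times Y(1)$ inside $S\times Y(1)$, by establishing $(V\times Y(1))^{\rm sa}=U\times Y(1)$ with $U=V^{\rm sa}\setminus\bigl(V^{\rm sa}\cap S^{[1+\dim V]}\bigr)$ and then combining openness of $U$ and of $V^{\rm sa}$ with the countability of $V^{\rm sa}\cap S^{[1+\dim V]}$. That product trick buys a derivation from an \emph{openness} hypothesis rather than from Conjecture \ref{zerodim}, which, as you correctly note, the paper only establishes (conditionally) for curves in Theorem \ref{Opt0}; your closing caveat about why Theorem \ref{oa0} cannot be invoked directly, namely that $V^{\rm oa}\subseteq V^{\rm sa}$ is in general strict, is also accurate.
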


However, the two formulations are also equivalent. Indeed, Conjecture \ref{TFC2} implies Conjecture \ref{TFC1} because a point $P\in V$ that is maximal atypical with respect to $V$ is contained in $V^{\rm sa}$ and
\begin{align*}
\dim\langle P\rangle\leq\dim S-\dim V-1.
\end{align*}
On the other hand, suppose that Conjecture \ref{TFC1} were true and consider a point $P\in V^{\rm sa}$ such that
\begin{align*}
\dim\langle P\rangle\leq\dim S-\dim V-1.
\end{align*}
Then $P$ is a component of $\langle P\rangle\cap V$. Otherwise, such a component $W$ containing $P$ would be special anomalous in $V$, which would contradict the fact that $P\in V^{\rm sa}$. Therefore, $P$ is atypical with respect to $V$ and, in fact, maximal atypical with respect to $V$.

In their article \cite{bombieri2007anomalous}, Bombieri, Masser, and Zannier showed that, in fact, the Torsion Openness conjecture implies the Torsion Finiteness conjecture. We imitate their argument to show the following.

\begin{proposition}\label{TOimpliesTF}
Let $Y(1)$ denote the modular curve associated with $\SL_2(\ZZ)$. If Conjecture \ref{TOC2} is true for $S\times Y(1)$, then Conjecture \ref{TFC2} is true for $S$. 
\end{proposition}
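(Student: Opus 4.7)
The plan is to embed the problem into the larger Shimura variety $S \times Y(1)$: each point $P \in V$ with small Mumford--Tate group gives rise to a special anomalous curve $\{P\} \times Y(1)$ in $V \times Y(1)$, and Conjecture \ref{TOC1} applied there supplies only finitely many maximal such curves.

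First, let $V \subseteq S$ and let $P \in V^{\rm sa} \cap S^{[1+\dim V]}$, so $\dim\langle P\rangle \leq \dim S - \dim V - 1$. Since the smallest Shimura subdatum of $(G \times \GL_2, \mathfrak{X} \times \HH^{\pm})$ whose associated special subvariety contains $\{P\} \times Y(1)$ is $(M \times \GL_2, \mathfrak{X}_M \times \HH^{\pm})$, where $M = \MT(x)$ for $x$ lying over $P$, I have $\langle \{P\} \times Y(1)\rangle = \langle P \rangle \times Y(1)$. A direct computation then gives
\[
\max\{1, 1 + \dim(V \times Y(1)) + \dim\langle \{P\} \times Y(1)\rangle - \dim(S \times Y(1))\} = \max\{1, \dim\langle P\rangle + \dim V + 1 - \dim S\} \leq 1,
\]
so $\{P\} \times Y(1)$ is special anomalous in $V \times Y(1)$. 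By Conjecture \ref{TOC1} (equivalent to Conjecture \ref{TOC2}) for $S \times Y(1)$ applied to $V \times Y(1)$, there are finitely many maximal special anomalous subvarieties $W_1, \ldots, W_k$ of $V \times Y(1)$; since $\{P\} \times Y(1)$ is irreducible, it lies in some $W_i$.

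Next, for each $W = W_i$, I analyse the projection $\pi_S \colon W \to V$ and its image $Z := \pi_S(W)$. If $\dim Z = 0$, then $Z = \{P_W\}$ and, since $\dim W \geq 1$ forces $W = \{P_W\} \times Y(1)$, this $W$ contributes at most the single point $P_W$ to the count. If $\dim Z \geq 1$, I claim $Z$ is special anomalous in $V$, which will give $Z \cap V^{\rm sa} = \emptyset$ and so rule out any contribution. There are two subcases depending on the generic fibre dimension $d$ of $\pi_S|_W$ (which must be $0$ or $1$ since fibres lie in $\{P\} \times Y(1)$). If $d = 1$, then by irreducibility $W = Z \times Y(1)$ and one verifies $\langle W\rangle = \langle Z\rangle \times Y(1)$ as above, so the special anomalous inequality $\dim W \geq 1 + \dim(V \times Y(1)) + \dim \langle W\rangle - \dim(S \times Y(1))$ rewrites to $\dim Z \geq 1 + \dim V + \dim\langle Z\rangle - \dim S$, the special anomalous condition for $Z$ in $V$. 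If $d = 0$, then $\dim W = \dim Z$; combining with $\dim\langle Z\rangle \leq \dim\pi_S(\langle W\rangle) \leq \dim\langle W\rangle$ (using that a Shimura morphism sends special subvarieties to special subvarieties) yields the same conclusion.

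Combining the two cases, every $P \in V^{\rm sa} \cap S^{[1+\dim V]}$ equals $P_{W_i}$ for some $i$ with $\dim \pi_S(W_i) = 0$, a finite set. The main technical point is the identity $\langle Z \times Y(1)\rangle = \langle Z\rangle \times Y(1)$, used in Step 1 and in Case $d = 1$ above. The inclusion $\subseteq$ is immediate since $\langle Z\rangle \times Y(1)$ is special and contains $Z \times Y(1)$. For the other direction, any special subvariety of $S \times Y(1)$ containing $Z \times Y(1)$ surjects onto $Y(1)$, and the generic fibre of a generic point of $Z$ has Mumford--Tate equal to the full $\MT(z) \times \GL_2$ by independence of the corresponding Hodge structures, ruling out proper ``diagonal'' subdata of $G \times \GL_2$ whose image could still contain $Z \times Y(1)$. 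This step, while not difficult, is the one that uses the specific feature that the Shimura datum $(\GL_2, \HH^\pm)$ of $Y(1)$ has no non-trivial morphism into a stabiliser of a point of $\mathfrak{X}$.
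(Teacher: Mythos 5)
Your proof is correct and reaches the same conclusion, but it reorganizes the top-level argument. The paper establishes the set-theoretic identity $(V\times Y(1))^{\rm sa}=U\times Y(1)$ with $U:=V^{\rm sa}\setminus(V^{\rm sa}\cap S^{[1+\dim V]})$ by proving both inclusions, then applies Conjecture \ref{TOC2} to conclude $U$ is open in $V$, and finishes with the observation that $V^{\rm sa}\cap S^{[1+\dim V]}$ is a countable Zariski-closed subset of $V^{\rm sa}$, hence finite. You instead pass to the equivalent Conjecture \ref{TOC1}, obtain finitely many maximal special anomalous $W_1,\dots,W_k\subseteq V\times Y(1)$, and count directly: each $P\in V^{\rm sa}\cap S^{[1+\dim V]}$ produces a special anomalous curve $\{P\}\times Y(1)$ inside some $W_i$; those $W_i$ with positive-dimensional image $Z=\overline{\pi_S(W_i)}$ in $S$ are excluded because $Z$ is then special anomalous in $V$ and hence disjoint from $V^{\rm sa}$, contradicting $P\in Z$; the remaining $W_i$ are vertical fibres $\{P_{W_i}\}\times Y(1)$, each contributing at most one $P$. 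In substance, both arguments use the same case split on the generic fibre dimension of $\pi_S|_W$ ($0$ or $1$) and the same identity $\langle Z\times Y(1)\rangle=\langle Z\rangle\times Y(1)$ (which the paper uses without comment and you sketch via Mumford–Tate independence). Your route avoids the countability-plus-irreducibility step at the end; the paper's route gives the stronger conclusion $(V\times Y(1))^{\rm sa}=U\times Y(1)$ as a by-product. One small arithmetic slip: in the display showing $\{P\}\times Y(1)$ is special anomalous, the middle quantity should be $2+\dim V+\dim\langle P\rangle-\dim S$, since $\dim(V\times Y(1))=\dim V+1$, $\dim\langle\{P\}\times Y(1)\rangle=\dim\langle P\rangle+1$, and $\dim(S\times Y(1))=\dim S+1$; the bound $\leq 1$ still follows from $\dim\langle P\rangle\leq\dim S-\dim V-1$, so the conclusion is unaffected.
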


\begin{proof} We denote by $U$ the complement in $V^{\mathrm{sa}}$ of $V^{\mathrm{sa}}\cap S^{[\dim V+1]}$. Regarding $V\times Y(1)$ as a subvariety of the Shimura variety $S\times Y(1)$, we claim that
\begin{align}\label{product}
(V\times Y(1))^{\mathrm{sa}}=U\times Y(1).
  \end{align}

To see this, first let $\hat{Y}$ be a special anomalous subvariety of $V\times Y(1)$. Then
\begin{align*}
\dim \hat{Y} &\geq 1+\dim \langle \hat{Y}\rangle+\dim (V\times Y(1))-\dim (S\times Y(1)) \\
&= 1+\dim \langle \hat{Y}\rangle+(1+\dim V)-(1+\dim S)\\
&= 1+\dim \langle \hat{Y}\rangle+\dim V-\dim S.
\end{align*}

We denote by $\pi_S$ the projection $\pi_S: S\times Y(1)\rightarrow S$. Then the Zariski closure $Y$ of $\pi_S(\hat{Y})$ lies in the special subvariety  $\pi_S(\langle \hat{Y}\rangle)$ of $S$. If $\dim Y=\dim \hat{Y}$, then 
\begin{align*}
\dim Y &=\dim \hat{Y}\\
&\geq 1+\dim \langle \hat{Y}\rangle+\dim V-\dim S \\
&\geq 1+\dim \pi_S(\langle \hat{Y}\rangle)+\dim V-\dim S\\
&\geq 1+\dim \langle Y\rangle+\dim V-\dim S
\end{align*}
and $Y$ is special anomalous in $V$. If $1\leq \dim Y< \dim \hat{Y}$, then $\dim Y=\dim \hat{Y}-1$ and $\hat{Y}=Y\times Y(1)$, which implies $\langle \hat{Y}\rangle=\langle Y\rangle\times Y(1)$. Therefore,
\begin{align*}
\dim Y &=\dim \hat{Y}-1\\
&\geq 1+\dim \langle \hat{Y}\rangle+\dim V-\dim S-1 \\
&= 1+\dim \langle Y\rangle +1+\dim V-\dim S-1\\
&= 1+\dim \langle Y\rangle+\dim V-\dim S
\end{align*}
and $Y$ is special anomalous in $V$. In both cases, $\hat{Y}$ is contained in 
\begin{align*}
Y\times Y(1)\subseteq (V\backslash V^{\mathrm{sa}})\times Y(1)\subseteq (V\backslash U)\times Y(1)=(V\times Y(1))\backslash (U\times Y(1)).
\end{align*}

Finally, if $\dim Y=0$, then $\hat{Y}=\{P\}\times Y(1)$ for some $P\in S$. Since $\hat{Y}$ is special anomalous, we have 
\begin{align*}
1 &=\dim \hat{Y}\\
&\geq 1+\dim \langle \hat{Y}\rangle+(1+\dim V)-(1+\dim S)\\
&= 1+\dim \langle P\rangle+1 +\dim V-\dim S
\end{align*}
and $P\in S^{[1+\dim V]}$. Therefore 
\begin{align*}
\hat{Y} &\subseteq V\backslash  (V-S^{[1+\dim V]})\times Y(1)\\
&\subseteq V\backslash  (V^{\mathrm{sa}}-S^{[1+\dim V]})\times Y(1) \\
&=(V\backslash U)\times Y(1)\\
&= (V\times Y(1))\backslash (U\times Y(1)).
\end{align*}
We conclude that $(V\times Y(1))^{\rm sa}\subseteq(U\times Y(1))$.

On the other hand, for any $P\in V\backslash U=V\backslash (V^{\mathrm{sa}}-S^{[1+\dim V]})$, we have either $P\notin V^{\mathrm{sa}}$ or $P\in V^{\rm sa}\cap S^{[1+\dim V]}$.

If $P\notin V^{\mathrm{sa}}$, then $P$ is contained in a special anomalous $Y$ of $V$. Therefore, 
\begin{align*}
\dim (Y\times Y(1)) &=1+\dim Y\\
&\geq 1+1+\dim \langle Y\rangle +\dim V-\dim S \\
&=1+\dim \langle Y\times Y(1)\rangle+ \dim (V\times Y(1))-\dim (S\times Y(1))
\end{align*}
and $Y\times Y(1)\subset (V\times Y(1))\backslash (U\times Y(1))$ is special anomalous.

If $P\in V^{\mathrm{sa}}\cap S^{[1+\dim V]}$, we let $\hat{Y}=\{P\}\times Y(1)$. Then 
\begin{align*}
\dim \hat{Y}&=1\\
&\geq 1+1+\dim \langle P\rangle +\dim V-\dim S \\
&=1+\dim \langle \hat{Y}\rangle+ \dim (V\times Y(1))-\dim (S\times Y(1))
\end{align*}
and $\hat{Y}\subseteq (V\times Y(1))\backslash (U\times Y(1))$ is special anomalous. We conclude that $(U\times Y(1))\subseteq(V\times Y(1))^{\rm sa}$, which proves (\ref{product}).

Therefore, if Conjecture \ref{TOC2} is true for $S\times Y(1)$, we conclude that $U\times Y(1)$ is open in $V\times Y(1)$. Therefore, $U$ is open in $V$. On the other hand, $V^{\mathrm{sa}}$ is also open in $V$, whereas $V^{\rm sa}\cap S^{[1+\dim V]}$ is at most countable since there are only countably many special subvarieties and each point in $V^{\rm sa}\cap S^{[1+\dim V]}$ is an irreducible component of $V\cap Z$ for some special subvariety $Z$ of codimension at most $1+\dim V$. It follows that $V^{\rm sa}\cap S^{[1+\dim V]}$ is finite.
\end{proof}

\newpage

\bibliographystyle{abbrv}
\bibliography{basic}

\begin{flushleft}
  Christopher Daw \\
  Department of Mathematics and Statistics, \\
  University of Reading, \\
  Whiteknights, \\
  PO Box 217, \\
  Reading, \\
  Berkshire RG6 6AH, \\
  United Kingdom. \\
  E-mail address: \href{mailto:chris.daw@reading.ac.uk}{\texttt{chris.daw@reading.ac.uk}}
\end{flushleft}

\begin{flushleft}
  Jinbo Ren \\
  Institut des Hautes Études Scientifiques, \\
  Le Bois-Marie 35, route de Chartres, \\
  91440 Bures-sur-Yvette, France. \\

  E-mail address: \href{mailto:renjinbo@ihes.fr}{\texttt{renjinbo@ihes.fr}}
\end{flushleft}

\end{document}